\title[Probabilistic Computability and Choice]
{Probabilistic Computability and Choice}
\author{Vasco Brattka}
\address{Faculty of Computer Science, Universit\"at der Bundeswehr M\"unchen, Germany and 
             Department of Mathematics \& Applied Mathematics, University of Cape Town, South Africa\footnote{Vasco Brattka is supported by the National Research Foundation of South Africa.}} 
\email{Vasco.Brattka@cca-net.de}
\author{Guido Gherardi}
\address{Faculty of Computer Science, Universit\"at der Bundeswehr M\"unchen, Germany} 
\email{Guido.Gherardi@unibw.de}
\author{Rupert H\"olzl}
\address{Department of Mathematics, Faculty of Science, National University of Singapore\footnote{Rupert H\"olzl was supported by a Feodor Lynen postdoctoral research fellowship by the Alexander von Humboldt Foundation.}} 
\email{r@hoelzl.fr}
\date{\today}
\def\AA{{\mathcal A}}
\def\CC{{\mathcal C}}
\def\FF{{\mathcal F}}
\def\OO{{\mathcal O}}
\def\UU{{\mathcal U}}
\def\IN{{\mathbb{N}}}
\def\IR{{\mathbb{R}}}
\def\IS{{\mathbb{S}}}
\def\TO{\Longrightarrow}
\def\In{\subseteq}
\def\prefix{\sqsubseteq}
\def\mto{\rightrightarrows}
\def\id{{\rm id}}
\def\dom{{\rm dom}}
\def\range{{\rm range}}
\def\diam{{\rm diam}}
\def\dist{{\rm dist}}
\def\sgn{{\rm sgn}}
\def\Baire{{\IN^\IN}}
\def\Tr{{\rm Tr}}
\newcommand{\SO}[1]{{{\bf\Sigma}^0_{#1}}}
\newcommand{\PO}[1]{{{\bf\Pi}^0_{#1}}}
\newcommand{\pO}[1]{{\Pi^0_{#1}}}
\def\LPO{\text{\rm\sffamily LPO}}
\def\LLPO{\text{\rm\sffamily LLPO}}
\def\WKL{\text{\rm\sffamily WKL}}
\def\WWKL{\text{\rm\sffamily WWKL}}
\def\IVT{\text{\rm\sffamily IVT}}
\def\B{\text{\rm\sffamily B}}
\def\P{\mbox{\rm\sffamily P}}
\def\PC{\mbox{\rm\sffamily PC}}
\def\C{\mbox{\rm\sffamily C}}
\def\ConC{\mbox{\rm\sffamily CC}}
\def\UC{\mbox{\rm\sffamily UC}}
\def\PCC{\mbox{\rm\sffamily PCC}}
\def\AUC{\mbox{\rm\sffamily AUC}}
\def\LPO{\mbox{\rm\sffamily LPO}}
\def\LLPO{\mbox{\rm\sffamily LLPO}}
\def\K{\text{\rm\sffamily K}}
\def\Low{\text{\rm\sffamily L}}
\def\CL{\text{\rm\sffamily CL}}
\def\PCL{\text{\rm\sffamily PCL}}
\def\RDIV{\text{\rm\sffamily RDIV}}
\def\NASH{\text{\rm\sffamily NASH}}
\def\LDL{\text{\rm\sffamily LDL}}
\def\leqr{\mathop{\leq_{\mathrm{r}}}}
\def\leqT{\mathop{\leq_{\mathrm{T}}}}
\def\leqW{\mathop{\leq_{\mathrm{W}}}}
\def\equivW{\mathop{\equiv_{\mathrm{W}}}}
\def\leqSW{\mathop{\leq_{\mathrm{sW}}}}
\def\equivSW{\mathop{\equiv_{\mathrm{sW}}}}
\def\nleqW{\mathop{\not\leq_{\mathrm{W}}}}
\def\nleqSW{\mathop{\not\leq_{\mathrm{sW}}}}
\def\lW{\mathop{<_{\mathrm{W}}}}
\def\lSW{\mathop{<_{\mathrm{sW}}}}
\def\nW{\mathop{|_{\mathrm{W}}}}
\def\nSW{\mathop{|_{\mathrm{sW}}}}
\DeclareMathOperator*{\bigtimes}{\vartimes}
\def\stars{*_{\rm s}\;\!}
\newcommand{\dash}{\mbox{-}}
\newtheorem{theorem}{Theorem}[section]
\newtheorem{proposition}[theorem]{Proposition}
\newtheorem{lemma}[theorem]{Lemma}
\newtheorem{corollary}[theorem]{Corollary}
\theoremstyle{definition}
\newtheorem{definition}[theorem]{Definition}
\begin{document}

\ \\[-1cm]
\begin{abstract}
We study the computational power of randomized computations on infinite objects, such as real numbers.
In particular, we introduce the concept of a Las Vegas computable multi-valued function, which is a function that
can be computed on a probabilistic Turing machine that receives a random binary sequence as
auxiliary input. The machine can take advantage of this random sequence, but it always has to produce
a correct result or to stop the computation after finite time if the random advice is not successful. 
With positive probability the random advice has to be successful. 
We characterize the class of Las Vegas computable functions in the Weihrauch lattice with the help of 
probabilistic choice principles and Weak Weak K\H{o}nig's Lemma. 
Among other things we prove an Independent Choice Theorem that implies that Las Vegas
computable functions are closed under composition.
In a case study we show that Nash equilibria are Las Vegas computable, while zeros of continuous functions
with sign changes cannot be computed on Las Vegas machines. However, we show that the
latter problem admits randomized algorithms with weaker failure recognition mechanisms.
The last mentioned results can be interpreted such that the Intermediate Value Theorem is reducible
to the jump of Weak Weak K\H{o}nig's Lemma, but not to Weak Weak K\H{o}nig's Lemma itself.
These examples also demonstrate that Las Vegas computable functions form a proper superclass of the class of 
computable functions and a proper subclass of the class of non-deterministically computable functions. 
 We also study the impact of specific lower bounds on the success probabilities, which leads
to a strict hierarchy of classes. In particular, the classical technique of probability amplification fails
for computations on infinite objects. We also investigate the dependency on the underlying probability space.
Besides Cantor space, we study the natural numbers, the Euclidean space and Baire space. \ \bigskip \\
{\bf Keywords:} Computable analysis, Weihrauch lattice, computability theory, reverse mathematics,
randomized algorithms.
\end{abstract}

\maketitle

%\pagebreak

\begin{footnotesize}
\setcounter{tocdepth}{1}
\tableofcontents
\end{footnotesize}

%\newpage
\section{Introduction}
\label{sec:introduction}
 
What is the computational power of a sequence of coin flips for computations on real numbers?
While the power of randomized algorithms has been studied in the discrete setting for a long time (for a survey see for instance the text book by Motwani and Raghavan \cite{MR95}), 
very little is known for computations on real numbers.\footnote{See however the work of Hertling and Weihrauch \cite{HW03}, G{\'{a}}cs \cite{Gac05}, Bosserhoff \cite{Bos08b,Bos08f} and Hoyrup and Rojas \cite{HR09},
Freer and Roy \cite{FR12} for some results on randomness and probabilistic computability over topological spaces in this direction and for further references.}

In the discrete setting of decision problems randomization has no impact on what can be computed in principle, which follows from the Theorem
of Sacks and its predecessors (see the discussion of Theorem~\ref{thm:Sacks}), but it might have
an impact on the computational complexity (whether it does or not is still not known for polynomial time complexity).

As we will see, randomization actually increases the computational power in general for computations of multi-valued functions in the infinite setting;
and the question is to which extent it does so.

The purpose of this study is to analyze this question from the following perspective. Given a problem (a partial multi-valued function) $f:\In X\mto Y$:
\begin{itemize}
\item
Imagine that a Turing machine upon input of $x\in X$ receives a second auxiliary input $r\in R$
and is supposed to produce a result $y\in f(x)$ with the help of this additional ``random advice'' $r$.
\item
We will require that such a computation is successful for every fixed $x$ 
with a certain probability, i.e., the set $S_x$ of successful advices $r$ for input $x$ has to have a certain measure.
\item
Additional conditions can be imposed on how the machine has to recognize the possible failure of an advice
during the course of the computation.
\end{itemize}

Hence, this scenario can be seen as a way to formalize randomized algorithms over infinite objects $x,y$,
where the computation is performed using some additional ``random'' input $r$.
Now we can distinguish several ways in which this scenario can be refined:

\begin{enumerate}
\item {\bf Probability space.} The probability space $R$ can be chosen in different ways:
\begin{enumerate}
\item $R=2^\IN$ reflects the situation where the computation depends on a sequence of ``coin tosses'' (i.e., a sequence of zeros and ones),
\item $R=\IN$ reflects the situation where the computation depends on a randomly chosen natural number,
\item $R=\IN\times2^\IN$ reflects the situation where the computation depends on a randomly chosen natural number and a randomly chosen sequence of zeros and ones 
         (as we will see, one can more or less equivalently work with $R=\IR$ and choose a real number $x\in\IR$),
\item $R=\IN^\IN$ reflects the situation where the computation depends on a randomly chosen sequence of natural numbers.
\end{enumerate}
In each case $R$ is equipped with some natural canonical (probability) measure $\mu$. We also allow measures which are not probability measures.
\item {\bf Success probability.} Different success probabilities can be imposed:
\begin{enumerate}
\item $\mu(S_x)>0$ for all admissible inputs $x$ reflects positive success probability, which is the weakest meaningful requirement in this regard,
\item $\mu(S_x)\in I$ reflects more generally a success probability in some fixed interval $I\In\IR$ for all admissible inputs $x$.
\end{enumerate}
\item {\bf Failure recognition.} Finally, we can impose different conditions on how the machine has to recognize the failure of an advice $r$:
\begin{enumerate}
\item Las Vegas algorithms\footnote{Our understanding of Las Vegas algorithms for infinite computations is very close to Babai's original understanding of this concept, see \cite{Bab79}.}
require that the machine always produces a correct result and otherwise recognizes at some
        finite stage that the advice is unsuccessful and stops the computation in this case.
\item Higher order probabilistic algorithms are defined by weaker failure recognition
        mechanisms of the machine.
\end{enumerate} 
\end{enumerate}

The most important and most natural scenario for us is the one with a sequence of coin tosses $R=2^\IN$, 
with positive success probability in $I=(0,1]$ and a Las Vegas failure recognition mechanism. 
In this setting we will simply speak of {\em Las Vegas computability}.

We briefly summarize some major results that we are going to provide.
After the introduction of some preliminaries in Section~\ref{sec:preliminaries} we formally introduce the
concept of Las Vegas computability in Section~\ref{sec:Las-Vegas} and we characterize it with the help
of a probabilistic choice operation $\P_I\C_X$.
Intuitively, the problem $\P_I\C_X$ is the problem of finding
a point in a given closed set $A\In X$ of measure $\mu(A)\in I$ (we assume that $X$ is a topological space
equipped with a suitable measure $\mu$ and $I\In\IR$ is an interval). 
In case of $I=(0,\infty]$ we briefly write $\PC_X$ instead of $\P_I\C_X$ and in this form this problem
was already introduced and studied under the name {\em positive choice} by the first author and Arno Pauly in \cite{BP10}.
Using the concept of Weihrauch reducibility $\leqW$ we show that 
\[\mbox{$f$ is Las Vegas computable $\iff f\leqW\PC_{2^\IN}$.}\]
We assume that Cantor space $2^\IN$ is equipped with the usual uniform measure.
Intuitively, the above characterization means that a function $f$ is Las Vegas computable if and only if it can be computed using the
resource of probabilistic choice $\PC_{2^\IN}$ exactly once during the course of the computation.
We also prove that Weihrauch reducibility $\leqW$ cannot be replaced by strong Weihrauch reducibility $\leqSW$ here (the strong reducibility
is different in that the only information that is available after usage of the oracle is the answer of the oracle; in particular the original
input is not available afterwards). 

In Section~\ref{sec:products} we prove a general Independent Choice Theorem~\ref{thm:products} with the help of the Theorem of Fubini
(that generalizes a corresponding result of the first author, de Brecht and Pauly \cite{BBP12} on non-deterministic computations).
This theorem implies, in particular, that Las Vegas computable functions are closed under composition, i.e.,
\[\mbox{$f$ and $g$ Las Vegas computable $\TO g\circ f$ Las Vegas computable.}\]
Closure under composition is in a certain sense a minimal requirement for a reasonable class of functions
from a ``practical programming'' perspective.

The Independent Choice Theorem~\ref{thm:products} also shows that a similar result cannot just be obtained for randomized
computations with positive probabilities,
but more generally whenever the intervals $I$ of probabilities are closed under multiplication. In Section~\ref{sec:intervals}
we show that over the probability space $R=2^\IN$ we can only get three classes of functions in this way:
the computable functions, the Las Vegas computable functions and the non-deterministically computable functions.\footnote{A multi-valued function
is {\em non-deterministically computable} if it can be computed using an infinite sequence of coin tosses with failure recognition, but without any
further restrictions on the success probability; however, we still require that there has to be at least one successful guess.}

In Section~\ref{sec:PCN} we study the setting of the probability space $R=\IN\times2^\IN$ and we call the functions
below $\PC_{\IN\times2^\IN}$ {\em Las Vegas computable with finitely many mind changes}.
We prove that these functions are exactly those functions that one can obtain if one composes a Las Vegas computable
function $f$ with a function $g$ that is computable with finitely many mind changes in either order.
In particular, the class of functions that are Las Vegas computable with finitely many mind changes is closed under composition as well. 
This result is interesting, since functions that are computable with finitely many mind changes are of independent interest
and have been used for instance in learning theory (see for instance \cite{dBY10}).

In Section~\ref{sec:probability-space} we use the Smith-Volterra Cantor set construction in order to show
that the probability space $R=2^\IN$ can almost (up to some arbitrarily small measure) be replaced by the unit interval $R=[0,1]$, equipped with the Lebesgue measure.
In fact, the corresponding classes are even exactly equivalent if arbitrary positive probabilities are allowed and,
in particular, we obtain
\[\mbox{$f$ is Las Vegas computable $\iff f\leqW\PC_{[0,1]}$.}\]
We also prove that $R=\IN\times2^\IN$ and $R=\IR$ lead exactly to the same classes of probabilistically computable functions,
no matter what kind of intervals are imposed on the probabilities. In particular,
\[\mbox{$f$ is Las Vegas computable with finitely many mind changes $\iff f\leqW\PC_\IR$.}\]

In Section~\ref{sec:WWKL} we collect some definitions and observations regarding Weak Weak K\H{o}nig's Lemma $\WWKL$,
in particular we have $\PC_{2^\IN}\equivW\WWKL$ and thus
\[\mbox{$f$ is Las Vegas computable $\iff f\leqW\WWKL$.}\]
In Section~\ref{sec:jumps} we briefly discuss jumps $\WWKL'$ and discrete jumps $\WWKL^\Delta$ of $\WWKL$
and we show that
\[\mbox{$f$ is Las Vegas computable with finitely many mind changes $\iff f\leqW\WWKL^\Delta.$}\]
The functions $f\leqW\WWKL'$ form an even larger class and they can be seen as probabilistically computable with weaker
failure recognition mechanisms. For all these functions failure of the random advice can, in particular, be recognized ``in the limit''.

In Section~\ref{sec:probability} we study the problem $\varepsilon\dash\WWKL$ that was introduced by Dorais et al.\ \cite{DDH+12}
and that is Weihrauch equivalent to $\P_{(\varepsilon,1]}\C_{2^\IN}$, the probabilistic choice problem for closed sets $A\In2^\IN$ with $\mu(A)>\varepsilon$.
We prove that the lower bounds that are imposed on the probability lead to a strict hierarchy of problems, i.e.,
\[\varepsilon\dash\WWKL\leqW\delta\dash\WWKL\iff\varepsilon\geq\delta.\]
A similar result has independently been proved by Dorais et al.\ \cite[Proposition~4.7]{DDH+12}.

The aforementioned result can be interpreted such that probability amplification fails for Las Vegas computable functions.
Intuitively, this is because we are dealing with infinite computations and even if we perform two randomized computations in parallel
we need to start producing some definite output possibly before we might know that one of the computations fails. 
Using a version of the Lebesgue Density Lemma $\LDL$ we prove in Section~\ref{sec:LDL} that 
probability amplification works for Las Vegas computable functions with finitely many mind changes, i.e.,
\[\mbox{$\PC_{\IN\times2^\IN}\equivW\PC_\IR\equivW\P_{(\varepsilon,\infty]}\C_\IR$ for every $\varepsilon>0$.}\]

This also shows that there is a trade-off between the different aspects of probabilistic computations that we consider: the underlying 
probability space, the imposed probability and the failure recognition mechanism. If we want to achieve a certain guaranteed probability,
then this might be feasible for the price of changing the underlying probability space (for instance from $\PC_{2^\IN}$ to $\PC_{\IN\times2^\IN}$)
or for the price of allowing a weaker failure recognition mechanism (for instance from $\PC_{2^\IN}$ to $\PC_{2^\IN}'$).

As a side result we obtain that the Lebesgue Density Lemma $\LDL$ itself is equivalent to choice on natural numbers, i.e., $\LDL\equivW\C_\IN$.
In Section~\ref{sec:probability-amplification} we briefly discuss an algebraic operation $f+g$ that mimics parallel computations as they occur in the usual probability amplification method. 

In Section~\ref{sec:majority} we prove that single-valued functions $f:X\to Y$ on computable metric spaces $X,Y$,
which are below $\frac{1}{2}\dash\WWKL^{(n)}$ (the $n$--fold jump of $\frac{1}{2}\dash\WWKL$) are always computable, i.e.,
for all $n\in\IN$ we have
\[\mbox{$f\leqW\frac{1}{2}\dash\WWKL^{(n)}\TO f$ computable.}\]
The underlying idea of a ``majority-vote'' is the same that has been used for the classical proof of the Theorem of Sacks~\ref{thm:Sacks},
which we discuss in Section~\ref{sec:probabilistic-degrees}.

More generally, we discuss {\em probabilistic functions} $f$ in Section~\ref{sec:probabilistic-degrees}, which have been called functions that
are computable with {\em random advice} in an earlier study of the first author and Arno Pauly~\cite{BP10}.
Intuitively, a probabilistic $f$ is an $f$ that can be computed with random advice irrespectively of any kind of uniformity or
failure recognition method in this regard. In \cite{BP10} it was already proved that $\WKL$ is not probabilistic.
We show that all functions below $\PC_R^{(n)}$ for $R$ among $\IN,2^\IN,\IN\times2^\IN$ and $\IN^\IN$ are probabilistic.
Hence, a proof that some function is not probabilistic shows that it cannot be computed with any of the mentioned resources.
In particular, we obtain
\[\WKL\nleqW\WWKL^{(n)}\]
for all $n\in\IN$. We prove another result that generalizes the Theorem of Sacks~\ref{thm:Sacks} in a certain sense, namely
for suitable single-valued $f:X\to Y$ we have that
\[\mbox{$f$ probabilistic $\TO f$ maps computable inputs $x$ to computable outputs $f(x)$.}\]

In the remaining sections of the paper we present case studies in which we investigate certain computational problems with regards to the question
of whether they admit a Las Vegas algorithm.
In Section~\ref{sec:IVT} we start with the problem of finding a zero $x\in[0,1]$ of a continuous function
$f:[0,1]\to\IR$ that changes its sign, i.e., $f(0)\cdot f(1)<0$. This problem is also known as the Intermediate Value Theorem $\IVT$. 
Using the finite extension method we prove that there is no Las Vegas algorithm for this problem (not even with finitely many mind changes), 
however it is probabilistic
and even admits a uniform probabilistic algorithm with a weaker failure recognition mechanism in the sense that
$\IVT\leqW\WWKL'$. Altogether, we obtain
\[\IVT\nleqW\WWKL\mbox{ and }\IVT\leqW\WWKL'.\]

The second case study concerns the computation of Nash equilibria $\NASH$. This problem was studied by Arno Pauly \cite{Pau10,Pau11}
who proved that it is equivalent to the idempotent closure $\RDIV^*$ of robust division $\RDIV$, i.e., $\NASH\equivW\RDIV^*$.
Intuitively, robust divisions can be used to solve linear equations (and inequalities) in a compact domain and by using this
operation repeatedly, one can determine Nash equilibria. 
In Section~\ref{sec:RDIV} we first prove that there is a Las Vegas algorithm for robust division
and since $\WWKL$ is idempotent (which means $\WWKL^*\leqW\WWKL$) we can conclude in Section~\ref{sec:NASH} that 
\[\NASH\leqW\WWKL.\]
This implies that there is a Las Vegas algorithm to compute Nash equilibria.
As a side result we prove that robust division (and hence Nash equilibria $\NASH$) cannot be computed with any fixed
positive success probability and that Nash equilibria cannot be reduced to the Intermediate Value Theorem, i.e.,
\[\NASH\nleqW\IVT.\]
Hence, the two problems $\IVT$ and $\NASH$ are incomparable by the above results.
The exact relation between most of the problems studied in this paper is shown in the diagram in Figure~\ref{fig:diagram}
in the concluding Section~\ref{sec:conclusions}.
 
Altogether, our case study proves that there are problems of practical interest that are Las Vegas computable,
but not computable (such as Nash equilibria) and that there are other problems of practical interest
which are probabilistically and non-deterministically computable, but not Las Vegas computable, such 
as the problem of finding zeros of continuous functions with sign changes. 
The latter problem also illustrates that there are problems of practical interest, which admit
randomized algorithms with weaker failure recognition methods (here the failure is recognizable only in the limit),
but not Las Vegas algorithms. This proves that the distinctions that we have made are meaningful and
can be illustrated with problems of practical importance.

\section{Preliminaries}
\label{sec:preliminaries}

In this section we give a brief introduction into the Weihrauch lattice and we provide some basic notions from
probability theory. 

\subsection*{Pairing Functions}

We are going to use some standard pairing functions in the following that we briefly summarize.
As usual, we denote by 
$\langle n,k\rangle :=\frac{1}{2}(n+k+1)(n+k)+k$
the Cantor pair of two natural numbers $n,k\in\IN$ and by 
$\langle p,q\rangle(n):=p(k)$ if $n=2k$ and $\langle p,q\rangle(n)=q(k)$, if $n=2k+1$,
the pairing of two sequences $p,q\in\IN^\IN$. By 
$\langle k,p\rangle(n):=kp$ 
we denote the obvious pairing of a number $k\in\IN$ with a sequence $p\in\IN^\IN$.
We also define a pairing function
$\langle p_0,p_1\rangle:=\langle\langle p_0(0),p_1(0)\rangle,\langle \overline{p_0},\overline{p_1}\rangle\rangle$,
for $p_0,p_1\in\IN\times2^\IN$, where $\overline{p_i}(n)=p_i(n+1)$.
Finally, we use the pairing function
$\langle p_0,p_1,p_2,...\rangle\langle i,j\rangle:=p_i(j)$
for sequences $(p_i)_i$ in $\IN^\IN$.

\subsection*{The Weihrauch Lattice}

The original definition of Weihrauch reducibility is due to Klaus Weihrauch
and has been studied for many years (see \cite{Ste89,Wei92a,Wei92c,Her96,Bra99,Bra05}).
More recently it has been noticed that a certain variant of this reducibility yields
a lattice that is very suitable for the classification of the computational content of mathematical theorems
(see  \cite{GM09,Pau10,Pau10a,BG11,BG11a,BBP12,BGM12}). The basic reference for all notions
from computable analysis is Weihrauch's textbook \cite{Wei00}.
The Weihrauch lattice is a lattice of multi-valued functions on represented
spaces. 

A {\em representation} $\delta$ of a set $X$ is just a surjective partial
map $\delta:\In\IN^\IN\to X$. In this situation we call $(X,\delta)$ a {\em represented space}.
By $\IN:=\{0,1,2,...\}$ we denote the set of {\em natural numbers}.
In general we use the symbol ``$\In$'' in order to indicate that a function is potentially partial.
We work with partial multi-valued functions $f:\In X\mto Y$ where $f(x)\In Y$ denotes the set of possible
values upon input $x\in\dom(f)$. If $f$ is single-valued, then for the sake of simplicity we identify $f(x)$ with the single element $y$ in it.
We denote the {\em composition} of
two (multi-valued) functions $f:\In X\mto Y$ and $g:\In Y\mto Z$ either by $g\circ f$ or by $gf$.
It is defined by 
\[g\circ f(x):=\{z\in Z:(\exists y\in Y)(z\in g(y)\mbox{ and }y\in f(x))\},\]
where $\dom(g\circ f):=\{x\in X:f(x)\In\dom(g)\}$.
Using represented spaces we can define the concept of a realizer. 

\begin{definition}[Realizer]
Let $f : \In (X, \delta_X) \mto (Y, \delta_Y)$ be a multi-valued function on represented spaces.
A function $F:\In\IN^\IN\to\IN^\IN$ is called a {\em realizer} of $f$, in symbols $F\vdash f$, if
$\delta_YF(p)\in f\delta_X(p)$ for all $p\in\dom(f\delta_X)$.
\end{definition}

Realizers allow us to transfer the notions of computability
and continuity and other notions available for Baire space to any represented space;
a function between represented spaces will be called {\em computable}, if it has a computable realizer, etc.
Now we can define Weihrauch reducibility.

\begin{definition}[Weihrauch reducibility]
Let $f,g$ be multi-valued functions on represented spaces. 
Then $f$ is said to be {\em Weihrauch reducible} to $g$, in symbols $f\leqW g$, if there are computable
functions $K,H:\In\IN^\IN\to\IN^\IN$ such that $H\langle \id, GK \rangle \vdash f$ for all $G \vdash g$.
Moreover, $f$ is said to be {\em strongly Weihrauch reducible} to $g$, in symbols $f\leqSW g$,
if an analogous condition holds, but with the property $HGK\vdash f$ in place of  $H\langle \id, GK \rangle \vdash f$.
\end{definition}

We can always tacitly assume that $K, H$ are defined on the minimal necessary domains that consist of those names that are actually involved. More precisely, 
let $f:\In X\mto Y$ and $g:\In W\mto Z$ be multi-valued functions on represented spaces $(X,\delta_X)$, $(Y,\delta_Y)$, $(W,\delta_W)$ and $(Z,\delta_Z)$,
and let $f\leqW g$ hold according to the above definition;
then we say that $H$ and $K$ have {\em minimal domains} if 
\begin{enumerate}
\item $\dom(K)=\{p\in\IN^\IN:\delta_X(p)\in\dom(f)\}$ and 
\item $\dom(H)=\{\langle p,q\rangle:p\in\dom(K)$ and $\delta_Z(q)\in g(\delta_W(K(p)))\}$. 
\end{enumerate}
We use an analogous terminology in case of $f\leqSW g$.

The difference between ordinary and strong Weihrauch reducibility is that the ``output modificator'' $H$ has
direct access to the original input in case of ordinary Weihrauch reducibility, but not in case of strong Weihrauch reducibility. 
There are algebraic and other reasons to consider ordinary Weihrauch reducibility as the more natural variant. 
For instance, one can characterize the reduction $f\leqW g$ as follows: $f\leqW g$ holds if and only if a Turing machine can compute $f$ in such a way that
it evaluates the ``oracle'' $g$ exactly on one (usually infinite) input during the course of its computation (see \cite[Theorem~7.2]{RTW11}).
We will use the strong variant $\leqSW$ of Weihrauch reducibility mostly for technical purposes, for instance
it is better suited to study jumps (see below).

We note that the relations $\leqW$, $\leqSW$ and $\vdash$ implicitly refer to the underlying representations, which
we will only mention explicitly if necessary. It is known that these relations only depend on the underlying equivalence
classes of representations, but not on the specific representatives (see Lemma~2.11 in \cite{BG11}).
The relations $\leqW$ and $\leqSW$ are reflexive and transitive, thus they induce corresponding partial orders on the sets of 
their equivalence classes (which we refer to as {\em Weihrauch degrees} or {\em strong Weihrauch degrees}, respectively).
These partial orders will be denoted by $\leqW$ and $\leqSW$ as well. The induced lattice and semi-lattice, respectively, are distributive
(for~details~see~\cite{Pau10a}~and~\cite{BG11}).
We use $\equivW$ and $\equivSW$ to denote the respective equivalences regarding $\leqW$ and $\leqSW$, 
by $\lW$ and $\lSW$ we denote strict reducibility and by $\nW,\nSW$ we denote incomparability in the respective sense.

\subsection*{The Algebraic Structure}

The partially ordered structures induced by the two variants of Weihrauch reducibility are equipped with a number of useful algebraic operations that we summarize in the next definition.
We use $X\times Y$ to denote the ordinary set-theoretic {\em product}, $X\sqcup Y:=(\{0\}\times X)\cup(\{1\}\times Y)$ in order
to denote {\em disjoint sums} or {\em coproducts}, by $\bigsqcup_{i=0}^\infty X_i:=\bigcup_{i=0}^\infty(\{i\}\times X_i)$ we denote the 
{\em infinite coproduct}. By $X^i$ we denote the $i$--fold product of a set $X$ with itself, where $X^0=\{()\}$ is some canonical singleton (i.e., we identify $()$ with the empty word $\varepsilon$).
By $X^*:=\bigsqcup_{i=0}^\infty X^i$ we denote the set of all {\em finite sequences over $X$}
and by $X^\IN$ the set of all {\em infinite sequences over $X$}. 
All these constructions have parallel canonical constructions on representations and the corresponding representations
are denoted by $[\delta_X,\delta_Y]$ for the product of $(X,\delta_X)$ and $(Y,\delta_Y)$, 
by $\delta_X^n$ for the $n$--fold product of $(X,\delta_X)$ with itself, where $n\in\IN$ and $\delta_X^0$ is a representation of the one-point set $\{()\}=\{\varepsilon\}$.
By $\delta_X\sqcup\delta_Y$ we denote the representation of the coproduct, by $\delta^*_X$ the representation of $X^*$ and by $\delta_X^\IN$ the representation
of $X^\IN$. For instance, $(\delta_X\sqcup\delta_Y)$ can be defined by $(\delta_X\sqcup\delta_Y)\langle n,p\rangle:=(0,\delta_X(p))$
if $n=0$ and $(\delta_X\sqcup\delta_Y)\langle n,p\rangle:=(1,\delta_Y(p))$, otherwise.
Likewise, $\delta^*_X\langle n,p\rangle:=(n,\delta_X^n(p))$.
See \cite{Wei00} or \cite{BG11,Pau10a,BBP12} for details of the definitions of the other representations. We will always assume that these canonical representations
are used, if not mentioned otherwise. 

\begin{definition}[Algebraic operations]
\label{def:algebraic-operations}
Let $f:\In X\mto Y$ and $g:\In Z\mto W$ be multi-valued functions. Then we define
the following operations:
\begin{enumerate}
\itemsep 0.2cm
\item $f\times g:\In X\times Z\mto Y\times W, (f\times g)(x,z):=f(x)\times g(z)$ \hfill (product)
\item $f\sqcap g:\In X\times Z\mto Y\sqcup W, (f\sqcap g)(x,z):=f(x)\sqcup g(z)$ \hfill (sum)
\item $f\sqcup g:\In X\sqcup Z\mto Y\sqcup W$, with $(f\sqcup g)(0,x):=\{0\}\times f(x)$ and\\
        $(f\sqcup g)(1,z):=\{1\}\times g(z)$ \hfill (coproduct)
\item $f^*:\In X^*\mto Y^*,f^*(i,x):=\{i\}\times f^i(x)$ \hfill (finite parallelization)
\item $\widehat{f}:\In X^\IN\mto Y^\IN,\widehat{f}(x_n):=\bigtimes\limits_{i\in\IN} f(x_i)$ \hfill (parallelization)
\end{enumerate}
\end{definition}

In this definition and in general we denote by $f^i:\In X^i\mto Y^i$ the $i$--th fold product
of the multi-valued map $f$ with itself ($f^0$ is the constant function on the canonical singleton).
It is known that $f\sqcap g$ is the {\em infimum} of $f$ and $g$ with respect to strong as well as
ordinary Weihrauch reducibility (see \cite{BG11}, where this operation was denoted by $f\oplus g$).
Correspondingly, $f\sqcup g$ is known to be the {\em supremum} of $f$ and $g$ with respect to ordinary Weihrauch reducibility $\leqW$ (see \cite{Pau10a}).
This turns the partially ordered structure of Weihrauch degrees (induced by $\leqW$) into a lattice,
which we call the {\em Weihrauch lattice}.
The two operations $f\mapsto\widehat{f}$ and $f\mapsto f^*$ are known to be closure operators
in this lattice (see \cite{BG11,Pau10a}).

There is some useful terminology related to these algebraic operations. 
We say that $f$ is a {\em a cylinder} if $f\equivSW\id\times f$ where $\id:\Baire\to\Baire$ always
denotes the identity on Baire space, if not mentioned otherwise. 
Cylinders $f$ have the property that $g\leqW f$ is equivalent to $g\leqSW f$ (see \cite{BG11}).
We say that $f$ is {\em idempotent} if $f\equivW f\times f$ and {\em strongly idempotent}, if $f\equivSW f\times f$. 
We say that a multi-valued function on represented spaces is {\em pointed}, if it has a computable
point in its domain. For pointed $f$ and $g$ we obtain $f\sqcup g\leqSW f\times g$. 
The properties of pointedness and idempotency are both preserved under
equivalence and hence they can be considered as properties of the respective degrees.
For a pointed $f$ the finite prallelization $f^*$ can also be considered as {\em idempotent closure} since one
can easily show that idempotency is equivalent to $f\equivW f^*$ in this case.
We call $f$ {\em parallelizable} if $f\equivW\widehat{f}$ and it is easy to see that $\widehat{f}$ is always idempotent.
Analogously, we call $f$ {\em strongly parallelizable} if $f\equivSW\widehat{f}$.

More generally, we define {\em countable coproducts} $\bigsqcup_{i\in\IN} f_i:\In\bigsqcup_{i\in\IN} X_i\mto\bigsqcup_{i\in\IN} Y_i$ 
for a sequence $(f_i)$ of multi-valued functions $f_i:\In X_i\mto Y_i$ on represented spaces and then it denotes
the operation given by $(\bigsqcup_{i\in\IN} f_i)(i,u):=\{i\}\times f_i(u)$. 
Using this notation we obtain $f^*=\bigsqcup_{i\in\IN} f^i$. 
In \cite{BBP12} a multi-valued function on represented spaces has been called 
{\em join-irreducible} if $f\equivW\bigsqcup_{n\in\IN}f_n$ implies that there
is some $n$ such that $f\equivW f_n$. Analogously, we can define {\em strong join-irreducibility}
using strong Weihrauch reducibility in both instances. 
We can also define a {\em countable sum} $\bigsqcap_{i\in\IN} f_i:\In \bigtimes_{i\in\IN} X_i\mto\bigsqcup_{i\in\IN} Y_i$,
defined by $\left(\bigsqcap_{i\in\IN} f_i\right)(x_i)_i:=\bigsqcup_{i\in\IN} f_i(x_i)$.

One should note however, that $\bigsqcap$ and $\bigsqcup$ do not provide infima and
suprema of sequences. By a result of Higuchi and Pauly \cite[Proposition~3.15]{HP13} the Weihrauch lattice has no non-trivial suprema (i.e., a sequence
$(s_n)$ has a supremum $s$ if and only if $s$ is already the supremum of a finite prefix of the sequence $(s_n)_n$)
and likewise by \cite[Corollary~3.18]{HP13} the pointed Weihrauch degrees do not have non-trivial infima. 
In particular, the Weihrauch lattice is not complete.\footnote{We note, however, that for the continuous variant of Weihrauch reducibility
the objects $\bigsqcup_{n\in\IN}f_n$ and $\bigsqcap_{n\in\IN}f_n$ are suprema and infima of the sequence $(f_n)_n$, respectively, and the corresponding
continuous version of the Weihrauch lattice is actually countably complete (see \cite{HP13}).}

\subsection*{Compositional Products}

While the Weihrauch lattice is not completed, some suprema and some infima exist in general.
The following result was proved by the first author and Pauly in \cite{BP14a} and ensures the existence of an important maximum.

\begin{proposition}[Compositional products]
Let $f,g$ be multi-valued functions on represented spaces. Then the following Weihrauch degree exists:
\begin{itemize}
\item[] $f *g:=\max\{f_0\circ g_0:f_0\leqW f\mbox{ and }g_0\leqW g\}$ \hfill (compositional product)
\end{itemize}
\end{proposition}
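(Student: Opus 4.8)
The plan is to show that the set $\{f_0 \circ g_0 : f_0 \leqW f \text{ and } g_0 \leqW g\}$ has a maximum in the Weihrauch lattice by constructing an explicit representative. First I would fix representatives $f : \In X \mto Y$ and $g : \In Z \mto W$ and define the compositional product concretely as a multi-valued function on Baire space: roughly, $f * g$ should be the problem that, given a name $\langle p, q \rangle$ where $q$ codes an input to $g$ and $p$ codes (via a fixed universal-style coding) how to transform any $g$-answer into an $f$-input, outputs a name for an $f$-answer composed with a $g$-answer. More precisely, one takes $f * g$ to be defined on those $\langle q, p \rangle$ such that $\delta_Z(q) \in \dom(g)$ and for every $w \in g(\delta_Z(q))$ the computable function coded by $p$ produces (on a name of $w$) a name in $\dom(f \delta_X)$; the output on such input is a name of some $f$-answer to that $f$-input. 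One must check this is well-defined as a function on represented spaces (choosing the represented space of the product appropriately, e.g.\ realizers as first-class objects, or by working directly in Baire space).

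The two things to verify are: (i) $f * g$ as defined is an upper bound, i.e.\ $f_0 \circ g_0 \leqW f * g$ whenever $f_0 \leqW f$ and $g_0 \leqW g$; and (ii) $f * g \leqW f_0 \circ g_0$ for a suitable choice $f_0 \leqW f$, $g_0 \leqW g$ — so that the maximum is attained, not merely a supremum. For (i): given $f_0 \leqW f$ via $K_f, H_f$ and $g_0 \leqW g$ via $K_g, H_g$, and given an input $u$ to $f_0 \circ g_0$, we feed $g$ the name $K_g(u)$ and we feed it the code $p$ of the computable function $v \mapsto K_f\langle u, H_g\langle K_g(u), v\rangle\rangle$; then from a $g$-answer $v$ we recover a $g_0$-answer $H_g\langle K_g(u), v\rangle$, feed it forward to get an $f$-answer, and apply $H_f$ with access to the original $u$ to obtain an $f_0$-answer — this is exactly a Weihrauch reduction to $f * g$. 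For (ii): the map $f_0 := f$, $g_0 := g$ essentially already works after noting $f * g$ "is" $f \circ g$ restricted along a computable recoding; one has to be slightly careful because $f \circ g$ in the strict sense requires $\range(g) \subseteq \dom(f)$, which is why one replaces $f, g$ by mild restrictions / precompositions with computable coding maps, all of which are $\leqW f$ and $\leqW g$ respectively. A clean way is to cite the characterisation of $\leqW$ via single oracle use mentioned earlier in the excerpt: $h \leqW f * g$ iff $h$ is computable using one call to $g$ followed (with access to the original input and the $g$-answer) by one call to $f$; this makes both inequalities almost immediate once the concrete definition is fixed.

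The main obstacle I anticipate is purely bookkeeping: making the concrete definition of $f * g$ precise enough that its domain conditions and the "for all $w \in g(\dots)$" quantifier over the (possibly multi-valued) intermediate answer are handled correctly, and ensuring the represented space on which $f * g$ lives is chosen so that the coding function $p$ and its application are computable operations — this is where one typically invokes a utm-style theorem or the fact that partial continuous functions on Baire space have a standard representation with computable evaluation. Once that infrastructure is in place, both directions of the maximality argument are short diagram chases in the style of the composition of Weihrauch reductions. I would therefore structure the proof as: (1) fix representatives and define $f * g$ explicitly on an appropriate represented space; (2) verify it is an upper bound by composing given reductions; (3) verify the bound is attained by exhibiting $f_0 \leqW f$, $g_0 \leqW g$ with $f * g \leqW f_0 \circ g_0$, using computable recodings to repair the domain-matching issue; (4) conclude the maximum exists and note (as a remark) independence from the choice of representatives, referring back to Lemma~2.11 of \cite{BG11}.
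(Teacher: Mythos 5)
The paper does not actually prove this proposition; it cites \cite{BP14a} (Brattka--Pauly, ``On the algebraic structure of Weihrauch degrees'') for it. Your reconstruction is essentially the proof given there: fix a concrete representative of $f*g$ on Baire space whose input $\langle p,q\rangle$ bundles an input name $q$ for $g$ with a name $p$ of a partial \emph{continuous} (not merely computable --- $p$ must range over all of $\IN^\IN$ so that one can supply it non-uniformly) function turning any realizer-output for $g$ into an $f$-input; then show this is an upper bound by a direct reduction, and show it is attained by choosing $g_0$ to be the variant of $g$ that also passes along its own input and $f_0$ to be $f$ precomposed with the evaluation of $p$. So the overall strategy is right.

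Two points where your sketch glosses over something that genuinely matters. First, the domain condition must be ``$\Phi_p(r)\in\dom(f\delta_X)$ for \emph{every} name $r$ of \emph{every} $w\in g(\delta_Z(q))$,'' not merely for some name of each $w$, since in a Weihrauch reduction the oracle realizer $G$ is adversarial and may return any realizing name. Second, and more substantively: the output of $f*g$ must include (a name of) the intermediate $g$-answer, not just the final $f$-answer. In your upper-bound argument, $H_f$ must be applied to a pair $\langle u', s\rangle$ where $u'=H_g\langle K_g(u),v\rangle$ is a name of the $f_0$-input; this $u'$ depends on the $g$-answer $v$ chosen by the oracle, and the correctness guarantee for $H_f$ is only for the specific name that was fed to $K_f$. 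If $f*g$ returns only $s$, you cannot reconstruct $u'$ and the reduction fails. Your phrase ``a name for an $f$-answer composed with a $g$-answer'' is ambiguous about whether the pair is returned; make it explicit. With those two repairs, steps (1)--(4) of your plan go through and match the argument of \cite{BP14a}.
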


Here $f*g$ is defined over all $f_0\leqW f$ and $g_0\leqW g$ which can actually be composed (i.e., the target space of $g_0$ and the source space of $f_0$ have to coincide).
In this way $f*g$ characterizes the most complicated Weihrauch degree that can be obtained by first performing a computation with the help of $g$ and then another one with the help of $f$.
Since $f*g$ is a maximum in the Weihrauch lattice, we can consider $f*g$ as some fixed representative of the corresponding degree.
It is easy to see that $f\times g\leqW f*g$ holds. 
We can also define the {\em strong compositional product} by 
\[f\stars g:=\sup\{f_0\circ g_0:f_0\leqSW f\mbox{ and }g_0\leqSW g\}\]
(but we neither claim that it exists in general nor that it is a maximum). 
The compositional products were originally introduced in \cite{BGM12}.

\subsection*{Jumps}

In \cite{BGM12} the first two authors and Marcone introduced {\em jumps} or {\em derivatives} $f'$ of multi-valued functions $f$ on represented spaces.
We recall that the {\em jump} $f':\In (X,\delta_X')\mto (Y,\delta_Y)$ of a multi-valued function $f:\In (X,\delta_X)\mto (Y,\delta_Y)$ on represented
spaces is obtained by replacing the input representation $\delta_X$ by its jump $\delta'_X:=\delta_X\circ\lim$. This leads to $f'\equivSW f\stars\lim$
(see \cite[Corollary~5.16]{BGM12}). By $f^{(n)}$ we denote the $n$--fold jump. 
Here
\[\lim:\In\IN^\IN\to\IN^\IN,\langle p_0,p_1,p_2,...\rangle\mapsto\lim_{n\to\infty}p_n\] 
is the limit operation on Baire space $\IN^\IN$ with respect to the product topology on $\IN^\IN$. 
Hence, a $\delta_X'$--name $p$ of a point $x\in X$ is a sequence that converges to a $\delta_X$--name of $x$.
This means that a $\delta_X'$--name typically contains significantly less accessible information on $x$ than a $\delta_X$--name. 
Hence, $f'$ is typically harder to compute than $f$, since less input information is available for $f'$.

The jump operation $f\mapsto f'$ plays a similar role in the Weihrauch lattice as the Turing jump operation
does in the Turing semi-lattice. In a certain sense $f'$ is a version of $f$ on the ``next higher'' level of complexity
(which can be made precise using the Borel hierarchy \cite{BGM12}).
It was proved in \cite{BGM12} that the jump operation $f\mapsto f'$ is monotone with respect to strong Weihrauch 
reducibility $\leqSW$, but not with respect to ordinary Weihrauch reducibility $\leqW$. This is one reason
why it is beneficial to extend the study of the Weihrauch lattice to strong Weihrauch reducibility.

\subsection*{Closed Choice}

A particularly useful multi-valued function in the Weihrauch lattice is closed choice (see \cite{GM09,BG11,BG11a,BBP12})
and it is known that many notions of computability can be calibrated using the right version of choice. 
We recall that a subset $U\In X$ of a represented space $X$ is open with respect to the final topology of its representation,\footnote{If $\delta_X$ is the representation
of $X$, then $\{U\In X:\delta_X^{-1}(U)$ open in $\dom(\delta_X)\}$ is called the {\em final topology of $\delta_X$}.}
if and only if its characteristic function
\[\chi_U:X\to\IS,x\mapsto\left\{\begin{array}{ll}
  1 & \mbox{if $x\in U$}\\
  0 & \mbox{otherwise}
\end{array}\right.\]
is continuous, where $\IS=\{0,1\}$ is Sierpi{\'n}ski space (equipped with the topology $\{\emptyset,\{1\},\IS\}$).
Analogously, $U$ is {\em c.e.\ open} if $\chi_U$ is computable, where $\IS$ is equipped with its Standard representation $\delta_\IS$
defined by $\delta_\IS(p):=0$ if $p(n)=0$ for all $n$ and $\delta_\IS(p):=1$, otherwise.
Closed and {\em co-c.e.\ closed} sets $A\In X$ are sets whose complement $U:=X\setminus U$ is open and c.e.\ open, respectively (see \cite{BBP12,BP03}
for more details).
For subsets $A\In\IN$ of natural numbers this leads to the usual notion of c.e.\ and co-c.e.\ sets.
The co-c.e.\ closed subsets $A\In2^\IN$ of Cantor space are exactly the usual $\pO{1}$--classes.

In general, if $(X,\delta_X)$ is a represented space then we always assume that this space is endowed with the final topology
of its representation.
We denote by $\AA_-(X)$ the set of closed subsets of $X$
represented with respect to negative information. More precisely, we can define a representation $\psi_-$ of $\AA_-(X)$ by
\[\psi_-(p)=A:\iff[\delta_X\to\delta_\IS](p)=\chi_{X\setminus A},\]
where $[\delta_X\to\delta_\IS]$ is the canonical function space representation in the category of represented spaces (see \cite{Wei00}).
This means that a $\psi_-$--name $p$ of a closed set $A\In X$ is a name for the characteristic function $\chi_{X\setminus A}$ of its complement.

We are mostly interested in closed choice for computable metric spaces $X$, which are separable metric spaces
such that the distance function is computable on the given dense subset.
We assume that computable metric spaces are represented via their Cauchy representations
(see \cite{Wei00} for details). In this special case a computably equivalent definition of $\psi_-$ can be obtained by
\[\psi_-(p):=X\setminus\bigcup_{i=0}^\infty B_{p(i)},\]
where $B_n$ is some standard enumeration of the open balls of $X$ with center in the dense subset and rational radius.
Here a $\psi_-$--name $p$ of a closed set $A\In X$ is a list of sufficiently many open rational balls whose union exhausts exactly the complement of $A$.
We are now prepared to define closed choice.

\begin{definition}[Closed Choice]
Let $X$ be a represented space. The {\em closed choice} problem 
of the space $X$ is defined by
\[\C_X:\In\AA_-(X)\mto X,A\mapsto A\]
with $\dom(\C_X):=\{A\in\AA_-(X):A\not=\emptyset\}$.
\end{definition}

Intuitively, $\C_X$ takes as input a non-empty closed set in negative description (i.e., given by $\psi_-$) 
and it produces an arbitrary point of this set as output.
Hence, $A\mapsto A$ means that the multi-valued map $\C_X$ maps
the input $A\in\AA_-(X)$ to the set $A\In X$ as a set of possible outputs.
We mention some classes of functions that can be characterized
by closed choice. The following results have mostly been proved in \cite{BBP12}:

\begin{proposition}
\label{prop:classes}
Let $f$ be a multi-valued function on represented spaces. Then:
\begin{enumerate}
\item $f\leqW\C_1\iff f$ is computable,
\item $f\leqW\C_\IN\iff f$ is computable with finitely many mind changes,
\item $f\leqW\C_{2^\IN}\iff f$ is non-deterministically computable,
\item $f\leqW\C_{\IN^\IN}\iff f$ is effectively Borel measurable.
\end{enumerate}
In the latter case (4) we have to assume that $f:X\to Y$ is single-valued and defined
on computable complete metric spaces $X,Y$.
\end{proposition}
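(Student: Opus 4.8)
The plan is to establish the four equivalences one at a time, in each case obtaining the left-to-right implication by simulating a realizer of $\C_X$ inside the corresponding computation model, and the right-to-left implication by packaging the single ``choice'' available to that model as one instance of $\C_X$. Parts~(2)--(4) are essentially contained in \cite{BBP12}; below I indicate the arguments I would use. For~(1): the space $\{()\}$ has exactly one nonempty closed subset, so $\dom(\C_1)$ is a singleton whose value is forced; hence $\C_1$ has a computable realizer and is pointed. If $f$ is computable with computable realizer $F$, set $K$ to be the constant map onto a fixed name of $()$ and $H\langle p,q\rangle:=F(p)$; then $H\langle\id,GK\rangle=F\vdash f$ for every $G\vdash\C_1$, so $f\leqW\C_1$. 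Conversely, if $f\leqW\C_1$ via computable $H,K$, then choosing a computable $G\vdash\C_1$ makes $H\langle\id,GK\rangle$ a computable realizer of $f$.

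For~(2): if $f\leqW\C_\IN$ via computable $H,K$, then on a name $p$ of $x\in\dom(f)$ the value $K(p)$ is a $\psi_-$-name of a nonempty co-c.e.\ set $A\In\IN$, i.e.\ an enumeration of $\IN\sm A$; run the procedure that keeps as current guess the least number not yet enumerated into $\IN\sm A$, always outputs $H\langle p,\overline n\rangle$ for the constant name $\overline n$ of the current guess $n$, and revises $n$ whenever $n$ leaves $A$. Since $A\neq\emptyset$ only finitely many revisions occur, the guess stabilizes at some $n_\infty\in A$, and as $\overline{n_\infty}$ is a correct $\C_\IN$-answer to $K(p)$ the output stabilizes at a name of a value in $f(x)$; hence $f$ is computable with finitely many mind changes. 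Conversely, regard a finite-mind-change computation of $f$ on input $p$ as a family of ``attempts'' indexed by $\IN$, where attempt $k$ is abandoned exactly when the machine performs its $(k{+}1)$st revision; the set $A_p$ of never-abandoned attempts is a nonempty co-c.e.\ subset of $\IN$, and, capping the simulated attempt index at the number $k$ returned by $\C_\IN$, one computes from $p$ and $\overline k$ (for $k\in A_p$) a name of the final, correct output, so $f\leqW\C_\IN$.

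For~(3): a reduction $f\leqW\C_{2^\IN}$ via computable $H,K$ yields, from a name $p$ of $x$, a $\psi_-$-name of a nonempty co-c.e.\ closed $A_p\In2^\IN$ (a $\pO{1}$-class uniformly in $p$) with $H\langle p,q\rangle$ a name of a value in $f(x)$ for all $q\in A_p$; reading $q$ as the advice and ``$q\in A_p$'' as the finite-time recognizable success condition, this is precisely a non-deterministic computation of $f$ in the sense of \cite{BBP12}. Conversely, such a non-deterministic computation provides, uniformly in a name of the input, a nonempty co-c.e.\ closed set of successful advices in $2^\IN$; applying $\C_{2^\IN}$ to it returns a successful advice, with which the machine computes a correct output.

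For~(4), assume $f:X\to Y$ is single-valued on computable Polish spaces. First one checks the auxiliary fact that $\C_Z\leqW\C_{\IN^\IN}$ for every computable Polish space $Z$: the set of fast Cauchy names over the dense set is a $\pO{1}$ subset of $\IN^\IN$, and for co-c.e.\ closed $A\In Z$ the preimage $\delta_Z^{-1}(A)$ meets it in a nonempty $\pO{1}$ set, to which $\C_{\IN^\IN}$ applies. Now, if $f$ is effectively Borel, then $\graph(f)$ is effectively $\sI{1}$, so by the $\sI{1}$ normal form there is, uniformly in a name $p$ of $x$, a nonempty co-c.e.\ closed set $B_p\In Y\times\IN^\IN$ whose points $(y,q)$ witness $f(x)=y$; since $f$ is total $B_p$ is nonempty, and since $f$ is single-valued its projection to $Y$ is $\{f(x)\}$, so applying (the $\C_{\IN^\IN}$-realization of) $\C_{Y\times\IN^\IN}$ to $B_p$ and reading off the first coordinate computes $f(x)$, giving $f\leqW\C_{\IN^\IN}$. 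Conversely, unwinding a reduction $f\leqW\C_{\IN^\IN}$ shows that $\graph(f)$ is the projection along a $\IN^\IN$-coordinate of a uniformly co-c.e.\ closed set, hence effectively $\sI{1}$; as $f$ is a single-valued map between computable Polish spaces, an effective Lusin--Souslin argument upgrades this effectively analytic graph to an effectively Borel one, so $f$ is effectively Borel. I expect the effective Lusin--Souslin step to be the main obstacle; it is precisely where the single-valuedness and metrizability hypotheses are used, and it is carried out in \cite{BBP12}.
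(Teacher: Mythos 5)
The paper itself supplies no proof of this proposition: it is stated as a collection of results from the cited literature (``mostly been proved in \cite{BBP12}''), so there is no proof in the source to compare against. Your reconstruction follows the standard arguments, and parts~(1), (3) and (4) are correct in the way you sketch them, with the genuinely hard effective Lusin--Souslin step in~(4) properly flagged and deferred.

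There is, however, a gap in the reverse direction of~(2). You take $A_p$ to be the set of ``never-abandoned attempts,'' i.e.\ the set of $k$ such that the machine performs at most $k$ revisions. This is indeed nonempty and co-c.e., but knowing only an upper bound $k\geq m^*$ (where $m^*$ is the true number of revisions) is \emph{not} enough to compute a name of the final output: the realizer $H$ is an ordinary computation with no mind changes of its own, so it must irrevocably commit to output symbols, and with only a bound on the \emph{count} of future revisions it cannot tell whether the machine will revise again. A concrete obstruction: take the standard $1$-mind-change procedure for the halting function $n\mapsto[\varphi_n(n){\downarrow}]$; the bound $k=1$ is always correct, yet no computable $H$ can turn it into the answer. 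The fix is cheap: encode a \emph{time} bound rather than (or in addition to) a count bound, e.g.\ let
\[
A_p:=\{t\in\IN:\text{the machine performs no revision after step }t\},
\]
which is again nonempty (since there are only finitely many revisions) and co-c.e.\ (one enumerates $t$ out upon observing a revision at a step $>t$). Given $t\in A_p$ and $p$, the realizer $H$ simply runs the machine to step $t$ and then copies the output of the phase that is current at step $t$, which is the final phase; no commitment problem arises. With this replacement your argument for~(2) goes through.
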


Here and in general we identify each natural number $n\in\IN$ with the corresponding finite subset $n=\{0,1,...,n-1\}$.
The problem $\C_0$, i.e., closed choice for the empty set $0=\emptyset$, is the bottom element of the Weihrauch lattice.
Also $\C_2$ plays a significant role, since it is equivalent to $\LLPO$, the so-called {\em lesser limited principle of omniscience} as it is known from constructive mathematics.
In \cite{BGM12} we have characterized the jumps $\C_X'\equivW\CL_X$ for computable metric spaces $X$ using
the cluster point problem $\CL_X$ of $X$.
We also use the {\em limited principle of omniscience} $\LPO:\IN^\IN\to\{0,1\}$, which is simply the characteristic function
of the constant zero sequence $\widehat{0}\in\IN^\IN$.

\subsection*{Some Measure Theory}

We now introduce some notation from measure theory. 
We consider measures as non-negative functions into $[0,\infty]$. 
On the set $\IN=\{0,1,2,...\}$ we can use the {\em geometric probability measure} $\mu$,
induced by $\mu(\{n\})=2^{-n-1}$ for all $n\in\IN$.
This leads to a product measure $\mu_{\IN^\IN}$ on Baire space $\IN^\IN$ with 
$\mu_{\IN^\IN}(w\IN^\IN)=\prod_{i=0}^{|w|-1}2^{-w(i)-1}$ for all words $w\in\IN^*$.
Similarly, we use the uniform measure $\mu_{2}$ on $2=\{0,1\}$ with $\mu_{2}(\{i\})=\frac{1}{2}$
for $i\in\{0,1\}$ and the induced product measure $\mu_{2^\IN}$ on Cantor space $2^\IN$
with $\mu_{2^\IN}(w2^\IN)=2^{-|w|}$ for all words $w\in2^*=\{0,1\}^*$.
Often, we rather use the {\em counting measure} $\mu_\IN$ on $\IN$ that is induced by
$\mu_\IN(\{n\})=1$ for all $n\in\IN$. If not mentioned otherwise, we assume that $\IN$ is endowed
with the counting measure.
On $\IR$ and $[0,1]$ we use the Lebesgue measure~$\lambda$.
We recall that a measure $\mu$ on $X$ is called {\em finite} if $\mu(X)<\infty$ and {\em $\sigma$--finite}
if there exists a sequence $(X_n)_n$ of measurable sets $X_n\In X$ with $X=\bigcup_{n=0}^\infty X_n$ and $\mu(X_n)<\infty$ for all $n\in\IN$.
All the measures mentioned here are $\sigma$--finite, in fact all except the counting measure
on $\IN$ and the Lebesgue measure on $\IR$ are even probability measures.
We only use Borel measures, i.e., measures for which exactly all sets in the Borel $\sigma$--algebra
generated by the underlying topology are measurable sets. 
This is because we want to assume that all closed sets are measurable.
If not mentioned otherwise, we always assume in the following that $\IN$ is endowed with the counting measure $\mu_\IN$
and $2^\IN,\IN^\IN$ are endowed with the product measures $\mu_{2^\IN},\mu_{\IN^\IN}$ as standard measures, respectively. 
Likewise, $\IR$ and $[0,1]$ are always endowed with the Lebesgue measure $\lambda$.
If we just write $\mu$ for a measure on one of these spaces, then this refers to the corresponding standard measure. 

Given two $\sigma$--finite measures $\mu_X$ on $X$ and $\mu_Y$ on $Y$,
we obtain a unique {\em product measure} $\mu_X\otimes\mu_Y$ on $X\times Y$
with respect to the corresponding product $\sigma$--algebra and this measure is $\sigma$--finite too (see \cite[Theorem~23.3]{Bau01a}).
The product measure $\mu_X\otimes\mu_Y$ satisfies
\[(\mu_X\otimes\mu_Y)(A\times B)=\mu_X(A)\cdot\mu_Y(B)\]
for all corresponding measurable sets $A\In X$ and $B\In Y$.
In the following we assume that we always use this product measure on product spaces.
For instance, $\IN\times2^\IN$ is endowed with the measure
$\mu_\IN\otimes\mu_{2^\IN}$ and so forth.

A basic computability theoretic property of all measures used here is that they
are upper semi-computable on closed sets.

\begin{lemma}[Semi-computability of measures]
\label{lem:semi-computable}
The measures $\mu_{2^\IN}:\AA_-(2^\IN)\to\IR$ and $\lambda:\AA_-([0,1])\to\IR$ are upper-semi computable.
\end{lemma}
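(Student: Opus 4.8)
The plan is to show that both $\mu_{2^\IN}$ and $\lambda$ are upper semi-computable as functions $\AA_-(X)\to\IR$, which by definition means that given a $\psi_-$-name of a closed set $A$ we can compute a decreasing sequence of rationals converging to $\mu(A)$ from above — equivalently, that $\{(A,q)\in\AA_-(X)\times\IQ : \mu(A)<q\}$ is c.e.\ open. First I would recall that a $\psi_-$-name of a closed set $A$ provides an enumeration of a sequence of basic open balls (or, in the Cantor space case, cylinders $w2^\IN$) whose union is exactly $U=X\setminus A$.

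The key step is the following: from such an enumeration of $U$ as a union $\bigcup_i B_i$ of basic open sets, one obtains an increasing sequence of finite unions $U_n:=\bigcup_{i<n}B_i$, and since finite unions of rational balls (respectively cylinders) have computable measure, the sequence $\mu(U_n)$ is a computable increasing sequence of rationals. For a finite measure like $\mu_{2^\IN}$ on Cantor space, or $\lambda$ on $[0,1]$, we have $\mu(A)=\mu(X)-\mu(U)=\mu(X)-\sup_n\mu(U_n)=\inf_n(\mu(X)-\mu(U_n))$, and $\mu(X)$ is a known rational ($1$ in both cases). Hence $\mu(A)$ is the infimum of a computable decreasing sequence of rationals, which is exactly upper semi-computability. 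In the Cantor space case the computation of $\mu(U_n)$ is especially clean: one can first refine the finitely many cylinders $w_0,\dots,w_{n-1}$ to a common length and remove duplicates, so that $\mu(U_n)$ is just the number of distinct cylinders of that length divided by the appropriate power of two. For $[0,1]$ one intersects the finitely many rational open intervals with $[0,1]$, takes their union as a finite disjoint union of intervals, and sums the lengths.

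The one genuine subtlety worth addressing is that the two representations of $\AA_-(X)$ mentioned in the excerpt — the abstract one via $\chi_{X\setminus A}$ through $[\delta_X\to\delta_\IS]$, and the concrete one via a list of basic balls exhausting $X\setminus A$ — must be computably equivalent for the spaces in question; this is stated in the preliminaries for computable metric spaces, so I would simply invoke it and work with the concrete enumeration of basic open sets. A second minor point: for the Cantor space statement the measure is a probability measure so no $\sigma$-finiteness gymnastics are needed; for $\lambda$ on $[0,1]$ likewise $\lambda([0,1])=1<\infty$. (If one wanted the analogous statement for $\lambda$ on all of $\IR$ it would fail, since $\lambda(A)$ can be $\infty$, but the lemma as stated only concerns $[0,1]$.)

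I expect no serious obstacle here; the proof is essentially bookkeeping. The only place requiring a little care is making the measure computation of a finite union of basic open sets genuinely effective — for intervals in $[0,1]$ one must handle overlaps correctly, e.g.\ by sorting endpoints and summing the lengths of the resulting maximal subintervals, but this is routine. I would therefore keep the written proof short: fix the concrete $\psi_-$-representation, describe the monotone approximation $U_n$ of the complement, note that $\mu(U_n)$ is rational and computable uniformly in $n$, and conclude $\mu(A)=\lim_n(\mu(X)-\mu(U_n))$ is a computable decreasing limit, hence upper semi-computable.
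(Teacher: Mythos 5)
The paper states this lemma without proof, treating it as a basic folklore fact. Your argument is correct and is exactly the standard one: enumerate the complement $U = X \setminus A$ from the $\psi_-$-name as a union of basic cylinders (resp.\ rational intervals), observe that the finite partial unions $U_n$ have computable rational measure increasing to $\mu(U)$, and use $\mu(A) = 1 - \mu(U) = \inf_n (1 - \mu(U_n))$ since both measures are probability measures. Your remark on the computable equivalence of the abstract $\psi_-$-representation via $[\delta_X\to\delta_\IS]$ and the concrete one via a list of basic balls is precisely the bridge the paper sets up in its preliminaries, and your caveat about $\lambda$ on all of $\IR$ (where the subtraction trick fails) is well taken; the lemma is deliberately restricted to $[0,1]$ for that reason.
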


We will also occasionally use the fact that two measures $\mu_1,\mu_2$ on $2^\IN$ coincide on closed sets
if they coincide on all open balls $w2^\IN$.

\begin{lemma}[Identity of measures]
\label{lem:identity}
Let $\mu_1,\mu_2$ be two Borel measures on a subspace $R\In\IN^\IN$ of Baire space, at least one of which is finite. 
If
\[\mu_1(w\IN^\IN\cap R)=\mu_2(w\IN^\IN\cap R)\]
for all $w\in\IN^*$, then $\mu_1=\mu_2$.
\end{lemma}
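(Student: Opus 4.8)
The plan is to reduce the statement to a classical uniqueness theorem from measure theory via the standard $\pi$--$\lambda$ argument. First I would observe that the collection $\mathcal{G} := \{w\IN^\IN \cap R : w \in \IN^*\} \cup \{\emptyset\}$ is a $\pi$-system on $R$: the intersection of two cylinders $w\IN^\IN$ and $v\IN^\IN$ is either empty (if $w,v$ are incompatible) or equal to the cylinder generated by the longer of the two (if one is a prefix of the other), and intersecting with $R$ throughout preserves this. Moreover $\mathcal{G}$ generates the Borel $\sigma$-algebra on $R \In \IN^\IN$, since the cylinders $w\IN^\IN$ form a countable base of the product topology on $\IN^\IN$, hence their traces on $R$ form a base of the subspace topology, and a countable base generates the Borel $\sigma$-algebra.

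Next I would invoke the uniqueness-of-measures theorem (e.g.\ \cite[Theorem~5.4]{Bau01a} or the standard Dynkin $\pi$--$\lambda$ theorem): if two measures agree on a $\pi$-system $\mathcal{G}$, and there is an exhausting sequence $R = \bigcup_n G_n$ with $G_n \in \mathcal{G}$ and $\mu_1(G_n) < \infty$, then $\mu_1 = \mu_2$ on $\sigma(\mathcal{G})$. Here the hypothesis gives agreement on all $w\IN^\IN \cap R$, and trivially on $\emptyset$. For the exhausting sequence: the cylinders $\{ (n)\IN^\IN : n \in \IN \}$ of length-one words already cover $\IN^\IN$, hence their traces cover $R$; and since at least one of $\mu_1,\mu_2$ is finite — say $\mu_1$ — we have $\mu_1((n)\IN^\IN \cap R) \le \mu_1(R) < \infty$ for each $n$. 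This supplies the required $\sigma$-finiteness compatible with the $\pi$-system, and the theorem yields $\mu_1 = \mu_2$ on all Borel subsets of $R$, which is the claim.

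The one point requiring a little care — and the only genuine obstacle — is matching the exact hypotheses of whichever uniqueness theorem is cited: the classical statement is usually phrased for a $\pi$-system that is *itself* closed under the exhausting sequence, or asks that the whole space lie in the $\pi$-system. Since $R$ need not be of the form $w\IN^\IN$, I would either (i) use the length-one cylinders as above so that the exhausting sets genuinely lie in $\mathcal{G}$, or (ii) note that adding $R$ itself to $\mathcal{G}$ keeps it a $\pi$-system (as $R \cap (w\IN^\IN \cap R) = w\IN^\IN \cap R$) and, by taking $w = \varepsilon$ (the empty word, with $\varepsilon\IN^\IN = \IN^\IN$), the hypothesis already gives $\mu_1(R) = \mu_1(\IN^\IN \cap R) = \mu_2(\IN^\IN \cap R) = \mu_2(R)$, so both measures are finite and agree on the generating $\pi$-system including the whole space. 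Either way the Dynkin argument closes, and no computation beyond these bookkeeping observations is needed.
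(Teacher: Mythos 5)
Your proof is correct, but it takes a genuinely different route from the paper. The paper's argument proceeds in two topological steps: first, it observes that every open subset of $R\In\IN^\IN$ is a countable \emph{disjoint} union of trace cylinders $w\IN^\IN\cap R$ (a special feature of the zero-dimensional topology of Baire space), so by $\sigma$-additivity the measures agree on all open sets; then it invokes outer regularity of finite Borel measures on metric spaces (\cite[Lemma~26.2]{Bau01a}) to pass from open sets to all Borel sets. You instead bypass the open-set layer entirely and apply Dynkin's uniqueness-of-measures theorem directly: the cylinders form a $\pi$-system generating the Borel $\sigma$-algebra, the hypothesis gives agreement on that $\pi$-system (and, via $w=\varepsilon$, on $R$ itself, so both measures are finite), and the standard $\pi$--$\lambda$ argument finishes. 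Both are sound; your route is more abstract and does not exploit the disjoint-union structure of open sets in Baire space, trading that topological observation for the $\pi$--$\lambda$ machinery, and it sidesteps the need to invoke outer regularity. Your care over the exhaustion hypothesis of the Dynkin theorem -- either using the length-one cylinders or adding $R$ to the $\pi$-system -- correctly handles the one place where a gap could have opened.
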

\begin{proof}
Firstly, if one of the involved measures is finite, then the other is finite too, since $\mu_1(R)=\mu_2(R)$.
It follows by $\sigma$--additivity that $\mu_1(U)=\mu_2(U)$ for every open $U\In R$, 
since every such open set $U$ can be written as a disjoint union of balls $w\IN^\IN\cap R$.
Finally, all finite Borel measures on Polish spaces are outer regular (see \cite[Lemma~26.2]{Bau01a}) and hence $\mu_1,\mu_2$ even coincide completely
under the given conditions. 
\end{proof}

By an {\em interval} $I$ we mean any interval of real numbers
with open or closed end points and including $\infty$ as a possible right end point.
An interval $[a,\infty]$ can be used to accommodate a measure that is infinite.
We note that $[\infty,\infty]$ is not considered as an interval here, but singletons $[a,a]=\{a\}$ for $a\in\IR$ are allowed.
If we want to exclude the case of the closed right end point $\infty$, then we speak about an interval $I\In\IR$.
If we want to exclude the case of the open right end point $\infty$, then we speak about intervals $I$ without the open endpoint $\infty$.
We will need the following statement on the monotonicity of Lebesgue integrals. 
We adopt the usual convention in measure theory (see \cite{Bau01a}) that
\[0\cdot\infty=0 \mbox{ and } x\cdot\infty=\infty\mbox{ for $x>0$}.\]
We also assume that these products commute and we can set similar conventions for negative numbers (that we are not going to use).

\begin{lemma}[Monotonicity]
\label{lem:monotonicity}
Let $X$ be a topological space with a $\sigma$--finite Borel measure $\mu$ and let $I$ be a non-empty interval that does not have $\infty$ as an open endpoint.
Let $A\In X$ be measurable and let $f:X\to\IR$ be a non-negative measurable function such that $f(x)\in I$ for all $x\in A$.
Then
\[\int_Af\,{\rm d}\mu\in\mu(A)\cdot I.\]
\end{lemma}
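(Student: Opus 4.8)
The plan is to reduce the statement to the elementary monotonicity of the Lebesgue integral in its integrand, combined with the standard fact that a measurable function which is strictly positive on a set of positive measure has a strictly positive integral over that set. Write $a:=\inf I$ and $b:=\sup I$. If $A=\emptyset$ the claim is trivial, since $\int_Af\,{\rm d}\mu=0$ and $\mu(A)\cdot I=\{0\}$; so assume $A\not=\emptyset$. Since $f\ge0$ and $f(x)\in I$ for all $x\in A$, the interval $I\cap[0,\infty]$ is non-empty, has no open endpoint $\infty$, and still contains every value $f(x)$ with $x\in A$; moreover replacing $I$ by $I\cap[0,\infty]$ only shrinks the set $\mu(A)\cdot I$, so it suffices to prove the statement for this smaller interval. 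Hence we may assume $0\le a\le f(x)\le b$ for all $x\in A$. Integrating the two constant functions $x\mapsto a$ and $x\mapsto b$ over $A$ and using monotonicity together with the conventions $0\cdot\infty=0$ and $x\cdot\infty=\infty$ for $x>0$, we obtain
\[a\cdot\mu(A)\le\int_Af\,{\rm d}\mu\le b\cdot\mu(A),\]
so $\int_Af\,{\rm d}\mu$ lies in the closed interval with endpoints $a\,\mu(A)$ and $b\,\mu(A)$.

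Suppose first that $0<\mu(A)<\infty$. Then $\mu(A)\cdot I$ is just $I$ rescaled by the positive finite factor $\mu(A)$, with the same open/closed behaviour at both endpoints, so by the display it only remains to exclude the endpoints that do not belong to $\mu(A)\cdot I$. If $a\notin I$, then $a$ is finite and $f(x)>a$ for every $x\in A$, whence $\int_A(f-a)\,{\rm d}\mu>0$ because $\mu(A)>0$; were $\int_Af\,{\rm d}\mu=a\,\mu(A)$, then, $a\,\mu(A)$ being finite, $f$ would be integrable over $A$ and $\int_A(f-a)\,{\rm d}\mu=0$, a contradiction, so $\int_Af\,{\rm d}\mu>a\,\mu(A)$. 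Symmetrically, if $b\notin I$ then $b$ is finite — this is the one place where the hypothesis that $I$ has no open endpoint $\infty$ is used — so $f(x)<b$ on $A$, $b\,\mu(A)<\infty$, and the same reasoning applied to $b-f$ gives $\int_Af\,{\rm d}\mu<b\,\mu(A)$. Together with the display this places $\int_Af\,{\rm d}\mu$ inside $\mu(A)\cdot I$.

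The remaining cases are degenerate. If $\mu(A)=0$ then $\int_Af\,{\rm d}\mu=0$, while $\mu(A)\cdot I=\{0\}$ since $I\not=\emptyset$ and $0\cdot t=0$ for every $t\in I$ (including $t=\infty$), so equality holds. If $\mu(A)=\infty$ then, when $a>0$, we have $f\ge a>0$ on $A$, forcing $\int_Af\,{\rm d}\mu=\infty\in\mu(A)\cdot I$, whereas the sub-cases with $a=0$ follow directly from the conventions for products with $\infty$, the trivial sub-case $I=\{0\}$ (where $f\equiv0$ on $A$) being checked separately. I expect the only genuinely delicate step to be the strictness at an open endpoint: there one rewrites a putative equality $\int_Af\,{\rm d}\mu=a\,\mu(A)$ (respectively $=b\,\mu(A)$) as $\int_A(f-a)\,{\rm d}\mu=0$ (respectively $\int_A(b-f)\,{\rm d}\mu=0$), which is only legitimate once the quantities involved are known to be finite — and that finiteness is precisely what is secured by $a$ (respectively $b$, via the hypothesis on $I$) being finite together with $\mu(A)<\infty$. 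Everything else is bookkeeping around the monotonicity inequality of the first paragraph.
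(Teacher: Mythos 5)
Your proof follows essentially the same route as the paper's: the non-strict bounds $\mu(A)a\leq\int_Af\,{\rm d}\mu\leq\mu(A)b$ come from the monotonicity of the Lebesgue integral, and the strict bounds at an open endpoint come from observing that $\int_A(f-c)\,{\rm d}\mu=0$ together with $\mu(A)>0$ would force $f=c$ somewhere on $A$. You organise by cases on $\mu(A)$ (zero, finite positive, infinite) rather than on the shape of $I$, and you are somewhat more explicit than the paper about the finiteness one needs before subtracting $a\mu(A)$ (resp.\ $b\mu(A)$) from the integral; otherwise the two arguments coincide.

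The one step that does not hold up is your treatment of $\mu(A)=\infty$ with $a=\inf I=0$ and $I\neq\{0\}$, which you dispatch with the remark that it ``follows directly from the conventions for products with $\infty$.'' Taking $X=\IN$ with the counting measure, $A=\IN$, $f(n)=2^{-n}$ and $I=(0,1]$ shows otherwise: here $f(n)\in I$ for all $n$ and $\mu(A)=\infty$, yet $\int_Af\,{\rm d}\mu=2$ while $\mu(A)\cdot I=\{\infty\}$, so the conclusion of the lemma is in fact false in this configuration and no appeal to the conventions can close the case. To be fair, this is not a defect you introduced: the paper's own proof only discusses $\mu(A)=0$ and ``$\mu(A)>0$ with $a,b\in\IR$'' and never examines $\mu(A)=\infty$ separately, and the configuration does not arise in the paper's actual uses of the lemma (probability measures on Cantor space, or intervals with positive infimum combined with measures at least $1$ on the relevant sets). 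A clean resolution, though, is to add the hypothesis $\mu(A)<\infty$ or $\inf I>0$ to cover the degenerate case honestly, rather than to wave at the conventions as you did.
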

\begin{proof}
If $\mu(A)=0$, then $\mu(A)\cdot I=\{0\}$ since $I$ is non-empty and $\int_Af\,{\rm d}\mu=0$. 
Let now $\mu(A)>0$ and $a,b\in\IR$. Then we obtain
\begin{enumerate}
\item $a\leq f(x)$ for all $x\in A \TO \mu(A)a\leq\int_Af\,{\rm d}\mu$,
\item $f(x)\leq b$ for all $x\in A \TO \int_Af\,{\rm d}\mu\leq \mu(A)b$,
\item $a< f(x)$ for all $x\in A$ $\TO \mu(A)a<\int_Af\,{\rm d}\mu$,
\item $f(x)< b$ for all $x\in A$ $\TO \int_Af\,{\rm d}\mu< \mu(A)b$.
\end{enumerate}
Here the first two properties (1) and (2) are just consequences of the monotonicity of the Lebesgue integral (see \cite[Theorem~12.4]{Bau01a}),
whereas the other two strong monotonicity properties (3) and (4) follow from the first two properties together with the
following observation: for any constant $c\in\IR$ we have that
$\int_Af\,{\rm d}\mu-\mu(A)c=\int_Af-c\,{\rm d}\mu=0$ implies $f=c$ almost everywhere by \cite[Theorem~13.2]{Bau01a}
and hence $f(x)=c$ for some $x\in A$ since $\mu(A)>0$.
Altogether, suitable combinations of the above statements prove the claim for all bounded intervals $I$.
It clearly also holds if $I$ is of the form $I=(a,\infty]$ or $I=[a,\infty]$.
\end{proof}

We mention that the above result cannot be extended to the cases $(a,\infty)$ or $[a,\infty)$, 
since for instance $\int_{(0,1]}\frac{1}{x}\,{\rm d}x=\infty$, even though $\frac{1}{x}<\infty$ for all $x\in(0,1]$.

\section{Las Vegas Computability and Probabilistic Choice}
\label{sec:Las-Vegas}

In this section we would like to formalize the notion of a Las Vegas computable multi-valued function $f:\In X\mto Y$ as it was 
intuitively described in the introduction and we will show that this notion can be characterized
by a suitably defined probabilistic choice operation. Since we do not want to formalize probabilistic Turing
machines on infinite sequences in a technical way here, we will just use the notion of an ordinary computable function
$F:\In\IN^\IN\to\IN^\IN$ in order to introduce our concept of randomized computations. 
Essentially, we will use two such functions $F_1,F_2$, which
play the following roles:
\begin{enumerate}
\item $F_2$ is a {\em failure recognition function} that takes a name $p$ of the input $x\in X$ and a ``random advice'' $r\in R$
         and indicates whether $r$ is successful on input $p$. Here $\delta_\IS F_2\langle p,r\rangle=0$ indicates
         success, i.e., the set
         \[S_p:=\{r\in R:\delta_\IS F_2\langle p,r\rangle=0\}\] 
         (which is closed in $R$) is the {\em set of successful advices} on input $p$. 
\item $F_1$ is the {\em computation function}, i.e., it also takes a name $p$ of the input and the ``random advice'' $r$ 
         and it actually computes the correct result for $f:\In (X,\delta_X)\mto (Y,\delta_Y)$
         in the sense that 
         \[\delta_YF_1\langle p,r\rangle\in f\delta_X(p)\]
         for all successful advices $r\in S_p$.
\end{enumerate}

By the nature of Sierpi\'nski space $(\IS,\delta_\IS)$ the failure event $\delta_\IS F_2\langle p,r\rangle=1$ is the one that can be recognized
in finite time, while success $\delta_\IS F_2\langle p,r\rangle=0$ just means absence of failure in the long run.
If no failure occurs, then $F_1\langle p,r\rangle$ will be a correct result on input $p$ with advice $r$ in the long run.
Having these interpretations in mind we are now prepared to give the formal definition, which is actually a refined version of non-deterministic computability
as defined in \cite{BBP12}.

\begin{definition}[Las Vegas computability]
\label{def:Las-Vegas}
Let $(X,\delta_X)$ and $(Y,\delta_Y)$ be represented spaces, let $R\In\IN^\IN$, let $\mu_R$ be a
Borel measure on $R$ and let $I$ be some interval. A multi-valued function $f:\In X\mto Y$ is said to be 
{\em Las Vegas computable over $R$ with measure in $I$},
if there exist two computable functions $F_1,F_2:\In\IN^\IN\to\IN^\IN$ such that
$\langle \dom(f\delta_X)\times R\rangle\In\dom(F_2)$ and for each $p\in\dom(f\delta_X)$
the following hold:
\begin{enumerate}
\item $S_p:=\{r\in R:\delta_\IS F_2\langle p,r\rangle=0\}$ is non-empty and $\mu_R(S_p)\in I$,
\item $\delta_YF_1\langle p,r\rangle\in f\delta_X(p)$ for all $r\in S_p$.
\end{enumerate}
\end{definition}

If $f$ is Las Vegas computable over $R=2^\IN$ with measure $\mu_{2^\IN}$ and values in $I=(0,1]$, then we say for short that $f$
is {\em Las Vegas computable}. If the same holds over $R=\IN\times2^\IN$ with $\mu_{\IN}\otimes\mu_{2^\IN}$ and $I=(0,\infty]$, then we say for short that $f$
is {\em Las Vegas computable with finitely many mind changes}. The latter terminology will become clearer in 
Section~\ref{sec:PCN} and in particular by Corollary~\ref{cor:PCN2N}.

We mention that Las Vegas computability over $2^\IN$ with probabilities in $I=[0,1]$ is the
same as non-deterministic computability as originally introduced by Martin Ziegler \cite{Zie07,Zie07a}
and further studied in \cite{BBP12}. The above definition is just an adaption of \cite[Definition~7.1]{BBP12}.
In \cite[Theorem~7.2]{BBP12} non-deterministic computations over $R$ were characterized with
the help of the closed choice principle $\C_R$ and here we transfer this result to the probabilistic setting.

We introduce corresponding probabilistic choice principles by generalizing the corresponding definition in \cite{BP10}.
By $\P_I\C_X$ we denote the closed choice operation restricted to closed subsets of $X$
whose measure is in the interval $I$.

\begin{definition}[Probabilistic choice]
Let $X$ be a represented space together with a Borel measure $\mu_X$ on $X$
and let $I$ be an interval.
By 
\[\P_I\C_X:\In\AA_-(X)\mto X,A\mapsto A\] 
we denote the choice operation restricted to $\dom(\P_I\C_X):=\{A:\mu_X(A)\in I\}$.
We call $\P_I\C_X$ {\em probabilistic choice} of $X$ with respect to $I$.
\end{definition}

We usually abbreviate the interval $I$, for instance, by writing ``$>0$'' instead of $(0,\infty]$ and
we use analogous abbreviations for other intervals.
We also write $\PC_X:=\P_{>0}\C_X=\P_{(0,\infty]}\C_X$, which was already studied under the name {\em positive choice} in \cite{BP10}.
We also obtain ordinary closed choice as s special case:
$\C_X=\P_{\geq 0}\C_X=\P_{[0,\infty]}\C_X$.
Theorem~7.2 from \cite{BBP12} can now directly be transferred to our setting.

\begin{theorem}[Las Vegas computability]
\label{thm:probabilistic-computability}
Let $X$ and $Y$ be represented spaces, let $R\In\IN^\IN$ be endowed with a Borel measure, let $I$ be an interval
and let $f:\In X\mto Y$ be a multi-valued function. Then the following are equivalent to each other:
\begin{enumerate}
\item $f\leqW \P_I\C_R$,
\item $f$ is Las Vegas computable over $R$ with measure in $I$.
\end{enumerate}
\end{theorem}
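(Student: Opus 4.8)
The plan is to mimic the proof of \cite[Theorem~7.2]{BBP12} but to keep careful track of the measure condition throughout. The two directions are both fairly direct once one understands how the realizer of $\P_I\C_R$ interacts with the failure recognition function.

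For the implication $(2)\Rightarrow(1)$, suppose $f$ is Las Vegas computable over $R$ with measure in $I$ via computable functions $F_1,F_2$. Given a name $p$ of an input $x\in\dom(f)$, I would use $F_2$ to produce a $\psi_-$--name of the closed set $S_p=\{r\in R:\delta_\IS F_2\langle p,r\rangle=0\}$; this is possible because $F_2\langle p,\cdot\rangle$ is a computable realizer of the characteristic function of the complement of $S_p$, so the map $p\mapsto S_p$ is a computable map into $\AA_-(R)$. By assumption $S_p$ is non-empty and $\mu_R(S_p)\in I$, so $S_p\in\dom(\P_I\C_R)$. Thus the map $K$ of the Weihrauch reduction sends $p$ to (a name of) $S_p$; any realizer $G$ of $\P_I\C_R$ then returns some $r\in S_p$, and the output modificator $H$ applies $F_1$ to $\langle p,r\rangle$, which yields $\delta_Y F_1\langle p,r\rangle\in f\delta_X(p)$ by condition (2) of Definition~\ref{def:Las-Vegas}. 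Note that $H$ needs access to the original input $p$ here (not just to $r$), which is why we get $\leqW$ and in general not $\leqSW$; this matches the discussion in the introduction.

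For the implication $(1)\Rightarrow(2)$, suppose $f\leqW\P_I\C_R$ via computable $K,H$. Given a name $p$ of $x\in\dom(f)$, the function $K$ produces a $\psi_-$--name $q:=K(p)$ of some non-empty closed set $A_p\In R$ with $\mu_R(A_p)\in I$. I would now \emph{define} the failure recognition function $F_2$ so that, on input $\langle p,r\rangle$, it monitors whether $r$ is still compatible with being in $A_p$: using the negative information in $q$ one lists the open rational balls exhausting $R\setminus A_p$, and one outputs $1$ (failure) precisely at the first stage, if any, at which $r$ is found to lie in one of these balls; otherwise one outputs $0$. Then $S_p:=\{r\in R:\delta_\IS F_2\langle p,r\rangle=0\}=A_p$, which is non-empty with measure in $I$, establishing condition (1). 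For the computation function, set $F_1\langle p,r\rangle:=H\langle p,r\rangle$; for every $r\in S_p=A_p$ the pair $\langle p,r\rangle$ is a valid input to $H$ in the sense of the minimal-domain convention (since $r$ is a correct output of $\P_I\C_R$ on input $q=K(p)$), hence $\delta_Y H\langle p,r\rangle\in f\delta_X(p)$, which is condition (2).

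I expect the only genuinely delicate point to be the bookkeeping around domains and representations: one must check that $p\mapsto S_p$ really lands in $\AA_-(R)$ with the representation $\psi_-$ as defined in the preliminaries (this uses that $R\In\IN^\IN$ and that $\delta_\IS$--names of $\chi_{R\setminus S_p}$ are exactly what a computable $F_2\langle p,\cdot\rangle$ produces), and conversely that from a $\psi_-$--name of $A_p$ one can computably drive the Sierpi\'nski output of $F_2$ so that $S_p$ is \emph{exactly} $A_p$ and not merely a subset or superset. Once the definitions of $\AA_-(R)$, $\psi_-$, and Las Vegas computability are unwound, both directions are routine, and the measure condition $\mu_R(S_p)\in I$ simply travels along unchanged because $S_p$ and $A_p$ coincide. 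No measure-theoretic subtleties arise here; the argument is purely a translation between the ``internal'' description via $F_1,F_2$ and the ``external'' description via the oracle $\P_I\C_R$, exactly as in the non-deterministic case of \cite{BBP12}.
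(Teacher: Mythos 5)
Your proof is correct and follows exactly the route the paper intends: it reproduces the argument of Theorem~7.2 of \cite{BBP12}, translating between the internal description $(F_1,F_2)$ and the external oracle $\P_I\C_R$ via $S_p=A_p$, and observes that the measure condition is carried along unchanged, which is precisely the ``extra observation'' the paper invokes when it refers to that proof.
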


The proof is literally the same as the proof of Theorem~7.2 in \cite{BBP12},
with the extra observation that the success sets $S_p$ in case of Las Vegas computability have to satisfy analogous
measure requirements as the sets in the domain of $\P_I\C_R$.
As a special case of Theorem~\ref{thm:probabilistic-computability} we obtain the following corollary.

\begin{corollary}[Las Vegas computability]
\label{cor:Las-Vegas}
Let $f$ be a multi-valued function on represented spaces. Then
\begin{enumerate}
\item $f\leqW\PC_{2^\IN}$ if and only if $f$ is Las Vegas computable,
\item $f\leqW\PC_{\IN\times2^\IN}$ if and only if $f$ is Las Vegas computable with finitely many mind changes.
\end{enumerate}
\end{corollary}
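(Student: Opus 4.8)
The plan is to derive Corollary~\ref{cor:Las-Vegas} directly from Theorem~\ref{thm:probabilistic-computability} together with the definitional conventions fixed just above it. The two items of the corollary are exactly the two named special cases of Las~Vegas computability (over $R=2^\IN$ with $\mu_{2^\IN}$ and $I=(0,1]$ in part~(1), and over $R=\IN\times2^\IN$ with $\mu_\IN\otimes\mu_{2^\IN}$ and $I=(0,\infty]$ in part~(2)), so by Theorem~\ref{thm:probabilistic-computability} it suffices to observe that $\P_{(0,1]}\C_{2^\IN}\equivW\PC_{2^\IN}=\P_{(0,\infty]}\C_{2^\IN}$ and that $\P_{(0,\infty]}\C_{\IN\times2^\IN}=\PC_{\IN\times2^\IN}$ is already in the required form.

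For part~(1) the only content is that over Cantor space with its uniform (probability) measure the intervals $(0,1]$ and $(0,\infty]$ cut out the same closed sets: since $\mu_{2^\IN}(2^\IN)=1$, every nonempty closed $A\In2^\IN$ has $\mu_{2^\IN}(A)\le 1$, hence $\mu_{2^\IN}(A)\in(0,\infty]$ iff $\mu_{2^\IN}(A)\in(0,1]$. Thus $\P_{(0,1]}\C_{2^\IN}$ and $\P_{(0,\infty]}\C_{2^\IN}$ have the same domain and the same (multi-)values, so they are literally the same problem and in particular $\equivW$. Applying Theorem~\ref{thm:probabilistic-computability} with $R=2^\IN$, $\mu=\mu_{2^\IN}$, $I=(0,1]$ then gives $f\leqW\PC_{2^\IN}\iff f\leqW\P_{(0,1]}\C_{2^\IN}\iff f$ is Las~Vegas computable, which is the claim.

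For part~(2) there is even less to do: by definition $f$ is Las~Vegas computable with finitely many mind changes iff $f$ is Las~Vegas computable over $R=\IN\times2^\IN$ (with measure $\mu_\IN\otimes\mu_{2^\IN}$) with measure in $I=(0,\infty]$, and Theorem~\ref{thm:probabilistic-computability} with these parameters states precisely that this is equivalent to $f\leqW\P_{(0,\infty]}\C_{\IN\times2^\IN}=\PC_{\IN\times2^\IN}$. One should just record that $\IN\times2^\IN$ is, as fixed in the measure-theory preliminaries, endowed with the product measure $\mu_\IN\otimes\mu_{2^\IN}$, so the notation $\PC_{\IN\times2^\IN}$ unambiguously refers to $\P_{(0,\infty]}\C_{\IN\times2^\IN}$ with that measure.

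There is essentially no obstacle here; the corollary is a bookkeeping consequence of the theorem and the naming conventions. The only point that deserves an explicit word is the coincidence of the intervals $(0,1]$ and $(0,\infty]$ over the probability space $2^\IN$ in part~(1); everything else is an instantiation of Theorem~\ref{thm:probabilistic-computability}.
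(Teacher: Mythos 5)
Your proof is correct and follows exactly the route the paper intends: the corollary is stated as a special case of Theorem~\ref{thm:probabilistic-computability}, obtained by instantiating $R$, $\mu$, and $I$ according to the definitional conventions. Your extra remark that $(0,1]$ and $(0,\infty]$ carve out the same domain over the probability space $2^\IN$ is a useful bit of bookkeeping that the paper leaves implicit.
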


Theorem~\ref{thm:probabilistic-computability} raises the question of whether there is a difference
between ordinary and strong Weihrauch reducibility to $\P_I\C_R$, in other words, whether
$\P_I\C_R$ is a cylinder. The following result shows that this is typically not the case.
We first introduce a notation.

\begin{definition}[Cardinality]
Let $f:\In X\mto Y$ be a multi-valued function. Then we denote by $\# f$ the supremum 
of the cardinalities $|M|$ of sets $M\In\dom(f)$ such that $\{f(x):x\in M\}$ contains only pairwise disjoint sets.
We call $\# f$ the {\em cardinality} of $f$.
\end{definition}

Obviously, the cardinality $\# f$ is always bounded from above by the cardinality of $\dom(f)$.
It is clear that strong reductions preserve cardinalities in the following sense.

\begin{proposition}[Cardinality]
\label{prop:cardinality}
$f\leqSW g\TO \# f\leq \#g$.
\end{proposition}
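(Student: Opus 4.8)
The plan is to take a witness to $\# f \geq \kappa$, i.e.\ a set $M \In \dom(f)$ with $|M| = \kappa$ such that the family $\{f(x) : x \in M\}$ consists of pairwise disjoint (nonempty) sets, and to push it forward along the strong reduction to obtain a witness to $\# g \geq \kappa$. So suppose $f \leqSW g$ via computable $K, H$, meaning $HGK \vdash f$ for every realizer $G$ of $g$. Fix for each $x \in M$ a name $p_x \in \dom(f\delta_X)$ of $x$, and set $q_x := K(p_x)$, which is a name (with respect to $\delta_W$) of some point $w_x \in \dom(g)$. I would then consider the set $N := \{w_x : x \in M\}$ and claim that the family $\{g(w_x) : x \in M\}$ is pairwise disjoint; this gives $|N| = \kappa$ as well, hence $\# g \geq \kappa$, and taking the supremum over all such $\kappa$ yields the proposition. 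Since this holds for the cardinal $\# f$ itself when it is attained, and in the limiting case one argues with an approximating sequence of witnesses, we get $\# f \leq \# g$.

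The key step is the disjointness claim. Suppose, toward a contradiction, that for distinct $x, x' \in M$ there is a point $z \in g(w_x) \cap g(w_{x'})$; pick a $\delta_Z$-name $s$ of $z$. Now build a single realizer $G$ of $g$ that, on input $q_x$, outputs $s$, and on input $q_{x'}$, also outputs $s$ — this is consistent with $G \vdash g$ precisely because $z \in g(w_x)$ and $z \in g(w_{x'})$, so $G$ can be extended to a full realizer of $g$ on all of $\dom(g\delta_W)$ (assigning arbitrary correct values elsewhere). Then $HGK(p_x) = H(s)$ and $HGK(p_{x'}) = H(s)$ produce the \emph{same} output name, call it $t$. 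But $HGK \vdash f$ forces $\delta_Y(t) \in f(x)$ and $\delta_Y(t) \in f(x')$, contradicting $f(x) \cap f(x') = \emptyset$. This is the heart of the argument and exactly where strongness is used: $H$ sees only the oracle answer $s$, not the original input $p_x$, so it cannot distinguish the two computations and must return the same value.

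The main obstacle — really the only subtle point — is handling the case where $\# f$ is an infinite cardinal not attained by any single witness $M$, i.e.\ a supremum of strictly smaller cardinalities; but this is handled routinely by running the above argument for each witness separately and taking the supremum, since each finite or infinite approximating witness $M$ transfers to a witness $N$ for $g$ of the same cardinality. A secondary technical care point is that the points $w_x$ must be genuinely distinct: this is automatic, since if $w_x = w_{x'}$ then $g(w_x) = g(w_{x'})$ is a single nonempty set intersecting itself, which already contradicts the disjointness we just derived (or, more directly, one observes that the $q_x$ need not be distinct, but the disjointness claim as proved shows the corresponding values $w_x$ must be). No delicate computability is needed beyond the observation that a partial consistent assignment of oracle answers extends to a realizer.
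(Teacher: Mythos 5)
Your proof is correct, and it spells out the argument the paper leaves implicit (the paper merely asserts the proposition, prefacing it with ``It is clear that strong reductions preserve cardinalities''). The key step --- that a realizer $G$ with $G(q_x)=G(q_{x'})=s$ forces $H(s)$ to name a point in $f(x)\cap f(x')$, which is exactly where strongness (no access to the original input $p$) is used --- is the right and essentially the only idea, and your treatment of possibly coinciding $w_x$ and of $\#f$ as a supremum is sound.
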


It is folklore that for a $\sigma$--finite measure there cannot be an uncountable number of
pairwise disjoint sets of positive measure. For completeness we include the proof.

\begin{proposition}
Let $X$ be a space that is equipped with a $\sigma$--finite measure. Then there can be at most
countably many pairwise disjoint measurable sets $A\In X$ of positive measure.
\end{proposition}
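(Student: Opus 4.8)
The plan is to exploit $\sigma$--finiteness to reduce the statement to an elementary pigeonhole argument on sets of finite measure. First I would invoke the definition of $\sigma$--finiteness to fix a sequence $(X_n)_{n\in\IN}$ of measurable sets with $X=\bigcup_{n\in\IN}X_n$ and $\mu(X_n)<\infty$ for all $n$ (one may replace $X_n$ by $X_n\sm\bigcup_{k<n}X_k$ to make them pairwise disjoint, but this is not essential). Suppose $(A_i)_{i\in J}$ is a family of pairwise disjoint measurable subsets of $X$ with $\mu(A_i)>0$ for every $i\in J$; the goal is to show that $J$ is countable.

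The second step is to attach to each $A_i$ a ``witness'' in $\IN\times\IN$. Since $\mu(A_i)\le\sum_{n\in\IN}\mu(A_i\cap X_n)$ and $\mu(A_i)>0$, there is some $n$ with $\mu(A_i\cap X_n)>0$, and hence some $k\ge 1$ with $\mu(A_i\cap X_n)>\frac1k$. Choosing such a pair for each $i$ defines a map $i\mapsto(n(i),k(i))$ from $J$ to $\IN\times\IN$. It then suffices to show that each fiber of this map is finite, since a countable union of finite sets is countable.

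The heart of the argument—and really the only point requiring any thought—is the finiteness of the fibers, which is a volume bound. Fix $n,k\in\IN$ with $k\ge 1$ and set $J_{n,k}:=\{i\in J:n(i)=n,\ k(i)=k\}$. The sets $\{A_i\cap X_n:i\in J_{n,k}\}$ are pairwise disjoint measurable subsets of $X_n$, each of measure strictly greater than $\frac1k$. By finite additivity and monotonicity of $\mu$, any $m$ of them have total measure greater than $\frac{m}{k}$, so $\frac{m}{k}\le\mu(X_n)<\infty$; hence $|J_{n,k}|\le k\,\mu(X_n)$ is finite. Since $J=\bigcup_{n\in\IN,\,k\ge 1}J_{n,k}$, the set $J$ is countable, which completes the proof. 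I do not anticipate any genuine obstacle here; the only care needed is to handle the case where the chosen $X_n$ are not disjoint, which is dealt with by using $\mu(A_i)\le\sum_n\mu(A_i\cap X_n)$ rather than equality.
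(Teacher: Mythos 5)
Your proof is correct and follows essentially the same route as the paper's: use $\sigma$-finiteness to cover $X$ by sets $X_n$ of finite measure, index the disjoint family by pairs $(n,k)$ according to a threshold $\mu(A\cap X_n)>1/k$ (the paper uses $2^{-k}$), and observe that each such subfamily is finite by a volume bound, so the whole family is a countable union of finite sets. The only difference is cosmetic: you phrase the decomposition as fibers of a map and explicitly justify that every $A_i$ lands in some fiber via subadditivity, a step the paper leaves implicit.
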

\begin{proof}
Since $X$ is $\sigma$--finite, there is a sequence $(X_i)$ of measurable sets $X_i\In X$ with 
$X=\bigcup_{i=0}^\infty X_i$ and $\mu(X_i)<\infty$ for all $i\in\IN$.
Let $\FF$ be a family of pairwise disjoint sets $A\In X$ of positive measure and let 
\[\FF_{n,k}:=\{A\in\FF:\mu(A\cap X_n)>2^{-k}\}\]
for all $n,k\in\IN$. Then $\FF=\bigcup_{n,k\in\IN}\FF_{n,k}$. There cannot be more than $\mu(X_n)\cdot 2^k$ many
pairwise disjoint sets in $\FF_{n,k}$. Hence any set $\FF_{n,k}$ is finite and hence $\FF$ is a countable union of
finite sets and countable itself.  
\end{proof}

Hence, we obtain the following.

\begin{proposition}[Cardinality of probabilistic choice]
\label{prop:cardPICR}
If $X$ is a represented space that is equipped with a $\sigma$--finite measure and $I$ is an interval
with $0\not\in I$, then $\#\P_I\C_X\leq|\IN|$.
\end{proposition}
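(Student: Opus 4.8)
The plan is to reduce the statement immediately to the preceding proposition on pairwise disjoint sets of positive measure. First I would unwind the definition of $\#\P_I\C_X$: it is the supremum of the cardinalities $|M|$ over all $M\In\dom(\P_I\C_X)$ for which the family $\{\P_I\C_X(A):A\in M\}$ consists of pairwise disjoint sets. Since $\P_I\C_X$ acts by $A\mapsto A$, the value $\P_I\C_X(A)$ is just the closed set $A$ itself, so such an $M$ is nothing but a family of pairwise disjoint closed subsets of $X$, each lying in $\dom(\P_I\C_X)$, i.e.\ each satisfying $\mu_X(A)\in I$.

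Next I would observe that the hypothesis $0\notin I$, together with the fact that $\mu_X$ is a non-negative measure, forces $\mu_X(A)>0$ for every $A\in M$: indeed $\mu_X(A)\geq 0$ always holds, and $\mu_X(A)\in I$ rules out the value $0$. Hence $M$ is a family of pairwise disjoint measurable sets of strictly positive measure in the $\sigma$--finite measure space $X$.

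Now the preceding proposition applies verbatim: any such family is at most countable, so $|M|\leq|\IN|$, and taking the supremum over all admissible $M$ yields $\#\P_I\C_X\leq|\IN|$. I do not expect any genuine obstacle here; the only two points worth spelling out are that the output sets of $\P_I\C_X$ are literally its input closed sets, so disjointness of outputs is disjointness of the corresponding closed sets, and that the condition $0\notin I$ is exactly what turns ``measure in $I$'' into ``positive measure'', which is what makes the counting argument of the previous proposition available.
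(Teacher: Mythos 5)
Your argument is exactly the one the paper intends: unwind the definition of $\#\P_I\C_X$, note that outputs coincide with the input closed sets, observe that $0\notin I$ forces positive measure, and invoke the preceding proposition on pairwise disjoint sets of positive measure in a $\sigma$--finite space. Correct and the same route.
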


Since $\#\id_{\IN^\IN}=|\IN^\IN|$, we get $\id_{\IN^\IN}\nleqSW\P_I\C_X$ in this situation.
This yields the following corollary.

\begin{corollary}[Probabilistic choice is not a cylinder]
\label{cor:cylinder}
If $X$ is a represented space that is equipped with a $\sigma$--finite measure and $I$ is an interval
with $0\not\in I$, then $\P_I\C_X$ is not a cylinder.
\end{corollary}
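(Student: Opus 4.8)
The plan is to derive this corollary directly from the preceding propositions, which already do all the heavy lifting. The statement to prove is that if $X$ is a represented space equipped with a $\sigma$--finite measure and $I$ is an interval with $0\not\in I$, then $\P_I\C_X$ is not a cylinder. Recall that by definition $f$ is a cylinder if $f\equivSW\id\times f$, where $\id=\id_{\IN^\IN}$ is the identity on Baire space.

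First I would invoke Proposition~\ref{prop:cardPICR}, which under exactly these hypotheses gives $\#\P_I\C_X\leq|\IN|$. The key observation, already noted in the remark immediately before the corollary, is that $\#\id_{\IN^\IN}=|\IN^\IN|$: the singletons $\{p\}$ for $p\in\IN^\IN$ form a family of pairwise disjoint one-element value sets indexed by a set of size $|\IN^\IN|$. Since $|\IN^\IN|>|\IN|$, monotonicity of cardinality under strong reductions (Proposition~\ref{prop:cardinality}) yields $\id_{\IN^\IN}\nleqSW\P_I\C_X$.

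Now suppose for contradiction that $\P_I\C_X$ were a cylinder, i.e., $\P_I\C_X\equivSW\id_{\IN^\IN}\times\P_I\C_X$. Because $\id_{\IN^\IN}\times\P_I\C_X$ has as a strong sub-problem the restriction to a fixed computable input in the $\P_I\C_X$-component (the space is pointed — the whole space $X$ is always a valid closed set of measure in $I$ if $\infty\in I$, or one can more carefully arrange a computable point; in any case $\P_I\C_X$ is nonempty as a problem and the trivial argument is that $\id_{\IN^\IN}\times g$ always satisfies $\id_{\IN^\IN}\leqSW\id_{\IN^\IN}\times g$ whenever $g$ is pointed, and more simply $\#(\id_{\IN^\IN}\times g)\geq\#\id_{\IN^\IN}$ always), we get $\#(\id_{\IN^\IN}\times\P_I\C_X)\geq\#\id_{\IN^\IN}=|\IN^\IN|$. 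Combined with $\P_I\C_X\equivSW\id_{\IN^\IN}\times\P_I\C_X$ and Proposition~\ref{prop:cardinality} this forces $\#\P_I\C_X\geq|\IN^\IN|$, contradicting Proposition~\ref{prop:cardPICR}. Hence $\P_I\C_X$ is not a cylinder.

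The only subtlety I anticipate — and the one place worth a sentence of care — is justifying $\#(\id_{\IN^\IN}\times f)\geq\#\id_{\IN^\IN}$ for arbitrary $f$ with nonempty domain: given any $p_0\in\dom(f)$ and any $p_1\in\dom(f(p_0))$ one forms the pairs $(q,p_0)$ for $q\in\IN^\IN$, whose value sets $\{q\}\times f(p_0)$ are pairwise disjoint and indexed by $\IN^\IN$; since $\P_I\C_X$ has nonempty domain (any $A\in\AA_-(X)$ with $\mu_X(A)\in I$ works, and such $A$ exists whenever the problem is nontrivial), this applies. With that in hand the contradiction is immediate and the proof is complete.

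\begin{proof}
By Proposition~\ref{prop:cardPICR} we have $\#\P_I\C_X\leq|\IN|$. On the other hand, for the identity $\id=\id_{\IN^\IN}$ the singletons $\{p\}$, $p\in\IN^\IN$, witness $\#\id=|\IN^\IN|$, and for any multi-valued $f$ with non-empty domain we have $\#(\id\times f)\geq\#\id=|\IN^\IN|$: picking $p_0\in\dom(f)$ and $y_0\in f(p_0)$, the value sets $\{q\}\times f(p_0)$ of the inputs $(q,p_0)$ are pairwise disjoint for distinct $q\in\IN^\IN$. If $\P_I\C_X$ were a cylinder, then $\P_I\C_X\equivSW\id\times\P_I\C_X$, so by Proposition~\ref{prop:cardinality} we would get $\#\P_I\C_X\geq\#(\id\times\P_I\C_X)\geq|\IN^\IN|>|\IN|$, contradicting the first inequality. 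Hence $\P_I\C_X$ is not a cylinder.
\end{proof}
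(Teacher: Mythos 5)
Your proof is correct and follows essentially the same route the paper intends: Proposition~\ref{prop:cardPICR} gives $\#\P_I\C_X\leq|\IN|$, the cardinality of $\id_{\IN^\IN}$ is $|\IN^\IN|$, and Proposition~\ref{prop:cardinality} rules out the strong equivalence $\P_I\C_X\equivSW\id\times\P_I\C_X$. Your direct bound $\#(\id\times f)\geq|\IN^\IN|$ is a clean way to phrase the contradiction (and conveniently sidesteps any pointedness consideration the paper leaves implicit in its remark that $\id_{\IN^\IN}\nleqSW\P_I\C_X$), but it is the same key lemma and the same argument.
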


This means that typically (namely under the conditions given in Corollary~\ref{cor:cylinder}) 
we cannot replace Weihrauch reducibility $\leqW$ by strong Weihrauch reducibility $\leqSW$
in Theorem~\ref{thm:probabilistic-computability}.

\section{Products of Probabilistic Choice}
\label{sec:products}

We now want to compare different probabilistic choice operations with each other
and in particular we want to consider products of probabilistic choice.
As a first obvious observation we note that probabilistic choice is always monotone in the interval of probabilities (or measure values).

\begin{proposition}[Monotonicity]
\label{prop:interval-monotonicity}
Let $I,J$ be intervals and let $X$ be some represented space endowed with some Borel measure.
Then
\[I\In J\TO\P_I\C_X\leqSW\P_J\C_X.\]
\end{proposition}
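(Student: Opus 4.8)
The plan is to establish the reduction $\P_I\C_X \leqSW \P_J\C_X$ directly from the definitions, using the fact that the only difference between these two problems is the domain: both map a closed set $A$ to itself, but $\P_I\C_X$ restricts the input to sets with $\mu_X(A) \in I$, while $\P_J\C_X$ restricts to sets with $\mu_X(A) \in J$. Since $I \In J$, every admissible input for $\P_I\C_X$ is also an admissible input for $\P_J\C_X$, so $\P_I\C_X$ is literally a restriction of $\P_J\C_X$.

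First I would recall that for a strong Weihrauch reduction we need computable functions $K, H : \In \IN^\IN \to \IN^\IN$ such that $H G K \vdash \P_I\C_X$ for every $G \vdash \P_J\C_X$. The natural choice is to take $K = H = \id_{\IN^\IN}$, the identity on Baire space. Given a $\psi_-$--name $p$ of a closed set $A \In X$ with $\mu_X(A) \in I$, the point is that $p$ is then also a $\psi_-$--name of $A$ viewed as an element of $\dom(\P_J\C_X)$, because $\mu_X(A) \in I \In J$. Hence for any realizer $G$ of $\P_J\C_X$, the output $G(p) = G(K(p))$ is a name of some point $x \in A = \P_J\C_X(A) = \P_I\C_X(A)$, and so $H(G(K(p))) = G(K(p))$ is a correct output name for $\P_I\C_X$ on input $p$. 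This verifies $H G K \vdash \P_I\C_X$.

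The only subtlety worth mentioning is the degenerate case: if $I = \emptyset$, then $\dom(\P_I\C_X) = \emptyset$ and the reduction holds vacuously (the bottom degree reduces to anything). Otherwise the argument above goes through verbatim, and since $K$ and $H$ are the identity (hence in particular the output modificator does not even use the original input), we obtain the strong reduction $\leqSW$, not merely $\leqW$. There is no real obstacle here; the statement is essentially a bookkeeping observation that inclusion of parameter intervals induces inclusion of domains, and restriction of a problem to a smaller domain is always a strong Weihrauch reduction.
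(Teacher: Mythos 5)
Your proof is correct and is exactly the argument the paper has in mind; the paper states this as an obvious observation and omits the proof, and the canonical justification is precisely the restriction-via-identity argument you give, noting that $\dom(\P_I\C_X)\In\dom(\P_J\C_X)$ so any realizer of $\P_J\C_X$ already realizes $\P_I\C_X$.
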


We mention another obvious result on products.
For two intervals $I,J\In\IR$ we denote by $I\cdot J:=\{x\cdot y:x\in I,y\in J\}$
the {\em arithmetical product} of the two sets. 
It is not too difficult to see that products of non-negative intervals are always intervals (the case $[0,a]\cdot[\infty,\infty]=\{0,\infty\}$ cannot occur
since $[\infty,\infty]$ is not considered as an interval here.)
We say that an interval $I$ is {\em closed under product}, if $I\cdot I\In I$.

\begin{proposition}[Products]
\label{prop:products}
Let $X$ and $Y$ be represented spaces, both endowed with $\sigma$--finite Borel measures. 
Let $I,J$ be intervals. Then we obtain
\[\P_I\C_X\times\P_J\C_Y\leqSW\P_{I\cdot J}\C_{X\times Y}.\]
\end{proposition}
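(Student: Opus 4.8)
The plan is to give a direct strong Weihrauch reduction. Suppose we are given names for two closed sets: a $\psi_-$-name of $A\in\AA_-(X)$ with $\mu_X(A)\in I$ and a $\psi_-$-name of $B\in\AA_-(Y)$ with $\mu_Y(B)\in J$. The desired output of $\P_{I\cdot J}\C_{X\times Y}$ is any point of some closed set $C\In X\times Y$ with $(\mu_X\otimes\mu_Y)(C)\in I\cdot J$. The obvious candidate is $C:=A\times B$. First I would check that this is a legitimate instance of $\P_{I\cdot J}\C_{X\times Y}$: the set $A\times B$ is closed in $X\times Y$ (a product of closed sets is closed), it is non-empty since $A$ and $B$ are non-empty, and by the product measure identity $(\mu_X\otimes\mu_Y)(A\times B)=\mu_X(A)\cdot\mu_Y(B)\in I\cdot J$, as recorded in the preliminaries. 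So $A\times B\in\dom(\P_{I\cdot J}\C_{X\times Y})$.

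Next I would verify that the map $(A,B)\mapsto A\times B$ is computable as a map $\AA_-(X)\times\AA_-(Y)\to\AA_-(X\times Y)$; equivalently, that a $\psi_-$-name of $A\times B$ can be computed from $\psi_-$-names of $A$ and $B$. This is standard: with respect to negative information, $X\times Y\setminus(A\times B)=((X\setminus A)\times Y)\cup(X\times(Y\setminus B))$, and a name of an open set as a union of basic open boxes is readily assembled from names of $X\setminus A$ and $Y\setminus B$ as unions of basic open sets in $X$ and $Y$ respectively, using that the product representation $[\delta_X,\delta_Y]$ has the expected effectivity properties. This gives the computable ``input modificator'' $K$ of the reduction, feeding the single instance $A\times B$ to the oracle $\P_{I\cdot J}\C_{X\times Y}$.

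For the output side, the oracle returns (a name of) some point $(x,y)\in A\times B$, i.e.\ $x\in A$ and $y\in B$; this is, verbatim, a correct answer for $(\P_I\C_X\times\P_J\C_Y)(A,B)=A\times B$, so the output modificator $H$ can be taken to be the identity (modulo translating between the product representation on $X\times Y$ and the pair of representations), which in particular does not need access to the original input — hence the reduction is strong. I do not expect a serious obstacle here; the only points requiring a little care are the measure-theoretic bookkeeping (which is exactly $(\mu_X\otimes\mu_Y)(A\times B)=\mu_X(A)\mu_Y(B)$, valid by $\sigma$-finiteness, and the observation that $I\cdot J$ is again an interval, already noted before the statement) and the routine effectivity of the closed-set operations under the $\psi_-$ representations.
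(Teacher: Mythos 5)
Your proposal is correct and matches the paper's proof exactly: the paper also reduces via the computable map $(A,B)\mapsto A\times B$ on $\AA_-$-spaces, using that the product measure of $A\times B$ is $\mu_X(A)\cdot\mu_Y(B)$. You have merely spelled out the routine effectivity and the triviality of the output side, which the paper leaves implicit.
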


The proof is straightforward, noting that the map 
\[\times:\AA_-(X)\times\AA_-(Y)\to\AA_-(X\times Y),(A,B)\mapsto A\times B\]
is computable and satisfies the property that the measure of the output
is the product of the measures of the inputs.

In \cite{BBP12} an Independent Choice Theorem~7.3 was proved that allows to
conclude that non-deterministically computable functions are closed under composition.
With an additional invocation of the Theorem of Fubini we can transfer this theorem
and its proof to the probabilistic setting.
This theorem can also be seen as a generalization of Proposition~\ref{prop:products} for ordinary Weihrauch reducibility in the
case of $R,S\In\IN^\IN$, since $f\times g\leqW f*g$. 

\begin{theorem}[Independent Choice]
\label{thm:products}
Let $R,S\In\IN^\IN$ both be endowed with $\sigma$--finite Borel measures and let $I,J$ be intervals, such that $\infty$ is not an open endpoint of $I$.
Then
\[\P_I\C_R*\P_J\C_S\leqW \P_{I\cdot J}\C_{R\times S}.\]
\end{theorem}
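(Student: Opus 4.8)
The plan is to unfold the definition of the compositional product on the left and the characterization of Weihrauch reducibility by realizers, and then build a single reduction to $\P_{I\cdot J}\C_{R\times S}$ whose success set is a product (up to a Fubini argument) of two success sets. Concretely, suppose $h\leqW \P_I\C_R*\P_J\C_S$; by the maximality of the compositional product it suffices to treat $h = f_0\circ g_0$ for arbitrary $f_0\leqW \P_I\C_R$ and $g_0\leqW\P_J\C_S$ that compose. By Theorem~\ref{thm:probabilistic-computability} the inner function $g_0$ is Las Vegas computable over $S$ with measure in $J$: on a name $p$ of the input there are computable $G_1,G_2$ with success set $S^{g}_p:=\{s\in S:\delta_\IS G_2\langle p,s\rangle=0\}$ of measure in $J$, and $G_1\langle p,s\rangle$ names an element of $g_0$ of that input for every $s\in S^g_p$. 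Likewise $f_0$ is Las Vegas computable over $R$ with measure in $I$, with computable $F_1,F_2$ and, for a name $q$ of an input to $f_0$, success set $S^f_q$ of measure in $I$.

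Next I would compose these two Las Vegas machines: on input name $p$ (for $h$) and random advice $\langle s,r\rangle\in S\times R$, first run the $g_0$-machine on $\langle p,s\rangle$; the output name $q:=G_1\langle p,s\rangle$ is fed as the input name to the $f_0$-machine together with $r$. The combined failure-recognition function outputs failure if either $G_2\langle p,s\rangle$ or $F_2\langle q,r\rangle$ ever signals failure; since $\IS$ is closed under the relevant ``or'' operation computably, this is a computable $\IS$-valued map, so its zero set is closed in $S\times R$. The combined computation function outputs $F_1\langle q,r\rangle$, which names an element of $f_0(g_0(\cdot))=h(\cdot)$ whenever both sub-advices are successful. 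So the success set on input $p$ is exactly
\[
T_p=\bigl\{\langle s,r\rangle\in S\times R: s\in S^g_p\text{ and }r\in S^f_{G_1\langle p,s\rangle}\bigr\}.
\]
What remains is to show $(\mu_S\otimes\mu_R)(T_p)\in J\cdot I=I\cdot J$, and then invoke Theorem~\ref{thm:probabilistic-computability} again (in the direction ``Las Vegas computable $\Rightarrow$ Weihrauch below $\P_{I\cdot J}\C_{R\times S}$'') to conclude $h\leqW\P_{I\cdot J}\C_{R\times S}$.

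The main obstacle is precisely this measure computation, and this is where Fubini and Lemma~\ref{lem:monotonicity} enter. The set $T_p$ is the region under the ``graph'' $s\mapsto S^f_{G_1\langle p,s\rangle}$, so by the Tonelli/Fubini theorem for the $\sigma$-finite product measure,
\[
(\mu_S\otimes\mu_R)(T_p)=\int_{S^g_p}\mu_R\bigl(S^f_{G_1\langle p,s\rangle}\bigr)\,{\rm d}\mu_S(s).
\]
For each $s\in S^g_p$ the name $q=G_1\langle p,s\rangle$ is a legitimate input name to $f_0$, so $\mu_R(S^f_q)\in I$; that is, the integrand takes values in the interval $I$ on the set $S^g_p$, which has $\mu_S$-measure in $J$. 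Lemma~\ref{lem:monotonicity} — applicable because $\infty$ is not an open endpoint of $I$ — then gives $\int_{S^g_p}(\cdots)\,{\rm d}\mu_S\in \mu_S(S^g_p)\cdot I\subseteq J\cdot I$. (One should also check measurability of $s\mapsto \mu_R(S^f_{G_1\langle p,s\rangle})$, which follows from upper semi-continuity/semi-computability of the measure together with the fact that the map $s\mapsto S^f_{G_1\langle p,s\rangle}$ is realized by a continuous operation into $\AA_-(R)$; this is routine and parallels the non-deterministic case in \cite{BBP12}.) This establishes $(\mu_S\otimes\mu_R)(T_p)\in I\cdot J$, completes the Las Vegas computation of $h$ over $R\times S$ with measure in $I\cdot J$, and hence yields the claimed reduction.
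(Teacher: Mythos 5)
Your proposal is correct and takes essentially the same route as the paper: reduce via Theorem~\ref{thm:probabilistic-computability} to composing two Las Vegas machines with random advice $\langle s,r\rangle\in S\times R$, form the combined failure-recognition map, and compute the measure of the success set $T_p$ via Fubini together with Lemma~\ref{lem:monotonicity}. The only (harmless) omission is that the paper disposes of the trivial case where $I$ or $J$ is empty before invoking Lemma~\ref{lem:monotonicity}, which requires a non-empty interval; also, the measurability of $s\mapsto\mu_R(S^f_{G_1\langle p,s\rangle})$ is automatic from Fubini for the measurable (in fact closed) set $T_p$, so your extra remark on this point is unnecessary.
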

\begin{proof}
If one of the intervals $I,J$ is empty, then $I\cdot J$ is empty and the claim holds.
Hence, we can assume that $I,J$ are both non-empty.
We consider represented spaces $(X,\delta_X)$, $(Y,\delta_Y)$ and $(Z,\delta_Z)$.
Let now $f:\In Y\mto Z$ and $g:\In X\mto Y$ be Las Vegas computable over $R,S$, respectively
with measures in $I,J$, respectively. Let $\mu_R,\mu_S$ denote the $\sigma$--finite measures of $R,S$, respectively.
Due to Theorem~\ref{thm:probabilistic-computability} it suffices to show that
$f\circ g$ is Las Vegas computable over $R\times S$ with measure in $I\cdot J$.
Intuitively, we can choose an advice $(r,s)\in R\times S$ and use advice $r$
for $f$ and advice $s$ for $g$. More precisely, let $f$ and $g$ be Las Vegas
computable using computable functions $F_1,F_2$ and $G_1,G_2$ according to Definition~\ref{def:Las-Vegas},
respectively.
We define $H_1$ and $H_2$ that witness Las Vegas computability of $f\circ g$
over $R\times S$ with measure in $I\cdot J$.
We can define a computable $H_1$ by
\[H_1\langle p,\langle r,s\rangle\rangle:=F_1\langle G_1\langle p,s\rangle,r\rangle\]
and there exists a computable $H_2$ such that
\[\delta_\IS H_2\langle p,\langle r,s\rangle\rangle
  =\left\{\begin{array}{ll}
  1 & \mbox{if $\delta_\IS G_2\langle p,s\rangle=1$}\\
  \delta_\IS F_2\langle G_1\langle p,s\rangle,r\rangle & \mbox{otherwise}
\end{array}\right.\]
for all $p\in\dom(fg\delta_X)$ and all $(r,s)\in R\times S$.
Such a computable $H_2$ exists, since $\delta_\IS G_2\langle p,s\rangle=0$ implies
that $\delta_Y G_1\langle p,s\rangle\in g(\delta_X(p))\In\dom(f)$.
Now we verify that $H_1$ and $H_2$ satisfy conditions (1) and (2) of
Definition~\ref{def:Las-Vegas} for $f\circ g$.
To this end, let $p\in\dom(fg\delta_X)$.

Let $(r,s)\in R\times S$ be such that $\delta_\IS H_2\langle p,\langle r,s\rangle\rangle=0$.
Then $\delta_\IS G_2\langle p,s\rangle=0$ and $\delta_\IS F_2\langle G_1\langle p,s\rangle,r\rangle=0$.
Hence by conditions (2) for $g$ and $f$ we obtain $\delta_YG_1\langle p,s\rangle\in g\delta_X(p)$
and hence $\delta_ZF_1\langle G_1\langle p,s\rangle,r\rangle\in fg\delta_X(p)$, which proves
condition (2) for $f\circ g$.

It remains to prove that condition (1) holds for $f\circ g$. For this purpose we consider for our fixed $p$
the following sets (which are closed in $S$, $R$, and $R\times S$, respectively):
\begin{itemize}
\item $S_p:=\{s\in S:\delta_\IS G_2\langle p,s\rangle=0\}$,
\item $R_{p,s}:=\{r\in R:\delta_\IS F_2\langle G_1\langle p,s\rangle,r\rangle=0\}$ for all $s\in S_p$,
\item $T_p:=\{(r,s)\in R\times S:\delta_\IS H_2\langle p,\langle r,s\rangle\rangle=0\}$.
\end{itemize}
Intuitively, the set $S_p$ is the set of successful advices for the Las Vegas computation of $g$ on input $p$,
$R_{p,s}$ is the set of successful advices of the Las Vegas computation of $f$ on input $G_2\langle p,s\rangle$,
provided $s\in S_p$ and $T_p$ is the set of all successful advices $(r,s)$ for the Las Vegas computation of $f\circ g$ on input $p$.
By condition (2) for $f$ and $g$ we know that $\mu_S(S_p)\in J$ and $\mu_R(R_{p,s})\in I$ for all $s\in S_p$.
By definition of $H_2$ we obtain
\[T_p=\{(r,s)\in R\times S:s\in S_p\mbox{ and }r\in R_{p,s}\}.\]
By the Theorem of Fubini for measurable sets (see \cite[Theorem~23.3]{Bau01a}) and Lemma~\ref{lem:monotonicity}
this yields
\[(\mu_R\otimes\mu_S)(T_p)=\int_{S_p}\mu_R(R_{p,s})\,{\rm d}\mu_S\in I\cdot\mu_S(S_p) \In I\cdot J,\]
where the integrand is understood to be the function $s\mapsto \mu_R(R_{p,s})$.
This shows that condition (1) also holds for $f\circ g$.
\end{proof}

In order to complete our results on products we need another notion.
A function $f:X\to Y$ on spaces $X,Y$ that are equipped with measures $\mu_X,\mu_Y$,
respectively, is called {\em measure preserving} if $\mu_X(f^{-1}(A))=\mu_Y(A)$ for all closed $A\In Y$.
A function $f:X\to Y$ is called a {\em computable isomorphism} if it is bijective and $f$
as well as $f^{-1}$ are computable. 
We will exploit the fact that the usual Cantor pairing functions (as introduced in Section~\ref{sec:preliminaries})
are computable and measure preserving. 

\begin{lemma}[Pairing functions]
\label{lem:pairing}
The following functions are computable isomorphisms and measure preserving. 
We assume that $\IN$ is endowed with the counting measure.
\begin{enumerate}
\item $2^\IN\times2^\IN\to2^\IN,(p,q)\mapsto\langle p,q\rangle$,
\item $\IN^\IN\times\IN^\IN\to\IN^\IN,(p,q)\mapsto\langle p,q\rangle$,
\item $\IN\times\IN\to\IN,(n,k)\mapsto\langle n,k\rangle$,
\item $(\IN\times2^\IN)\times(\IN\times2^\IN)\to\IN\times2^\IN,(p,q)\mapsto\langle p,q\rangle$.
\end{enumerate}
\end{lemma}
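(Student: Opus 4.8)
The plan is to verify each of the four items directly, since they are all of the same shape: for a canonical pairing function we must check (a) computability of the map, (b) computability of its inverse, and (c) measure preservation. Part (a) and (b) are standard facts about the pairing functions collected in Section~\ref{sec:preliminaries}, so the only real content is part (c), and even that reduces to checking the defining equation of a product measure on the generating balls together with an appeal to Lemma~\ref{lem:identity}. Concretely, for a pairing map $\pi$ of the form $X\times X\to X$ I would compute, for each basic ball $B$ in the target space $X$, the measure $\mu_X(\pi^{-1}(B))$ and compare it with $(\mu_X\otimes\mu_X)(\pi^{-1}(B))$ pulled back, i.e.\ show $\mu_{X\times X}(\pi^{-1}(B))=\mu_X(B)$. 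Because $\pi$ is a computable isomorphism, $\pi^{-1}(B)$ is itself a ball (or a disjoint union of balls) in $X\times X$, so this is a finite computation with the explicit formulas for the measures given in the measure-theory subsection.

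For item~(3), the Cantor pairing $\langle\cdot,\cdot\rangle:\IN\times\IN\to\IN$ is a bijection and both directions are primitive recursive, hence computable. With the counting measure on $\IN$, every singleton has measure $1$, and since the map is a bijection, $\mu_\IN(\langle\cdot,\cdot\rangle^{-1}(\{n\}))=1=\mu_\IN(\{n\})$ for every $n$, while $(\mu_\IN\otimes\mu_\IN)(\{(j,k)\})=1$ as well; so measure preservation is immediate and no limiting argument is needed. For item~(1), the pairing $\langle p,q\rangle$ on Cantor space interleaves $p$ and $q$, so the preimage of a cylinder $w2^\IN$ with $|w|=2m$ (respectively $2m+1$) is a product $u2^\IN\times v2^\IN$ with $|u|+|v|=|w|$, and then $\mu_{2^\IN\times2^\IN}(u2^\IN\times v2^\IN)=2^{-|u|}2^{-|v|}=2^{-|w|}=\mu_{2^\IN}(w2^\IN)$; by Lemma~\ref{lem:identity} (applied to the finite measure $\mu_{2^\IN}$ transported along the isomorphism) this gives measure preservation on all closed sets. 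Item~(2) is analogous with $\IN^\IN$ in place of $2^\IN$, using that the product measure $\mu_{\IN^\IN}$ was defined so that $\mu_{\IN^\IN}(w\IN^\IN)=\prod_{i<|w|}2^{-w(i)-1}$ factors exactly along the interleaving. Item~(4) is then obtained by combining the previous cases componentwise: the pairing on $\IN\times 2^\IN$ was defined as $\langle p_0,p_1\rangle:=\langle\langle p_0(0),p_1(0)\rangle,\langle\overline{p_0},\overline{p_1}\rangle\rangle$, so it is the composite of the $\IN$-pairing on the first coordinates and the $2^\IN$-pairing on the tails, both of which we have just shown to be computable isomorphisms that are measure preserving; since a composition of such maps is again such a map (and the measure on $\IN\times 2^\IN$ is the product $\mu_\IN\otimes\mu_{2^\IN}$), the claim follows.

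The main obstacle, such as it is, is purely bookkeeping: making sure the indexing conventions of the interleaving match the exponents in the product-measure formulas, especially the parity split in item~(1) and the fact that in item~(4) one has to keep track of which coordinate ($\IN$-part or tail) each piece of the interleaved name belongs to. None of the steps require a genuinely new idea; the only external input beyond the explicit formulas is Lemma~\ref{lem:identity}, which lets us reduce "measure preserving on all closed sets" to "measure preserving on balls" once we know the relevant measures are finite (for $2^\IN$ and $\IN^\IN$ they are probability measures, and for $\IN$ with the counting measure the bijection argument bypasses the lemma entirely).
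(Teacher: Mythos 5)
The paper states this lemma without proof, treating it as routine; your proof correctly supplies the omitted verification along the only natural lines: computability of the pairings and their inverses is immediate from the definitions in the preliminaries, measure preservation is checked on basic cylinders (where it follows directly from the defining formulas for the product measures), and Lemma~\ref{lem:identity} extends this from cylinders to all closed sets in the Baire-subspace cases, while for the counting measure on $\IN$ bijectivity alone suffices. Your handling of item~(4) as the composite of the rearrangement isomorphism $(\IN\times2^\IN)^2\cong(\IN\times\IN)\times(2^\IN\times2^\IN)$ with the componentwise applications of items~(1) and~(3) correctly matches the paper's definition $\langle p_0,p_1\rangle=\langle\langle p_0(0),p_1(0)\rangle,\langle\overline{p_0},\overline{p_1}\rangle\rangle$ and uses the uniqueness of $\sigma$-finite product measures to justify the rearrangement being measure preserving.
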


In the special case of spaces $R$ that are equipped with a pairing mechanism that
is a computable isomorphism and measure preserving (as for the spaces given in Lemma~\ref{lem:pairing})
and $I$ is closed under product, 
we can exploit the fact that we obtain
\[\P_I\C_{R\times R}\equivSW\P_I\C_{R}.\]
This yields the following corollary.

\begin{corollary}[Products and pairing]
\label{cor:products-pairing}
If $I$ is an interval that is closed under product and does not contain $\infty$ as an open endpoint
and $R\In\IN^\IN$ is equipped with a $\sigma$--finite measure and a corresponding pairing function that is a computable
isomorphism as well as measure preserving, then we obtain
\[\P_I\C_{R\times R}\equivSW\P_I\C_{R}\equivSW\P_I\C_{R}\times\P_I\C_{R}\equivW\P_I\C_{R}*\P_I\C_{R}.\]
In particular, $\P_I\C_{R}$ is strongly idempotent and closed under composition.
\end{corollary}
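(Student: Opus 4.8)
The plan is to establish the displayed chain of equivalences link by link, letting the pairing function carry out the geometry and letting Theorem~\ref{thm:products} supply the one reduction that is not pure bookkeeping. Write $\pi:R\times R\to R$ for the given pairing function; by hypothesis (and by Lemma~\ref{lem:pairing} in the concrete cases) $\pi$ is a computable isomorphism and measure preserving, i.e.\ $(\mu_R\otimes\mu_R)(\pi^{-1}(B))=\mu_R(B)$ for every closed $B\In R$. Since $\pi$ is a bijection, this rewrites as $(\mu_R\otimes\mu_R)(A)=\mu_R(\pi(A))$ for every closed $A\In R\times R$.

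First I would show $\P_I\C_{R\times R}\equivSW\P_I\C_R$. Because $\pi$ is a computable isomorphism of the underlying represented spaces, it lifts to a computable isomorphism $\AA_-(R\times R)\to\AA_-(R)$, $A\mapsto\pi(A)$, with computable inverse $B\mapsto\pi^{-1}(B)$ (preimages and images of basic open balls under $\pi$ and $\pi^{-1}$ are effectively enumerable, so negative information transports both ways). By the measure identity above, $A\in\dom(\P_I\C_{R\times R})$ iff $\pi(A)\in\dom(\P_I\C_R)$; hence on input $A$ one solves $\P_I\C_R$ on $\pi(A)$, obtains $y\in\pi(A)$, and outputs $\pi^{-1}(y)\in A$, and symmetrically in the other direction. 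Both output maps discard the original input, so these are strong reductions. Next, $\P_I\C_R\equivSW\P_I\C_R\times\P_I\C_R$: for ``$\leqSW$'' use the diagonal, feeding $(A,A)$ to $\P_I\C_R\times\P_I\C_R$ and projecting the answer; for ``$\geqW$'' combine Proposition~\ref{prop:products}, monotonicity in the interval (Proposition~\ref{prop:interval-monotonicity}), closure of $I$ under product, and the previous step to get
\[\P_I\C_R\times\P_I\C_R\leqSW\P_{I\cdot I}\C_{R\times R}\leqSW\P_I\C_{R\times R}\equivSW\P_I\C_R.\]
Finally, $\P_I\C_R*\P_I\C_R\equivW\P_I\C_R$: ``$\leqW$'' is exactly Theorem~\ref{thm:products} with $R=S$ and $I=J$ (this is where the hypothesis that $\infty$ is not an open endpoint of $I$ is used), followed once more by $\P_{I\cdot I}\C_{R\times R}\leqSW\P_I\C_{R\times R}\equivSW\P_I\C_R$; and ``$\geqW$'' follows from the general fact $f\times g\leqW f*g$ together with the second step. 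Chaining the three steps yields the displayed equivalences.

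The ``in particular'' claims are then immediate: strong idempotency of $\P_I\C_R$ is precisely the second step, and closure under composition follows since $f,g\leqW\P_I\C_R$ implies $f\circ g\leqW f*g\leqW\P_I\C_R*\P_I\C_R\equivW\P_I\C_R$ by monotonicity of the compositional product. I expect the only point requiring genuine care is the first step, namely verifying that a computable isomorphism of the base spaces really does induce a computable isomorphism of the negative-information hyperspaces in \emph{both} directions, and keeping the measure-preservation identity $\mu_R(\pi(A))=(\mu_R\otimes\mu_R)(A)$ straight; everything after that is assembling Theorem~\ref{thm:products}, Proposition~\ref{prop:products} and Proposition~\ref{prop:interval-monotonicity}.
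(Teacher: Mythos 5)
Your proof is correct and follows the same route the paper (implicitly) takes: the pairing isomorphism gives $\P_I\C_{R\times R}\equivSW\P_I\C_R$, and the rest is chaining Propositions~\ref{prop:interval-monotonicity}, \ref{prop:products} and Theorem~\ref{thm:products} together with the closure of $I$ under product. The paper states the corollary without a separate proof, merely noting the key equivalence $\P_I\C_{R\times R}\equivSW\P_I\C_R$; you have supplied exactly the intended details, including the correct use of the hypothesis that $\infty$ is not an open endpoint of $I$ (for Theorem~\ref{thm:products}) and of $\sigma$-finiteness (for Proposition~\ref{prop:products}).
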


We note that by Lemma~\ref{lem:pairing} the assumption on the pairing function applies to all the spaces
$R$ among $\IN,2^\IN,\IN^\IN,\IN\times2^\IN$ with the respective
canonical measures. In particular, we get the following corollary.

\begin{corollary}[Closure under composition]
The classes of multi-valued functions that are Las Vegas computable 
and Las Vegas computable with finitely many mind changes, respectively,
are both closed under composition.
\end{corollary}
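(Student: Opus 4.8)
The plan is to deduce this directly from the characterizations in Corollary~\ref{cor:Las-Vegas} together with the idempotence and composition-closure of positive choice established in Corollary~\ref{cor:products-pairing}. First I would recall that, by Corollary~\ref{cor:Las-Vegas}, a multi-valued function $f$ is Las Vegas computable if and only if $f\leqW\PC_{2^\IN}$, and Las Vegas computable with finitely many mind changes if and only if $f\leqW\PC_{\IN\times2^\IN}$. Hence it suffices to show that for $R$ equal to $2^\IN$ or to $\IN\times2^\IN$ (each with its canonical $\sigma$--finite measure), the class $\{f:f\leqW\PC_R\}$ is closed under composition.

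Next I would check that the hypotheses of Corollary~\ref{cor:products-pairing} are met for the interval $I=(0,\infty]$ that defines positive choice. This interval is closed under product, since $x,y>0$ gives $xy>0$ and $x\cdot\infty=\infty$ for $x>0$; moreover it does not contain $\infty$ as an open endpoint. By Lemma~\ref{lem:pairing}, both $2^\IN$ and $\IN\times2^\IN$ carry a pairing function that is a computable isomorphism as well as measure preserving. Therefore Corollary~\ref{cor:products-pairing} applies and yields in particular $\PC_R*\PC_R\equivW\PC_R$ for $R$ among $2^\IN$ and $\IN\times2^\IN$.

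Finally I would run the closure argument: given $f\leqW\PC_R$ and $g\leqW\PC_R$ with $g\circ f$ defined, one has $g\circ f\leqW g*f$ directly from the definition of the compositional product (the composable pair $(g,f)$ is itself among those over which the maximum is taken), and $g*f\leqW\PC_R*\PC_R$ by monotonicity of the compositional product in both arguments with respect to $\leqW$ (any $f_0\leqW f$ satisfies $f_0\leqW\PC_R$ by transitivity, and likewise for $g_0$). Combining these gives $g\circ f\leqW\PC_R*\PC_R\equivW\PC_R$, so $g\circ f$ again lies in the class. Applying this once with $R=2^\IN$ and once with $R=\IN\times2^\IN$ establishes both assertions. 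I do not expect a genuine obstacle here; the one point requiring care is verifying that $(0,\infty]$ really satisfies the product-closure hypothesis of Corollary~\ref{cor:products-pairing}, which is precisely why \emph{positive} choice is the right object and why an analogous closure statement would fail for choice with a fixed positive lower bound.
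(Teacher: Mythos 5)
Your proof is correct and takes essentially the same route as the paper: the corollary is presented there as an immediate consequence of Corollary~\ref{cor:products-pairing} once one observes (via Lemma~\ref{lem:pairing}) that $2^\IN$ and $\IN\times2^\IN$ admit measure-preserving computable pairing isomorphisms, and that $(0,\infty]$ is closed under product with $\infty$ a closed rather than open endpoint. Your explicit unwinding of the step from $\PC_R*\PC_R\equivW\PC_R$ to closure of the downward closure $\{f:f\leqW\PC_R\}$ under composition is exactly what the paper leaves tacit, and is correct.
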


\section{Intervals that are Closed under Product}
\label{sec:intervals}

In Corollary~\ref{cor:products-pairing} we have seen that intervals that are closed
under product lead to very natural notions of probabilistic computability, since
the corresponding classes of functions are closed under composition.
For the case of coin tosses, i.e., for the space $2^\IN$, we will see that we 
only obtain three distinct classes in this way: $\C_1,\C_{2^\IN}$ and $\PC_{2^\IN}$.
These classes correspond exactly to the following classes of multi-valued functions: computable,
non-deterministically computable and Las Vegas computable ones, respectively.

We start by considering the types of intervals that are closed under product.
In case that we are using a probability measure, we only have to
consider intervals $I\In[0,1]$ and we can easily see which of those are closed
under product.

\begin{lemma}[Intervals closed under product]
An interval $I\In[0,1]$ is closed under product if and only if
one of the following cases holds:
\begin{enumerate}
\item $I=\{1\}$,
\item $0\in I$,
\item $I=(0,b)$ or $I=(0,b]$ for some $b\in(0,1]$.
\end{enumerate}
\end{lemma}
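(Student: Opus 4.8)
The plan is to characterize which intervals $I\In[0,1]$ satisfy $I\cdot I\In I$ by a direct case analysis on whether $0\in I$, whether $1\in I$, and what the endpoints of $I$ look like. The ``if'' direction is routine: in case (1), $1\cdot 1=1$; in case (2), if $0\in I$ then $I$ is an interval of the form $[0,b)$, $[0,b]$, $\{0\}$, and products of elements of such an interval stay in $[0,b]$ (and equal $0$ if either factor is $0$), so closure holds — here one uses $0\le xy\le\max(x,y)\le b$ for $x,y\in[0,1]$; in case (3), for $x,y\in(0,b)$ we have $0<xy<b$ since $xy<x<b$ (as $y<1$, using $b\le 1$), and similarly for the half-open variant $xy\le x\le b$ with equality only if $x=b$ and $y=1$, which forces $b=1\in I$, consistent with $I=(0,1]$. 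I would phrase this carefully so that the borderline $b=1$ case is handled.

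For the ``only if'' direction, suppose $I\In[0,1]$ is a non-empty interval closed under product, and assume we are not in cases (1) or (2); so $0\notin I$ and $I\neq\{1\}$. First I would argue that $I$ cannot contain any point $a<1$ together with being a singleton: if $I=\{a\}$ with $a<1$ and $a>0$, then $a^2<a$, so $a^2\notin I$, contradiction; hence a singleton $I$ closed under product with $0\notin I$ must be $\{1\}$, already excluded. So $I$ is a non-degenerate interval with $0\notin I$, hence $\inf I=:c\ge 0$; I claim $c=0$. Indeed, pick $x\in I$ with $x<1$ (possible since $I\neq\{1\}$ is non-degenerate and $I\In[0,1]$, so $I$ cannot be $\{1\}$ or contain only the point $1$). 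Then $x,x^2,x^3,\dots$ all lie in $I$ by closure under product, and $x^n\to 0$; since $I$ is an interval containing $x$ and containing points arbitrarily close to $0$, we get $(0,x]\In I$ and $\inf I=0$. Because $0\notin I$, the left endpoint is open, so $I=(0,b)$ or $I=(0,b]$ where $b=\sup I\in(0,1]$ — that is exactly case (3). This completes the argument.

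The main obstacle — really the only subtle point — is bookkeeping at the endpoint $b=1$: one must check that $(0,1]$ is genuinely closed under product (it is: $x,y\in(0,1]\Rightarrow 0<xy\le 1$, and $xy\le\min(x,y)$ shows $xy\in(0,1]$), and that $(0,1)$ is too, so case (3) correctly includes both. One should also note that case (3) overlaps case (1) nowhere and case (2) nowhere since $0\notin(0,b]$, and that the list is not meant to be mutually exclusive in an essential way (e.g. $I=\{1\}$ could also be seen as a degenerate interval), so I would state the lemma's cases as an exhaustive disjunction rather than a partition. A final sentence would remark that this lemma, combined with Corollary~\ref{cor:products-pairing}, is what pins down the three classes $\C_1$, $\PC_{2^\IN}$, $\C_{2^\IN}$ over $R=2^\IN$: $I=\{1\}$ gives computability, $I=(0,b)$ or $(0,b]$ gives Las Vegas computability, and $0\in I$ gives non-deterministic computability.
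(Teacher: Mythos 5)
The paper states this lemma without proof, evidently regarding it as routine, so there is no official argument to compare against. Your proof is correct and supplies exactly the kind of elementary case analysis that the omission implicitly expects. The key points are all in place: for sufficiency you use $0<xy\le\min(x,y)$ for $x,y\in(0,1]$ (your stated bound $xy\le\max(x,y)$ is weaker but still adequate for case (2); for case (3) it's the $\min$ bound that one really needs, and you effectively use it via $xy<x$); for necessity you rule out nontrivial singletons, then observe that any $x\in I\cap(0,1)$ forces all powers $x^n$ into $I$, so by convexity $(0,x]\In I$, hence $\inf I=0$, and since $0\notin I$ the left endpoint is open, giving $I=(0,b)$ or $(0,b]$ with $b=\sup I\in(0,1]$.

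Two small remarks. First, your closing aside is self-contradictory: you first (correctly) note that case (3) is disjoint from cases (1) and (2), and then claim the list ``is not meant to be mutually exclusive.'' In fact the three cases are pairwise disjoint as stated: $\{1\}$ does not contain $0$ and is not of the form $(0,b)$ or $(0,b]$; case (2) requires $0\in I$ while case (3) excludes it. Second, if one allows the empty set as an interval (the paper does entertain empty intervals elsewhere, e.g.\ in the proof of the Independent Choice Theorem), then $\emptyset$ is vacuously closed under product but falls under none of the three cases; your proof tacitly assumes $I\neq\emptyset$, and the lemma should likewise be read with that convention. Neither of these affects the substance of the argument.
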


These three cases lead exactly to the three choice principles $\C_1,\C_{2^\IN}$ and $\PC_{2^\IN}$, respectively. 

\begin{proposition}[Choice for intervals that are closed under product]
Let $I\In[0,1]$ be an interval that is closed under product.
We obtain:
\[\P_I\C_{2^\IN}\equivSW\left\{\begin{array}{ll}
   \C_1 & \mbox{if $I=\{1\}$}\\
   \C_{2^\IN} & \mbox{if $0\in I$}\\
   \PC_{2^\IN} & \mbox{if $I=(0,b)$ or $I=(0,b]$ for some $b\in(0,1]$}
\end{array}\right..\]
\end{proposition}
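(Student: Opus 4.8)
The strategy is to handle each of the three cases separately, using the previous lemma to know that these are the only cases, and establishing equivalences in both directions via strong Weihrauch reducibility. In each case the harder direction is exhibiting the reduction from the standard choice principle to $\P_I\C_{2^\IN}$; the reverse direction $\P_I\C_{2^\IN}\leqSW$ (target) follows either from monotonicity (Proposition~\ref{prop:interval-monotonicity}) or is trivial.

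\emph{Case $I=\{1\}$.} Here $\dom(\P_{\{1\}}\C_{2^\IN})$ consists only of closed sets $A\subseteq2^\IN$ with $\mu_{2^\IN}(A)=1$. First I would observe $\C_1\leqSW\P_{\{1\}}\C_{2^\IN}$ trivially (the single point of the one-point space is computable; map it to $2^\IN$ itself, which has measure $1$, and any output is acceptable). For the converse $\P_{\{1\}}\C_{2^\IN}\leqSW\C_1$ I would argue that a closed set of measure $1$ in Cantor space is all of $2^\IN$: if $A\neq2^\IN$ then its complement contains a nonempty basic open cylinder $w2^\IN$, which has positive measure $2^{-|w|}$, contradicting $\mu(A)=1$. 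Hence the problem is to produce an arbitrary point of $2^\IN$, which is computable, so $\P_{\{1\}}\C_{2^\IN}\equivSW\C_1$.

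\emph{Case $0\in I$.} Here $\emptyset$-measure sets are admissible inputs, so every nonempty closed set with measure in $I$ is allowed — but we need \emph{all} nonempty closed sets, and a nonempty closed set can have measure outside $I$ (e.g.\ measure close to $1$ when $I=[0,b]$ with small $b$). The reduction $\P_I\C_{2^\IN}\leqSW\C_{2^\IN}$ is immediate from Proposition~\ref{prop:interval-monotonicity} since $I\subseteq[0,1]\subseteq[0,\infty]$. For $\C_{2^\IN}\leqSW\P_I\C_{2^\IN}$ the key step is a measure-shrinking trick: given an arbitrary nonempty co-c.e.\ closed set $A\subseteq2^\IN$, compute from its negative description the closed set $A':=\{0^{n}1p:p\in A, n\text{ suitably large}\}$ — more precisely embed $A$ into a thin cylinder so that its image has measure $\leq b$ while remaining nonempty and co-c.e.\ closed, then recover a point of $A$ from a point of $A'$ by a computable projection. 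One has to check that prepending a fixed block is measure-preserving up to the scaling factor and that the embedding and its partial inverse are computable on negative information; since $0\in I$ the resulting measure $2^{-(n+1)}\mu(A)\in[0,b]\subseteq I$ for all $n$, so any such $n$ works and no uniform knowledge of $\mu(A)$ is needed.

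\emph{Case $I=(0,b)$ or $(0,b]$.} This is the main case and the one requiring genuine work. Here $0\notin I$, so we must guarantee \emph{positive} measure, and $b$ may be $<1$. One direction: $\PC_{2^\IN}=\P_{(0,\infty]}\C_{2^\IN}\leqSW\P_{(0,1]}\C_{2^\IN}$ is trivial since over a probability space measures lie in $[0,1]$, and then $\P_{(0,1]}\C_{2^\IN}\leqSW\P_{(0,b]}\C_{2^\IN}$ is the step that needs the shrinking construction again: given nonempty co-c.e.\ closed $A$ of positive measure, embed it into a cylinder $w2^\IN$ of small enough length that the image has measure $\leq b$ (it stays $>0$ since $A\neq\emptyset$ forces $\mu(A)>0$ only if $A$ has positive measure — \textbf{careful}: a nonempty closed set can have measure $0$, e.g.\ a single point!). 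This is the subtle point: when $I=(0,b]$ the domain of $\PC_{2^\IN}$ already excludes measure-zero sets, so we are reducing from $\P_{(0,\infty]}\C_{2^\IN}$ whose inputs all have positive measure, and the embedding into a short cylinder multiplies the measure by a positive constant, keeping it in $(0,b]$ for $|w|$ large. The reverse direction $\P_{(0,b]}\C_{2^\IN}\leqSW\PC_{2^\IN}$ (and similarly with the open interval) is immediate from $I\subseteq(0,\infty]$ and Proposition~\ref{prop:interval-monotonicity}. Finally I would note that $I=(0,b)$ reduces to $I=(0,b]$ and vice versa by the same cylinder-embedding trick together with the observation that one can always shrink a set of measure $<b$ to one of measure $<b'<b$, or note both equal $\PC_{2^\IN}$ by sandwiching.

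\emph{Main obstacle.} The crux throughout the second and third cases is the \textbf{measure-rescaling embedding}: producing, uniformly and computably from a negative (co-c.e.) description of a nonempty closed $A\subseteq2^\IN$, a negative description of a closed set whose measure has been scaled down into the target interval $I$, together with a computable map recovering a point of $A$ from a point of the rescaled set. I expect the bookkeeping about which cylinder length suffices (and that no knowledge of the exact value $\mu(A)$, which is only upper semi-computable by Lemma~\ref{lem:semi-computable}, is required) to be the delicate part; the fact that $0\in I$ or that the input measure is already positive is exactly what makes a single fixed or trivially-chosen cylinder length work without needing to know $\mu(A)$.
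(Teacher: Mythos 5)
Your proof follows the same three-case decomposition as the paper, and cases~1 and~3 are handled essentially the same way the paper does: case~1 by observing a closed full-measure set in $2^\IN$ must be all of $2^\IN$, and case~3 by the prefix/cylinder-embedding trick $A\mapsto 0^nA$, which rescales the measure by $2^{-n}$ while keeping it positive.

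However, there is a genuine gap in case~2 ($0\in I$). You propose the same prefix-shrinking trick $A\mapsto 0^n1A$ and justify it by the claim that ``$2^{-(n+1)}\mu(A)\in[0,b]\subseteq I$.'' This is exactly where the argument fails: the condition $0\in I$ does \emph{not} imply that $I$ contains a right-neighborhood $[0,b]$ of $0$. The singleton interval $I=\{0\}=[0,0]$ is explicitly admitted by the paper's definition of interval, and it satisfies $0\in I$ and $I\cdot I\subseteq I$, so it falls under case~2. For $I=\{0\}$ your construction produces a set of measure $2^{-(n+1)}\mu(A)$, which is strictly positive whenever $\mu(A)>0$ (e.g.\ $A=2^\IN$), and hence is not in $\dom(\P_{\{0\}}\C_{2^\IN})$ for any choice of $n$. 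More generally, your trick can only drive the measure into $[0,\varepsilon]$ for some fixed $\varepsilon>0$; it cannot force the measure to be exactly~$0$, which is what the degenerate interval demands.

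The paper avoids this by a different device: it maps $A$ to $B:=\langle 0^\omega,A\rangle=\{\langle 0^\omega,p\rangle:p\in A\}$, using the measure-preserving computable pairing isomorphism $\langle\cdot,\cdot\rangle:2^\IN\times2^\IN\to2^\IN$ (Lemma~\ref{lem:pairing}). Then $\mu(B)=\mu(\{0^\omega\})\cdot\mu(A)=0$ \emph{exactly}, for every nonempty closed $A$, so $B\in\dom(\P_I\C_{2^\IN})$ whenever $0\in I$ — including $I=\{0\}$ — and a point of $A$ is recovered by the computable inverse of the pairing. Replacing your shrinking embedding with this pairing construction repairs the gap; the rest of your argument (the easy directions, the other two cases) is sound.
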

\begin{proof}
If a closed set $A\In2^\IN$ has full measure $1$, then
it must be identical to the whole space $2^\IN$ and
hence it contains the constant zero sequence that can be computed.
This proves that $\P_{=1}\C_{2^\IN}\equivSW\C_1$.

An arbitrary non-empty closed set $A\In2^\IN$ can
be paired with the constant zero sequence $0^\omega$ to $B=\langle 0^\omega,A\rangle$
and this set has measure $0$. Hence, the computable map $A\mapsto B$ yields
the reduction $\C_{2^\IN}\leqSW\P_I\C_{2^\IN}$ if $0\in I$.
The other direction is obvious.
This proves $\P_I\C_{2^\IN}\equivSW\P_{\geq0}\C_{2^\IN}=\C_{2^\IN}$ if $0\in I$.

Now let $I=(0,b)$ and $J=(0,c)$ with $b,c\in(0,1]$. 
We claim 
\[\P_{(0,b)}\C_{2^\IN}\equivSW\P_{(0,c)}\C_{2^\IN}.\]
If $b\leq c$, then the direction $\leqSW$ is obvious. For the other direction, 
we need to map a given closed set $A\In2^\IN$ with positive measure $\mu(A)<c$
in a computable way to a closed set $B\In2^\IN$ with positive measure $\mu(B)<b\leq c$
such that we can recover a point of the original set $A$ from any point in $B$.
For this purpose we use the map $A\mapsto 0^nA$ that adds a suitable prefix $0^n$ of length $n$
to any point in $A$, where $n$ depends on $\frac{c}{b}$ and guarantees that $0^nA$ has a small
enough measure. This yields the desired reduction. The proof for intervals of type $I=(0,b]$ 
is analogous.  
\end{proof}

We mention that the second case also includes $\P_{=0}\C_{2^\IN}$, i.e., choice for non-empty
closed zero sets. 
While the results in this section show that {\em upper} bounds on the probability do not really lead
to meaningful distinctions, we will see in Section~\ref{sec:probability} that {\em lower} bounds can lead to such distinctions.

\section{Las Vegas Computability with Finitely Many Mind Changes}
\label{sec:PCN}

We recall that we want to call $f$ Las Vegas computable with finitely many mind changes
if $f\leqW\PC_{\IN\times 2^\IN}$. The purpose of this section is to get some further insights
into the $\PC_{\IN\times 2^\IN}$. We start with some comments on $\PC_\IN$.

\begin{lemma}[Probabilistic choice on natural numbers]
\label{lem:PCN}
We obtain
\[\PC_\IN=\P_{[0,\infty]}\C_\IN\equivSW\P_{[0,\infty)}\C_\IN\equivSW\C_\IN,\]
if $\IN$ is equipped with the counting measure or the geometric measure.
\end{lemma}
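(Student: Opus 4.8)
The plan is to prove the chain of equivalences by showing two inclusions: the "easy" monotonicity direction $\C_\IN\leqSW\P_{[0,\infty)}\C_\IN\leqSW\PC_\IN$ and the "hard" direction $\PC_\IN\leqSW\C_\IN$, which collapses the whole chain. The first two reductions are immediate from Proposition~\ref{prop:interval-monotonicity}, since $[0,\infty)\In[0,\infty]$ gives $\P_{[0,\infty)}\C_\IN\leqSW\PC_\IN$, and every non-empty closed subset of $\IN$ has strictly positive measure under either the counting measure ($\geq 1$) or the geometric measure ($\geq 2^{-n-1}>0$ for the least element $n$), so in fact $\C_\IN$ restricted to non-empty closed sets already lands in the domain of $\P_{[0,\infty)}\C_\IN$; that is, $\C_\IN\equivSW\P_{(0,\infty)}\C_\IN\leqSW\P_{[0,\infty)}\C_\IN$. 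Note also that for the counting measure any non-empty closed set has measure in $[1,\infty]$, so trivially in $[0,\infty]$; for the geometric measure the total mass is $1$, so all measures lie in $[0,1]\In[0,\infty]$. So the only content is the reduction $\PC_\IN\leqSW\C_\IN$.

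For the main step, recall that a name for $A\in\AA_-(\IN)$ is an enumeration of $\IN\setminus A$, and $\C_\IN$ must output some element of $A$. We are given additionally that $\mu(A)>0$, but crucially we do not know a positive lower bound for $\mu(A)$. Nonetheless I claim $\PC_\IN\leqSW\C_\IN$: the reduction is essentially the identity on names, i.e.\ we simply feed the negative name of $A$ to the $\C_\IN$-oracle and return its output, with $H=\id$. The point is that $\C_\IN$ is \emph{total} on non-empty closed subsets of $\IN$, and every $A$ in the domain of $\PC_\IN$ is a non-empty closed subset of $\IN$ (positive measure forces non-emptiness under either measure); so any realizer of $\C_\IN$ already produces a correct answer on every such input. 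Thus $K=H=\id$ witness $\PC_\IN\leqSW\C_\IN$, and since this is a strong reduction we get $\PC_\IN\equivSW\C_\IN$.

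The only subtlety — and the place where one should be slightly careful — is matching the domains. Under the counting measure, the domain of $\PC_\IN=\P_{[0,\infty]}\C_\IN$ is $\{A:\mu_\IN(A)\in[0,\infty]\}$, which is \emph{all} closed sets including $\emptyset$ (since $\mu_\IN(\emptyset)=0\in[0,\infty]$), whereas $\dom(\C_\IN)$ excludes $\emptyset$. So strictly one should write $\PC_\IN$ here with its domain restricted to non-empty closed sets, or observe that $\emptyset$ is vacuously handled. Combining: $\C_\IN\leqSW\P_{[0,\infty)}\C_\IN\leqSW\PC_\IN\leqSW\C_\IN$, and since $\leqSW$ is transitive and antisymmetric on degrees, all four are strongly Weihrauch equivalent, which is the assertion of the lemma. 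I expect no genuine obstacle here; the lemma is essentially the observation that every non-empty closed subset of the discrete space $\IN$ automatically has positive measure, so the positivity constraint in $\PC_\IN$ is vacuous and adds nothing beyond ordinary closed choice $\C_\IN$.
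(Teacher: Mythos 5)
Your overall decomposition is fine, but your proof of the middle reduction $\C_\IN\leqSW\P_{[0,\infty)}\C_\IN$ breaks down for the counting measure, and this is precisely the direction that carries the actual mathematical content. You argue from the fact that ``every non-empty closed subset of $\IN$ has strictly positive measure'' that $\dom(\C_\IN)$ lands inside $\dom(\P_{[0,\infty)}\C_\IN)$, so that the identity gives the reduction. But the interval $[0,\infty)$ excludes $\infty$, and under the counting measure every \emph{infinite} subset of $\IN$ --- including $\IN$ itself --- has measure $\infty$. Hence $\dom(\P_{[0,\infty)}\C_\IN)$ consists exactly of the non-empty \emph{finite} subsets of $\IN$, and the identity does not carry $\dom(\C_\IN)$ into it. Positivity is indeed automatic in the discrete space, but finiteness is not, and you have silently conflated the two constraints.

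You have also inverted which direction is hard. The reduction $\PC_\IN\leqSW\C_\IN$, which you treat as the ``main step'', is not merely ``essentially the identity'': under either measure, positivity of measure is equivalent to non-emptiness, so $\PC_\IN$ and $\C_\IN$ have identical domains and are literally the \emph{same} partial multi-valued function; there is nothing to prove. The genuine content is to show that restricting $\C_\IN$ to finite inputs (which is what $\P_{[0,\infty)}\C_\IN$ amounts to under the counting measure) does not weaken it. The paper does this by invoking the non-trivial fact $\UC_\IN\equivSW\C_\IN$ (Proposition~3.8 of the cited work by Brattka, Gherardi and Marcone), where $\UC_\IN$ is choice restricted to singletons; since singletons are in particular finite, this gives $\C_\IN\equivSW\UC_\IN\leqSW\P_{[0,\infty)}\C_\IN\leqSW\C_\IN$. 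Your argument cannot be repaired without some such input: there is no computable procedure that, from a negative name of an arbitrary non-empty $A\In\IN$, produces a name of a finite non-empty subset of $A$, so no simple domain trick is available. (As a minor side point, your worry about whether $\emptyset$ lies in $\dom(\P_{[0,\infty]}\C_\IN)$ is moot: by the paper's convention $\P_I\C_X$ is a \emph{restriction} of $\C_X$, which already excludes $\emptyset$, as is made explicit by the identity $\C_X=\P_{[0,\infty]}\C_X$ stated just before the lemma.)
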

\begin{proof}
Let $\IN$ be endowed with the counting measure. Then $\PC_\IN=\P_{[0,\infty]}\C_\IN=\C_\IN$
and $\P_{[0,\infty)}\C_\IN$ is closed choice restricted to finite subsets $A\In\IN$.
Hence we get
\[\UC_\IN\leqSW\P_{[0,\infty)}\C_\IN\leqSW\P_{[0,\infty]}\C_\IN=\C_\IN,\]
where $\UC_\IN$ denotes choice for singletons.  In \cite[Proposition~3.8]{BGM12} we have proved
$\UC_\IN\equivSW\C_\IN$ and hence the equivalence follows.
If $\IN$ is endowed with the geometric measure (which is finite), then
$\PC_\IN=\P_{[0,\infty]}\C_\IN=\P_{[0,\infty)}\C_\IN=\C_\IN$.
\end{proof}

It follows from the Independent Choice Theorem~\ref{thm:products} that
\[\C_{\IN}\times\PC_{2^\IN}\leqW\C_{\IN}*\PC_{2^\IN}\leqW\PC_{\IN\times2^\IN}.\]
Here we assume that $\IN$ is endowed with the counting measure. 
We now want to prove that we can also get the inverse reductions in the above situation.

\begin{lemma}
\label{lem:PCN2Na}
$\PC_{\IN\times2^\IN}\leqW\PC_{2^\IN}*\C_\IN$.
\end{lemma}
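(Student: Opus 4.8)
The goal is to show $\PC_{\IN\times2^\IN}\leqW\PC_{2^\IN}*\C_\IN$, i.e., that a Las Vegas computation over $\IN\times2^\IN$ can be simulated by first applying $\C_\IN$ and then $\PC_{2^\IN}$. The plan is to start with a nonempty closed set $A\In\IN\times2^\IN$ given by negative information, and to think of it as a sequence of closed ``fibers'' $A_n:=\{q\in2^\IN:(n,q)\in A\}\In2^\IN$, uniformly available in negative information. The total measure is $\mu_\IN\otimes\mu_{2^\IN}(A)=\sum_{n\in\IN}\mu_{2^\IN}(A_n)$, which is positive, so at least one fiber $A_n$ has positive measure. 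The idea is to use $\C_\IN$ to select a ``good'' index $n$ (one with $A_n$ nonempty — in fact we will want positive measure, or at least nonempty) and then use $\PC_{2^\IN}$ to pick a point of $A_n$; combining $n$ with that point gives a point of $A$.

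First I would set up the $\C_\IN$-instance. The set of ``bad'' indices — those $n$ with $A_n=\emptyset$ — is c.e.\ uniformly in the negative description of $A$ (we enumerate a prefix-cover of the complement and wait until all of $\{n\}\times2^\IN$ is covered), so the set $G:=\{n:A_n\neq\emptyset\}$ of good indices is co-c.e., i.e., an element of $\AA_-(\IN)$ computable from the input; and $G\neq\emptyset$ since $A\neq\emptyset$. Thus $\C_\IN$ hands us some $n_0$ with $A_{n_0}\neq\emptyset$. Now comes the subtle point: after fixing $n_0$, the fiber $A_{n_0}$ is a nonempty closed subset of $2^\IN$ computable in negative information from the output of $\C_\IN$ together with the original input (here ordinary, not strong, Weihrauch reducibility is used, so the original name of $A$ is still available), but we only know $A_{n_0}\neq\emptyset$, not that $\mu_{2^\IN}(A_{n_0})>0$ — so we cannot directly feed $A_{n_0}$ to $\PC_{2^\IN}$.

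The fix I expect to use is to adjust the fiber before passing it to $\PC_{2^\IN}$, so that it always has positive measure while still allowing recovery of a point of the original $A_{n_0}$ when $\mu_{2^\IN}(A_{n_0})>0$, and remaining harmless when it does not. One clean way: replace $A_{n_0}$ by $B:=\langle A_{n_0}, 2^\IN\rangle \cup \langle 0^\omega, C\rangle$ under the pairing $2^\IN\times2^\IN\cong2^\IN$ of Lemma~\ref{lem:pairing}, where $C$ is a fixed closed set of positive measure; more carefully, one arranges $B$ so that $\mu_{2^\IN}(B)>0$ always (using the second summand as a ``safety net'' of fixed positive measure), while any point of the first component projects to a point of $A_{n_0}$. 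One then observes that $\C_\IN$ can be asked to return not just any nonempty fiber but one of \emph{positive} measure: the set of $n$ with $\mu_{2^\IN}(A_n)>0$ need not be co-c.e., but the set of $n$ with $\mu_{2^\IN}(A_n)\geq 2^{-k}$ for the least suitable $k$ — equivalently, we can search in parallel and let $\C_\IN$ pick from the co-c.e.\ set of $n$ that are ``not yet ruled out'' at a threshold that is guaranteed to be met by at least one $n$. Concretely: since $\sum_n\mu_{2^\IN}(A_n)>0$, there is some $k$ with $\max_n\mu_{2^\IN}(A_n)>2^{-k}$; but we don't know $k$ in advance, so instead I would bound the total measure first — using upper semicomputability of $\mu_{2^\IN}$ on closed sets (Lemma~\ref{lem:semi-computable}) we can compute a rational upper bound $M$ for $\mu_\IN\otimes\mu_{2^\IN}(A)$ only if it is finite, which it is, being at most... no, it need not be finite under the counting measure. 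So instead: truncate to finitely many fibers adaptively. This is the step I expect to be the main obstacle, and I would resolve it by the following device: let $\C_\IN$ choose from the co-c.e.\ set $G':=\{\langle n,k\rangle : \text{it is never confirmed that }\mu_{2^\IN}(A_n)<2^{-k}\}$, which is co-c.e.\ by upper semicomputability of $\mu_{2^\IN}$, and which is nonempty because some fiber has positive measure (pick $n$ with $\mu_{2^\IN}(A_n)>0$ and $k$ with $2^{-k}<\mu_{2^\IN}(A_n)$). Given the output $\langle n_0,k_0\rangle$ we then \emph{know} $\mu_{2^\IN}(A_{n_0})\geq 2^{-k_0}>0$, so $A_{n_0}\in\dom(\PC_{2^\IN})$ and we may invoke $\PC_{2^\IN}$ on it directly.

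Finally I would assemble the reduction: the computable ``inner'' reduction map $K$ sends the $\psi_-$-name of $A$ to the $\psi_-$-name of $G'\in\AA_-(\IN)$; the oracle $\C_\IN$ returns $\langle n_0,k_0\rangle$; a further computable map extracts from this and the original name of $A$ a $\psi_-$-name of the positive-measure fiber $A_{n_0}$; the oracle $\PC_{2^\IN}$ returns some $q\in A_{n_0}$; and the ``outer'' computable map $H$ outputs $(n_0,q)\in A$. Since the two oracle calls are made one after the other — first $\C_\IN$, then $\PC_{2^\IN}$ — this exhibits exactly a reduction to the compositional product $\PC_{2^\IN}*\C_\IN$ in the sense of the compositional-product characterization, giving $\PC_{\IN\times2^\IN}\leqW\PC_{2^\IN}*\C_\IN$ as claimed. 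The only genuinely delicate point is the one flagged above — ensuring the fiber selected by $\C_\IN$ has \emph{positive} measure rather than merely being nonempty — and the use of Lemma~\ref{lem:semi-computable} to make $G'$ co-c.e.\ is what makes it work.
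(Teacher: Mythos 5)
Your proposal is correct and arrives at precisely the paper's argument: the key device, letting $\C_\IN$ select a pair $\langle n,k\rangle$ from the co-c.e.\ set $\{\langle n,k\rangle:\mu(A_n)\geq 2^{-k}\}$ (nonempty because $\mu(A)>0$, and co-c.e.\ by Lemma~\ref{lem:semi-computable}) so that the certified fiber $A_n$ lies in $\dom(\PC_{2^\IN})$, is exactly the paper's construction, and the assembly of the two oracle calls into a compositional product is likewise the same. The preliminary detours (selecting merely nonempty fibers, padding with a safety set, bounding total measure) are dead ends you correctly discard before landing on the right idea.
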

\begin{proof}
Given a closed set $A\In\IN\times2^\IN$ of positive measure we can compute the sequence $(A_n)$ of sections
\[A_n:=\{p\in2^\IN:(n,p)\in A\}.\]
Since the measure $\mu:\AA_-(2^\IN)\to\IR$ is upper semi-computable by Lemma~\ref{lem:semi-computable}, we obtain that 
\[B:=\{\langle n,k\rangle\in\IN:\mu(A_n)\geq2^{-k}\}\]
is co-c.e.\ in the original set $A$. Since $A$ has positive measure, it follows that $B$ is non-empty
and hence we can use $\C_\IN$ to find a point $\langle n,k\rangle\in B$.
Given $\langle n,k\rangle\in B$ and the original input $A$, we can use $\PC_{2^\IN}$ to find
a point $p\in A_n$. Then $(n,p)\in A$. This proves the claim.
\end{proof}

Next we prove that $\PC_{2^\IN}$ commutes with $\C_\IN$.
Essentially, we exploit for this proof that $\PC_{2^\IN}$ is a fractal (fractals are defined after Theorem~\ref{thm:CN-elimination}).\footnote{Arno Pauly pointed out
that Lemma~\ref{lem:PCN2Nb} holds more generally for suitably defined uniform fractals $f$ in place of $\PC_{2^\IN}$ too.} 

\begin{lemma}
\label{lem:PCN2Nb}
$\PC_{2^\IN}*\C_\IN\leqW\C_\IN\times\PC_{2^\IN}$.
\end{lemma}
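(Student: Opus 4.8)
The plan is to unfold the compositional product and then to reduce it to one independent use of $\CN$ together with one use of $\PC_{2^\IN}$. The crucial structural fact about $\PC_{2^\IN}$ used here is its self-similarity: the restriction of $\PC_{2^\IN}$ to the closed subsets of any non-empty basic ball $w2^\IN$ is strongly Weihrauch equivalent to $\PC_{2^\IN}$ itself, uniformly in $w$, via the measure-scaling homeomorphism $2^\IN\to w2^\IN$. By the definition of the compositional product it suffices to prove $f_0\circ g_0\leqW\CN\times\PC_{2^\IN}$ for arbitrary composable $f_0\leqW\PC_{2^\IN}$ and $g_0\leqW\CN$; unfolding these two reductions reduces the task to the following: from the input one computes a non-empty co-c.e.\ closed set $A\In\IN$ and, uniformly in $n\in\IN$, a $\psi_-$-name of a closed set $B_n\In2^\IN$ that satisfies $\mu_{2^\IN}(B_n)>0$ as soon as $n\in A$, and one has to output some point of $\bigcup_{n\in A}B_n$ (after which the appropriate parts of $g_0$ and $f_0$ are re-applied).

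First I would query $\CN$ on an instance computably obtained from $A$, and in parallel build a single $\PC_{2^\IN}$-instance $B$ by merging the $B_k$. The first approximation to this merge is to use the partition of $2^\IN$ into the balls $0^k12^\IN$ ($k\in\IN$) and the leftover point $0^\omega$, together with the measure-scaling embeddings $\iota_k\colon2^\IN\to0^k12^\IN$, $r\mapsto0^k1r$: set $B\cap0^k12^\IN:=\iota_k(B_k)$, and empty the $k$-th block once $k$ is enumerated out of $A$, so that in the limit exactly the blocks with $k\in A$ survive. This $\psi_-$-name is clearly computable from the input, and $\mu_{2^\IN}(B)=\sum_{k\in A}2^{-k-1}\mu_{2^\IN}(B_k)>0$, so $B\in\dom(\PC_{2^\IN})$. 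If $\PC_{2^\IN}$ returns a point $q\in B$ lying in a ball $0^k12^\IN$, then $k$ can be read off from $q$, is forced into $A$, and $\iota_k^{-1}(q)$ is the desired point of $B_k\In\bigcup_{m\in A}B_m$. The one remaining possibility is $q=0^\omega$; to remove it I would refine the construction, recursively routing the leftover part through a rescaled copy of the entire object so that the self-similarity of $\PC_{2^\IN}$ can be invoked there as well, and use the answer returned by $\CN$ in the post-processor $H$ to resolve the unique outcome that the nesting still leaves over. Checking that $K$ and $H$ can be taken on their minimal domains, as the definition of $\leqW$ prescribes, is routine.

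The hard part is exactly this treatment of the leftover outcomes, and it is also the point where $\CN$ is genuinely needed: one has $\CN\nleqW\PC_{2^\IN}$, for otherwise $\PC_{2^\IN}*\CN\leqW\PC_{2^\IN}*\PC_{2^\IN}\equivW\PC_{2^\IN}$ by Corollary~\ref{cor:products-pairing}, which is false. Since $2^\IN$ is compact, any decomposition of it into infinitely many non-empty clopen balls must leave over at least a limit point, and because $A$ is only available as a co-c.e.\ set there is no computable bound on how many blocks of $B$ are active; hence $B$ inevitably contains outcomes from which none of the $B_k$ can be decoded, and an adversarial realizer of $\PC_{2^\IN}$ is free to return such an outcome. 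Verifying that the recursive self-similar re-routing of the residual part, combined with the independent $\CN$-query, neutralizes these null-measure outcomes — i.e.\ that the combined post-processing always yields a point of $\bigcup_{n\in A}B_n$ while $\mu_{2^\IN}(B)$ remains positive through the nesting (a geometric series in the scaling factors) — is where the substance of the proof lies.
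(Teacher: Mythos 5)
Your reduction of the task is essentially right, and the merge $B:=\{0^\omega\}\cup\bigcup_{k\in A}0^k1B_k$ (emptying block $k$ once $k$ leaves $A$) is a natural first attempt, but the limit-point difficulty you identify at the end is fatal and the ``recursive routing'' sketch does not repair it. Whenever $A$ is infinite, the closed set $B$ necessarily contains $0^\omega$ (it is a limit of the surviving blocks and is never enumerated into the complement), and an adversarial realizer of $\PC_{2^\IN}$ is free to return exactly $q=0^\omega$. Your plan is to let $H$ rescue this case using the independent $\C_\IN$-answer $n_0\in A$, but there is no computable (or even continuous) map taking $(p,n_0)$ to a point of $B_{n_0}$: the set $B_{n_0}$ is an arbitrary co-c.e.\ closed subset of $2^\IN$ of positive measure, and such a set need not contain any computable point (the complement of a universal Martin-L\"of test is a standard example). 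So the post-processor has no valid output on the pair $(n_0,0^\omega)$. The recursion does not escape this: $\{0^\omega\}$ is a singleton and cannot host a rescaled copy of anything, and—as you yourself observe—compactness forces any countable clopen decomposition to leave some limit point, which by the same argument must remain in $B$. Any static merging of the whole family $\{B_n\}_{n\in A}$ into a single $\PC_{2^\IN}$-instance therefore runs into the same wall.

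The paper's proof avoids ever having to merge an unbounded family. It uses the characterization of $g_0\leqW\C_\IN$ as computability with \emph{finitely many} mind changes and simulates that machine directly. The key observation is that after a ``clean-up'' step, the partial $\psi_-$-name written so far is always a valid finite prefix of a name of a set of positive measure, hence that set has nonempty interior; so when a mind change occurs one can pick a fresh uncovered sub-ball $w_j2^\IN$ inside the current region and shift all subsequent output into it (this is where the self-similarity of $\PC_{2^\IN}$ is actually used). Since only finitely many mind changes happen, this produces a \emph{finite} nested chain of shifts $w_{n_0}\supsetneq w_{n_1}\supsetneq\cdots\supsetneq w_{n_{k+1}}$ and, in the limit, a $\psi_-$-name of the scaled set $w_{n_{k+1}}A$ sitting inside a fixed clopen ball—no accumulation point ever enters the picture. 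In parallel, the simulation emits an eventually constant stream of indices, and $\C_\IN$ extracts $n_{k+1}$ from it, after which $H$ just strips the prefix $w_{n_{k+1}}$. In short: the paper nests according to the finitely many mind changes rather than according to the possibly infinitely many elements of $A$, and that is precisely what makes the construction close.

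As a side remark, the justification you give that $\C_\IN\nleqW\PC_{2^\IN}$ is circular (it presupposes that $\PC_{2^\IN}*\C_\IN\nleqW\PC_{2^\IN}$, which is exactly the kind of separation one would prove using the lemma under discussion). The non-reduction is true, but one should derive it independently, e.g.\ from $\LPO\leqW\C_\IN$ together with $\LPO\nleqW\WKL\equivW\C_{2^\IN}\geqW\PC_{2^\IN}$.
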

\begin{proof}
It suffices to prove that $f\leqW\PC_{2^\IN}*\C_{\IN}$ implies $f\leqW\C_{\IN}\times\PC_{2^\IN}$ for all $f:\In\IN^\IN\mto\IN^\IN$.
Let then $f\leqW\PC_{2^\IN}*\C_\IN$. Then upon input of $p\in\dom(f)$ there is a computation of a machine $M$ with finitely many mind changes
that produces finitely many partial outputs $v_0,...,v_k\in\IN^*$ before it finally produces an infinite output $q\in\IN^\IN$ 
that is a name for a set $A\In 2^\IN$ of positive measure. A name $r$ for a point in $A$ together with $q$ and $p$ finally
allow to compute a point in $f(p)$.
The basic idea is to replace the computation of $M$ with finitely many mind changes by an ordinary computation
that produces $v_0v_1...v_kq$ instead. 
The problem is that the latter sequence might not be a name of a set $A\In2^\IN$ of positive measure.
However, this can be rectified.

Firstly, there is a computable function $g:\IN^*\to\IN^*$ such that $g(v)=v$ if $v$ is a valid prefix of a name
of a set of positive measure and such that $g(v)$ is always a valid prefix of a name of a set of positive measure (this can be achieved
by replacing certain portions of negative information by dummy information). In this sense $g$ ``cleans-up'' the output.
A set of positive measure described by only finitely many open balls in its complement also automatically has a non-empty interior. 
Moreover, we can assume that $g$ is monotone, i.e., that $v\prefix w$ implies $g(v)\prefix g(w)$.
We assume that $(w_n)$ is a bijective effective standard enumeration of $2^*$.
Then there is a computable function $h:\In\IN^*\times\IN\to\IN$ such that $h(v,i)=j$, where $j$ is minimal with the property that $w_j 2^\IN$ 
is left uncovered by the negative information $v$ and $w_i\prefix w_j$, which is possible whenever an open subset of $w_i\IN^\IN$ is left uncovered by $v$.
Hence, $h$ finds an ``unspoiled region'' where the computation can continue.
Finally, there is a computable function $s:\IN^*\times\IN\to\IN^*$ that has the property that if $v$ describes a closed set $A\In2^\IN$,
then $s(v,i)$ describes the set $w_iA$. Hence $s$ ``shifts the output'' to a possibly unspoiled region.
We can also assume that $s$ is monotone in the first component, i.e., $w\prefix v$ implies $s(w,i)\prefix s(v,i)$.

We now describe an algorithm that transfers the original computation of machine $M$ with finitely many mind changes into a regular computation 
of an infinite output together with a sequence $(n_i)$ of natural numbers.
We start with $n_0$ such that $w_{n_0}$ is the empty word. Whenever the output of $M$ is extended to $w$, then we convert it to the ``cleaned-up'' output $g(w)$
and we write the number $n_0$ repeatedly to the output stream of numbers. When a first mind change happens, then the output $u_0:=g(v_0)$ has been produced so far.
In case of this event we compute $n_1:=h(u_0,n_0)$ (i.e., an unspoiled region, which exists since $u_0$ is cleaned-up). 
We continue to read the pieces of information $w$ produced by $M$ after the first mind change and we proceed writing $s(g(w),n_1)$ to the output 
(i.e., a cleaned-up version of the information that follows shifted to the unspoiled region)
and we write $n_1$ repeatedly to the stream of natural numbers until possibly the second mind change happens, in which case we continue inductively. 
In general, in between the $i$--th and the $(i+1)$--st mind change we have produced the output $u_{i}:=s(g(v_{i}),n_{i})$.
When the $(i+1)$--st mind change happens we compute $n_{i+1}:=h(u_i,n_i)$ and we start writing $s(g(w),n_{i+1})$ on the output 
and the number of $n_{i+1}$ into the stream of numbers from now on (for the partial output $w$ of $M$ that follows). 
Eventually (when $i=k+1$) no further mind change happens and we continue with the last step forever, writing $s(g(w),n_{k+1})$ on the output
tape and the number $n_{k+1}$ into the stream of numbers.

In this  phase after the last mind change $w$ will consist of longer and longer prefixes of $q$ and hence it will be already clean (i.e., $g(w)=w$ at this stage).
Altogether, we end up writing an output that constitutes a name of the set $w_{n_{k+1}}A$ and a sequence of numbers
$n_0,n_1,...,n_k,n_{k+1}$ with possible repetitions of each $n_i$ and infinitely many repetitions of the last $n_{k+1}$. 
With the help of $\C_\IN$ we can compute the value $n_{k+1}$ from this list. 
Given a point $r'\in w_{n_{k+1}}A$ and $n_{k+1}$ we can easily recover a point $r\in A$, which together with $p,q$ allows to find some point in $f(p)$. 
Altogether, this proves $f\leqW\C_\IN\times\PC_{2^\IN}$.
\end{proof}

Now Lemmas~\ref{lem:PCN2Na} and \ref{lem:PCN2Nb} together with Theorem~\ref{thm:products} yield
the following characterization.

\begin{corollary}
\label{cor:PCN2N}
$\PC_{2^\IN}\times\C_{\IN}\equivSW\C_{\IN}\times\PC_{2^\IN}\equivW\PC_{2^\IN}*\C_{\IN}\equivW\C_{\IN}*\PC_{2^\IN}\equivW\PC_{\IN\times2^\IN}$.
\end{corollary}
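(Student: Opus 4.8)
The plan is to verify a cycle of Weihrauch reductions linking the five expressions, using Lemmas~\ref{lem:PCN2Na} and \ref{lem:PCN2Nb}, the Independent Choice Theorem~\ref{thm:products}, Lemma~\ref{lem:PCN}, and two routine facts: that $f\times g\leqW f*g$ (recorded after the definition of the compositional product) and that $f*g$ is monotone in both arguments (immediate from its definition as a maximum over all $f_0\leqW f$ and $g_0\leqW g$). The strong equivalence $\PC_{2^\IN}\times\C_{\IN}\equivSW\C_{\IN}\times\PC_{2^\IN}$ requires no work, since swapping the two factors of a product is induced by a computable isomorphism of the underlying represented spaces, so $\times$ is commutative up to $\equivSW$.

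For the ordinary equivalences I would assemble the cycle
\[\PC_{\IN\times2^\IN}\leqW\PC_{2^\IN}*\C_{\IN}\leqW\C_{\IN}\times\PC_{2^\IN}\leqW\C_{\IN}*\PC_{2^\IN}\leqW\PC_{\IN\times2^\IN},\]
where the first reduction is Lemma~\ref{lem:PCN2Na}, the second is Lemma~\ref{lem:PCN2Nb} combined with the strong commutativity of the product, the third is $f\times g\leqW f*g$, and the fourth --- the return arrow --- is obtained from the Independent Choice Theorem, as explained below. Going around the cycle shows that $\PC_{\IN\times2^\IN}$, $\PC_{2^\IN}*\C_{\IN}$, $\C_{\IN}\times\PC_{2^\IN}$ and $\C_{\IN}*\PC_{2^\IN}$ all lie in one $\equivW$-degree; adding $\PC_{2^\IN}\times\C_{\IN}\leqW\PC_{2^\IN}*\C_{\IN}$ together with the strong commutativity $\PC_{2^\IN}\times\C_{\IN}\equivSW\C_{\IN}\times\PC_{2^\IN}$ puts the fifth expression in the same degree, which gives all the claimed $\equivW$'s (and both $\equivSW$'s).

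It remains to explain the return arrow $\C_{\IN}*\PC_{2^\IN}\leqW\PC_{\IN\times2^\IN}$. By Lemma~\ref{lem:PCN}, $\C_{\IN}\equivSW\PC_{\IN}=\P_{(0,\infty]}\C_{\IN}$, so by monotonicity of the compositional product $\C_{\IN}*\PC_{2^\IN}\equivW\PC_{\IN}*\PC_{2^\IN}$. Now Theorem~\ref{thm:products} applies with $R=\IN$, $S=2^\IN$ and $I=J=(0,\infty]$: the hypothesis that $\infty$ is not an open endpoint of $I$ is met because $\infty$ is the closed right endpoint of $(0,\infty]$, and $I\cdot J=(0,\infty]$, so $\PC_{\IN}*\PC_{2^\IN}\leqW\P_{(0,\infty]}\C_{\IN\times2^\IN}=\PC_{\IN\times2^\IN}$. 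Running the same argument with the roles of $\IN$ and $2^\IN$ exchanged, and using that the coordinate swap $2^\IN\times\IN\to\IN\times2^\IN$ is a measure-preserving computable isomorphism, gives $\PC_{2^\IN}*\C_{\IN}\leqW\PC_{\IN\times2^\IN}$ directly --- an alternative way to close the cycle.

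Since the two lemmas and the Independent Choice Theorem are already in hand, there is no serious obstacle; the one point that needs attention is the interval bookkeeping in the last step. It is essential to replace $\C_{\IN}=\P_{[0,\infty]}\C_{\IN}$ by $\PC_{\IN}=\P_{(0,\infty]}\C_{\IN}$ before invoking Theorem~\ref{thm:products}: applied to $\C_{\IN}$ directly the theorem would only yield the interval $[0,\infty]\cdot(0,\infty]=[0,\infty]$ and hence the useless bound $\C_{\IN\times2^\IN}$, whereas with $\PC_{\IN}$ the product interval collapses to $(0,\infty]$ (which is closed under product) and the target is exactly $\PC_{\IN\times2^\IN}$.
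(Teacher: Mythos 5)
Your proof is correct and follows essentially the paper's route: the cycle $\PC_{\IN\times2^\IN}\leqW\PC_{2^\IN}*\C_{\IN}\leqW\C_{\IN}\times\PC_{2^\IN}\leqW\C_{\IN}*\PC_{2^\IN}\leqW\PC_{\IN\times2^\IN}$ is exactly what the paper assembles from Lemmas~\ref{lem:PCN2Na} and \ref{lem:PCN2Nb}, the fact that $f\times g\leqW f*g$, and the Independent Choice Theorem~\ref{thm:products}. Two small remarks: the second reduction is Lemma~\ref{lem:PCN2Nb} verbatim and needs no commutativity of the product, and your explicit interval bookkeeping in the last step spells out what the paper compresses into the remark that $\IN$ carries the counting measure, under which $\C_\IN$ and $\PC_\IN=\P_{(0,\infty]}\C_\IN$ literally coincide as functions (Lemma~\ref{lem:PCN}).
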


We obtain the following corollary that expresses this result in different terms using Proposition~\ref{prop:classes}
and Theorem~\ref{thm:probabilistic-computability}.

\begin{corollary}[Computability with finitely many mind changes]
\label{cor:PCN-composition}
Let $f$ be a multi-valued function on represented spaces.
Then the following are equivalent:
\begin{enumerate}
\item $f$ is Las Vegas computable with finitely many mind changes,
\item $f=g\circ h$ with some $g$ that is Las Vegas computable and some $h$ that is computable with finitely many mind changes,
\item $f=g\circ h$ with some $g$ that is computable with finitely many mind changes and some $h$ that is Las Vegas computable.
\end{enumerate}
\end{corollary}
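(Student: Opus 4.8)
The plan is to deduce everything from the characterization of $\PC_{\IN\times2^\IN}$ as a compositional product in Corollary~\ref{cor:PCN2N}. By Corollary~\ref{cor:Las-Vegas} condition~(1) says exactly that $f\leqW\PC_{\IN\times2^\IN}$, and by Corollary~\ref{cor:PCN2N} this is equivalent to $f\leqW\PC_{2^\IN}*\C_\IN$ and to $f\leqW\C_\IN*\PC_{2^\IN}$. On the other hand, by Theorem~\ref{thm:probabilistic-computability} a multi-valued function is Las Vegas computable if and only if it is Weihrauch reducible to $\PC_{2^\IN}$ (over the probability space $2^\IN$ all measures are bounded by $1$, so the interval $(0,1]$ and the interval $(0,\infty]$ agree here), and by Proposition~\ref{prop:classes} it is computable with finitely many mind changes if and only if it is Weihrauch reducible to $\C_\IN$. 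Hence the whole corollary reduces to the purely order-theoretic claim that $f\leqW a*b$ holds if and only if $f=g\circ h$ for some $g\leqW a$ and $h\leqW b$, applied once with $(a,b)=(\PC_{2^\IN},\C_\IN)$ and once with $(a,b)=(\C_\IN,\PC_{2^\IN})$.

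The implications (2)$\Rightarrow$(1) and (3)$\Rightarrow$(1) are the easy halves and need only monotonicity of the compositional product. If $f=g\circ h$ with $g\leqW\PC_{2^\IN}$ and $h\leqW\C_\IN$, then $g\circ h$ is one of the functions over which the maximum defining $\PC_{2^\IN}*\C_\IN$ is formed, so $f=g\circ h\leqW\PC_{2^\IN}*\C_\IN\equivW\PC_{\IN\times2^\IN}$, and $f$ is Las Vegas computable with finitely many mind changes by Corollary~\ref{cor:Las-Vegas}. Replacing $\PC_{2^\IN}*\C_\IN$ by $\C_\IN*\PC_{2^\IN}$ gives (3)$\Rightarrow$(1) verbatim.

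For the converse implications (1)$\Rightarrow$(2) and (1)$\Rightarrow$(3) I would argue as follows. From (1) we obtain $f\leqW\PC_{2^\IN}*\C_\IN$ (respectively $f\leqW\C_\IN*\PC_{2^\IN}$), and since the compositional product is a \emph{maximum} there are $g_0\leqW\PC_{2^\IN}$ and $h_0\leqW\C_\IN$ (respectively with the roles of the two degrees exchanged) together with computable reduction functions $K,H$ such that $H\langle\id,(g_0\circ h_0)K\rangle\vdash f$. It then remains to turn this reduction into an honest factorization. The idea is to choose the intermediate space with the Baire-space representation and large enough that the first factor also carries a name of the original input along: let $h:=\langle\id,h_0'\rangle$, where a realizer of $h$ sends a name $p$ of the input to the pair of $p$ and $H_0(K(p))$ for some realizer $H_0$ of $h_0$, so that $h$ returns the input together with a value of $h_0$ on the $K$-transformed name; and let $g$ be the map that, given such a pair, computes the argument of $g_0$ from the second component and the stored name, applies $g_0$, and finally applies $H$ — which is legitimate precisely because under ordinary Weihrauch reducibility the output modificator $H$ may read the original input. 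Then $g\circ h=f$, the map $h$ reduces to $\id\times h_0\leqW\C_\IN$, and $g$ reduces to $\PC_{2^\IN}$; the pairing with $\id$ does no harm here since we only need ordinary reducibility, so that $\PC_{2^\IN}$ is not a cylinder (Corollary~\ref{cor:cylinder}) is irrelevant. Translating back by Proposition~\ref{prop:classes} and Theorem~\ref{thm:probabilistic-computability} yields (2), and the construction with the roles exchanged yields (3). The only step that genuinely needs care — and which I expect to be the main obstacle — is this last absorption of the reduction witnesses into an actual composition; it amounts to the standard fact that a function below $a*b$ can be computed by a single oracle call to (a function equivalent to) $b$ followed by a single oracle call to $a$ (compare \cite[Theorem~7.2]{RTW11}), and everything else is bookkeeping with the results already established.
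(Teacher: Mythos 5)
Your overall route is the one the paper takes: reduce everything to Corollary~\ref{cor:PCN2N}, which identifies $\PC_{\IN\times2^\IN}$ with the compositional products $\PC_{2^\IN}*\C_\IN$ and $\C_\IN*\PC_{2^\IN}$, and then translate via Corollary~\ref{cor:Las-Vegas} and Proposition~\ref{prop:classes}; the paper itself gives no further argument, simply asserting that the corollary ``expresses this result in different terms.'' The easy implications $(2)\Rightarrow(1)$ and $(3)\Rightarrow(1)$ are exactly as you say. Your treatment of $(1)\Rightarrow(2)$ and $(1)\Rightarrow(3)$ is more explicit than the paper's and correctly identifies the crux (turning a reduction $f\leqW a*b$ into an honest factorization $f=g\circ h$), but as written it only yields $g\circ h\In f$ rather than equality.

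Here is the issue and its fix. In your construction $g$ is described operationally: on input $\langle p,q\rangle$ it ``applies $g_0$, and finally applies $H$.'' If you take $g$ to be the multi-valued function whose value set on $\langle p,q\rangle$ is exactly $\{\delta_Y H\langle p,r\rangle : r \text{ a name of a value of } g_0 \text{ at the point named by } q\}$, then every element of $g\circ h(x)$ lies in $f(x)$ by correctness of the reduction, but there is no reason the union over $\langle p,q\rangle\in h(x)$ of these sets should exhaust $f(x)$ (take $f$ with large value sets and $H$ that always outputs the same name). Since the corollary asserts literal equality $f=g\circ h$ in the sense of the paper's definition of composition, this is a gap. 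The standard repair is to decouple the value set of $g$ from what the realizer produces: keep $\dom(h)=\dom(f)$, $h(x):=\{\langle p,q\rangle : p\in\delta_X^{-1}(x),\ \delta_W(q)\in h_0(\delta_V K(p))\}$, put $\dom(g):=\bigcup_{x\in\dom(f)}h(x)$, and define $g(\langle p,q\rangle):=f(\delta_X(p))$ outright. Then $g\circ h=f$ holds trivially, and the procedure you describe (one oracle call to $g_0$, then $H$) is still a valid realizer of this larger $g$ because its single output lands in $f(\delta_X(p))$ — so $g\leqW g_0\leqW\PC_{2^\IN}$ and $h\leqW h_0\leqW\C_\IN$ as needed. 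With that adjustment your argument is complete, and the symmetric case follows by swapping roles exactly as you note.
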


This result actually justifies to call the $f$ below $\PC_{\IN\times2^\IN}$ Las Vegas computable
with finitely many mind changes.

Since the uniform measure on Cantor space is finite, it does not matter whether we define
$\PC_{2^\IN}$ using the interval $(0,\infty]$ or $(0,\infty)$. Likewise, it does not matter for $\IN$
by Lemma~\ref{lem:PCN}. The proof of Corollary~\ref{cor:PCN2N} also goes through in both cases.
Hence we obtain the following corollary, which says that also $\PC_{\IN\times2^\IN}$ can be defined
using $(0,\infty]$ or $(0,\infty)$.

\begin{corollary}
\label{cor:PCN2Nb}
$\PC_{\IN\times2^\IN}=\P_{(0,\infty]}\C_{\IN\times2^\IN}\equivSW\P_{(0,\infty)}\C_{\IN\times2^\IN}.$
\end{corollary}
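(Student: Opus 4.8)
The plan is to note first that the outer identity $\PC_{\IN\times2^\IN}=\P_{(0,\infty]}\C_{\IN\times2^\IN}$ holds purely by definition, since $\PC_X$ is just our abbreviation for $\P_{(0,\infty]}\C_X$, and that the reduction $\P_{(0,\infty)}\C_{\IN\times2^\IN}\leqSW\P_{(0,\infty]}\C_{\IN\times2^\IN}$ is immediate from $(0,\infty)\In(0,\infty]$ by the interval monotonicity of Proposition~\ref{prop:interval-monotonicity}. So the only thing with real content is the converse reduction
\[\P_{(0,\infty]}\C_{\IN\times2^\IN}\leqSW\P_{(0,\infty)}\C_{\IN\times2^\IN},\]
i.e. we must turn a non-empty closed set of possibly infinite measure into one of finite positive measure without losing the ability to recover a point.

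First I would describe the following computable transformation on closed sets. Given a $\psi_-$--name of a non-empty closed $A\In\IN\times2^\IN$ with $\mu(A)\in(0,\infty]$, let $A_n:=\{p\in2^\IN:(n,p)\in A\}$ be its $n$--th section, and map $A$ to
\[B:=\bigcup_{n\in\IN}\{n\}\times 0^nA_n,\]
obtained by prepending the block $0^n$ of $n$ zeros to every point of the $n$--th section. Each $0^nA_n$ is closed (it is the image of the closed set $A_n$ under the homeomorphism $p\mapsto0^np$ of $2^\IN$ onto the clopen ball $0^n2^\IN$), so $B$ is closed in $\IN\times2^\IN$. Since $\mu_{2^\IN}(0^nA_n)=2^{-n}\mu_{2^\IN}(A_n)$ and the sections sit in disjoint clopen pieces $\{n\}\times2^\IN$, we get $\mu(B)=\sum_{n}2^{-n}\mu_{2^\IN}(A_n)\leq\sum_{n}2^{-n}=2<\infty$, and $\mu(B)>0$ because $\mu(A)=\sum_n\mu_{2^\IN}(A_n)>0$ forces $\mu_{2^\IN}(A_n)>0$ for some $n$. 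Hence $\mu(B)\in(0,\infty)$, so $B$ is an admissible input for $\P_{(0,\infty)}\C_{\IN\times2^\IN}$. Conversely, a point of $B$ necessarily has the form $(n,0^nq)$ with $q\in A_n$; from it one reads $n$ off the first component, strips the prefix $0^n$ from the second component, and outputs $(n,q)\in A$. This recovery map never consults $A$, so the reduction it witnesses is strong.

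The step I expect to require the most care is verifying that $A\mapsto B$ is computable with respect to negative information, i.e. that from a $\psi_-$--name of $A$ one effectively produces a $\psi_-$--name of $B$. Here I would argue directly on complements: within each section, $(\{n\}\times2^\IN)\sm(\{n\}\times0^nA_n)$ splits into the clopen ``margin'' $\{n\}\times\bigcup_{i<n}0^i1\,2^\IN$, a finite union of basic balls disjoint from $B$ which can be enumerated outright, together with $\{n\}\times0^n\bigl(2^\IN\sm A_n\bigr)$, which is the union of the balls $\{n\}\times 0^nw2^\IN$ obtained by prepending $0^n$ to each complement ball $\{n\}\times w2^\IN$ that the given name of $A$ enumerates in section $n$. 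Prepending a fixed finite block to a basic ball of $2^\IN$ is computable, so interleaving these two enumerations over all $n$ produces an effective list of basic balls exhausting $(\IN\times2^\IN)\sm B$, which is exactly a $\psi_-$--name of $B$.

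Combining the definitional identity, the monotonicity reduction, and the construction above gives the chain $\PC_{\IN\times2^\IN}=\P_{(0,\infty]}\C_{\IN\times2^\IN}\equivSW\P_{(0,\infty)}\C_{\IN\times2^\IN}$, which is the claim. (Alternatively one could deduce the equivalence from the extended form of Corollary~\ref{cor:PCN2N}, observing that over the finite measure space $2^\IN$ the intervals $(0,\infty]$ and $(0,\infty)$ define literally the same problem and that $\P_{(0,\infty)}\C_\IN\equivSW\C_\IN$ by Lemma~\ref{lem:PCN}; but the direct compression argument is shorter and delivers the strong reduction without re-running the proof of Corollary~\ref{cor:PCN2N}.)
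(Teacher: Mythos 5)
Your proposal is correct, and it takes a genuinely different (and arguably cleaner) route than the paper does. The paper deduces the corollary indirectly: it notes that $\PC_{2^\IN}$ is literally the same problem for $(0,\infty]$ and $(0,\infty)$ because $\mu_{2^\IN}$ is finite, that $\P_{(0,\infty)}\C_\IN\equivSW\C_\IN$ by Lemma~\ref{lem:PCN}, and then asserts that the proof of Corollary~\ref{cor:PCN2N} goes through for both intervals, so the two versions of $\PC_{\IN\times2^\IN}$ fall into the same class. You instead give a direct compression reduction: shift the $n$--th section into the clopen ball $0^n2^\IN$, which makes the total measure finite while keeping it positive, and recover a point of $A$ by reading off $n$ and stripping the prefix. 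Both arguments are valid, but yours has a real advantage that is worth noticing: the paper's route via Corollary~\ref{cor:PCN2N} only delivers $\equivW$ as stated (the chain there passes through compositional products, which are only $\equivW$ to $\PC_{\IN\times2^\IN}$), so obtaining the asserted $\equivSW$ from that route would require re-inspecting the proofs of Lemmas~\ref{lem:PCN2Na} and \ref{lem:PCN2Nb}. Your compression map $A\mapsto B$ together with the recovery $(n,0^nq)\mapsto(n,q)$, which never consults the original name, yields the strong reduction $\P_{(0,\infty]}\C_{\IN\times2^\IN}\leqSW\P_{(0,\infty)}\C_{\IN\times2^\IN}$ outright, matching the $\equivSW$ in the statement with no further bookkeeping. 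The verification of $\psi_-$--computability of $A\mapsto B$ via the margin balls $\{n\}\times0^i1\,2^\IN$ (for $i<n$) plus the shifted complement balls $\{n\}\times0^nw\,2^\IN$ is exactly right.
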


We mention that also the equivalence class of $\PC_{\IN\times2^\IN}$ does not depend on whether
$\IN$ is equipped with the counting measure or the geometric measure. This is because the domains of $\P_{(0,\infty]}\C_{\IN\times2^\IN}$ are
identical in both cases and hence the multi-valued functions are identical.

\begin{lemma}
\label{lem:geometric-counting}
$\PC_{\IN\times2^\IN}$ does not depend on whether $\IN$ is equipped with the counting
measure or the geometric measure. 
\end{lemma}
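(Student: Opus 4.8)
The plan is to show that the two candidate versions of $\PC_{\IN\times2^\IN}$ — the one where $\IN$ carries the counting measure $\mu_\IN$ and the one where $\IN$ carries the geometric measure — are not merely equivalent but are literally the \emph{same} multi-valued function on represented spaces. The observation driving this is that the represented space $\AA_-(\IN\times2^\IN)$ and the assignment $A\mapsto A$ that together constitute $\P_{(0,\infty]}\C_{\IN\times2^\IN}$ involve no measure at all; the measure enters only through the domain restriction $\dom(\P_{(0,\infty]}\C_{\IN\times2^\IN})=\{A\in\AA_-(\IN\times2^\IN):\mu(A)>0\}$. Hence the whole lemma reduces to the purely measure-theoretic claim that a closed set $A\In\IN\times2^\IN$ has positive measure with respect to $\mu_\IN\otimes\mu_{2^\IN}$ if and only if it has positive measure with respect to the product of the geometric measure with $\mu_{2^\IN}$.

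To prove that claim, I would first note that every closed set is Borel and hence measurable for both product measures, so that both measure-values are defined, and then pass to the sections $A_n:=\{p\in2^\IN:(n,p)\in A\}$, which are closed in $2^\IN$. Since the counting measure, the geometric measure and $\mu_{2^\IN}$ are all $\sigma$--finite, Tonelli's theorem expresses the two product measures of $A$ as
\[(\mu_\IN\otimes\mu_{2^\IN})(A)=\sum_{n\in\IN}\mu_{2^\IN}(A_n)\qquad\text{and}\qquad\sum_{n\in\IN}2^{-n-1}\,\mu_{2^\IN}(A_n),\]
respectively. Because every coefficient $2^{-n-1}$ is strictly positive, both series are positive exactly when some section $A_n$ has positive $\mu_{2^\IN}$--measure. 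Thus the two domains agree, the underlying represented space and value assignment are the same, and so the two versions of $\P_{(0,\infty]}\C_{\IN\times2^\IN}$ coincide as multi-valued functions; in particular they are (strongly) Weihrauch equivalent.

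I do not expect any genuine obstacle here: the only points requiring a moment of care are that closed sets are measurable (so the restriction is well defined) and that the use of Tonelli is licensed, which it is since all measures involved are $\sigma$--finite. Everything else is a one-line comparison of two non-negative series with identical ``support''.
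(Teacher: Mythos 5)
Your argument is correct and follows the same route as the paper: the paper likewise observes that the measure enters only through the domain restriction and simply asserts, in one sentence, that the two domains coincide, from which identity of the two multi-valued functions follows. You supply the detail — sectioning and a Tonelli/Fubini computation — that justifies this domain equality, but there is no substantive difference in approach.
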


\section{Changes of the Probability Space}
\label{sec:probability-space}

In this section we compare probabilistic choice for $[0,1]$ and $2^\IN$ as well
as probabilistic choice for $\IR$ and $\IN\times2^\IN$.

We recall that a {\em computable embedding} $f:X\to Y$ is a computable injective
function, such that the partial inverse $f^{-1}:\In Y\to X$ is computable too. 
If there is such a computable embedding $f$ such that $\range(f)$ is co-c.e.\ closed,
then $\C_X\leqSW \C_Y$ follows (see Corollary~4.3 in \cite{BBP12}).
Likewise, if there is a computable surjection $s:\In X\to Y$ with a co-c.e.\ closed domain $\dom(s)$,
then also $\C_Y\leqSW \C_X$ follows (see Proposition~3.7 in \cite{BBP12}).
We will implicitly use these ideas in the following and the proofs work for probabilistic
choice too with some assumptions on measure preservation.

We note that the binary representation 
\[\rho_2:2^\IN\to[0,1],p\mapsto\sum_{i=0}^\infty\frac{p(i)}{2^{i+1}}\]
is surjective, computable and measure-preserving. This yields immediately the reduction
$\P_I\C_{[0,1]}\leqSW\P_I\C_{2^\IN}$.
For the other direction we use the usual Smith-Volterra-Cantor set construction.

\begin{lemma}[Smith-Volterra-Cantor set]
For every computable $\varepsilon\in[0,1)$ there exists a computable embedding
$f_\varepsilon:2^\IN\to[0,1]$ such that
\[\lambda(f_\varepsilon(A))=\varepsilon\cdot\mu(A)\]
for every closed $A\In2^\IN$. Moreover, $\range(f_\varepsilon)$ is co-c.e.\ closed in this situation.
\end{lemma}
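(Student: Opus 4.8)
The construction is the classical Smith–Volterra–Cantor "fat Cantor set" adapted to carry the product structure of $2^\IN$. The idea is to build $f_\varepsilon$ as a limit of an effective sequence of affine embeddings of cylinder sets. Concretely, I would fix a computable sequence of "gap lengths" $(\delta_n)_{n\in\IN}$ of positive rationals with $\sum_{n=0}^\infty 2^n\delta_n = 1-\varepsilon$ (for instance $\delta_n := (1-\varepsilon)\cdot 2^{-n}\cdot 3^{-n-1}$ or any similarly explicit choice), and then recursively assign to each word $w\in 2^*$ a closed rational subinterval $I_w\In[0,1]$ such that: $I_\varepsilon = [0,1]$ (the empty word); $I_{w0}$ and $I_{w1}$ are the left and right closed subintervals of $I_w$ obtained by deleting an open middle interval of length $\delta_{|w|}$; and $|I_{w0}| = |I_{w1}| = (|I_w| - \delta_{|w|})/2$. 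An easy induction gives $|I_w| = 2^{-|w|}\bigl(1 - \sum_{k=0}^{|w|-1} 2^k\delta_k\bigr)$, and in particular $\sum_{|w|=n}|I_w| = 1 - \sum_{k<n}2^k\delta_k \to \varepsilon$. Then I define $f_\varepsilon(p) := $ the unique point of $\bigcap_{n} I_{p\restriction n}$; this intersection is a single point since the lengths go to $0$.

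Next I would verify the three required properties. \emph{Computability of $f_\varepsilon$:} from $p\restriction n$ one computes the rational endpoints of $I_{p\restriction n}$, and these form a rapidly shrinking nested sequence of intervals converging to $f_\varepsilon(p)$, which is exactly a Cauchy name; the recursion for the endpoints is primitive recursive in $n$ given the computable sequence $(\delta_n)$, so $f_\varepsilon$ is computable. \emph{Injectivity and computable partial inverse:} distinct $p,q\in2^\IN$ first differ at some position $n$, and then $I_{p\restriction(n+1)}$ and $I_{q\restriction(n+1)}$ are separated by the open gap of length $\delta_n$, so $f_\varepsilon(p)\neq f_\varepsilon(q)$; conversely, given a name of a point $x\in\range(f_\varepsilon)$, one can for each $n$ wait until the approximation to $x$ is precise enough to decide into which of the two children intervals $I_{w0}, I_{w1}$ it falls (it must eventually fall strictly inside one of them, since it cannot lie in the deleted open gap — that is precisely where $x\in\range(f_\varepsilon)$ is used), thereby computing $p = f_\varepsilon^{-1}(x)$ bit by bit. \emph{$\range(f_\varepsilon)$ is co-c.e.\ closed:} its complement in $[0,1]$ is the union, over all $w\in2^*$, of the open middle gaps removed at stage $w$, together with $(-\infty,0)\cup(1,\infty)$ intersected with the ambient space if relevant; this is an effective union of rational open intervals, hence c.e.\ open, so the range is co-c.e.\ closed.

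\emph{The measure identity.} For a single cylinder $A = w2^\IN$ we have $f_\varepsilon(w2^\IN) = \range(f_\varepsilon)\cap I_w$ and the part of $\range(f_\varepsilon)$ inside $I_w$ is itself a rescaled SVC set inside $I_w$; a geometric-series bookkeeping shows $\lambda\bigl(f_\varepsilon(w2^\IN)\bigr) = |I_w| - \sum_{k\geq|w|}2^{k-|w|}\cdot 2^{|w|}\delta_k \cdot(\text{number of gaps at level }k\text{ inside }I_w)$, which telescopes to $\varepsilon\cdot 2^{-|w|} = \varepsilon\cdot\mu(w2^\IN)$. For a general closed $A\In2^\IN$, I would write its complement $2^\IN\sm A$ as a disjoint union of cylinders $\bigcup_j w_j2^\IN$; since $f_\varepsilon$ is injective, $f_\varepsilon(A) = \range(f_\varepsilon)\sm\bigcup_j f_\varepsilon(w_j2^\IN)$ and the $f_\varepsilon(w_j2^\IN)$ are pairwise disjoint, so by countable additivity $\lambda(f_\varepsilon(A)) = \lambda(\range(f_\varepsilon)) - \sum_j\lambda(f_\varepsilon(w_j2^\IN)) = \varepsilon - \varepsilon\sum_j\mu(w_j2^\IN) = \varepsilon\,\mu(A)$, using the single-cylinder case and $\sum_j\mu(w_j2^\IN) = \mu(2^\IN\sm A) = 1-\mu(A)$. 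One subtlety to check is that $f_\varepsilon(A)$ is genuinely a Borel (indeed compact) set, which is immediate since $A$ is compact and $f_\varepsilon$ is continuous.

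\emph{Main obstacle.} The only place that needs real care is the interplay between injectivity and the co-c.e.\ range when computing $f_\varepsilon^{-1}$: one must argue that any point actually in the range eventually "commits" to one child interval, i.e.\ never sits on the boundary of a gap in a way that stalls the decision. This is handled by choosing the $I_w$ \emph{closed} so that a range point lying on a shared boundary of two sibling intervals does not occur (the gaps being open and nonempty forces the two closed children to be disjoint), and by noting that the endpoints of the $I_w$ are themselves in the range, reached by eventually-constant branches — so the "decide which child" procedure is always eventually decidable for range inputs. Everything else is routine geometric series manipulation and standard effectivity arguments of the kind already used for $\C_X\leqSW\C_Y$ via computable embeddings with co-c.e.\ range.
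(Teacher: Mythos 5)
Your proposal is correct and is built on the same Smith--Volterra--Cantor construction as the paper: nested closed intervals $I_w$ indexed by binary words, with open middle gaps deleted so that the total removed length is $1-\varepsilon$, and $f_\varepsilon(p)$ defined as the unique point of $\bigcap_n I_{p\restriction n}$. (Your illustrative choice $\delta_n := (1-\varepsilon)2^{-n}3^{-n-1}$ actually sums to $\sum_n 2^n\delta_n=(1-\varepsilon)/2$ rather than $1-\varepsilon$, and the displayed bookkeeping formula for $\lambda(f_\varepsilon(w2^\IN))$ double-counts a factor $2^{k-|w|}$ against the ``number of gaps'' term; both are slips rather than structural errors, since you explicitly allow any gap sequence with $\sum_n 2^n\delta_n=1-\varepsilon$ and state the correct telescoped value $\varepsilon\cdot 2^{-|w|}$.)

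Where your route genuinely differs from the paper is in the two verification steps. To pass from the cylinder identity $\lambda(f_\varepsilon(w2^\IN))=\varepsilon\,\mu(w2^\IN)$ to arbitrary closed $A$, the paper observes that $B\mapsto\lambda(f_\varepsilon(B))$ and $B\mapsto\varepsilon\,\mu(B)$ are two finite Borel measures agreeing on cylinders and invokes Lemma~\ref{lem:identity}, which in turn relies on outer regularity of finite Borel measures on Polish spaces. You instead decompose $2^\IN\setminus A$ into pairwise disjoint cylinders, use injectivity of $f_\varepsilon$ to write $f_\varepsilon(A)=\range(f_\varepsilon)\setminus\bigcup_j f_\varepsilon(w_j2^\IN)$, and conclude by countable additivity; this is more elementary and self-contained, at the modest cost of a little extra bookkeeping. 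Similarly, for co-c.e.\ closedness of the range and computability of $f_\varepsilon^{-1}$, the paper appeals to computable compactness of $2^\IN$ and a general result (``computable injections on computably compact domains have computable inverses''), whereas you give direct arguments: the complement of the range is the c.e.\ union of the deleted rational-length gaps, and the inverse is computed bit by bit by semi-deciding which of the two positively separated children $I_{w0}, I_{w1}$ the input lies in. Your closing ``main obstacle'' paragraph correctly identifies and handles the only delicate point in that decision procedure, namely that a range point never lies in an open gap and hence eventually commits to one child. Both sets of arguments are valid; the paper's are shorter because they lean on established machinery, while yours are more transparent to a reader unfamiliar with those background results.
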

\begin{proof}
We consider the classical Smith-Volterra-Cantor set construction (see the $\varepsilon$--Cantor set in \cite[Lemma~18.9]{AB98}).
Given a computable $\varepsilon\in[0,1)$ we choose $\delta:=1-\varepsilon$ and given the unit interval $[0,1]$ we inductively
construct a sequence $(I_w)$ of closed intervals $I_w\In\IR$ indexed by binary words $w\in\{0,1\}^*$ as follows.
For the empty word $e$ and words $w\in\{0,1\}^{n-1}$, $n\geq1$ and symbols $c\in\{0,1\}$ we define
\begin{itemize}
\item $I_e:=[0,1]$,
\item $I_{wc}:=\left\{\begin{array}{ll}
         \ [a,a+\frac{b-a}{2}-\frac{\delta}{2^{2n}}] & \mbox{if $c=0$}\\
         \ [a+\frac{b-a}{2}+\frac{\delta}{2^{2n}},b] & \mbox{if $c=1$}
         \end{array}\right.$ where $[a,b]:=I_w$.
\end{itemize}
In other words, $I_{wc}$ is constructed from $I_w$ with $|w|=n-1$ by removing a middle piece of length $\frac{\delta}{2^{2n-1}}$
and $I_{w0}$ is the left half of the result, while $I_{w1}$ is the right half. The set 
\[C_\varepsilon:=\bigcap_{n=0}^\infty\bigcup_{w\in\{0,1\}^n}I_w\]
is called the Smith-Volterra-Cantor set and due to the construction we obtain
\[\lambda(C_\varepsilon)=1-\sum_{n=1}^\infty2^{n-1}\frac{\delta}{2^{2n-1}}=1-\delta=\varepsilon.\]
Now we define a computable function $f_\varepsilon:2^\IN\to[0,1]$ by
\[\{f_\varepsilon(p)\}:=\bigcap_{w\prefix p}I_w,\]
where the function value is meant to be the unique real in the given set (the value is unique by Cantor's Intersection
Theorem as $\diam(I_w)\leq2^{-|w|}$).
It is easy to see that $f_\varepsilon$ is computable due to the inductive nature of the construction and because $\varepsilon$ and hence
$\delta$ are computable. Moreover, $\range(f_\varepsilon)=f_\varepsilon(2^\IN)=C_\varepsilon$ and due to the symmetry of the construction we obtain 
$\lambda(f_\varepsilon(w2^\IN))=2^{-|w|}\varepsilon=\varepsilon\cdot\mu(w2^\IN)$.
Due to Lemma~\ref{lem:identity}, this implies $\lambda(f_\varepsilon(A))=\varepsilon\cdot\mu(A)$ for any closed $A\In2^\IN$. 
Since $2^\IN$ is computably compact, it follows that $f_\varepsilon(2^\IN)=C_\varepsilon$ is computably compact too by \cite[Theorem~3.3]{Wei03} and,
in particular, co-c.e.\ closed. This also implies that the partial inverse $f_\varepsilon^{-1}$ is computable (see for instance \cite[Corollary~6.7]{BBP12}).
\end{proof} 

Altogether, we obtain the following result.

\begin{proposition}[Cantor space and the unit interval]
\label{prop:probability-cantor}
Let $I$ be an interval and let $\varepsilon\in[0,1)$ be computable. Then we obtain
\[\P_I\C_{[0,1]}\leqSW\P_I\C_{2^\IN}\leqSW\P_{\varepsilon I}\C_{[0,1]}.\]
\end{proposition}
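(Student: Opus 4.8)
The plan is to establish the two strong reductions in the stated chain separately. The first, $\P_I\C_{[0,1]}\leqSW\P_I\C_{2^\IN}$, has in fact already been observed above: since the binary representation $\rho_2\colon2^\IN\to[0,1]$ is surjective, computable and measure preserving, a closed set $A\In[0,1]$ with $\lambda(A)\in I$ can be computably mapped to its preimage $\rho_2^{-1}(A)\in\AA_-(2^\IN)$, which is nonempty, closed, and satisfies $\mu(\rho_2^{-1}(A))=\lambda(A)\in I$, and from any point of $\rho_2^{-1}(A)$ one recovers a point of $A$ by applying $\rho_2$. The output step uses only the oracle answer, so this is a strong reduction. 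Hence the real work is the second reduction $\P_I\C_{2^\IN}\leqSW\P_{\varepsilon I}\C_{[0,1]}$, and here I would use the Smith--Volterra--Cantor embedding $f_\varepsilon$ from the preceding lemma.

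For that reduction I would follow exactly the pattern by which $\C_{2^\IN}\leqSW\C_{[0,1]}$ is obtained from a computable embedding with co-c.e.\ closed range (compare the discussion around \cite[Corollary~4.3]{BBP12}), now using the measure identity of the lemma, with $f_\varepsilon$ as the input modificator and $f_\varepsilon^{-1}$ as the output modificator. Given $A\in\AA_-(2^\IN)$ with $\mu(A)\in I$ and $A\neq\emptyset$, I would pass to $f_\varepsilon(A)$; it is compact, hence closed in $[0,1]$, because $2^\IN$ is compact and $f_\varepsilon$ is continuous, and by the lemma $\lambda(f_\varepsilon(A))=\varepsilon\cdot\mu(A)\in\varepsilon I$, so $f_\varepsilon(A)$ is an admissible, nonempty input for $\P_{\varepsilon I}\C_{[0,1]}$. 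An application of $\P_{\varepsilon I}\C_{[0,1]}$ returns a point $y\in f_\varepsilon(A)$, and since $f_\varepsilon^{-1}$ is computable on $\range(f_\varepsilon)$ (this was part of the lemma, using that $\range(f_\varepsilon)$ is co-c.e.\ closed), the point $f_\varepsilon^{-1}(y)\in A$ is computable from $y$ alone. As the last step does not consult the original input, the reduction is strong.

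The main obstacle I expect is verifying that the input map $A\mapsto f_\varepsilon(A)$ is computable as a map $\AA_-(2^\IN)\to\AA_-([0,1])$, i.e.\ that a $\psi_-$--name of $f_\varepsilon(A)$ can be produced from one of $A$. I would deduce this from the general fact that a computable map out of a computably compact space induces a computable map on the hyperspaces of closed sets equipped with negative information: one enumerates all basic closed balls $\bar C$ in $[0,1]$ and semidecides for each whether $f_\varepsilon^{-1}(\bar C)\In 2^\IN\sm A$ --- which is possible because $f_\varepsilon^{-1}(\bar C)$ is co-c.e.\ closed in the computably compact space $2^\IN$ while $2^\IN\sm A$ is c.e.\ open, so that inclusion is semidecidable by the usual compactness argument --- and outputs the open ball $C$ whenever the test succeeds; compactness (hence closedness) of $f_\varepsilon(A)$ then guarantees that the balls produced this way exhaust precisely $[0,1]\sm f_\varepsilon(A)$. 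If one prefers a hands-on argument one can instead work from the decomposition $[0,1]\sm f_\varepsilon(A)=([0,1]\sm\range(f_\varepsilon))\cup\bigcup_{w}I_w$, where $w$ ranges over the words with $w2^\IN\cap A=\emptyset$ (these form a c.e.\ set given negative information on $A$, by compactness of $w2^\IN$, and $I_w\cap f_\varepsilon(A)=f_\varepsilon(w2^\IN\cap A)=\emptyset$ for each of them) and $I_w$ is the interval of the construction in the previous lemma, covering $[0,1]\sm\range(f_\varepsilon)$ by the open middle intervals discarded during the construction; the only fiddly point of this route is covering the endpoints of the $I_w$ by rational open intervals, which is handled using the adjacent discarded middle intervals. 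All the measure bookkeeping is immediate from the lemma, so, unlike in the Independent Choice Theorem, no Fubini-type argument enters here.
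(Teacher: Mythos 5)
Your proposal is correct and follows the same route the paper takes: the first reduction via the measure‑preserving surjection $\rho_2$, the second via the Smith--Volterra--Cantor embedding $f_\varepsilon$ with $f_\varepsilon^{-1}$ as output modificator. The paper leaves the hyperspace computability of $A\mapsto f_\varepsilon(A)$ implicit (it points to Corollary~4.3 of \cite{BBP12}), whereas you spell it out; the extra detail is sound but not a different argument.
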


For the interval $I=(0,\infty]$ we can just choose $\varepsilon=\frac{1}{2}$ and we obtain the
following result that was already proved in \cite[Corollary~19]{BP10}.

\begin{corollary}
\label{cor:interval-cantor}
$\PC_{[0,1]}\equivSW\PC_{2^\IN}$.
\end{corollary}

In case of the spaces $\IN\times2^\IN$ and $\IR$ we can obtain a stronger result.

\begin{proposition}[Real numbers]
\label{prop:reals}
Let $I$ be an interval. Then we obtain
\[\P_I\C_\IR\equivSW\P_I\C_{\IN\times2^\IN}.\]
Here $\IN$ is equipped with the counting measure.
\end{proposition}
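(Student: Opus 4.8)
The plan is to prove the two strong reductions $\P_I\C_\IR\leqSW\P_I\C_{\IN\times2^\IN}$ and $\P_I\C_{\IN\times2^\IN}\leqSW\P_I\C_\IR$ by the two standard devices for strong reductions between closed choice problems (cf.\ the techniques of \cite{BBP12} recalled before Lemma~\ref{lem:pairing}), refined so as to keep track of measures. For the reduction from $\IN\times2^\IN$ to $\IR$ I would exhibit a computable embedding $f:\IN\times2^\IN\to\IR$ with co-c.e.\ closed range whose induced image map is measure preserving in the sense that $\lambda(f(A))=(\mu_\IN\otimes\mu_{2^\IN})(A)$ for all closed $A\In\IN\times2^\IN$; for the converse a computable, total, measure-preserving surjection $s:\IN\times2^\IN\to\IR$. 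Since $\IR$ is connected and $\IN\times2^\IN$ is totally disconnected, there is no continuous surjection in the other direction, so this asymmetric use of the two devices is forced. Together the two reductions give $\P_I\C_\IR\equivSW\P_I\C_{\IN\times2^\IN}$.

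For the embedding I would place pairwise separated ``tiles'' $T_n:=[5n,5n+2]$ in $\IR$ (consecutive tiles are at distance $3$) and put onto each of them a rescaled and shifted copy of the Smith--Volterra--Cantor embedding of the previous lemma with parameter $\varepsilon=\tfrac12$, namely $g_n:2^\IN\to T_n$, $g_n(p):=5n+2f_{1/2}(p)$. Then each $g_n$ is, uniformly in $n$, a computable embedding with computably compact (hence co-c.e.\ closed) range, and $\lambda(g_n(C))=2\cdot\tfrac12\cdot\mu_{2^\IN}(C)=\mu_{2^\IN}(C)$ for every closed $C\In2^\IN$. Put $f(n,p):=g_n(p)$: it is computable uniformly in $n$ and injective by disjointness of the tiles together with injectivity of each $g_n$. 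Its partial inverse is computable because every $x\in\range f$ lies in exactly one tile, which can be identified since the tiles are pairwise at distance $3$ (a rational interval of radius $<1$ around $x$ meets only $T_n$), and then $g_n^{-1}(x)=f_{1/2}^{-1}\bigl((x-5n)/2\bigr)$ is computable because $f_{1/2}^{-1}$ is. The range $\bigcup_n g_n(2^\IN)$ is a locally finite union of compacta, hence closed, and it is co-c.e.\ closed because a bounded rational ball meets only finitely many tiles. Finally, for closed $A$ with sections $A_n:=\{p:(n,p)\in A\}$ the set $f(A)=\bigsqcup_n g_n(A_n)$ is again a locally finite union of compacta, hence a closed set, and by disjointness of the tiles $\lambda(f(A))=\sum_n\mu_{2^\IN}(A_n)=(\mu_\IN\otimes\mu_{2^\IN})(A)$, while $A\mapsto f(A)$ is computable as a map $\AA_-(\IN\times2^\IN)\to\AA_-(\IR)$ on the relevant inputs. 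Hence, given $A$ with $(\mu_\IN\otimes\mu_{2^\IN})(A)\in I$, the non-empty set $B:=f(A)$ satisfies $\lambda(B)\in I$, so $B\in\dom(\P_I\C_\IR)$, and from any $x\in B$ we recover $f^{-1}(x)\in A$; this witnesses $\P_I\C_{\IN\times2^\IN}\leqSW\P_I\C_\IR$.

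For the converse, let $e:\IN\to\IZ$ be the canonical computable bijection and recall that the binary representation $\rho_2:2^\IN\to[0,1]$ is computable, surjective and measure preserving. Define $s:\IN\times2^\IN\to\IR$ by $s(n,p):=e(n)+\rho_2(p)$; this is computable and total, and surjective since $\IR=\bigcup_{k\in\IZ}[k,k+1]=\bigcup_n s(\{n\}\times2^\IN)$. By continuity of $s$ the map $C\mapsto s^{-1}(C)$ is a computable map $\AA_-(\IR)\to\AA_-(\IN\times2^\IN)$. The $n$-th section of $s^{-1}(C)$ is $\rho_2^{-1}\bigl((C\cap[e(n),e(n)+1])-e(n)\bigr)$, which has $\mu_{2^\IN}$-measure $\lambda(C\cap[e(n),e(n)+1])$ because $\rho_2$ is measure preserving; summing over $n$ and using that the intervals $[k,k+1]$ overlap only in the Lebesgue-null set $\IZ$ gives $(\mu_\IN\otimes\mu_{2^\IN})(s^{-1}(C))=\sum_{k\in\IZ}\lambda(C\cap[k,k+1])=\lambda(C)$. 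Hence, given closed $C\In\IR$ with $\lambda(C)\in I$, the set $D:=s^{-1}(C)$ is non-empty (as $s$ is surjective) with $(\mu_\IN\otimes\mu_{2^\IN})(D)\in I$, so $D\in\dom(\P_I\C_{\IN\times2^\IN})$, and from any $(n,p)\in D$ we recover $s(n,p)\in C$; this witnesses $\P_I\C_\IR\leqSW\P_I\C_{\IN\times2^\IN}$.

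The measure bookkeeping and the effectivity of the two set maps $A\mapsto f(A)$ and $C\mapsto s^{-1}(C)$ are routine, as is the local-finiteness argument for closedness. The one genuinely non-obvious point, and the only real obstacle, is that $2^\IN$ cannot be embedded measure-preservingly into a bounded interval (a closed proper subset of $[0,1]$ homeomorphic to Cantor space has empty interior and measure $<1$), which is exactly why the tiles must have length $2$ and the Smith--Volterra--Cantor parameter must be $\tfrac12$; everything else follows the pattern of the earlier reductions in this section.
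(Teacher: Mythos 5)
Your proposal is correct and follows essentially the same route as the paper: a measure-preserving computable surjection $\IN\times2^\IN\to\IR$ built from $\rho_2$ and integer translates for $\P_I\C_\IR\leqSW\P_I\C_{\IN\times2^\IN}$, and a computable embedding placing rescaled Smith--Volterra--Cantor copies (parameter $\tfrac12$, scaled by $2$) onto separated tiles for the converse. The only differences are cosmetic, e.g.\ the paper uses tiles $[3n,3n+2]$ and writes the surjection with an explicit parity case split rather than naming the bijection $\IN\to\IZ$.
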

\begin{proof}
For the proof of $\P_I\C_\IR\leqSW\P_I\C_{\IN\times2^\IN}$ we use the function
\[f:\IN\times2^\IN\to\IR,(n,p)\mapsto\left\{\begin{array}{ll}
      \frac{1}{2}n+\rho_2(p) & \mbox{if $n$ is even}\\
      -\frac{1}{2}(n+1)+\rho_2(p) & \mbox{if $n$ is odd}
\end{array}\right.,\]
which is defined with the help of the binary representation $\rho_2$. This function $f$
is computable, surjective and measure-preserving (since the Lebesgue measure $\lambda$ is translation-invariant) 
and hence we obtain the desired reduction.

For the proof of $\P_I\C_{\IN\times2^\IN}\leqSW\P_I\C_\IR$ we use the function
$f_{\frac{1}{2}}$ from the Smith-Volterra-Cantor set construction and we define
\[g:\IN\times2^\IN\to\IR,(n,p)\mapsto 3n+2\cdot f_{\frac{1}{2}}(p).\]
The function $g$ is injective and even a computable embedding and it satisfies
\[\lambda(g(A))=2\lambda(f_{\frac{1}{2}}(A))=\mu(A)\]
for any closed set $A\In\IN\times2^\IN$, where $\mu$ is the measure on $\IN\times2^\IN$.
Finally, $\range(g)=3\IN+\range(2\cdot f_\frac{1}{2})$ is clearly a co-c.e.\ closed set. This is because
$2^\IN$ is computably compact and hence $\range(f_\frac{1}{2})$ is computably compact too and $\range(g)$
is the union of clearly separated copies of $\range(2\cdot f_\frac{1}{2})$ and hence it is co-c.e.\ closed.
Altogether, this proves $\P_I\C_{\IN\times2^\IN}\leqSW\P_I\C_\IR$.
\end{proof}

We give an example that shows that this stronger type of result cannot be achieved for the unit interval $[0,1]$
and Cantor space $2^\IN$, hence Proposition~\ref{prop:probability-cantor} is in a certain sense optimal.
The reason for this difference are the different connectedness properties of $[0,1]$ and $2^\IN$.
While the first one does not have two disjoint closed subsets of measure $\frac{1}{2}$, the second one does.

\begin{proposition}
\label{prop:example-unit-cantor}
$\P_{\geq\frac{1}{2}}\C_{[0,1]}\lW\P_{\geq\frac{1}{2}}\C_{2^\IN}$. 
\end{proposition}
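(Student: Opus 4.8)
The easy direction $\P_{\geq\frac12}\C_{[0,1]}\leqW\P_{\geq\frac12}\C_{2^\IN}$ is precisely the first inequality of Proposition~\ref{prop:probability-cantor} (it even holds strongly). So the plan is to prove strictness, i.e.\ $\P_{\geq\frac12}\C_{2^\IN}\nleqW\P_{\geq\frac12}\C_{[0,1]}$, using $\C_2$ as a separating witness. First, $\C_2\leqSW\P_{\geq\frac12}\C_{2^\IN}$: a non-empty closed $A\In\{0,1\}$, given by negative information, is mapped computably to the closed set $0\,2^\IN$, $1\,2^\IN$, or $2^\IN$ according as $0\notin A$, $1\notin A$, or neither has yet been excluded, each of measure $\geq\frac12$, and the first bit of any point of the image lies in $A$. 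Hence, by transitivity of $\leqW$, it suffices to prove $\C_2\nleqW\P_{\geq\frac12}\C_{[0,1]}$. The geometric heart is the following \emph{intersection fact}: two closed sets $A,B\In[0,1]$ with $\lambda(A),\lambda(B)\geq\frac12$ always meet, since $\lambda(A\cap B)\geq\lambda(A)+\lambda(B)-1\geq0$, and if this is $0$ then $A\cup B$ is a closed set of full measure, hence equals $[0,1]$, so a disjoint $A,B$ would be a non-trivial clopen partition of the interval, which is impossible. This is exactly the property of $[0,1]$ that $2^\IN$ fails via $0\,2^\IN$ and $1\,2^\IN$, so it is where the argument must bite.

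Assume $\C_2\leqW\P_{\geq\frac12}\C_{[0,1]}$ via computable $K,H$. Let $p_*$ be the name of $\{0,1\}$ that never excludes anything, let $C\In[0,1]$ be the closed set whose complement is enumerated by $K(p_*)$, and for $L\in\IN$ and $i\in\{0,1\}$ let $q_i^L$ be the name that agrees with $p_*$ on the first $L$ symbols and then excludes $i$ forever, so that $q_i^L$ names $\{1-i\}$; let $E_i^L\In[0,1]$ be the closed set coded by $K(q_i^L)$. By the reduction $\lambda(C),\lambda(E_i^L)\geq\frac12$, and $H(q_i^L,x)=1-i$ for every $x\in E_i^L$. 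Let $V_L$ be the finite union of open balls $K$ has enumerated after reading the first $L$ symbols of $p_*$; since this output prefix of $K$ is shared by $K(p_*)$ and by $K(q_i^L)$, both $C$ and $E_i^L$ lie in $X_L:=[0,1]\sm V_L$, and $X_L\searrow C$, so $\lambda(X_L)\searrow\lambda(C)$.

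Since $C$ is co-c.e.\ closed in the computably compact space $[0,1]$ and $H(p_*,\cdot)$ must output a valid value on every point of $C$ (it could be returned by the oracle), a compactness argument yields a uniform bound $N_0$ on the number of symbols of the first input component that $H$ reads before committing its output symbol, over all names of points of $C$; here I would pass to a representation of $\{0,1\}$ in which the value is read off the first output symbol. Now fix $L\geq N_0$ large enough that $C\cap E_0^L\cap E_1^L\neq\emptyset$. Such $L$ exists: inside $X_L$ we have $\lambda(X_L\sm C)=\lambda(X_L)-\lambda(C)$ and $\lambda(X_L\sm E_i^L)\leq\lambda(X_L)-\frac12$, so $\lambda(C\cap E_0^L\cap E_1^L)\geq\lambda(C)+1-2\lambda(X_L)$, which tends to $1-\lambda(C)>0$ whenever $\lambda(C)<1$; and if $\lambda(C)=1$ then $C=[0,1]$ and the intersection fact already gives $E_0^L\cap E_1^L\neq\emptyset$ for every $L$. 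Pick $y\in C\cap E_0^L\cap E_1^L$. Because $y\in C$ and $L\geq N_0$, the computation of $H$ on the $p_*$-input commits after reading at most $N_0\leq L$ symbols of the first component, a prefix that $q_0^L$ and $q_1^L$ share with $p_*$; hence $H(q_0^L,y)=H(q_1^L,y)=H(p_*,y)=:i\in\{0,1\}$. But $y\in E_0^L$ forces $i=1$ and $y\in E_1^L$ forces $i=0$, a contradiction. This proves $\C_2\nleqW\P_{\geq\frac12}\C_{[0,1]}$, and the proposition follows.

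The main obstacle I expect is not the geometry — the intersection fact is short — but the circular dependence typical of finite‑extension arguments: the stage $L$ at which the diverging names $q_i^L$ reveal their exclusion should exceed the number of input symbols $H$ reads before answering, yet that number a priori depends on the point $y$, which itself lives in the sets $E_i^L$ depending on $L$. The uniform bound $N_0$, obtained from compactness of $C$, is what cuts this loop. The secondary, lesser point is the measure bookkeeping: one must notice that it is $\lambda(X_L\sm C)$, which vanishes as $L\to\infty$, and not $\lambda(X_L)-\frac12$, that controls how much of $C$ the sets $E_i^L$ can miss, and the degenerate case $\lambda(C)=1$ must be handled separately via the intersection fact.
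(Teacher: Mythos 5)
Your proof is correct. It takes the same overall route as the paper's — reduce to showing $\C_2\nleqW\P_{\geq\frac{1}{2}}\C_{[0,1]}$, use compactness/uniform continuity of $H$ on names of points of the set $C$ coded by $K$ on a name $p_*$ of $\{0,1\}$ to get a stable prefix, and derive a contradiction from a measure estimate in which the connectedness of $[0,1]$ is decisive — but the measure argument is organized differently. The paper partitions $C$ into $A_0,A_1,A_2$ according to which $H$-values names of points return, picks $i$ with $\lambda(A_i)\leq\frac{1}{2}\lambda(C)$, and follows a single name $p'$ of $\{i\}$ extending the stable prefix to force the set coded by $K(p')$ below measure $\frac{1}{2}$ in a one-shot estimate with the fixed error budget $\frac{1-\lambda(C)}{3}$; the degenerate case $\lambda(C)=1$ is handled by $d(A_0,A_1)>0\Rightarrow\lambda(A_2)>0$, which is where connectedness enters there. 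You avoid the partition altogether: you follow both names $q_0^L,q_1^L$ simultaneously and show $C\cap E_0^L\cap E_1^L\neq\emptyset$, by letting $L\to\infty$ when $\lambda(C)<1$ and by your intersection fact (again connectedness) when $\lambda(C)=1$, then read off conflicting $H$-values at a single common point. Your symmetric version is conceptually a bit cleaner, since it isolates exactly the geometric obstruction distinguishing $[0,1]$ from $2^\IN$ — two closed subsets of $[0,1]$ of measure $\geq\frac{1}{2}$ must meet — at the cost of a limiting argument in place of a one-shot bound. One small point you should make explicit: the uniform bound $N_0$ hinges on the set of names of points of $C$ being compact, which requires a proper (e.g.\ signed-digit) representation of $[0,1]$; the paper fixes this assumption at the outset.
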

\begin{proof}
The positive part of the reduction follows from Proposition~\ref{prop:probability-cantor}.
In order to prove the strictness, we claim that $\C_2\leqSW\P_{\geq\frac{1}{2}}\C_{2^\IN}$ and $\C_2\nleqW\P_{\geq\frac{1}{2}}\C_{[0,1]}$.

The first part is easy to see as $2^\IN$ can be subdivided into $A_i:=i2^\IN$ for $i\in\{0,1\}$ and 
$\mu(A_0)=\mu(A_1)=\frac{1}{2}$. Now any subset $C\In\{0,1\}$ is computably mapped to $A_C=\bigcup_{i\in C}A_i$ and
given some $q\in A_C$ one can directly recover an $i$ with $q\in A_i$. This yields a computable reduction   $\C_2\leqSW\P_{\geq\frac{1}{2}}\C_{2^\IN}$.

In order to prove the negative claim, we assume for a contradiction that we have $\C_2\leqW\P_{\geq\frac{1}{2}}\C_{[0,1]}$.
Then there are computable $H,K$ such that $H\langle\id,FK\rangle$ is a realizer of $\C_2$ whenever $F$ is a realizer of $\P_{\geq\frac{1}{2}}\C_{[0,1]}$.
We recall that we can assume that $H$ is defined on the minimal required domain and that we use the signed-digit representation of $[0,1]$.
Let $p$ be a name of $\{0,1\}$. Then $K(p)$ is a name of a closed set $A\In[0,1]$ with $\lambda(A)\geq\frac{1}{2}$.
Let $A_i\In A$ be the set of all points that have only names $q$ such that $H\langle p,q\rangle$ is mapped to (a name of) $i\in\{0,1\}$.
We write $H\langle p,q\rangle=i$ in this situation. Let $A_2\In A$ be a set of all those points that have names $q$ with $H\langle p,q\rangle=0$ as
well as names $q$ with $H\langle p,q\rangle=1$. Then $A=A_0\cup A_1\cup A_2$ is a disjoint union, where one of the $A_i$'s might be empty.
Let $N$ be the set of names of the points in $A$, which is compact for the signed-digit representation. 
Since $H$ is uniformly continuous\footnote{Strictly speaking, we use the following property stronger than uniform continuity on $K$: 
if $f:X\to Y$ is a continuous function on metric spaces $(X,d_X)$ and $(Y,d_Y)$ and $K\In X$ is compact, then
for every $\varepsilon>0$ there exists a $\delta>0$ such that for all $x\in X$ and all $y\in K$ we obtain
that $d_X(x,y)<\delta$ implies $d_Y(f(x),f(y))<\varepsilon$. This can be proved analogously to the fact
that $f|_K$ is uniformly continuous. When we refer to ``uniform continuity'' of a function $f:X\to Y$ on $K$, then we mean this property.} 
on the compact set $\{p\}\times N$, there is a finite prefix $w\prefix p$ such that $H\langle w\IN^\IN,q\rangle$ is a singleton for every
name $q$ of a point in $A$. Since $K$ is continuous, we can assume that $w$ is long enough such that
$K(w\IN^\IN)$ only contains names of sets $A'$ with $\lambda(A'\setminus A)\leq\frac{(1-\lambda(A))}{3}$.
There also exists an $i\in\{0,1\}$ such that $\lambda(A_i)\leq\frac{1}{2}\lambda(A)$.
Let now $p'$ be a name of $C:=\{i\}$ that extends $w$, i.e., $w\prefix p'$.
Then $K(p')$ is a name of a set $A'$ with $\lambda(A')\geq\frac{1}{2}$. We claim that $A'\cap A\In A_i$, for otherwise there is an 
$x\in A'\cap A$ with a name $q$ such that $H\langle p',q\rangle=1-i\not\in C$ and there is a realizer $F$ of $\P_{\geq\frac{1}{2}}\C_{[0,1]}$ 
such that $FK(p')=q$ in contradiction to the assumption and the choice of $C$. Hence, if $\lambda(A)<1$, then we obtain
\[\lambda(A')\leq\lambda(A_i)+\lambda(A'\setminus A)\leq\frac{1}{2}\lambda(A)+\frac{1-\lambda(A)}{3}<\frac{1}{2}\]
in contradiction to the assumption. If, on the other hand, $\lambda(A)=1$, then $A=[0,1]$ and by continuity of $H$
we obtain $d(A_0,A_1):=\inf\{|a-b|:a\in A_0,b\in A_1\}>0$. This implies $\lambda(A_2)>0$ and hence we can 
even assume $\lambda(A_i)<\frac{1}{2}$ for some $i\in\{0,1\}$. Similarly to above, this yields the contradiction
$\lambda(A')\leq\lambda(A_i)+\lambda(A'\setminus A)<\frac{1}{2}$.
\end{proof}

This proves that Proposition~\ref{prop:probability-cantor} cannot be extended to the case $\varepsilon=1$.

\section{Weak Weak K\H{o}nig's Lemma}
\label{sec:WWKL}

It is easy to see that $\PC_{2^\IN}$ is essentially equivalent to Weak Weak K\H{o}nig's Lemma as it is known
from reverse mathematics \cite{Sim09}.
By $\Tr$ we denote the set of binary trees $T\In2^*$, represented via their characteristic functions.
Then  Weak K\H{o}nig's Lemma $\WKL:\In\Tr\mto2^\IN,T\mapsto[T]$ is the problem
to map a binary tree $T$ to set $[T]$ of its infinite paths.
The domain $\dom(\WKL)$ consists of all infinite binary trees.
It has been proved in \cite[Corollary~83 and Theorem~8.5]{BG11} that $\C_{2^\IN}\equivSW\WKL$ (see also \cite{GM09,BBP12,BGM12}).
The proof is essentially based on the fact that the map
\[[\,]:\Tr\to\AA_-(2^\IN),T\mapsto[T]\]
that maps infinite binary trees to closed subsets of Cantor space (with respect to negative information) is computable and has a multi-valued computable inverse.
Analogously to Weak K\H{o}nig's Lemma, we can define the computational counterpart of Weak Weak K\H{o}nig's Lemma, as it is used in reverse mathematics \cite{Sim09}.

\begin{definition}[Weak Weak K\H{o}nig's Lemma]
{\em Weak Weak K\H{o}nig's Lemma} is the problem
\[\WWKL:\In\Tr\mto2^\IN,T\mapsto[T]\]
restricted to the set $\dom(\WWKL)=\{T:\mu([T])>0\}$ of those trees $T$, whose set
of infinite paths $[T]$ has positive measure.
\end{definition}

Weak Weak K\H{o}nig's Lemma was already studied in the Weihrauch lattice in \cite{BP10}
and the following result was noticed. This observation can be proved using the computable map $T\mapsto[T]$ above
which also preserves the respective measure conditions.

\begin{proposition}[Weak Weak K\H{o}nig's Lemma]
\label{prop:WWKL}
$\WWKL\equivSW\PC_{2^\IN}\equivSW\PC_{[0,1]}$.
\end{proposition}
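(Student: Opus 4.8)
The plan is to deduce the statement from two ingredients that are already in place. The equivalence $\PC_{2^\IN}\equivSW\PC_{[0,1]}$ is exactly Corollary~\ref{cor:interval-cantor}, so the only thing that needs a genuine argument is $\WWKL\equivSW\PC_{2^\IN}$. For this I would reuse the correspondence between binary trees and closed subsets of Cantor space that underlies the equivalence $\C_{2^\IN}\equivSW\WKL$ from \cite{BG11}: the map $[\,]:\Tr\to\AA_-(2^\IN),T\mapsto[T]$ is computable, and it has a multi-valued computable right inverse, i.e.\ a computable multi-valued map sending a $\psi_-$-name of a closed set $A\In2^\IN$ to (the characteristic function of) some binary tree $T$ with $[T]=A$. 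The single new observation needed is that this correspondence transports the positive-measure side conditions in both directions, which is immediate because $[T]=A$ as sets forces $\mu([T])=\mu(A)$.

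For $\WWKL\leqSW\PC_{2^\IN}$ I would argue as follows. Given an infinite binary tree $T$ with $\mu([T])>0$, compute $[T]\in\AA_-(2^\IN)$ via $[\,]$; since $\mu([T])>0$, the set $[T]$ lies in $\dom(\PC_{2^\IN})$, so $\PC_{2^\IN}$ applied to it returns some $p\in[T]$, and such a $p$ is by definition an infinite path through $T$, hence a correct value of $\WWKL(T)$. Because the oracle's output is handed back unchanged (no post-processing using the original input is required), this is a strong Weihrauch reduction.

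For the converse $\PC_{2^\IN}\leqSW\WWKL$ I would do the mirror image. Given a closed set $A\In2^\IN$ with $\mu(A)>0$, apply the multi-valued computable inverse of $[\,]$ to obtain a binary tree $T$ with $[T]=A$; then $T$ is infinite and $\mu([T])=\mu(A)>0$, so $T\in\dom(\WWKL)$, and $\WWKL(T)$ returns some $p\in[T]=A$, which is a correct value of $\PC_{2^\IN}(A)$. Again the output is returned verbatim, so the reduction is strong. Chaining these two reductions with Corollary~\ref{cor:interval-cantor} yields the full chain of strong equivalences.

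I do not expect a real obstacle here: all the hard work (computability of $[\,]$ and of its multi-valued inverse, preservation of non-emptiness) is already contained in the proof of $\C_{2^\IN}\equivSW\WKL$ in \cite{BG11}. The only point to keep an eye on is bookkeeping: one must check that the tree produced by the inverse really has a positive-measure path set (true since it equals $A$) and that neither reduction secretly needs to inspect or modify the oracle's answer, so that the reductions come out strong rather than merely ordinary.
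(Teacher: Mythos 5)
Your proposal is correct and matches the paper's argument: the paper also derives $\WWKL\equivSW\PC_{2^\IN}$ from the computable map $[\,]:\Tr\to\AA_-(2^\IN)$ and its computable multi-valued inverse (from the proof of $\C_{2^\IN}\equivSW\WKL$ in \cite{BG11}), together with the observation that this correspondence preserves the positive-measure condition, and then appeals to Corollary~\ref{cor:interval-cantor} for the $[0,1]$ case. You have simply spelled out the details the paper leaves implicit.
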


We mention that by Corollary~\ref{cor:products-pairing} we obtain the following.

\begin{corollary}
\label{cor:WWKL-product}
$\WWKL*\WWKL\equivW\WWKL$.
\end{corollary}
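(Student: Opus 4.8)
The plan is to read this off directly from Corollary~\ref{cor:products-pairing} via the equivalence $\WWKL\equivSW\PC_{2^\IN}$ of Proposition~\ref{prop:WWKL}. First I would check that the interval $I=(0,\infty]$, for which $\PC_{2^\IN}=\P_I\C_{2^\IN}$, meets the two hypotheses on intervals in Corollary~\ref{cor:products-pairing}: it is closed under product, since for $x,y\in(0,\infty]$ one has $xy\in(0,\infty)$ if $x,y$ are both finite and $xy=\infty$ otherwise, so $I\cdot I=I$; and it does not contain $\infty$ as an open endpoint (its right endpoint is included). Second, $2^\IN$ is endowed with the uniform measure $\mu_{2^\IN}$, which is a probability measure and hence $\sigma$--finite, and by Lemma~\ref{lem:pairing}(1) the Cantor pairing $2^\IN\times2^\IN\to2^\IN$ is a computable isomorphism that is measure preserving. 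So Corollary~\ref{cor:products-pairing} applies with $R=2^\IN$ and gives $\PC_{2^\IN}*\PC_{2^\IN}\equivW\PC_{2^\IN}$ (this is exactly the assertion there that $\PC_{2^\IN}$ is closed under composition).

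It then remains to transport this equivalence along $\WWKL\equivSW\PC_{2^\IN}$. Since $\equivSW$ implies $\equivW$, and since the compositional product $f*g=\max\{f_0\circ g_0:f_0\leqW f\mbox{ and }g_0\leqW g\}$ depends only on the lower $\leqW$--cones of its arguments and hence only on their Weihrauch degrees, we obtain $\WWKL*\WWKL\equivW\PC_{2^\IN}*\PC_{2^\IN}\equivW\PC_{2^\IN}\equivW\WWKL$, which is the claim.

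I do not anticipate any genuine obstacle here: the entire content is the verification that $(0,\infty]$ lies among the intervals permitted in Corollary~\ref{cor:products-pairing} and the immediate observation that the compositional product respects Weihrauch equivalence of its inputs. One could even bypass the latter bookkeeping by noting $\WWKL*\WWKL\equivW\PC_{2^\IN}*\PC_{2^\IN}$ straight from the definition of $*$ as a maximum over $\leqW$--cones, using that $\WWKL$ and $\PC_{2^\IN}$ share the same cone.
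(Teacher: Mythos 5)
Your proof is correct and takes exactly the route the paper takes: the paper derives this corollary by citing Corollary~\ref{cor:products-pairing} together with $\WWKL\equivSW\PC_{2^\IN}$. You have merely spelled out the hypothesis checks for $I=(0,\infty]$ and $R=2^\IN$ that the paper leaves implicit.
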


Theorem~\ref{thm:probabilistic-computability} yields the following characterization.

\begin{corollary}[Las Vegas computability]
\label{cor:Las-Vegas-WWKL}
The following are equivalent to each other:
\begin{enumerate}
\item $f\leqW\WWKL$,
\item $f$ is Las Vegas computable.
\end{enumerate}
\end{corollary}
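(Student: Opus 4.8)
The plan is to derive this immediately by chaining together two results already established in the excerpt: the characterization of Las Vegas computability via $\PC_{2^\IN}$ and the equivalence $\WWKL \equivSW \PC_{2^\IN}$.

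First I would recall that, by Corollary~\ref{cor:Las-Vegas}(1), a multi-valued function $f$ on represented spaces is Las Vegas computable if and only if $f \leqW \PC_{2^\IN}$. (Strictly speaking, Las Vegas computability is defined over $R = 2^\IN$ with measure in $I = (0,1]$, while $\PC_{2^\IN} = \P_{(0,\infty]}\C_{2^\IN}$; but since the uniform measure $\mu_{2^\IN}$ is a probability measure, every closed subset of $2^\IN$ has measure in $[0,1]$, so $\P_{(0,\infty]}\C_{2^\IN}$ and $\P_{(0,1]}\C_{2^\IN}$ have the same domain and the same values, i.e.\ they are literally the same multi-valued function. Hence Corollary~\ref{cor:Las-Vegas}(1), itself an instance of Theorem~\ref{thm:probabilistic-computability}, applies verbatim.)

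Next I would invoke Proposition~\ref{prop:WWKL}, which gives $\WWKL \equivSW \PC_{2^\IN}$ and in particular $\WWKL \equivW \PC_{2^\IN}$. Since Weihrauch reducibility $\leqW$ is transitive and respects $\equivW$ (as recalled in the preliminaries), we get $f \leqW \PC_{2^\IN} \iff f \leqW \WWKL$ for every $f$. Combining this with the previous step yields the claimed equivalence
\[
f \leqW \WWKL \iff f \text{ is Las Vegas computable.}
\]

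There is no real obstacle here: the statement is a formal consequence of Theorem~\ref{thm:probabilistic-computability} and Proposition~\ref{prop:WWKL}, and the only point that deserves a word of care is the harmless identification $\PC_{2^\IN} = \P_{(0,1]}\C_{2^\IN}$ noted above, which is what lets the interval $(0,1]$ appearing in the definition of Las Vegas computability match the interval $(0,\infty]$ used in the definition of $\PC_{2^\IN}$.
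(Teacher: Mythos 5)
Your proof is correct and matches the paper's own reasoning: the corollary is presented there as an immediate consequence of Theorem~\ref{thm:probabilistic-computability} (via Corollary~\ref{cor:Las-Vegas}) combined with the equivalence $\WWKL \equivSW \PC_{2^\IN}$ from Proposition~\ref{prop:WWKL}. Your extra remark reconciling the intervals $(0,1]$ and $(0,\infty]$ is a harmless clarification the paper leaves implicit.
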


It is important to note that $\WWKL$ can be separated from $\WKL$. 
This was proved in \cite[Theorem~20]{BP10} and independently in \cite[Proposition~4.2]{DDH+12}.

\begin{corollary}[Weak and Weak Weak K\H{o}nig's Lemma]
\label{cor:WWKL-WKL}
$\WWKL\lW\WKL$.
\end{corollary}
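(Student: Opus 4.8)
The plan is to establish the two parts of $\WWKL\lW\WKL$ separately. The reduction $\WWKL\leqSW\WKL$ is immediate: $\WWKL$ is the restriction of the same multi-valued map $T\mapsto[T]$ to the smaller domain $\{T:\mu([T])>0\}\In\{T:[T]\neq\emptyset\}=\dom(\WKL)$, so the identity functions $K=H=\id$ witness a strong reduction. For strictness I would show $\WKL\nleqW\WWKL$, which by Corollary~\ref{cor:Las-Vegas-WWKL} amounts to showing that $\WKL$ is not Las Vegas computable.

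So assume for a contradiction that $\WKL$ is Las Vegas computable, via computable functions $F_1,F_2$ as in Definition~\ref{def:Las-Vegas} (over $R=2^\IN$ with the uniform measure and $I=(0,1]$). Fix a computable infinite binary tree $T_0\In2^*$ whose set of paths $[T_0]$ consists exactly of the $\{0,1\}$--valued diagonally non-recursive functions (equivalently, the completions of Peano arithmetic); hence $[T_0]\neq\emptyset$ and every member of $[T_0]$ is of PA degree. Let $p$ be a computable name of $T_0$. Then $S:=\{r\in2^\IN:\delta_\IS F_2\langle p,r\rangle=0\}$ is a co-c.e.\ closed subset of $2^\IN$ (its complement is c.e.\ open, since $p$ and $F_2$ are computable) with $\mu_{2^\IN}(S)>0$, and $r\mapsto F_1\langle p,r\rangle$ is a computable map each of whose values on $S$ is a name of a member of $[T_0]$.

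Now I would invoke the fact that the set of reals of PA degree is Lebesgue-null; since $\mu_{2^\IN}(S)>0$, it follows that $S$ contains a real $r_0$ that is not of PA degree. (Concretely, one may take $r_0$ to be $2$--random, as the non-$2$--random reals form a null set and no $2$--random real has PA degree: a $2$--random real is $1$--random and satisfies $\emptyset'\nleqT r_0$ by Sacks' theorem relativized to $\emptyset'$, while by Stephan's theorem a $1$--random real of PA degree computes $\emptyset'$.) Let $q\in[T_0]$ be the real named by $F_1\langle p,r_0\rangle$. Then $q$ is of PA degree and $q\leqT r_0$, since $F_1$ and $p$ are computable; hence $r_0$ is of PA degree by upward closure of the PA degrees --- a contradiction. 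Thus $\WKL$ is not Las Vegas computable, and together with the first part $\WWKL\lW\WKL$ follows.

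The only non-routine ingredient is the measure-theoretic input that the PA degrees are Lebesgue-null, and this is exactly where the restriction of $\WWKL$ to closed sets of positive measure (rather than the arbitrary nonempty closed sets handled by $\WKL\equivW\C_{2^\IN}$) is exploited. If one prefers to avoid algorithmic randomness, the alternative is a direct finite-extension argument: one builds a computable infinite binary tree $T$ against the hypothetical $F_1,F_2$, maintaining a single ``live stem'', and whenever the clopen stage-$s$ approximations $S^{(s)}$ to the success set stay of measure $\geq2^{-k}$ and some not-yet-rejected advice $r$ has $F_1$ already committed to an extension of the current stem, one redirects the stem past that commitment and declares the offending branch dead; the delicate points are that redirecting perturbs the machine's tree input (handled by noting that a dead branch stays dead, so an off-tree commitment stays wrong) and that no positive lower bound on $\mu(S)$ is known in advance (handled by running the construction for each $k\in\IN$). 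This is essentially the route of \cite[Theorem~20]{BP10}; an independent proof is also given in \cite[Proposition~4.2]{DDH+12}.
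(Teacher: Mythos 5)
Your proposal is correct, and it uses the same essential ingredient as the paper: the measure-theoretic fact that the reals that can ``solve'' a fixed nontrivial instance of $\WKL$ form a Lebesgue-null set. You phrase this as ``the reals of PA degree are null'' and take the paths of $T_0$ to be the $\{0,1\}$-valued DNC functions; the paper invokes the same fact in the form of the Jockusch--Soare theorem that the reals computing a separating set for a fixed computably inseparable pair of c.e.\ sets form a null set. These are literally the same statement in different dialects, since having PA degree is equivalent to computing such a separating set.

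Where you diverge structurally is in how the contradiction is packaged. You argue directly against the Las Vegas definition: you take a computable name $p$ of $T_0$, note $\mu(S_p)>0$, pick a non-PA $r_0\in S_p$, and observe that $F_1\langle p,r_0\rangle$ would give $r_0$ PA degree. (One small remark: you observe that $S_p$ is co-c.e.\ closed, but you never use this --- positive measure alone supplies the witness $r_0$, and that's all you need.) The paper instead proves the non-uniform statement that $\WKL$ is not \emph{probabilistic} (Proposition~\ref{prop:WKL-probabilistic}) and then invokes Corollary~\ref{cor:probabilistic}, which shows that everything below $\PC_R^{(n)}$ is probabilistic. That detour buys a stronger conclusion, namely $\WKL\nleqW\WWKL^{(n)}$ for every $n$, whereas your direct argument only defeats Las Vegas algorithms for $\WKL$ itself. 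But for the corollary as stated, your argument is complete, correct, and self-contained --- indeed it is a fleshed-out version of the proof in \cite[Theorem~20]{BP10}, which is one of the two references the paper cites for this result. Your side remark that a finite-extension construction gives an alternative, randomness-free route is also accurate and is essentially what \cite{DDH+12} do.
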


We will prove a more general result in Section~\ref{sec:probabilistic-degrees}, see Corollary~\ref{prop:WKL-probabilistic}.
Corollary~\ref{cor:WWKL-WKL} can also be rephrased using classes of functions as follows.

\begin{corollary}
Every Las Vegas computable multi-valued function is non-deter\-ministically computable,
but there are non-deterministically computable multi-valued functions that are not
Las Vegas computable.
\end{corollary}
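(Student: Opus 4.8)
The plan is to deduce everything from the characterizations already established together with the strict separation $\WWKL\lW\WKL$ from Corollary~\ref{cor:WWKL-WKL}.

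First I would recall the two relevant characterizations. By Corollary~\ref{cor:Las-Vegas-WWKL}, a multi-valued function $f$ is Las Vegas computable if and only if $f\leqW\WWKL$. By Proposition~\ref{prop:classes}(3), $f$ is non-deterministically computable if and only if $f\leqW\C_{2^\IN}$, and since $\C_{2^\IN}\equivSW\WKL$ (as recalled in Section~\ref{sec:WWKL}, following \cite{BG11}), this is equivalent to $f\leqW\WKL$. So the corollary is nothing but a restatement of Corollary~\ref{cor:WWKL-WKL} in terms of classes of functions, and the proof amounts to reading off the two implications.

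For the first claim I would argue: if $f$ is Las Vegas computable, then $f\leqW\WWKL$, and since $\WWKL\leqW\WKL$ by Corollary~\ref{cor:WWKL-WKL}, transitivity of $\leqW$ gives $f\leqW\WKL\equivW\C_{2^\IN}$, hence $f$ is non-deterministically computable. For the second claim I would exhibit $\WKL$ itself as a witness: trivially $\WKL\leqW\WKL\equivW\C_{2^\IN}$, so $\WKL$ is non-deterministically computable; but $\WKL\nleqW\WWKL$ by Corollary~\ref{cor:WWKL-WKL}, so by the characterization $\WKL$ is not Las Vegas computable.

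There is essentially no obstacle here; the work has all been done in Corollary~\ref{cor:WWKL-WKL} and the characterization results. The only point that needs a word of care is making explicit that ``non-deterministically computable'' is being used in the sense of Proposition~\ref{prop:classes}(3), i.e.\ reducibility to $\C_{2^\IN}$, so that the identification with reducibility to $\WKL$ is licensed by $\C_{2^\IN}\equivSW\WKL$; once that is spelled out the argument is a two-line invocation of transitivity.
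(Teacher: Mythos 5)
Your proof is correct and follows exactly the same route the paper intends: the corollary is a restatement of $\WWKL\lW\WKL$ (Corollary~\ref{cor:WWKL-WKL}) via the characterizations $f\leqW\WWKL$ for Las Vegas computability and $f\leqW\C_{2^\IN}\equivW\WKL$ for non-deterministic computability, with $\WKL$ itself serving as the separating example.
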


To emphasize multi-valuedness is important here, since all single-valued non-deter\-ministically
computable functions (on computable metric spaces) are automatically computable, as proved in \cite[Corollary~8.8]{BG11}.

Dorais et al.\ \cite{DDH+12} also introduced a quantitative version $\varepsilon\dash\WWKL$ of Weak Weak K\H{o}nig's Lemma
that requires a probability above some threshold $\varepsilon$. We define this version here
and we will study it starting from Section~\ref{sec:probability}.

\begin{definition}[Quantitative Weak Weak K\H{o}nig's Lemma]
For $\varepsilon\in\IR$ we denote by
$\varepsilon\dash\WWKL$ the restriction of 
Weak Weak K\H{o}nig's Lemma $\WWKL$ to the set
$\dom(\varepsilon\dash\WWKL)=\{T:\mu([T])>\varepsilon\}$.
\end{definition}

From the aforementioned results it is clear that we obtain the following corollary.

\begin{corollary}
\label{cor:epsilon-WWKL-Cantor}
$\varepsilon\dash\WWKL\equivSW\P_{>\varepsilon}\C_{2^\IN}$ for all $\varepsilon\in\IR$.
\end{corollary}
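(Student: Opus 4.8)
The plan is to re-run, essentially verbatim, the argument behind Proposition~\ref{prop:WWKL}, now keeping track of the threshold $\varepsilon$. The only tool needed is the pair of maps from \cite[Corollary~83 and Theorem~8.5]{BG11}: the computable map
\[[\,]:\Tr\to\AA_-(2^\IN),\ T\mapsto[T],\]
which sends a binary tree (coded by its characteristic function) to the closed set of its infinite paths (coded by negative information), together with a computable multi-valued inverse that, from a $\psi_-$--name of a closed set $A\In2^\IN$, produces a name of some binary tree $T$ with $[T]=A$. The point to record is that whenever $[T]=A$ we have $\mu_{2^\IN}([T])=\mu_{2^\IN}(A)$, so $T$ lies in the restricted domain $\{T:\mu([T])>\varepsilon\}$ of $\varepsilon\dash\WWKL$ if and only if $A$ lies in the restricted domain $\{A:\mu(A)>\varepsilon\}$ of $\P_{>\varepsilon}\C_{2^\IN}$; moreover $T$ is infinite precisely when $[T]\neq\emptyset$.

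Given this, both strong reductions are immediate. For $\varepsilon\dash\WWKL\leqSW\P_{>\varepsilon}\C_{2^\IN}$ one uses as pre-processing the realizer of $[\,]$: from a name of an admissible tree $T$ compute a $\psi_-$--name of $A:=[T]$; since $\mu(A)=\mu([T])>\varepsilon$, the set $A$ is an admissible input for $\P_{>\varepsilon}\C_{2^\IN}$, and any point it returns already belongs to $[T]$, hence is a correct output for $\varepsilon\dash\WWKL$. Conversely, for $\P_{>\varepsilon}\C_{2^\IN}\leqSW\varepsilon\dash\WWKL$ one uses as pre-processing the computable inverse of $[\,]$: from a $\psi_-$--name of an admissible $A$ obtain a tree $T$ with $[T]=A$; then $T$ is infinite and $\mu([T])=\mu(A)>\varepsilon$, so $T$ is admissible for $\varepsilon\dash\WWKL$, and any infinite path of $T$ it returns belongs to $A$. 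In both directions the oracle answer is handed through without modification, so the reductions are in fact strong.

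I do not expect any real obstacle here; the statement is pure bookkeeping on top of results already in hand, and the only step that needs to be spelled out is the observation above that $[\,]$ and its inverse match the two restricted domains. The one thing worth a sentence is the degenerate range of $\varepsilon$: for $\varepsilon\geq1$ both domains are empty and the equivalence is vacuous, while for $\varepsilon<0$ the threshold condition is met by every infinite tree (resp. every non-empty closed set), so $\varepsilon\dash\WWKL$ is $\WKL$ and $\P_{>\varepsilon}\C_{2^\IN}$ is $\C_{2^\IN}$, and the claim is just the known equivalence $\WKL\equivSW\C_{2^\IN}$ from \cite{BG11}. For the interesting range $\varepsilon\in[0,1)$ the two-line argument above is exactly what is required.
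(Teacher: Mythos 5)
Your proof is correct and follows exactly the route the paper intends: the corollary is stated as an immediate consequence of Proposition~\ref{prop:WWKL}, whose proof uses precisely the computable map $[\,]:\Tr\to\AA_-(2^\IN)$ and its computable multi-valued inverse, together with the observation that $\mu([T])=\mu(A)$ whenever $[T]=A$, so that the two restricted domains correspond. Your handling of the degenerate ranges of $\varepsilon$ is a reasonable extra care that the paper leaves implicit.
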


\section{Jumps of Weak Weak K\H{o}nig's Lemma}
\label{sec:jumps}

In this section we want to mention some observations on the jump of Weak Weak K\H{o}nig's Lemma.
We will also see that there is a certain trade-off between the complexity of the underlying spaces $X$ in $\PC_X^{(n)}$ and the number of jumps $n$.
We start with mentioning that with iterated jumps of $\WWKL$ one actually climbs up the Borel hierarchy.
We recall that a multi-valued map $f:\In X\mto Y$ on Polish spaces $X$ and $Y$ is called {\em $\SO{n}$--measurable} if preimages 
\[f^{-1}(U):=\{x\in X:f(x)\cap U\not=\emptyset\}\]
of open sets $U\In Y$ are $\SO{n}$--sets in the Borel hierarchy relatively to $\dom(f)$ (see \cite{Bra05} for more details).

\begin{proposition}
\label{prop:WWKL-measurable}
$\WWKL^{(n)}$ is $\SO{n+2}$--measurable, but not $\SO{n+1}$--measurable with respect to the Borel hierarchy for all $n\in\IN$.
\end{proposition}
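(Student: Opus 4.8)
The statement splits into a routine upper half and a more delicate lower half, and the plan is to reduce both to the behaviour of the building block $\C_2$ and of iterated jumps. For the upper bound I would argue that each jump costs exactly one Borel level. Since $\dom(\WWKL)\In\dom(\WKL)$ we have $\WWKL\leqSW\WKL$, and $\WKL$ is $\SO{2}$-measurable (for instance because the leftmost-path construction gives $\WKL\leqSW\lim$ and $\lim$ is $\SO{2}$-measurable, preimages of basic open sets being $\Sigma^0_2$ relatively to the domain). By the Borel-complexity analysis of the jump in \cite{BGM12,Bra05}, the $n$-fold jump of a $\SO{2}$-measurable function is $\SO{n+2}$-measurable, and $\SO{n+2}$-measurability is inherited downwards along $\leqW$; since the jump is monotone with respect to $\leqSW$, from $\WWKL^{(n)}\leqSW\WKL^{(n)}$ we obtain that $\WWKL^{(n)}$ is $\SO{n+2}$-measurable.

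For the lower bound the plan is to inject a well-understood discontinuous problem into $\WWKL$ and then jump it $n$ times. We have $\C_2\leqSW\PC_{2^\IN}\equivSW\WWKL$, using Proposition~\ref{prop:WWKL} for the last equivalence: given a $\psi_-$-name of a non-empty $A\In\{0,1\}$, one computes a $\psi_-$-name of the closed set $C:=\bigcup_{i\in A}(\{i\}\times2^\IN)\In2^\IN$, which has measure $\mu(C)=\tfrac{1}{2}|A|\geq\tfrac{1}{2}>0$, and from any $p\in C$ one reads off $p(0)\in A$ without recourse to the original input, so the reduction is even strong. By monotonicity of the jump under $\leqSW$ this gives $\C_2^{(n)}\leqSW\WWKL^{(n)}$, and since $\SO{n+1}$-measurability passes downwards along $\leqW$, it now suffices to show that $\C_2^{(n)}$ is \emph{not} $\SO{n+1}$-measurable.

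This last assertion — that each jump strictly raises the Borel complexity of $\C_2$ — is the core of the proof and where I expect the main difficulty. For $n=0$ it is immediate: $\C_2\equivW\LLPO$ is discontinuous, hence not $\SO{1}$-measurable. For $n=1$ I would combine $\CL_2\leqSW\C_2'$ (from the characterisation $\C_X'\equivW\CL_X$ of \cite{BGM12}, with $\CL_2$ the cluster-point problem of $\{0,1\}$) with a Baire-category argument: any realizer $F$ of $\CL_2$ must send every eventually-$0$ sequence to $0$ and every eventually-$1$ sequence to $1$, so $F^{-1}(\{0\})$ and $F^{-1}(\{1\})$ are two disjoint dense subsets of $2^\IN$ covering it; were both $F_\sigma$, then — being complementary — both would be $G_\delta$, which is impossible for two disjoint dense sets by the Baire category theorem. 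Hence $\CL_2$, and so $\C_2'$, is not $\SO{2}$-measurable. For general $n$, the same argument carried out in the refined jump topology — in which $\SO{n+1}$-sets play the role that $G_\delta$-sets play for $n=1$ — yields that $\C_2^{(n)}$ is not $\SO{n+1}$-measurable; equivalently, $\LLPO^{(n)}$ is properly $\SO{n+2}$-measurable, as in \cite{BGM12}. The iterated Baire-category bookkeeping through the jump topologies is, I expect, the only real obstacle, and it cannot be circumvented via $\LPO$: since $\LPO\leqSW\CL_2\leqSW\C_2'$, jumping yields only $\LPO^{(n-1)}\leqSW\WWKL^{(n)}$, which merely shows $\WWKL^{(n)}$ not $\SO{n}$-measurable — one Borel level short of the claim.
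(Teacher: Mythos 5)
Your overall architecture matches the paper's: the upper bound via $\WWKL^{(n)}\leqSW\WKL^{(n)}\leqSW\lim^{(n)}$ plus downward closure of $\SO{n+2}$-measurability under reducibility, and the lower bound via $\C_2^{(n)}\leqSW\WWKL^{(n)}$, so that everything hinges on showing $\C_2^{(n)}$ is not $\SO{n+1}$-measurable. You correctly identify that lemma as the crux, and your closing remark that $\LPO$ would land one level short (since $\LPO$ already sits at the level of $\C_2'$, not $\C_2$) is exactly the right sanity check.

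Where you diverge is the argument for the key lemma, and that is also where the gap sits. The paper does not use Baire category at all: it uses the equivalence $\C_2^{(n+1)}\equivSW\CL_2^{(n)}$ and then directly computes the Borel complexity of the preimage $(\CL_2^{(n)})^{-1}(\{0\})$. For $n=0$ this is the set of binary sequences with infinitely many zeros, which is classically $\PO{2}$-complete, hence not $\SO{2}$; for higher $n$ the paper observes that one application of $\lim$ adds exactly one quantifier (and one can choose its type), so by induction the preimage is $\PO{n+2}$-complete, hence not $\SO{n+2}$. This works with the paper's definition of $\SO{k}$-measurability verbatim — that definition is about the multi-valued preimage $f^{-1}(U)=\{x:f(x)\cap U\neq\emptyset\}$, not about realizers. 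Your Baire-category argument, in contrast, is phrased for realizers: you derive that no realizer $F$ of $\CL_2$ can have both $F^{-1}(\{0\})$ and $F^{-1}(\{1\})$ in $F_\sigma$. To conclude that $\CL_2$ is not $\SO{2}$-measurable in the paper's sense you would additionally need that $\SO{2}$-measurability of the multi-valued map yields a $\SO{2}$-measurable selection, which is a measurable-selection step you never state or justify, and which is precisely what the paper's direct preimage computation avoids. Worse, for $n\geq 2$ you offer only ``the same argument carried out in the refined jump topology'' and explicitly flag the ``iterated Baire-category bookkeeping'' as an expected obstacle; but there is no such obstacle in the paper's route — the quantifier-counting induction is mechanical once you have the $\PO{2}$-completeness base case. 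So your proposal is right about the shape of the proof and about what must be shown, but it substitutes a harder and incomplete argument (Baire category on realizers, hand-waved for general $n$) for the paper's direct and fully worked-out completeness computation.
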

\begin{proof}
Since $\WWKL^{(n)}\leqSW\WKL^{(n)}\leqSW\lim^{(n)}$ it is clear that $\WWKL^{(n)}$ is $\SO{n+2}$--measurable.
This is because $\lim^{(n)}$ is $\SO{n+2}$--measurable and hence so is $\WWKL^{(n)}$ by \cite[Proposition~7.5]{Bra05}.
On the other hand, $\C_2^{(n)}\leqSW\WWKL^{(n)}$ (as one can easily show) and $\C_2^{(n)}$ is not $\SO{n+1}$--measurable.
Hence $\WWKL^{(n)}$ is not $\SO{n+1}$--measurable by \cite[Proposition~7.5]{Bra05}.
We sketch the proof that $\C_2^{(n)}$ is not $\SO{n+1}$--measurable, which can be proved by induction. 
It is clear that $\C_2$ is not continuous, which proves the case $n=0$. 
For $n=1$ we note that $\C_2'\equivSW\CL_2$ by \cite[Theorem~9.4]{BGM12} where $\CL_2$ denotes the cluster point problem of $\{0,1\}$, also called
the infinite pigeonhole principle. Since $\CL_2^{-1}\{0\}$ is the set of binary sequences that contain infinitely many zeros, 
which is known to be $\PO{2}$--complete (see, for instance, \cite[Exercise~II.23.1]{Kec95}),
it is not a $\SO{2}$--set and hence $\C_2'\equivSW\CL_2$ is not $\SO{2}$--measurable.
We note that for a convergent sequence $(p_i)$ in $\{0,1\}^\IN$ we obtain
\[\lim_{i\to\infty}p_i(k)=0\iff(\exists j)(\forall i\geq j)\;p_i(k)=0\iff(\forall j)(\exists i\geq j)\;p_i(k)\not=1.\]
This implies that each further application of a limit adds exactly one quantifier (the above equivalence allows to choose
whether it is an existential or a universal one).
From this it follows by induction that the preimage $(\CL_2^{(n)})^{-1}(\{0\})$ is $\PO{n+2}$--complete and hence
not a $\SO{n+2}$ set
(using suitable complete sets of higher levels of the Borel hierarchy)
and hence $\C_2^{(n+1)}\equivSW\CL_2^{(n)}$ is not $\SO{n+2}$--measurable.
\end{proof}

In particular, we obtain the following by \cite[Proposition~7.5]{Bra05}.

\begin{corollary}
\label{cor:WWKL-hierarchy}
$\WWKL^{(n)}\lW\WWKL^{(n+1)}$ for all $n\in\IN$.
\end{corollary}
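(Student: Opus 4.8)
The plan is to deduce Corollary~\ref{cor:WWKL-hierarchy} directly from Proposition~\ref{prop:WWKL-measurable} using the cited fact \cite[Proposition~7.5]{Bra05}, which relates Weihrauch reducibility to Borel measurability: if $f\leqW g$ and $g$ is $\SO{n}$--measurable, then $f$ is $\SO{n}$--measurable (for suitable spaces). First I would observe that $\WWKL^{(n)}\leqSW\WWKL^{(n+1)}$, hence $\WWKL^{(n)}\leqW\WWKL^{(n+1)}$; this is immediate since the jump operation is monotone with respect to strong Weihrauch reducibility and $g\leqSW g'$ holds for every $g$ (because $g'\equivSW g\stars\lim$ and $g$ is pointed enough, or simply because a $\delta_X'$--name can be converted to a $\delta_X$--name with one application of $\lim$, so $g\leqSW g\stars\lim\equivSW g'$). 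Actually the cleanest justification is that $\WWKL^{(n)}$ is a restriction of $\WKL^{(n)}$, and more simply that for any $f$ we have $f\leqSW f'$ via the identity on realizers composed with $\lim$ on the input side; I would just cite the monotonicity of jumps and the base reduction. So the positive direction $\leqW$ is routine.

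For the strictness, I would argue by contradiction: suppose $\WWKL^{(n+1)}\leqW\WWKL^{(n)}$. By Proposition~\ref{prop:WWKL-measurable}, $\WWKL^{(n)}$ is $\SO{n+2}$--measurable. Then by \cite[Proposition~7.5]{Bra05} the reduction would force $\WWKL^{(n+1)}$ to be $\SO{n+2}$--measurable as well. But $\WWKL^{(n+1)}$ is obtained from $\WWKL^{(n)}$ by one more jump, and Proposition~\ref{prop:WWKL-measurable} (applied with $n+1$ in place of $n$) says precisely that $\WWKL^{(n+1)}$ is \emph{not} $\SO{n+2}$--measurable. This contradiction gives $\WWKL^{(n+1)}\nleqW\WWKL^{(n)}$, and combined with the positive direction yields $\WWKL^{(n)}\lW\WWKL^{(n+1)}$.

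The only subtlety — and the step I would be most careful about — is making sure the hypotheses of \cite[Proposition~7.5]{Bra05} apply, i.e.\ that $\WWKL^{(n)}$ and $\WWKL^{(n+1)}$ live on Polish spaces for which the stated implication ``$f\leqW g$ and $g$ is $\SO{m}$--measurable $\Rightarrow$ $f$ is $\SO{m}$--measurable'' is valid. Since $\WWKL^{(k)}\colon\subseteq\Tr\mto 2^\IN$ with the jumped representation on $\Tr$, and jumps of representations of Polish spaces still represent Polish spaces, this is exactly the setting covered in \cite{Bra05}. I do not expect any genuine obstacle here; the real content has already been placed in Proposition~\ref{prop:WWKL-measurable}, and this corollary is a one-line consequence. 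I would phrase the proof simply as: the reduction $\leqW$ holds by monotonicity of the jump, and strictness is immediate from Proposition~\ref{prop:WWKL-measurable} together with \cite[Proposition~7.5]{Bra05}.

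\begin{proof}
The reduction $\WWKL^{(n)}\leqW\WWKL^{(n+1)}$ holds since $f\leqSW f'$ for every $f$ and the jump is monotone with respect to $\leqSW$, hence with respect to $\leqW$. For strictness, assume $\WWKL^{(n+1)}\leqW\WWKL^{(n)}$. By Proposition~\ref{prop:WWKL-measurable}, $\WWKL^{(n)}$ is $\SO{n+2}$--measurable, so by \cite[Proposition~7.5]{Bra05} also $\WWKL^{(n+1)}$ would be $\SO{n+2}$--measurable, contradicting Proposition~\ref{prop:WWKL-measurable} applied with $n+1$ in place of $n$. Hence $\WWKL^{(n)}\lW\WWKL^{(n+1)}$.
\end{proof}
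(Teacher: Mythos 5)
Your proposal is correct and is exactly the route the paper takes: Proposition~\ref{prop:WWKL-measurable} supplies the measurability bounds, and \cite[Proposition~7.5]{Bra05} transfers non-measurability downward along $\leqW$ to yield strictness, while the forward reduction follows from $f\leqSW f'$ and monotonicity of the jump. The paper leaves these details implicit (``we obtain the following by \cite[Proposition~7.5]{Bra05}''), and your write-up simply makes them explicit.
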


We also introduce discrete jumps of Weihrauch degrees.
In \cite{BBP12} the discrete limit map $\lim_\Delta:\In\IN^\IN\to\IN^\IN,\langle p_0,p_1,...\rangle\mapsto\lim_{i\to\infty}p_i$ 
was studied, which is the limit map with respect to the discrete topology on $\IN^\IN$, i.e., $\lim_\Delta$ is the restriction of $\lim$
to eventually constant sequences. This leads to the {\em discrete jump} $\delta_X^\Delta:=\delta_X\circ\lim_\Delta$ of a representation
and analogously to the {\em discrete jump} $f^\Delta:\In(X,\delta_X^\Delta)\mto(Y,\delta_Y)$ of a multi-valued function.
It is easy to see that we obtain $f^\Delta\equivSW f\stars\lim_\Delta$. In \cite[Fact~3.7]{BGM12} it was proved that 
$\lim_\Delta\equivSW(\id\times\C_\IN)$, i.e., $\lim_\Delta$ is strongly equivalent to the cylindrification of $\C_\IN$.
We can use this concept to express the following result.
 
\begin{theorem}[Discrete jump]
\label{thm:discrete-jump}
$\WWKL^\Delta\equivSW\PC_{2^\IN}^\Delta\equivSW\PC_\IR\equivSW\PC_{\IN\times2^\IN}$.
\end{theorem}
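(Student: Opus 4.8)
The plan is to establish the chain of strong equivalences by linking each object to the ones next to it. First I would observe that the rightmost equivalence $\PC_\IR\equivSW\PC_{\IN\times2^\IN}$ is already available: it is the special case $I=(0,\infty]$ of Proposition~\ref{prop:reals}. So the remaining task is to show $\WWKL^\Delta\equivSW\PC_{2^\IN}^\Delta$ and $\PC_{2^\IN}^\Delta\equivSW\PC_{\IN\times2^\IN}$. The first of these is immediate from Proposition~\ref{prop:WWKL}, which gives $\WWKL\equivSW\PC_{2^\IN}$, together with the monotonicity of the discrete jump with respect to $\leqSW$ (this is the analogue for $\lim_\Delta$ of the monotonicity of $f\mapsto f'$ recorded in the preliminaries, and follows from $f^\Delta\equivSW f\stars\lim_\Delta$ together with monotonicity of $\stars$ in its first argument on strong degrees). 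Hence the crux of the theorem is the single equivalence $\PC_{2^\IN}^\Delta\equivSW\PC_{\IN\times2^\IN}$.

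For this I would use the identity $\PC_{2^\IN}^\Delta\equivSW\PC_{2^\IN}\stars\lim_\Delta$ together with the fact from \cite[Fact~3.7]{BGM12} that $\lim_\Delta\equivSW\id\times\C_\IN$. This gives
\[
\PC_{2^\IN}^\Delta\equivSW\PC_{2^\IN}\stars(\id\times\C_\IN).
\]
Now I would argue that on the level of strong degrees the trailing $\id$ can be absorbed, so that this strong compositional product is strongly equivalent to $\PC_{2^\IN}*\C_\IN$ (here one has to be a little careful: the strong compositional product $\stars$ is not known to exist in general, but $\PC_{2^\IN}$ is a cylinder-free object whose behaviour under $\stars\lim_\Delta$ is controlled by the definition of the discrete jump, and the $\id$ factor only supplies the unchanged input, which the jump of the input representation already carries). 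The upshot is that $\PC_{2^\IN}^\Delta\equivSW\PC_{2^\IN}*\C_\IN$ as strong degrees. On the other hand, Corollary~\ref{cor:PCN2N} gives $\PC_{2^\IN}*\C_\IN\equivW\PC_{\IN\times2^\IN}$ at the level of ordinary Weihrauch degrees, and it also records the strong equivalences $\PC_{2^\IN}\times\C_\IN\equivSW\C_\IN\times\PC_{2^\IN}$; what I need is the strengthening $\PC_{2^\IN}^\Delta\equivSW\PC_{\IN\times2^\IN}$, which I would obtain by checking that the reductions in Lemmas~\ref{lem:PCN2Na} and~\ref{lem:PCN2Nb}, combined with Theorem~\ref{thm:products}, can be arranged to be strong, or alternatively by giving a direct strong reduction in both directions as sketched below.

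The direct argument I have in mind: for $\PC_{\IN\times2^\IN}\leqSW\PC_{2^\IN}^\Delta$, given a closed $A\In\IN\times2^\IN$ of positive measure, one computes the sections $A_n$ as in Lemma~\ref{lem:PCN2Na}, uses upper semicomputability of $\mu$ (Lemma~\ref{lem:semi-computable}) to enumerate the co-c.e.\ set $B=\{\langle n,k\rangle:\mu(A_n)\ge 2^{-k}\}$, which is non-empty, and produces from this a name under the discrete-jump representation that stabilises on some $\langle n,k\rangle\in B$ while simultaneously delivering a $\psi_-$-name of $A_n$; applying $\PC_{2^\IN}^\Delta$ to this data returns a point $p\in A_n$, and $(n,p)\in A$ is the required output — and the only thing one retains after the oracle call is that very point, so the reduction is strong. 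For the reverse direction $\PC_{2^\IN}^\Delta\leqSW\PC_{\IN\times2^\IN}$, a $\psi_-^\Delta$-name of a positive-measure $A\In2^\IN$ is a sequence converging to a $\psi_-$-name of $A$; one reads off longer and longer finite approximations, uses the "clean-up'', "unspoiled region'' and "shift'' functions $g,h,s$ from the proof of Lemma~\ref{lem:PCN2Nb} to convert the finitely-many-mind-changes output into a single $\psi_-$-name of a set $w_{n_\infty}A$ of positive measure together with a natural number that stabilises at $n_\infty$, and feeds the pair into $\PC_{\IN\times2^\IN}$; decoding the resulting point of $w_{n_\infty}A$ back to a point of $A$ uses only the returned point, so again the reduction is strong.

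The main obstacle I anticipate is precisely the bookkeeping that turns the ordinary Weihrauch reductions of Section~\ref{sec:PCN} into strong ones — in particular making sure that in the $\PC_{2^\IN}^\Delta\leqSW\PC_{\IN\times2^\IN}$ direction no auxiliary data beyond the oracle's answer is needed at the output stage, which is why the "shift to an unspoiled region'' trick of Lemma~\ref{lem:PCN2Nb} is essential: it lets the single returned point carry, implicitly, the information about which mind-change branch was taken. Everything else — the identity $f^\Delta\equivSW f\stars\lim_\Delta$, $\lim_\Delta\equivSW\id\times\C_\IN$, Propositions~\ref{prop:WWKL} and~\ref{prop:reals} — is routine invocation of earlier results.
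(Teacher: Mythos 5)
Your overall decomposition matches the paper's: you peel off $\PC_\IR\equivSW\PC_{\IN\times2^\IN}$ via Proposition~\ref{prop:reals} and $\WWKL^\Delta\equivSW\PC_{2^\IN}^\Delta$ via Proposition~\ref{prop:WWKL}, and you correctly identify $\PC_{2^\IN}^\Delta\equivSW\PC_{\IN\times2^\IN}$ as the remaining crux, invoking $f^\Delta\equivSW f\stars\lim_\Delta$, $\lim_\Delta\equivSW\id\times\C_\IN$, Lemmas~\ref{lem:PCN2Na} and~\ref{lem:PCN2Nb} and Proposition~\ref{prop:products}, exactly as the paper does.

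There is, however, a genuine gap in your direct argument for $\PC_{\IN\times2^\IN}\leqSW\PC_{2^\IN}^\Delta$. You feed the oracle (a $\psi_-^\Delta$-name of) the section $A_n\In2^\IN$ singled out by the $\C_\IN$-style search, and the oracle hands back some $q\in A_n\In2^\IN$. But the required output of $\PC_{\IN\times2^\IN}$ is the \emph{pair} $(n,q)\in\IN\times2^\IN$, and in a strong reduction the output functional $H$ sees only the oracle's answer $q$; it has no access to $n$. So as written $H$ cannot produce $n$, and the reduction is not strong. The paper repairs this by not feeding $A_n$ to the oracle but the shifted set $C_n:=01^n0A_n$: since the oracle then returns a point of the form $01^n0q$, both $n$ and $q$ are readable off the returned point alone, and only then is the reduction genuinely strong. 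You explicitly anticipate the analogous difficulty in the opposite direction (and correctly see that the ``shift to an unspoiled region'' device of Lemma~\ref{lem:PCN2Nb} is what makes the returned point self-locating there), but you overlook that the same idea of smuggling the discrete datum into the set presented to the oracle is indispensable in the $\PC_{\IN\times2^\IN}\leqSW\PC_{2^\IN}^\Delta$ direction too. Your alternative of ``absorbing the trailing $\id$'' in $\PC_{2^\IN}\stars(\id\times\C_\IN)$ to obtain $\PC_{2^\IN}*\C_\IN$ at the level of strong degrees is asserted but not justified, and is precisely the step that the $C_n$-encoding replaces; the paper never argues that way.
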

\begin{proof}
It follows from Proposition~\ref{prop:reals} that $\PC_\IR\equivSW\PC_{\IN\times2^\IN}$
and from Proposition~\ref{prop:WWKL} that $\PC_{2^\IN}^\Delta\equivSW\WWKL^\Delta$.

A close inspection of the proof of Lemma~\ref{lem:PCN2Na} shows that the number $n$ that occurs in the pair $\langle n,k\rangle$ of that proof can be encoded in the
set $A_n$, by replacing it by $C_n:=01^n0A_n$. Then an application of $\PC_{2^\IN}$ to $C_n$ yields a point $p$ from which 
a point in the original set $A$ can be reconstructed without any direct access to the original input. 
This proves 
\[\PC_{\IN\times2^\IN}\leqSW\PC_{2^\IN}\stars(\id\times\C_\IN)\equivSW\PC_{2^\IN}\stars\lim\nolimits_\Delta\equivSW\PC_{2^\IN}^\Delta.\]
On the other hand, exactly the same proof as the proof of Lemma~\ref{lem:PCN2Nb} shows
\[\PC_{2^\IN}\stars\lim\nolimits_\Delta\leqSW\C_\IN\times\PC_{2^\IN}.\]
Finally, Proposition~\ref{prop:products} implies $\C_\IN\times\PC_{2^\IN}\leqSW\PC_{\IN\times2^\IN}$, which completes the proof.
\end{proof}

We mention that by Corollary~\ref{cor:products-pairing} we obtain the following.

\begin{corollary}
\label{cor:WWKL-Delta-product}
$\WWKL^\Delta*\WWKL^\Delta\equivW\WWKL^\Delta$.
\end{corollary}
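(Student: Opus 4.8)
The plan is to obtain this as an immediate consequence of Corollary~\ref{cor:products-pairing}, exactly as the surrounding text suggests. The key preliminary observation is the identification $\WWKL^\Delta\equivSW\PC_{\IN\times2^\IN}$ supplied by Theorem~\ref{thm:discrete-jump}; since $\PC_{\IN\times2^\IN}=\P_{(0,\infty]}\C_{\IN\times2^\IN}$, the goal reduces to a statement about $\P_I\C_R$ for $R=\IN\times2^\IN$ and $I=(0,\infty]$. One might instead hope to apply the jump operation directly to the equivalence $\WWKL*\WWKL\equivW\WWKL$ of Corollary~\ref{cor:WWKL-product}, but jumps do not interact well with compositional products in general, so the route through $\PC_{\IN\times2^\IN}$ is the one I would take.

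First I would check that the hypotheses of Corollary~\ref{cor:products-pairing} are met for this choice of $R$ and $I$. The interval $I=(0,\infty]$ is closed under product: for $x,y>0$ we have $xy>0$, and $x\cdot\infty=\infty$ for $x>0$ by the convention adopted before Lemma~\ref{lem:monotonicity}, so $I\cdot I\In I$; moreover $\infty$ occurs in $I$ only as a closed endpoint. The space $\IN\times2^\IN$ carries the $\sigma$--finite product measure $\mu_\IN\otimes\mu_{2^\IN}$ (a product of $\sigma$--finite measures), and by Lemma~\ref{lem:pairing}(4) the associated pairing function on $\IN\times2^\IN$ is a computable isomorphism that is measure preserving. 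Hence Corollary~\ref{cor:products-pairing} yields $\PC_{\IN\times2^\IN}*\PC_{\IN\times2^\IN}\equivW\PC_{\IN\times2^\IN}$.

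Finally I would transport this back along the equivalence of Theorem~\ref{thm:discrete-jump}. Since the compositional product $f*g$ is defined as a maximum over all $f_0\leqW f$ and $g_0\leqW g$, it depends only on the Weihrauch degrees of its arguments; so $\WWKL^\Delta\equivW\PC_{\IN\times2^\IN}$ gives $\WWKL^\Delta*\WWKL^\Delta\equivW\PC_{\IN\times2^\IN}*\PC_{\IN\times2^\IN}$, and the chain closes to $\WWKL^\Delta*\WWKL^\Delta\equivW\PC_{\IN\times2^\IN}\equivW\WWKL^\Delta$.

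I do not expect a genuine obstacle: the substantive content has already been carried out in the Independent Choice Theorem~\ref{thm:products} (together with Fubini) feeding into Corollary~\ref{cor:products-pairing}, and in Theorem~\ref{thm:discrete-jump}. The only points demanding a moment's care are the bookkeeping of which endpoint of $(0,\infty]$ is open and the fact that the pairing function must be measure preserving with respect to $\mu_\IN\otimes\mu_{2^\IN}$ rather than a probability measure --- but both are precisely what the endpoint conventions and Lemma~\ref{lem:pairing}(4) provide.
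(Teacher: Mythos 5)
Your proof is correct and follows exactly the route the paper intends: the text immediately before the corollary says "We mention that by Corollary~\ref{cor:products-pairing} we obtain the following," and your argument — transport via the equivalence $\WWKL^\Delta\equivSW\PC_{\IN\times2^\IN}$ from Theorem~\ref{thm:discrete-jump}, then invoke Corollary~\ref{cor:products-pairing} with $R=\IN\times2^\IN$ and $I=(0,\infty]$ — is precisely that. Your verification of the hypotheses (closure of $(0,\infty]$ under products with the $0\cdot\infty$ convention, $\sigma$-finiteness of the product measure, and Lemma~\ref{lem:pairing}(4) for the measure-preserving pairing isomorphism) and your observation that $f*g$ is degree-invariant are exactly the points that make the one-line derivation legitimate.
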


If we combine Theorem~\ref{thm:discrete-jump} with Corollary~\ref{cor:PCN2N} and Proposition~\ref{prop:WWKL}, then we obtain the following corollary.

\begin{corollary}
\label{cor:WWKL-Delta-WWKL-CN}
$\WWKL^\Delta\equivW\WWKL*\C_\IN$.
\end{corollary}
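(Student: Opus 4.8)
The statement is an immediate consequence of results already established in the excerpt, so the plan is simply to chain them together in the right order and with the right reducibility notion. First I would invoke Theorem~\ref{thm:discrete-jump}, which gives $\WWKL^\Delta\equivSW\PC_{\IN\times2^\IN}$ and hence in particular $\WWKL^\Delta\equivW\PC_{\IN\times2^\IN}$. Next, Corollary~\ref{cor:PCN2N} provides $\PC_{\IN\times2^\IN}\equivW\PC_{2^\IN}*\C_\IN$. Finally, Proposition~\ref{prop:WWKL} yields $\WWKL\equivSW\PC_{2^\IN}$, and since the compositional product $f*g=\max\{f_0\circ g_0:f_0\leqW f\mbox{ and }g_0\leqW g\}$ depends only on the lower cones of $f$ and $g$ with respect to $\leqW$, it respects Weihrauch equivalence in each argument; consequently $\PC_{2^\IN}*\C_\IN\equivW\WWKL*\C_\IN$. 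Concatenating these three equivalences gives $\WWKL^\Delta\equivW\WWKL*\C_\IN$, as claimed.

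There is essentially no obstacle in this argument; the only point I would flag with a sentence is why the conclusion is phrased with ordinary Weihrauch equivalence $\equivW$ rather than the strong variant. Although Theorem~\ref{thm:discrete-jump} and Proposition~\ref{prop:WWKL} both supply strong equivalences, Corollary~\ref{cor:PCN2N} identifies $\PC_{\IN\times2^\IN}$ with $\PC_{2^\IN}*\C_\IN$ only up to $\equivW$, and the compositional product $*$ is in any case only guaranteed to be a maximum with respect to $\leqW$. One could alternatively argue directly from $\WWKL^\Delta\equivSW\PC_{2^\IN}^\Delta\equivSW\PC_{2^\IN}\stars\lim_\Delta$ together with $\lim_\Delta\equivSW\id\times\C_\IN$, but passing from the strong compositional product $\stars$ to the ordinary product $*$ would require exactly the same bookkeeping, so the route through $\PC_{\IN\times2^\IN}$ and Corollary~\ref{cor:PCN2N} is the most economical one and is what I would write up.
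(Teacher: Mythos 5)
Your proof is correct and matches the paper's own derivation exactly: the paper also obtains this corollary by combining Theorem~\ref{thm:discrete-jump}, Corollary~\ref{cor:PCN2N}, and Proposition~\ref{prop:WWKL}. Your explicit remark that the compositional product $*$ respects Weihrauch equivalence in each argument, and your note on why the conclusion is only an $\equivW$ rather than an $\equivSW$, are both accurate and make the argument slightly more self-contained than the paper's one-line citation.
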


This yields the following characterization of Las Vegas computability with finitely many mind changes.

\begin{corollary}[Las Vegas computability with finitely many mind changes]
\label{cor:Las-Vegas-WWKL-Delta}
The following are equivalent to each other:
\begin{enumerate}
\item $f\leqW\WWKL^\Delta$,
\item $f$ is Las Vegas computable with finitely many mind changes.
\end{enumerate}
\end{corollary}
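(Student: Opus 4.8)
The plan is simply to chain together two results that have already been established. First I would invoke Theorem~\ref{thm:discrete-jump}, which gives $\WWKL^\Delta\equivSW\PC_{\IN\times2^\IN}$ and hence in particular $\WWKL^\Delta\equivW\PC_{\IN\times2^\IN}$. Since Weihrauch reducibility $\leqW$ depends only on the Weihrauch equivalence class of the problem on the right-hand side (cf.\ the remarks following the definition of $\leqW$), this immediately yields the equivalence $f\leqW\WWKL^\Delta\iff f\leqW\PC_{\IN\times2^\IN}$ for every multi-valued function $f$ on represented spaces.

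Second, I would apply Corollary~\ref{cor:Las-Vegas}(2), according to which $f\leqW\PC_{\IN\times2^\IN}$ holds if and only if $f$ is Las Vegas computable with finitely many mind changes. That corollary is itself the instance $R=\IN\times2^\IN$, $I=(0,\infty]$ of Theorem~\ref{thm:probabilistic-computability}, read together with the terminology fixed right after Definition~\ref{def:Las-Vegas}. Composing the two equivalences gives the claim, so the whole argument is a two-line deduction.

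There is essentially no obstacle at this stage: all the genuine work sits upstream. The substantive content is the proof of Theorem~\ref{thm:discrete-jump}, which reuses the arguments of Lemmas~\ref{lem:PCN2Na} and~\ref{lem:PCN2Nb} together with Proposition~\ref{prop:reals} to identify $\WWKL^\Delta$ with $\PC_{\IN\times2^\IN}$, and the characterization Theorem~\ref{thm:probabilistic-computability}. The only point worth a momentary check is the harmless observation that weakening $\equivSW$ to $\equivW$ costs nothing for the purpose of $\leqW$-reductions into the degree in question, which is immediate. One may also remark, as a sanity check for the reader, that by Corollary~\ref{cor:PCN2N} the same degree is equivalently $\C_\IN*\PC_{2^\IN}$, which makes the phrase ``with finitely many mind changes'' transparent: one first performs a $\C_\IN$-step (the mind changes) and then a single Las Vegas computation, in either order.
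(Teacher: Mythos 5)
Your two-line deduction—chaining the identification $\WWKL^\Delta\equivSW\PC_{\IN\times2^\IN}$ from Theorem~\ref{thm:discrete-jump} with the characterization of $f\leqW\PC_{\IN\times2^\IN}$ via Corollary~\ref{cor:Las-Vegas}(2)—is exactly the route the paper takes, which it leaves implicit after presenting Theorem~\ref{thm:discrete-jump} and Corollary~\ref{cor:WWKL-Delta-WWKL-CN}. The argument is correct, and the observation that $\equivSW$ specializes to $\equivW$ (and hence suffices for $\leqW$-characterizations) is indeed harmless and standard.
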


Theorem~\ref{thm:discrete-jump} together with 
$\lim_\Delta\leqSW\lim$ yields the positive content of the following corollary.
The fact that the reduction is strict follows from $\PC_{\IN\times2^\IN}\leqW\lim$, which holds by \cite[Theorem~8.7]{BBP12} (an intuitive explanation of this fact is that there is a limit computation that always selects the left most path in a given infinite tree~${T\In\IN\times\{0,1\}^*}$), 
which means
that $\PC_{\IN\times2^\IN}$ is $\SO{2}$--measurable (by \cite[Proposition~7.5]{Bra05}), whereas 
$\WWKL'$ is not $\SO{2}$--measurable by Proposition~\ref{prop:WWKL-measurable}.

\begin{corollary}
\label{cor:N2N}
$\PC_{\IN\times2^\IN}\lSW\PC_{2^\IN}'\equivSW\WWKL'$.
\end{corollary}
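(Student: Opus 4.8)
The plan is to assemble the statement from facts already available: the equivalence $\PC_{2^\IN}'\equivSW\WWKL'$, the positive reduction $\PC_{\IN\times2^\IN}\leqSW\PC_{2^\IN}'$, and the strictness $\WWKL'\nleqW\PC_{\IN\times2^\IN}$. First I would record the equivalence: Proposition~\ref{prop:WWKL} gives $\PC_{2^\IN}\equivSW\WWKL$, and since the jump operation $f\mapsto f'$ is monotone with respect to $\leqSW$ (as recalled in the preliminaries on jumps) it preserves $\equivSW$, so $\PC_{2^\IN}'\equivSW\WWKL'$.

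Next, for $\PC_{\IN\times2^\IN}\leqSW\PC_{2^\IN}'$ I would use the discrete jump. Theorem~\ref{thm:discrete-jump} gives $\PC_{\IN\times2^\IN}\equivSW\PC_{2^\IN}^\Delta$, and by definition $\PC_{2^\IN}^\Delta\equivSW\PC_{2^\IN}\stars\lim_\Delta$ while $\PC_{2^\IN}'\equivSW\PC_{2^\IN}\stars\lim$. Since $\lim_\Delta$ is $\lim$ restricted to eventually constant sequences we have $\lim_\Delta\leqSW\lim$ (realized by the identity), and hence the collection $\{f_0\circ g_0:f_0\leqSW\PC_{2^\IN},\ g_0\leqSW\lim_\Delta\}$ defining the supremum $\PC_{2^\IN}\stars\lim_\Delta$ is a subcollection of the one defining $\PC_{2^\IN}\stars\lim$; as both suprema are known to exist (being the discrete jump and the jump of $\PC_{2^\IN}$), this yields $\PC_{2^\IN}^\Delta\leqSW\PC_{2^\IN}'$. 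Chaining, $\PC_{\IN\times2^\IN}\leqSW\PC_{2^\IN}'\equivSW\WWKL'$, which is the positive content of the corollary. Alternatively one may re-run the algorithm from the proof of Theorem~\ref{thm:discrete-jump} verbatim with $\lim$ in place of $\lim_\Delta$.

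It remains to see that this reduction is strict; since $\leqSW$ implies $\leqW$, it suffices to show $\WWKL'\nleqW\PC_{\IN\times2^\IN}$, and for this I would use a descriptive-complexity obstruction. By \cite[Theorem~8.7]{BBP12} we have $\PC_{\IN\times2^\IN}\leqW\lim$ (intuitively, a limit computation can always output the leftmost infinite path of a given tree $T\In\IN\times\{0,1\}^*$), and $\lim$ is $\SO{2}$--measurable; since $\SO{2}$--measurability is preserved downwards along $\leqW$ by \cite[Proposition~7.5]{Bra05}, it follows that $\PC_{\IN\times2^\IN}$ is $\SO{2}$--measurable. On the other hand, Proposition~\ref{prop:WWKL-measurable} applied with $n=1$ states that $\WWKL'=\WWKL^{(1)}$ is not $\SO{2}$--measurable. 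Hence $\WWKL'\nleqW\PC_{\IN\times2^\IN}$, a fortiori $\WWKL'\nleqSW\PC_{\IN\times2^\IN}$, and together with the reduction above we conclude $\PC_{\IN\times2^\IN}\lSW\PC_{2^\IN}'\equivSW\WWKL'$.

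The argument involves no genuine obstacle; the only point requiring a little care is the monotonicity of the strong compositional product in its second argument, which is legitimate precisely because both $\PC_{2^\IN}\stars\lim_\Delta$ and $\PC_{2^\IN}\stars\lim$ are already known to exist, so the two defining suprema may be compared. Everything else is bookkeeping with equivalences established earlier in the paper.
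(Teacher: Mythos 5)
Your proof is correct and matches the paper's intended argument: the positive reduction is obtained via Theorem~\ref{thm:discrete-jump} and $\lim_\Delta\leqSW\lim$, the equivalence $\PC_{2^\IN}'\equivSW\WWKL'$ from Proposition~\ref{prop:WWKL} and monotonicity of the jump, and the strictness from the $\SO{2}$-measurability of $\PC_{\IN\times2^\IN}$ (via $\PC_{\IN\times2^\IN}\leqW\lim$) contrasted with Proposition~\ref{prop:WWKL-measurable}. Your extra care about why the strong compositional product is monotone in its second argument (both suprema exist, so the inclusion of defining families can be compared) is a legitimate and correct elaboration of a step the paper leaves implicit.
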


In \cite{BGM12} the cluster point problem of a space was studied and it was proved
that the cluster point problem is the jump of the closed choice problem of the same space.
This result straightforwardly generalizes to the probabilistic setting.
We will call the problem to find a cluster point of a sequence that has a set of cluster
points of a certain measure the probabilistic cluster point problem.

\begin{definition}[Probabilistic cluster point problem]
Let $X$ be a computable metric space that is equipped with a Borel measure $\mu$
and let $I$ be some interval. Then we call
\[\P_I\CL_X:\In X^\IN\mto X,(x_n)\mapsto\{x\in X:\mbox{$x$ is a cluster point of $(x_n)$}\}\]
the {\em probabilistic cluster point problem} with measure in $I$,
where $\dom(\P_I\CL_X)$ is the set of all sequences $(x_n)$ in $X$, with a non-empty set of cluster points $C$
that satisfies $\mu(C)\in I$.
\end{definition}

We use similar abbreviations as for $\P_I\C_X$, for instance $\PCL_X:=\P_{>0}\CL_X:=\P_{(0,\infty]}\CL_X$ etc.
Now we obtain the following general result.

\begin{theorem}[Probabilistic cluster point problem]
Let $X$ be a computable metric space that is equipped with a Borel measure and let $I$ be an interval.
Then
\[(\P_I\C_X)'\equivSW\P_I\CL_X.\]
\end{theorem}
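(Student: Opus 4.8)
The plan is to transport, essentially verbatim, the two reductions that establish $\C_X'\equivSW\CL_X$ in \cite{BGM12}, observing that both are automatically compatible with the measure side-conditions. Recall that $f'\equivSW f\stars\lim$, so that $(\P_I\C_X)'$ receives a sequence converging to a $\psi_-$--name of a closed set $A\In X$ with $\mu(A)\in I$ and must output a point of $A$, whereas $\P_I\CL_X$ receives a sequence $(x_n)$ in $X$ whose set $C$ of cluster points satisfies $\mu(C)\in I$ and must output a cluster point. The key remark is that the two constructions of \cite{BGM12} are \emph{set-exact} --- and have to be already in the unmeasured case: the reduction $\CL_X\leqSW\C_X'$ produces from $(x_n)$ a name of \emph{precisely} its cluster-point set $C$, and the reduction $\C_X'\leqSW\CL_X$ produces from $A$ a sequence whose cluster-point set is \emph{precisely} $A$. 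Moreover both reductions are strong, the ``output modificator'' $H$ being the identity in each case: the point produced by the oracle is exactly the point to be returned.

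For $\P_I\CL_X\leqSW(\P_I\C_X)'$: given $(x_n)$ with cluster-point set $C$, note that for every basic open ball $B_j$ of $X$ the predicate ``only finitely many $x_n$ lie in $B_j$'' is $\SO{2}$ uniformly in $(x_n)$, and $X\sm C$ is precisely the union of those $B_j$ for which it holds. Hence, as in \cite{BGM12}, one computes from $(x_n)$, by a single application of $\lim$, a $\psi_-$--name of $C$, that is, a $(\psi_-)'$--name of $C\in\AA_-(X)$. Since $\mu(C)\in I$, applying $(\P_I\C_X)'$ to this name is legitimate and returns a point of $C$, i.e.\ a cluster point of $(x_n)$, which is passed on unchanged.

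For $(\P_I\C_X)'\leqSW\P_I\CL_X$: from a sequence of $\psi_-$--names converging to a $\psi_-$--name of $A$ one builds, as in \cite{BGM12}, a sequence $(x_n)$ in $X$ whose set of cluster points is exactly $A$ --- reading longer and longer pieces of the approximations one maintains finite unions of basic balls approximating $X\sm A$ from within, forms closed candidate sets $A_k$ converging to $A$, and emits at stage $k$ those of the first $k$ points of the dense sequence of $X$ that currently look $2^{-k}$--close to $A_k$; every point of $A$ then lies in every candidate and is approximated to arbitrary precision, hence is a cluster point, while every $x$ with $\dist(x,A)>0$ is eventually dropped from the candidates and emitted only finitely often. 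Since the resulting cluster-point set is $A$, its measure is $\mu(A)\in I$, so the call to $\P_I\CL_X$ is legitimate and returns a point of $A$, again passed on unchanged. The two reductions together give $(\P_I\C_X)'\equivSW\P_I\CL_X$.

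There is thus no genuinely new obstacle beyond \cite{BGM12}; the only thing to verify is that the constructions there are set-exact, which is immediate from their proofs. The delicate points --- making the finite nets track the negatively described sets $A_k$ closely enough that no spurious cluster point is created and no point of $A$ is missed, and, for computable metric spaces that fail to be (locally) compact, handling the convergence $\dist(x,A_k)\to\dist(x,A)$ with the appropriate care --- are exactly the ones already dealt with in \cite{BGM12}, and the measure intervals $I$ ride along for free.
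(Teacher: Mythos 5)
Your proof is correct and takes essentially the same approach as the paper: the paper factors $\P_I\CL_X=\P_I\C_X\circ\Low_X$ and cites the relevant results of \cite{BGM12} ($\Low_X\leqSW\lim$, computability of $(\Low_X^{-1})'$, and $f\stars\lim\equivSW f'$), which rest on precisely the set-exactness you identify as the reason the measure condition rides along for free.
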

\begin{proof}
We just adapt results that have been provided in \cite{BGM12}. If we denote by $\Low_X:\In X^\IN\mto\AA_-(X)$
the surjective map that maps each sequence $(x_n)$ to the set $\Low_X(x_n)$ of its cluster points, then we obtain
\[\P_I\CL_X=\P_I\C_X\circ \Low_X.\]
By \cite[Proposition~9.2]{BGM12} we have $\Low_X\leqSW\lim$ and together with \cite[Theorem~5.14]{BGM12}
we obtain $\P_I\CL_X\leqSW(\P_I\C_X)'$. The other direction $(\P_I\C_X)'\leqSW\P_I\CL_X$ follows
by \cite[Corollary~9.5]{BGM12}, which states that the jump $(\Low_X^{-1})'$ of the multi-valued inverse of $\Low_X$
is computable.
\end{proof}

Together with Proposition~\ref{prop:WWKL} and the fact that jumps are monotone with respect to $\leqSW$
we obtain the following corollary.

\begin{corollary}
$\WWKL'\equivSW\PCL_{2^\IN}\equivSW\PCL_{[0,1]}$.
\end{corollary}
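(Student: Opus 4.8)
The plan is to obtain this corollary as an immediate consequence of the preceding Theorem on the probabilistic cluster point problem, combined with Proposition~\ref{prop:WWKL} and the monotonicity of the jump operation with respect to strong Weihrauch reducibility. There is no genuine new content to prove here; the work is entirely in lining up the hypotheses of the ingredients.

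First I would note that $\PC_X=\P_{>0}\C_X=\P_{(0,\infty]}\C_X$, so that the relevant interval is $I=(0,\infty]$, which is an admissible interval in the sense fixed in the preliminaries (with $\infty$ permitted as a closed right endpoint). Both Cantor space $2^\IN$ with the uniform measure $\mu_{2^\IN}$ and the unit interval $[0,1]$ with the Lebesgue measure $\lambda$ are computable metric spaces equipped with a Borel measure, so the preceding Theorem applies verbatim with this choice of $I$ and yields $(\PC_{2^\IN})'\equivSW\PCL_{2^\IN}$ and $(\PC_{[0,1]})'\equivSW\PCL_{[0,1]}$.

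Next I would invoke Proposition~\ref{prop:WWKL}, which gives $\WWKL\equivSW\PC_{2^\IN}\equivSW\PC_{[0,1]}$. Since the jump operation $f\mapsto f'$ is monotone with respect to strong Weihrauch reducibility $\leqSW$ (as recalled in the subsection on jumps, following \cite{BGM12}), it preserves the equivalence $\equivSW$; applying it to the chain above gives $\WWKL'\equivSW(\PC_{2^\IN})'\equivSW(\PC_{[0,1]})'$. Chaining this with the two equivalences obtained from the Theorem yields
\[\WWKL'\equivSW(\PC_{2^\IN})'\equivSW\PCL_{2^\IN}\quad\text{and}\quad\WWKL'\equivSW(\PC_{[0,1]})'\equivSW\PCL_{[0,1]},\]
which is the assertion.

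The only points that require a moment's attention, rather than being a true obstacle, are: (i) checking that the spaces $2^\IN$ and $[0,1]$ together with their standard measures indeed satisfy the hypotheses of the preceding Theorem, and (ii) making sure the monotonicity of the jump is used for $\leqSW$ and not for $\leqW$, for which it would fail. Both are routine verifications, so the proof is short.
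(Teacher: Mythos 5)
Your proposal is correct and matches the paper's own argument: apply the preceding theorem with $I=(0,\infty]$ to both $2^\IN$ and $[0,1]$, combine with Proposition~\ref{prop:WWKL}, and use the monotonicity of the jump under $\leqSW$. The paper treats this exactly as you do, as an immediate consequence of these three ingredients.
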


\section{Changes of the Probability Values}
\label{sec:probability}

In this section we discuss the dependency of probabilistic choice on different lower bounds on the probability.
Intuitively, it should make choice easier if the measure of the set that one chooses from increases.
Indeed, in some cases this is strictly so. For instance, one easily obtains
\[\frac{1}{2}\mbox{-}\WWKL\lSW\frac{1}{3}\mbox{-}\WWKL\lSW\frac{1}{4}\mbox{-}\WWKL\lSW...\]
While the reductions $\leqSW$ are obvious, the strictness of these reductions follows since 
$\C_n\leqSW\frac{1}{n+1}\mbox{-}\WWKL$ and $\C_n\nleqSW\frac{1}{n}\mbox{-}\WWKL$.
The latter is true for mere counting reasons, which show that there cannot be $n$ disjoint subsets of $2^\IN$ all
with measure $>\frac{1}{n}$. In fact, $\#\frac{1}{n}\dash\WWKL=n-1$ and $\#\C_n=n$ and
Proposition~\ref{prop:cardinality} yields the result.
As a consequence one obtains $\varepsilon$-$\WWKL\lSW\WWKL$ for all $\varepsilon>0$,
which was also proved by Dorais et al.\ in \cite[Proposition~4.7]{DDH+12}.

Here we generalize the above observation in several respects: for one we prove that the strictness of the
reduction even occurs for ordinary Weihrauch reducibility $\leqW$, secondly we separate the probabilistic choice
principles for arbitrary probabilities (not just for fractions of the form $\frac{1}{n}$) and lastly we prove the result for both spaces, the unit interval $[0,1]$
and Cantor space $2^\IN$.
Instead of $\C_n$ as above we use a slightly more general variant of choice. For non-negative integers $a,b$ we denote by $\C_{a,b}$ 
the closed choice operation for the set $b=\{0,1,...,b-1\}$, restricted to subsets $C\In b$ with cardinality $|C|\geq a$.
In other words, 
\[\C_{a,b}:=\P_{\geq a}\C_b.\] 
Equivalent problems have been studied under the name $\LLPO_{b,b-a}$ by Mylatz~\cite{Myl06}.
He also classified the exact relation of the problems $\LLPO_{n,m}$ and $\LLPO_{k,l}$ to each other in terms of number theoretic properties of $n,m,k$ and $l$~\cite[Satz~15]{Myl06}.
Other finite choice principles with restricted cardinality have already been studied by Pauly and Le Roux in \cite{LP13}. 
We point out that the negative part of the following proof is very similar to the proof of Proposition~\ref{prop:example-unit-cantor} (and to a lesser extent
to the proof of Theorem~\ref{thm:AC-*-WWKL}).

\begin{theorem}[Probability dependency]
\label{thm:probability-dependency}
Let $\varepsilon,\delta\in[0,1]$ and $X=2^\IN$ or $X=[0,1]$. Then
\[\P_{>\varepsilon}\C_{X}\leqW\P_{>\delta}\C_{X}\iff\P_{>\varepsilon}\C_{X}\leqSW\P_{>\delta}\C_{X}\iff\varepsilon\geq\delta.\]
\end{theorem}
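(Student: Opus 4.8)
The claim packages together three implications; only $\P_{>\varepsilon}\C_X \leqW \P_{>\delta}\C_X \TO \varepsilon\geq\delta$ requires real work, so I will structure the argument around it. First, the easy directions: if $\varepsilon\geq\delta$ then $(\varepsilon,1]\In(\delta,1]$, so $\P_{>\varepsilon}\C_X\leqSW\P_{>\delta}\C_X$ by Proposition~\ref{prop:interval-monotonicity}; and $\leqSW$ trivially implies $\leqW$. So it suffices to prove: if $\P_{>\varepsilon}\C_X\leqW\P_{>\delta}\C_X$, then $\varepsilon\geq\delta$. Equivalently, I will prove the contrapositive: if $\delta>\varepsilon$ then $\P_{>\varepsilon}\C_X\nleqW\P_{>\delta}\C_X$.

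\textbf{The core reduction argument.} Assume for contradiction that $\P_{>\varepsilon}\C_X\leqW\P_{>\delta}\C_X$ with $\delta>\varepsilon$, witnessed by computable $H,K$ on minimal domains, so that $H\langle\id,FK\rangle$ realizes $\P_{>\varepsilon}\C_X$ whenever $F$ realizes $\P_{>\delta}\C_X$. The strategy follows the finite-extension method used in Proposition~\ref{prop:example-unit-cantor}. I would feed in a name $p$ of the ``worst case'' input set $X$ itself (which has measure $1>\varepsilon$, hence lies in $\dom(\P_{>\varepsilon}\C_X)$). Then $K(p)$ is a name of a closed set $A\In X$ with $\mu(A)>\delta$. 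The key move is to build, using continuity of $H$ on a suitable compact set of names (for $X=[0,1]$ the signed-digit names of points of $A$; for $X=2^\IN$ the names are already $A$ itself, which is compact), a finite prefix $w\prefix p$ that ``locks in'' enough of $H$'s behaviour: for every point $x$ of $A$ and every name $q$ of $x$, $H\langle w\IN^\IN,q\rangle$ already determines a fixed cylinder of the output. Then I pick a point $y_0\in A$, extend $w$ to a name $p'$ of the singleton closed set $\{y_0\}$ — which has measure $0$, but I can instead use a name of a closed set $A''\In X$ of measure $>\varepsilon$ that is \emph{disjoint from} (or carves very little mass out of) the ``region'' where $H$'s locked-in behaviour forces outputs into $A$. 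Since $\mu(A)>\delta$ while the target measure is only $>\varepsilon<\delta$, there is genuine room: I can choose $A''$ with $\varepsilon<\mu(A'')$ and $\mu(A''\cap A)$ strictly less than the mass that the locked-in part of $H$ has already committed to landing in. Then $K(p')$ names a valid input $A'$ for $\P_{>\delta}\C_X$ (one must check $\mu(A')>\delta$ — arrange $w$ long enough that $K(w\IN^\IN)$ names only sets differing from $A$ by arbitrarily small measure), but there is a realizer $F$ of $\P_{>\delta}\C_X$ and a name $q$ of a point of $A'\setminus A''$ with $FK(p')=q$, so $H\langle p',q\rangle$ outputs a point not in $A''$ — contradicting that $H\langle\id,FK\rangle$ must realize $\P_{>\varepsilon}\C_X$ on input $p'$ (whose set is $A''$).

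\textbf{The main obstacle.} The delicate point is the bookkeeping of measures after the finite extension, and it differs between the two spaces. For $X=2^\IN$ the argument is clean: once $w$ is fixed, the locked-in outputs of $H$ occupy at most some closed set $B\In 2^\IN$; I need $\mu(B)\le$ something, and I need to find $A''$ with $\mu(A'')>\varepsilon$ and $A''\cap B$ small — here the total-disconnectedness of $2^\IN$ and the ability to split off clopen pieces of measure $2^{-n}$ make this routine, and I should exploit that $\mu(A)>\delta$ leaves slack $\mu(A)-\delta$ to absorb the $\le\frac{\mu(A)-\delta}{c}$-type errors introduced by the continuity of $K$, exactly as in Proposition~\ref{prop:example-unit-cantor}. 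For $X=[0,1]$ the connectedness issue from Proposition~\ref{prop:example-unit-cantor} is \emph{not} a problem here, because the target threshold $\varepsilon$ is strictly below $\delta$ (in that proposition the obstruction was the borderline case $\varepsilon=\delta=\frac12$); the strict gap $\delta-\varepsilon>0$ is precisely what lets me route the new input $A''$ into a sub-interval that $H$ has not committed, so the argument goes through for $[0,1]$ as well. I would therefore present the $2^\IN$ case in detail and then note that the signed-digit/compactness apparatus of Proposition~\ref{prop:example-unit-cantor}, together with the measure slack $\delta-\varepsilon$, transfers the argument to $[0,1]$ verbatim. One should also handle the boundary cases $\varepsilon=1$ (then $\P_{>1}\C_X$ has empty domain, so it reduces to everything and the claim $1\ge\delta$ is automatic) and $\delta=0$ (then the right-hand side is $\PC_X$ and $\varepsilon\ge 0$ always holds) separately, as these make the hypothesis or conclusion trivially true.
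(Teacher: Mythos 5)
The easy directions (monotonicity and $\leqSW\TO\leqW$) and the boundary cases are fine, and you have correctly identified the finite-extension method as the right family of tools. But the core of your argument has a genuine gap. You propose to feed in $p$ a name of the full space $X$, lock in a prefix $w\prefix p$, and then find an $A''$ with $\mu(A'')>\varepsilon$ ``disjoint from the region where $H$'s locked-in behaviour forces outputs.'' The problem is that there is no control on how much measure this ``output region'' occupies. For a single name $p$ of $X$, the outputs $H\langle p,q\rangle$ as $q$ ranges over names of points of $A:=\psi_-K(p)$ can perfectly well cover a set of measure close to $1$ (e.g.\ a realizer that essentially passes $q$ through to the output spreads the outputs over all of $A$, which has measure $>\delta$). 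Then any $A''$ avoiding it has measure $\leq 1-\delta$, and once $\varepsilon+\delta\geq 1$ — a perfectly admissible case — there is no $A''$ of measure $>\varepsilon$ avoiding the output region. So the ``slack'' $\delta-\varepsilon$ does not by itself give you room; you need a mechanism that forces most of $A$'s measure to become useless when the input shrinks, and ``avoidance'' alone does not supply one.

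What is missing is a pigeonhole step, and the paper obtains it by discretizing: it interpolates a problem $\C_{a,b}$ (choice on $b=\{0,\dots,b-1\}$ restricted to subsets of cardinality $\geq a$, with $\varepsilon<\frac{a}{b}\leq\delta$). The positive half, $\C_{a,b}\leqW\P_{>\varepsilon}\C_{[0,1]}$, is by embedding $b$ disjoint intervals of length just above $\frac{\varepsilon}{a}$. The negative half, $\C_{a,b}\nleqW\P_{>\delta}\C_{2^\IN}$, is where the finite-extension method really bites: because the output of $\C_{a,b}$ is a natural number in $\{0,\dots,b-1\}$, the locked-in prefix partitions $A$ into at most $b$ pieces $A_0,\dots,A_{b-1}$ by output value, and pigeonhole produces $a$ of them with combined measure $\leq\frac{a}{b}\mu(A)$; restricting the input to that size-$a$ set $C$ forces $K(p')$ to name a set $A'$ with $A'\cap A\In A_C$ and hence $\mu(A')\leq\frac{a}{b}\leq\delta$, a contradiction. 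Finally, Proposition~\ref{prop:probability-cantor} transfers between $[0,1]$ and $2^\IN$ in both directions, which is how a single negative result over $2^\IN$ together with a single positive result over $[0,1]$ yields the theorem for both spaces. Your proposal never introduces a finite output space, and as a consequence has no pigeonhole to invoke — that is the step that cannot be skipped. (Incidentally, your reading of Proposition~\ref{prop:example-unit-cantor} is also off: its obstruction is the connectedness contrast between $[0,1]$ and $2^\IN$ at the same fixed threshold, not a ``borderline $\varepsilon=\delta$'' phenomenon, so the heuristic ``the strict gap $\delta-\varepsilon$ removes the obstruction'' does not apply.)
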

\begin{proof}
``$\Longleftarrow$'' these reductions are clear and follow from Proposition~\ref{prop:interval-monotonicity}.\\
``$\TO$'' Let $\varepsilon<\delta$. Then there are positive integers $a<b$ with
$\varepsilon<\frac{a}{b}\leq\delta$. 
By Proposition~\ref{prop:probability-cantor} it suffices to show
$\C_{a,b}\leqW\P_{>\varepsilon}\C_{[0,1]}$ and $\C_{a,b}\nleqW\P_{>\delta}\C_{2^\IN}$,
since this implies $\P_{>\varepsilon}\C_{X}\nleqW\P_{>\delta}\C_{X}$ for both $X=[0,1]$ and $X=2^\IN$.

In order to prove the first statement, we select $b$ consecutive 
disjoint closed intervals $I_0,...,I_{b-1}\In[0,1]$ with rational endpoints
of equal length $l=\lambda(I_i)$ for all $i\in B=\{0,1,...,b-1\}$ with
$\frac{\varepsilon}{a}<l<\frac{1}{b}$.
The function that maps a subset $C\In B$ to $A_C:=\bigcup_{i\in C}I_i$
is computable and if $C$ is of cardinality $|C|\geq a$, then $\lambda(A_C)\geq al>\varepsilon$.
Since given a point $x\in A_C$ one can easily recover the unique number $i\in B$ of the interval $I_i$ with $x\in I_i$,
one obtains the reduction $\C_{a,b}\leqSW\P_{>\varepsilon}\C_{[0,1]}$.

Let us now assume for a contradiction that $\C_{a,b}\leqW\P_{>\delta}\C_{2^\IN}$.
Then there are computable $H,K$ such that $H\langle\id,FK\rangle\vdash \C_{a,b}$, whenever
$F\vdash \P_{>\delta}\C_{2^\IN}$ holds. 
Let $p$ be a name of the entire set $B=\{0,1,...,b-1\}$. Then $K(p)$ is a name of a closed set $A\In2^\IN$
with measure $\mu(A)>\delta$. Now $H\langle p,q\rangle$ is a (name of a) point $i\in B$ for every $q\in A$. 
For simplicity we write $H\langle p,q\rangle=i$ in this situation. Let
\[A_i:=\{q\in A:H\langle p,q\rangle=i\}\]
for all $i\in B$. 
By definition the sets $A_0,...,A_{b-1}$ are pairwise disjoint.
By a version of the Pigeonhole Principle there must be a set $C\In B$ of cardinality $|C|=a$ such that $A_C:=\bigcup_{i\in C}A_i$
has measure $\mu(A_C)\leq\frac{a}{b}\mu(A)$.

Since $H$ is uniformly continuous on the compact set $\{p\}\times A$, it follows that there
is some finite prefix $w\prefix p$ such that $H\langle w\IN^\IN,q\rangle$ is (a name of) a singleton for every $q\in A$.
Moreover, since $K$ is continuous we can assume that $w$ is long enough such that $K(w\IN^\IN)$ only contains names of sets $A'\In2^\IN$
with $\mu(A'\setminus A)\leq\frac{a}{b}(1-\mu(A))$. 
Now there is a name $p'$ of the set $C$ with $w\prefix p'$ and hence $K(p')$ is a name of a set $A'$ as above.
It is clear that $A'\cap A\In A_C$: for if  $q\in A'\cap A$, then there is a realizer $F$ of $\P_{>\delta}\C_{2^\IN}$
such that $FK(p')=q$ and hence $H\langle p,q\rangle=H\langle p',q\rangle=H\langle p',FK(p')\rangle\in C$, which implies $q\in A_C$.
Finally, the measure of $A'$ satisfies
\[\mu(A')\leq\mu(A_C)+\mu(A'\setminus A)\leq\frac{a}{b}\leq\delta\]
in contradiction to the requirement $\mu(A')>\delta$.
\end{proof}

In particular, we obtain the following result.

\begin{corollary}
\label{cor:probability-WWKL}
$\varepsilon$-$\WWKL\leqW\delta$-$\WWKL\iff\varepsilon\geq\delta$ for $\varepsilon,\delta\in[0,1]$.
\end{corollary}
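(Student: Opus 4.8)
The plan is to obtain this as an immediate consequence of Theorem~\ref{thm:probability-dependency} combined with the identification of the quantitative Weak Weak K\H{o}nig's Lemma with probabilistic choice on Cantor space. First I would invoke Corollary~\ref{cor:epsilon-WWKL-Cantor}, which yields $\varepsilon\dash\WWKL\equivSW\P_{>\varepsilon}\C_{2^\IN}$ and $\delta\dash\WWKL\equivSW\P_{>\delta}\C_{2^\IN}$. Since strong Weihrauch equivalence implies ordinary Weihrauch equivalence, and since $\leqW$ is transitive, we have $\varepsilon\dash\WWKL\leqW\delta\dash\WWKL$ if and only if $\P_{>\varepsilon}\C_{2^\IN}\leqW\P_{>\delta}\C_{2^\IN}$.

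Next I would apply Theorem~\ref{thm:probability-dependency} in the instance $X=2^\IN$, which states precisely that for $\varepsilon,\delta\in[0,1]$ the reduction $\P_{>\varepsilon}\C_{2^\IN}\leqW\P_{>\delta}\C_{2^\IN}$ holds if and only if $\varepsilon\geq\delta$. Chaining the two equivalences gives exactly the assertion of the corollary.

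There is essentially no obstacle here: the hypothesis $\varepsilon,\delta\in[0,1]$ in the corollary matches the range for which Theorem~\ref{thm:probability-dependency} is formulated, and Corollary~\ref{cor:epsilon-WWKL-Cantor} holds for all real parameters, so the translation between the two formulations is lossless. The only mild point of care is to note that one uses the $X=2^\IN$ branch of Theorem~\ref{thm:probability-dependency} (rather than the $[0,1]$ branch), since $\varepsilon\dash\WWKL$ is tied to Cantor space; but the theorem provides both branches simultaneously, so this is merely a matter of selecting the correct instance.
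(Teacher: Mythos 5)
Your argument is exactly the paper's: it derives the corollary from Theorem~\ref{thm:probability-dependency} (in the $X=2^\IN$ instance) via the identification $\varepsilon\dash\WWKL\equivSW\P_{>\varepsilon}\C_{2^\IN}$ of Corollary~\ref{cor:epsilon-WWKL-Cantor}. Correct and complete.
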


This result was independently proved by Dorais et al.\ \cite[Proposition~4.7]{DDH+12}.
Since the proof of Theorem~\ref{thm:probability-dependency} includes the case $\frac{a}{b}=\delta$, we obtain the following.

\begin{corollary}
\label{cor:C-n-WWKL}
$\C_n\nleqW\frac{1}{n}\dash\WWKL$ for all $n\geq1$.
\end{corollary}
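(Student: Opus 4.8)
The plan is to read this off the non-reducibility half of the proof of Theorem~\ref{thm:probability-dependency}, specialised to $a=1$, $b=n$, $\delta=\tfrac{1}{n}$. First I would bring both sides into the shape used there. On one side, $\C_n=\C_{1,n}=\P_{\geq1}\C_n$: for the finite discrete space $n=\{0,\dots,n-1\}$ every non-empty closed subset is simply a non-empty subset, so the cardinality restriction $|C|\geq1$ is vacuous and $\C_{1,n}$ literally equals $\C_n$. On the other side, $\tfrac{1}{n}\dash\WWKL\equivSW\P_{>1/n}\C_{2^\IN}$ by Corollary~\ref{cor:epsilon-WWKL-Cantor}. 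Since $\nleqW$ is invariant under $\equivSW$, it therefore suffices to prove $\C_{1,n}\nleqW\P_{>1/n}\C_{2^\IN}$.

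Second I would observe that the impossibility argument inside the proof of Theorem~\ref{thm:probability-dependency} in fact shows, for all positive integers $a<b$ and every $\delta$ with $\tfrac{a}{b}\leq\delta$, that $\C_{a,b}\nleqW\P_{>\delta}\C_{2^\IN}$; the strict hypothesis $\varepsilon<\tfrac{a}{b}$ appearing there is used only to produce the positive reduction $\C_{a,b}\leqW\P_{>\varepsilon}\C_{[0,1]}$, never in the non-reducibility part. So I would simply rerun that argument with $a=1$, $b=n$, $\delta=\tfrac1n$: assuming $\C_{1,n}\leqW\P_{>1/n}\C_{2^\IN}$ via computable $H,K$, feed $K$ a name $p$ of the whole set $\{0,\dots,n-1\}$ to obtain a name of a closed $A\In2^\IN$ with $\mu(A)>\tfrac1n$; split $A$ into the pairwise disjoint pieces $A_i=\{q\in A:H\langle p,q\rangle=i\}$; pick by pigeonhole some $A_{i_0}$ with $\mu(A_{i_0})\leq\tfrac1n\mu(A)$; and use uniform continuity of $H$ on the compact set $\{p\}\times A$ together with continuity of $K$ and upper semicomputability of the measure (Lemma~\ref{lem:semi-computable}) to find a prefix $w\prefix p$ so long that $H\langle w\IN^\IN,q\rangle$ is a singleton for all $q\in A$ and every set named by $K(w\IN^\IN)$ has measure at most $\tfrac1n(1-\mu(A))$ outside $A$. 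Extending $w$ to a name $p'$ of $\{i_0\}$ then forces $K(p')$ to name a set $A'$ with $\mu(A')>\tfrac1n$ but $A'\cap A\In A_{i_0}$, whence
\[\mu(A')\leq\mu(A_{i_0})+\mu(A'\setminus A)\leq\tfrac1n\mu(A)+\tfrac1n(1-\mu(A))=\tfrac1n,\]
a contradiction.

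The only thing that genuinely needs checking — and the place where the boundary case $\tfrac{a}{b}=\delta$ could conceivably misbehave compared with the strict case of Theorem~\ref{thm:probability-dependency} — is the closing estimate above: it terminates at $\mu(A')\leq\tfrac1n=\delta$, and since the requirement it contradicts is the \emph{strict} inequality $\mu(A')>\delta$, equality is harmless. Every other ingredient (the pigeonhole step, the compactness/uniform-continuity step, and the semicomputability bound controlling $\mu(A'\setminus A)$) is entirely indifferent to whether $\tfrac{a}{b}<\delta$ or $\tfrac{a}{b}=\delta$, so I expect no real obstacle and the write-up to amount to a short appeal to the earlier proof. For the degenerate value $n=1$ the statement holds trivially for a separate reason: $\dom(1\dash\WWKL)=\{T:\mu([T])>1\}=\emptyset$ while $\dom(\C_1)\neq\emptyset$, so no Weihrauch reduction $\C_1\leqW1\dash\WWKL$ can exist.
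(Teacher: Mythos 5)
Your proof is correct and matches the paper's own argument, which is just the one-line observation that the non-reducibility half of the proof of Theorem~\ref{thm:probability-dependency} already covers the boundary case $\tfrac{a}{b}=\delta$ --- exactly the point you spell out with $a=1$, $b=n$, $\delta=\tfrac1n$, after translating both sides via $\C_n=\C_{1,n}$ and $\tfrac1n\dash\WWKL\equivSW\P_{>1/n}\C_{2^\IN}$. Your separate disposal of the degenerate case $n=1$ (where the $\C_{a,b}$ argument needs $a<b$, but $\dom(1\dash\WWKL)=\emptyset$ makes the claim trivial) is a small extra care that the paper's terse remark leaves implicit.
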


Besides $\varepsilon\dash\WWKL$ we can also consider a $*$--version of this principle that we define next.
Essentially, the definition is
$*\dash\WWKL:=\bigsqcup_{n\in\IN} 2^{-n}\dash\WWKL$, which can be understood as a uniform version of the logical statement~``$(\forall n)\;2^{-n}\dash\WWKL$.''
We phrase the principle slightly more precisely.

\begin{definition}
We define $*\dash\WWKL:\In\IN\times\Tr\mto2^\IN$ by
\[*\dash\WWKL(n,T):=2^{-n}\dash\WWKL(T),\]
where $\dom(*\dash\WWKL):=\{(n,T)\in\IN\times\Tr:\mu([T])>2^{-n}\}$.
\end{definition}

That is the input to $*\dash\WWKL$ is a pair $(n,T)$, where $n$ is a natural number and 
$T$ is a tree such that the set of infinite paths $[T]$ of $T$ satisfies $\mu([T])>2^{-n}$.
The output is an infinite path $p\in[T]$.
It follows from Proposition~\ref{prop:products} that $*\dash\WWKL$ is idempotent.
Similarly, we can define a lower counterpart $(1-*)\dash\WWKL:=\bigsqcap_{n\in\IN}(1-2^{-n})\dash\WWKL$,
which can be understood as corresponding to the logical statement ``$(\exists n)\;(1-2^{-n})\dash\WWKL$.''
Also in this case we repeat the definition for clarity.

\begin{definition}
We define $(1-*)\dash\WWKL:\In\Tr^\IN\mto2^\IN$ by
\[(1-*)\dash\WWKL(T_n)_n:=\bigsqcup_{n\in\IN}(1-2^{-n})\dash\WWKL(T_n),\]
where $\dom((1-*)\dash\WWKL):=\{(T_n)_n\in\Tr^\IN:(\forall n\in\IN)\;\mu([T_n])>1-2^{-n}\}$.
\end{definition}

Thus, the input to $(1-*)\dash\WWKL$ is a sequence $(T_n)_n$ of trees with $\mu([T_n])>1-2^{-n}$
and the output is an infinite path $p\in[T_n]$ of one of these trees $T_n$ together with the information $n$
to which tree the path belongs.
It is clear that Corollary~\ref{cor:probability-WWKL} implies the following.

\begin{corollary}
\label{cor:*WWKL-WWKL}
$(1-*)\dash\WWKL\lW\varepsilon\dash\WWKL\lW*\dash\WWKL\leqSW\WWKL$ for every $\varepsilon\in(0,1)$.
\end{corollary}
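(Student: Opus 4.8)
The plan is to verify the displayed chain link by link: the three reductions $(1-*)\dash\WWKL\leqW\varepsilon\dash\WWKL$, $\varepsilon\dash\WWKL\leqW *\dash\WWKL$ and $*\dash\WWKL\leqSW\WWKL$, and then the two strictness claims $\varepsilon\dash\WWKL\nleqW(1-*)\dash\WWKL$ and $*\dash\WWKL\nleqW\varepsilon\dash\WWKL$. Everything should come out of Corollary~\ref{cor:probability-WWKL} together with interval monotonicity (Proposition~\ref{prop:interval-monotonicity}) and Corollary~\ref{cor:epsilon-WWKL-Cantor}, which lets me pass freely between $\varepsilon\dash\WWKL$ and $\P_{>\varepsilon}\C_{2^\IN}$.

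For the reductions I would first record the elementary fact that, straight from the definitions of $\bigsqcap$ and $\bigsqcup$, one has $\bigsqcap_{n\in\IN}f_n\leqSW f_m$ and $f_m\leqSW\bigsqcup_{n\in\IN}f_n$ for every $m$: the preprocessing is a projection, resp.\ the injection $x\mapsto(m,x)$, and the postprocessing just prepends, resp.\ reads off, the constant index $m$, so it needs no access to the input. Applying the first fact to $(1-*)\dash\WWKL=\bigsqcap_n(1-2^{-n})\dash\WWKL$ with an $n_0$ chosen so that $1-2^{-n_0}\geq\varepsilon$ (possible since $\varepsilon<1$) gives $(1-*)\dash\WWKL\leqSW(1-2^{-n_0})\dash\WWKL$, and then Corollary~\ref{cor:epsilon-WWKL-Cantor} together with Proposition~\ref{prop:interval-monotonicity} applied to $(1-2^{-n_0},\infty]\In(\varepsilon,\infty]$ yields $(1-2^{-n_0})\dash\WWKL\leqSW\varepsilon\dash\WWKL$, hence $(1-*)\dash\WWKL\leqW\varepsilon\dash\WWKL$. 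Dually, writing $*\dash\WWKL=\bigsqcup_n 2^{-n}\dash\WWKL$ and picking $n_1$ with $2^{-n_1}\leq\varepsilon$ (possible since $\varepsilon>0$), the same two ingredients give $\varepsilon\dash\WWKL\leqSW 2^{-n_1}\dash\WWKL\leqSW *\dash\WWKL$. Finally $*\dash\WWKL\leqSW\WWKL$ is direct: on input $(n,T)$ one forwards $T$ to $\WWKL$ (note $\mu([T])>2^{-n}>0$, so $T\in\dom(\WWKL)$) and returns the path unchanged.

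For the two strictness claims I would argue purely formally. If $\varepsilon\dash\WWKL\leqW(1-*)\dash\WWKL$ held, then choosing $\delta$ with $\varepsilon<\delta<1$ and composing with $(1-*)\dash\WWKL\leqW\delta\dash\WWKL$ (obtained exactly as above, using $1-2^{-n}\geq\delta$ for suitable $n$) would give $\varepsilon\dash\WWKL\leqW\delta\dash\WWKL$ with $\varepsilon<\delta$, contradicting Corollary~\ref{cor:probability-WWKL}. Likewise, if $*\dash\WWKL\leqW\varepsilon\dash\WWKL$ held, then picking $n$ with $2^{-n}<\varepsilon$ and using $2^{-n}\dash\WWKL\leqW *\dash\WWKL$ would give $2^{-n}\dash\WWKL\leqW\varepsilon\dash\WWKL$ with $2^{-n}<\varepsilon$, again contradicting Corollary~\ref{cor:probability-WWKL}.

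There is no genuine obstacle here; the statement is a bookkeeping corollary of Corollary~\ref{cor:probability-WWKL}. The only point that deserves a little care is tracking strict versus non-strict inequalities between the probability threshold $\varepsilon$ and the dyadic values $2^{-n}$ and $1-2^{-n}$, so that the hypothesis $\varepsilon\in(0,1)$ is really used in both directions: on the lower side to get an $n$ with $1-2^{-n}\geq\varepsilon$, and on the upper side to get an $n$ with $2^{-n}<\varepsilon$.
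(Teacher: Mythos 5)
Your proof is correct, and it fills in exactly the bookkeeping that the paper leaves as ``clear''; the paper's entire argument is the one-line remark that the chain follows from Corollary~\ref{cor:probability-WWKL} (i.e.\ $\varepsilon\dash\WWKL\leqW\delta\dash\WWKL\iff\varepsilon\geq\delta$) and the definitions of $*\dash\WWKL$ and $(1-*)\dash\WWKL$ as countable coproduct and sum. Your reductions and both strictness arguments unfold precisely this, so the approach is the same.
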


We want to show that the latter reduction is strict too.
We use $\PCC_{[0,1]}$, which is closed choice on $[0,1]$ restricted
to connected sets of positive measure. In other words, this is choice restricted to proper
intervals, which was already considered in \cite{BG11a} under the name $\C_I^-$
and in \cite{BLP12} under the name $\ConC_1^-$.
In \cite[Proposition~3.8]{BG11a} it was proved that $\PCC_{[0,1]}\leqW\C_\IN$ holds.

We first prove that $\PCC_{[0,1]}$ is join-irreducible. We mention that due to distributivity of the Weihrauch lattice
$g$ is {\em join-irreducible} 
(in the sense introduced in Section~\ref{sec:preliminaries}) if and only if
$g\leqW\bigsqcup_{i=0}^\infty f_i$ implies that there exists $i\in\IN$ with $g\leqW f_i$.
We prove a slightly more general result that we apply to other problems than $\PCC_{[0,1]}$ at a later stage.

\begin{lemma}
\label{lem:join-irreducible}
Every restriction $\C_{[0,1]}|_\CC$ of closed choice to a set $\CC$ of closed subsets $A\In[0,1]$ 
with $[0,1]\in\CC$ is join-irreducible.
\end{lemma}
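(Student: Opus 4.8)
The plan is to show that any Weihrauch reduction of $\C_{[0,1]}|_\CC$ to a countable coproduct $\bigsqcup_{i\in\IN} f_i$ must factor through a single one of the $f_i$. So suppose $\C_{[0,1]}|_\CC\leqW\bigsqcup_{i\in\IN} f_i$ via computable functions $H,K$. The key is to look at what happens on the computable point $[0,1]\in\CC$: fix a name $p$ of the closed set $[0,1]$, and consider $K(p)$, which is a name of an input to $\bigsqcup_{i\in\IN} f_i$. Since the coproduct takes inputs of the form $(j,u)$ with $u\in\dom(f_j)$, the name $K(p)$ must, after reading some finite prefix $w\prefix p$, already commit to the index $j$ of the summand. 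This is the crucial point: the index component of a name in $\bigsqcup_{i\in\IN} X_i$ is a single natural number, hence determined by a finite prefix of the name $K(p)$; and since $K$ is continuous, there is a finite prefix $w\prefix p$ such that $K(w\IN^\IN)$ only contains names with first component equal to this fixed $j$.

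Next I would argue that the same index $j$ works for \emph{every} input in $\CC$. The idea is that $[0,1]$ contains every closed set $A\In[0,1]$, so every $A\in\CC$ has a name $p_A$ extending the prefix $w$ (here I use that the standard negative-information representation $\psi_-$ of $\AA_-([0,1])$ has the property that if $A\In B$ then a name of $B$ can be extended to a name of $A$ — concretely, one keeps enumerating balls into the complement; the prefix $w$ of the name of $[0,1]$ lists finitely many balls disjoint from $[0,1]$, and $w$ can be extended to a name of any subset $A$). For such a name $p_A$ with $w\prefix p_A$, the value $K(p_A)$ is again a name with first component $j$, hence an input to $f_j$. Therefore $H$ together with the reduction witnesses $\C_{[0,1]}|_\CC\leqW f_j$: one uses the computable map $p\mapsto$ (a name $p_A$ extending $w$) as the new $K$, and the old $H$ composed with the projection onto the $j$-th component as the new $H$. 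This gives $\C_{[0,1]}|_\CC\leqW f_j$ for a single $j$, which is exactly join-irreducibility.

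The main obstacle I expect is the bookkeeping around names: specifically, verifying that (i) a finite prefix of $K(p)$ really does pin down the coproduct index — this needs the precise definition of the representation $\delta^\IN$-style representation $\bigsqcup_{i\in\IN}\delta_{X_i}$ via $\langle j,u\rangle\mapsto(j,\delta_{X_j}(u))$, so the index appears as a leading block that must be output after finitely many steps; and (ii) that from a name $p$ of an arbitrary $A\in\CC$ one can computably produce a name $p_A$ extending the fixed prefix $w$ while still naming the same set $A$. For (ii) the point is that $w$ encodes finitely many open rational balls all disjoint from $[0,1]\supseteq A$, so prepending $w$ to (a suitable reindexing of) any $\psi_-$-name of $A$ still yields a $\psi_-$-name of $A$; this uses only that negative information can always be padded with "spurious" balls that happen to lie outside $A$. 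Once these two routine facts are in place the argument closes immediately, and the hypothesis $[0,1]\in\CC$ is used exactly to guarantee that the distinguished computable input exists and that it is a superset of every member of $\CC$.
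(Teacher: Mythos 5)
Your proof is correct and follows essentially the same route as the paper's: fix a name $p$ of $[0,1]$, use continuity of $K$ to find a finite prefix $w\prefix p$ that pins down the coproduct index, then exploit the fact that $w$ carries no negative information overlapping $[0,1]$ to computably re-prefix the name of an arbitrary $A\in\CC$ by $w$. The paper packages the last step as a computable transformation $L$ and lets $H,KL$ witness $\C_{[0,1]}|_\CC\leqW f_n$; your description of the new $K$ and $H$ is the same construction in slightly different words.
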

\begin{proof}
Let $(f_i)_i$ be a sequence of multi-valued functions and
let $f:=\bigsqcup_{i=0}^\infty f_i$. 
Let us assume that $\C_{[0,1]}|_\CC\leqW f$ holds.
Then there are computable $H,K$ such that $H\langle\id,FK\rangle$ is a realizer of $\C_{[0,1]}|_\CC$
for every realizer $F$ of $f$. Now let $p$ be a name of $[0,1]$.
Then $K(p)$ is a name of a pair $(n,x)$, such that $x$ is an input to $f_n$. 
Since $K$ is continuous, there is a finite prefix $w\prefix p$
such that $K(w\IN^\IN)$ only contains names of pairs $(n,x)$ with the same fixed $n$.
Now there is a computable function $L$ that transforms any name $p'$ of a closed set $A\In[0,1]$ 
into a name $q=L(p')$ of the same set that starts with $w$, i.e., such that $w\prefix q$.
This is because $w$ contains no negative information that overlaps with $[0,1]$.
Hence, the functions $H, KL$ witness the reduction $\C_{[0,1]}|_\CC\leqW f_n$.
\end{proof}

We get the following immediate corollary.

\begin{corollary}
\label{cor:PCC-irreducible}
$\PCC_{[0,1]}$ is join-irreducible.
\end{corollary}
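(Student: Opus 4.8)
The plan is to recognize $\PCC_{[0,1]}$ as a special case of the family of choice problems handled by Lemma~\ref{lem:join-irreducible}. By definition, $\PCC_{[0,1]}$ is the restriction of closed choice $\C_{[0,1]}$ to the collection $\CC$ of non-empty closed connected subsets of $[0,1]$ of positive Lebesgue measure; these are exactly the non-degenerate closed intervals $[a,b]\In[0,1]$ with $a<b$. Since the whole space $[0,1]=[0,1]$ is itself connected and has measure $1>0$, we have $[0,1]\in\CC$. Thus the hypothesis ``$[0,1]\in\CC$'' of Lemma~\ref{lem:join-irreducible} is satisfied, and join-irreducibility of $\PCC_{[0,1]}=\C_{[0,1]}|_\CC$ follows immediately.

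There is essentially no obstacle here: the single point to verify is that the restricting family $\CC$ contains the full space $[0,1]$, which is trivial, and that the defining property ``connected of positive measure'' is the same notion as the one used for $\C_I^-$ in \cite{BG11a} and $\ConC_1^-$ in \cite{BLP12}; this identification has already been made in the text just before the statement. Hence the corollary is a one-line consequence of the lemma, and the whole proof amounts to applying Lemma~\ref{lem:join-irreducible} with $\CC$ taken to be the set of non-degenerate closed subintervals of $[0,1]$.
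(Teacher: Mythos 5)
Your proposal is correct and is precisely the paper's intended argument: the paper introduces Lemma~\ref{lem:join-irreducible} exactly so that this corollary (and, later, Corollary~\ref{cor:AC-irreducible}) can be read off from it, and the only thing to check is that the restricting family contains $[0,1]$ itself, which you verify.
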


Now we are prepared to prove the following.

\begin{proposition}
\label{prop:PCC-WWKL}
$\PCC_{[0,1]}\nleqW*\dash\WWKL$.
\end{proposition}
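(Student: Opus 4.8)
The plan is to use join-irreducibility to cut the problem down to a single threshold, and then to refute a reduction to $\P_{>\varepsilon}\C_{2^\IN}$ by a finite-extension argument governed by an image measure. Since $*\dash\WWKL\equivW\bigsqcup_{n\in\IN}2^{-n}\dash\WWKL$ and $\PCC_{[0,1]}$ is join-irreducible by Corollary~\ref{cor:PCC-irreducible}, a reduction $\PCC_{[0,1]}\leqW*\dash\WWKL$ would give $\PCC_{[0,1]}\leqW 2^{-n}\dash\WWKL$ for some $n$, hence $\PCC_{[0,1]}\leqW\P_{>\varepsilon}\C_{2^\IN}$ with $\varepsilon=2^{-n}$ by Corollary~\ref{cor:epsilon-WWKL-Cantor}. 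So it suffices to prove $\PCC_{[0,1]}\nleqW\P_{>\varepsilon}\C_{2^\IN}$ for every $\varepsilon\in(0,1)$.

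Assume, for a contradiction, that computable $H,K$ with minimal domains witness this reduction. Fix a name $p$ of $[0,1]\in\dom(\PCC_{[0,1]})$; as $[0,1]$ has empty complement, $p$ carries no negative information, so every finite prefix of $p$ extends to a name of an arbitrary closed subset of $[0,1]$. Then $K(p)$ names a closed set $A\In2^\IN$ with $\mu(A)>\varepsilon$, and since the admissible outputs for the input $[0,1]$ form all of $[0,1]$, for each $q\in A$ the sequence $H\langle p,q\rangle$ names a point $x_q\in[0,1]$. Let $\nu$ be the image under $q\mapsto x_q$ of the restriction of $\mu$ to $A$; this is a finite Borel measure on $[0,1]$ with total mass $\nu([0,1])=\mu(A)\le1$. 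The idea now is that $\nu$ is thin on some short window, and by sliding the required output interval deep inside such a window we force the reduction's set of successful advices to sit inside the $\nu$-preimage of a slightly larger window, hence to have measure below $\varepsilon$.

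To implement this I would fix $m\in\IN$ so large that $16/m<\varepsilon/2$, then $j\in\IN$ so large that two points of $[0,1]$ whose names agree on their first $j$ symbols lie within $1/m$ of each other, and then $\eta<\varepsilon/2$. Using the strengthened uniform continuity of $H$ on the compact set $\{p\}\times A$ (as in the footnote to the proof of Proposition~\ref{prop:example-unit-cantor}) and the continuity of $K$ at $p$, I would then choose a finite prefix $w$ of $p$ long enough that: (i) for every $q\in A$ and every $\tilde p$ extending $w$ with $\langle\tilde p,q\rangle\in\dom(H)$, the first $j$ output symbols of $H\langle\tilde p,q\rangle$ agree with those of $H\langle p,q\rangle$; and (ii) every closed set $A'$ named by $K$ on some extension of $w$ lying in $\dom(K)$ satisfies $\mu(A'\sm A)\le\eta$ (this is possible since a finite prefix of a name of $A$ already exhibits finitely many balls of its complement whose measure can be made arbitrarily close to $1-\mu(A)$). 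Partitioning $[0,1]$ into the closed windows $W_i=[\frac{4i}{m},\frac{4i+4}{m}]$ with $0\le i<\lfloor m/4\rfloor$, each point lies in at most two windows, so $\sum_i\nu(W_i)\le2\nu([0,1])\le2$ and some $W_i$ satisfies $\nu(W_i)\le16/m$. Set $J=[\frac{4i+1}{m},\frac{4i+3}{m}]$, a subinterval of $[0,1]$ of positive measure whose closed $1/m$-neighborhood lies in $W_i$, and let $p'$ be a name of $J$ extending $w$. Then $K(p')$ names a closed $A'\In2^\IN$ with $\mu(A')>\varepsilon$, and for $q\in A'\cap A$ the sequence $H\langle p',q\rangle$ names a point of $J$ whose name shares its first $j$ symbols with $H\langle p,q\rangle$; hence $x_q$ lies within $1/m$ of $J$, i.e.\ $x_q\in W_i$. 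Thus $A'\cap A\In\{q\in A:x_q\in W_i\}$, so $\mu(A'\cap A)\le\nu(W_i)\le16/m$ and $\mu(A')\le\mu(A'\cap A)+\mu(A'\sm A)\le16/m+\eta<\varepsilon$, contradicting $\mu(A')>\varepsilon$. The main obstacle is the bookkeeping typical of finite-extension arguments: arranging the dependencies $m\to j\to\eta\to|w|$ without circularity, and checking that for $q\in A'\cap A$ both $H\langle p,q\rangle$ and $H\langle p',q\rangle$ are defined and $j$-close even though $A'$ and $A$ need not coincide.
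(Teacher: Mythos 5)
Your proposal is correct, and it takes a genuinely different (more self-contained) route than the paper.  Both arguments open identically: join-irreducibility of $\PCC_{[0,1]}$ (Corollary~\ref{cor:PCC-irreducible}) collapses $*\dash\WWKL$ to a single threshold $2^{-n}\dash\WWKL\equivSW\P_{>2^{-n}}\C_{2^\IN}$.  From there the paper does no further work of its own: it cites $\C_{2^n}\leqW\K_\IN\equivSW\C_2^*\leqW\PCC_{[0,1]}$ (two facts imported from \cite{BGM12} and \cite{BLP12}) and then applies Corollary~\ref{cor:C-n-WWKL} ($\C_n\nleqW\frac{1}{n}\dash\WWKL$), which is itself a consequence of the probability-dependency Theorem~\ref{thm:probability-dependency}.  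You instead carry out a direct finite-extension argument against $\P_{>\varepsilon}\C_{2^\IN}$, using the image measure $\nu$ of $\mu|_A$ under $q\mapsto x_q$ and a pigeonhole on sliding windows $W_i$ to locate a $\nu$-thin region, then slide a small interval $J$ into that region so that on the one hand (via the strengthened uniform continuity of $H$ and the window geometry) $\mu(A'\cap A)\leq\nu(W_i)$ is small, and on the other hand (via continuity of $K$) $\mu(A'\setminus A)$ is small, contradicting $\mu(A')>\varepsilon$.  This is correct; the dependency order $m\to j\to\eta\to|w|$ is non-circular, the minimal-domain convention makes $\langle p,q\rangle$ and $\langle p',q\rangle$ both lie in $\dom(H)$ for $q\in A'\cap A$, and $\nu$ is indeed a Borel measure since $q\mapsto x_q$ is continuous on the compact set $A$.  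In flavor your argument is closer to the paper's proof of Theorem~\ref{thm:AC-*-WWKL} (which the paper notes yields, together with Theorem~\ref{thm:AC-PCC}, an alternative proof of this proposition) than to the cited one-liner; what you gain is independence from $\C_2^*\leqW\PCC_{[0,1]}$ and from the finite-choice machinery $\C_{a,b}$, at the cost of a longer self-contained construction.  One small point worth tidying in a final write-up: the bound some $W_i$ has $\nu(W_i)\leq16/m$ requires $m$ not too small (say $m\geq8$); otherwise $\lfloor m/4\rfloor$ is too small for the average-value estimate.
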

\begin{proof}
Let us assume for a contradiction that $\PCC_{[0,1]}\leqW*\dash\WWKL$ holds.
Since $\PCC_{[0,1]}$ is join-irreducible we can conclude by Corollary~\ref{cor:PCC-irreducible}, there is some $n$ such that
$\PCC_{[0,1]}\leqW2^{-n}\dash\WWKL$.
Now we obtain
\[\C_{2^n}\leqW\K_\IN\equivSW\C_2^*\leqW\PCC_{[0,1]}\leqW2^{-n}\dash\WWKL\]
in contradiction to Corollary~\ref{cor:C-n-WWKL}.
Here $\K_\IN$ denotes choice for finite subsets of $\IN$ that are given together with an upper bound of the set
and $\K_\IN\equivSW\C_2^*$ has been proved in \cite[Proposition~10.9]{BGM12}.
The reduction $\C_2^*\leqW\PCC_{[0,1]}$ has been proved in \cite[Proposition~7.2]{BLP12}.
\end{proof}

We note that this proof also yields another proof of the strictness of $\C_2^*\lW\PCC_{[0,1]}$ since $\C_2^*\leq*\dash\WWKL$ (compare the remark
after \cite[Proposition~7.2]{BLP12}).
Since we also obtain $\PCC_{[0,1]}\leqW\PC_{[0,1]}\equivW\WWKL$, we arrive at the following corollary of Proposition~\ref{prop:PCC-WWKL}.

\begin{corollary}
\label{cor:*-WWKL}
$*\dash\WWKL\lW\WWKL$.
\end{corollary}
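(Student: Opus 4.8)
The plan is to obtain this strictness statement as an immediate consequence of the non-reducibility in Proposition~\ref{prop:PCC-WWKL}, combined with the equivalence $\PC_{[0,1]}\equivW\WWKL$ supplied by Proposition~\ref{prop:WWKL}.

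First I would note that the reduction $*\dash\WWKL\leqW\WWKL$ is already in hand: it is the last (non-strict) link of the chain in Corollary~\ref{cor:*WWKL-WWKL}, and in fact holds in the strong form $\leqSW$. So all that remains is to rule out the converse reduction $\WWKL\leqW*\dash\WWKL$. Suppose, towards a contradiction, that it held. By definition $\PCC_{[0,1]}$ is the restriction of positive closed choice $\PC_{[0,1]}=\P_{>0}\C_{[0,1]}$ to those members of its domain that happen to be proper intervals; restricting the domain of a problem can only make it easier, so $\PCC_{[0,1]}\leqSW\PC_{[0,1]}$ via the identity reduction. Proposition~\ref{prop:WWKL} gives $\PC_{[0,1]}\equivW\WWKL$, hence $\PCC_{[0,1]}\leqW\WWKL$. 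Chaining this with the assumed $\WWKL\leqW*\dash\WWKL$ and using transitivity of $\leqW$ yields $\PCC_{[0,1]}\leqW*\dash\WWKL$, contradicting Proposition~\ref{prop:PCC-WWKL}. Therefore $\WWKL\nleqW*\dash\WWKL$, and together with $*\dash\WWKL\leqW\WWKL$ this is exactly $*\dash\WWKL\lW\WWKL$.

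There is essentially no obstacle left at this stage: the genuine content --- the failure of $\PCC_{[0,1]}\leqW*\dash\WWKL$, which itself relies on the join-irreducibility of $\PCC_{[0,1]}$ (Corollary~\ref{cor:PCC-irreducible}) and on the non-amplification fact $\C_n\nleqW\frac{1}{n}\dash\WWKL$ (Corollary~\ref{cor:C-n-WWKL}) --- has already been established in Proposition~\ref{prop:PCC-WWKL}. The present corollary is thus a short assembly of previously proved results; the only point that deserves an explicit word is the harmless observation that $\PCC_{[0,1]}$ is a subproblem of $\PC_{[0,1]}$, so that any upper bound for $\WWKL$ would also bound $\PCC_{[0,1]}$.
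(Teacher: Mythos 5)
Your argument is correct and follows exactly the same path the paper takes: the non-strict reduction comes from Corollary~\ref{cor:*WWKL-WWKL}, and strictness is deduced from $\PCC_{[0,1]}\leqW\PC_{[0,1]}\equivW\WWKL$ together with $\PCC_{[0,1]}\nleqW*\dash\WWKL$ (Proposition~\ref{prop:PCC-WWKL}). Nothing is missing; this is the same assembly of prior results the authors use.
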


\section{The Lebesgue Density Lemma}
\label{sec:LDL}

In the following we will need a simple version of the Lebesgue Density Theorem, which we will call the 
Lebesgue Density Lemma, and for that purpose we will classify its Weihrauch degree.
The classical Lebesgue Density Theorem (see \cite[5.8(ii)]{Bog07}), which is a special case of the Lebesgue Differentiation Theorem
for measurable sets, says that for every measurable set $A\In\IR^n$
\[\lim_{\varepsilon\to0}\frac{\lambda(A\cap B(x,\varepsilon))}{\lambda(B(x,\varepsilon))}=1\]
for almost all $x\in A$ (where $B(x,\varepsilon)$ denotes the ball around $x$ with radius $\varepsilon$).
We will use a special case of this theorem in Cantor space that is in for-all-exists form (see \cite[Theorem~1.2.3]{DH10} for a direct proof of this special case).

\begin{lemma}[Lebesgue Density Lemma]
\label{lem:LDL}
For every closed $A\In 2^\IN$ with $\mu(A)>0$ and every $k\in\IN$ there exists a word $w\in\{0,1\}^*$
such that 
\[\frac{\mu(A\cap w2^\IN)}{2^{-|w|}}\geq1-\frac{1}{2^k}.\]
\end{lemma}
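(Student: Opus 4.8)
The plan is to argue by contradiction, converting an assumed failure of density into a sequence of ever-thinner clopen covers of $A$ whose measures tend to $0$. So suppose that, for the given $k$, \emph{every} word $w\in\{0,1\}^*$ satisfies $\mu(A\cap w2^\IN)<(1-2^{-k})\cdot 2^{-|w|}$, and let us derive $\mu(A)=0$.

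First I would record an elementary approximation fact. Since $A$ is closed, for each word $w$ the set $A\cap w2^\IN$ is closed in $2^\IN$, hence the intersection of a decreasing sequence of clopen subsets of $w2^\IN$; by continuity of $\mu$ from above there is a clopen $C_w$ with $A\cap w2^\IN\In C_w\In w2^\IN$ and $\mu(C_w)<(1-2^{-k})\cdot 2^{-|w|}$. As $\mu(C_w)<2^{-|w|}$ we have $C_w\neq w2^\IN$, so $C_w=\bigcup_{u\in S_w}u2^\IN$ for a finite prefix-free set $S_w$ of proper extensions of $w$.

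Next I would build finite prefix-free sets of words by $W_0:=\{\varepsilon\}$ and $W_{j+1}:=\bigcup_{w\in W_j}S_w$. A straightforward induction shows that each $W_j$ is prefix-free, that $A\In\bigcup_{w\in W_j}w2^\IN$, and that
\[\sum_{w\in W_j}2^{-|w|}\;\leq\;(1-2^{-k})^{\,j}.\]
The inductive step uses that, for incomparable $w,w'\in W_j$, the extensions in $S_w$ and $S_{w'}$ are pairwise incomparable, so $\sum_{u\in W_{j+1}}2^{-|u|}=\sum_{w\in W_j}\mu(C_w)<(1-2^{-k})\sum_{w\in W_j}2^{-|w|}$, while $A=\bigcup_{w\in W_j}(A\cap w2^\IN)\In\bigcup_{w\in W_j}C_w=\bigcup_{u\in W_{j+1}}u2^\IN$. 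Since $W_j$ is prefix-free and covers $A$, this gives $\mu(A)\leq\sum_{w\in W_j}2^{-|w|}\leq(1-2^{-k})^{\,j}$ for every $j$, hence $\mu(A)=0$, contradicting $\mu(A)>0$.

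The only point needing a little care is the clopen approximation of $A\cap w2^\IN$ together with the bookkeeping that keeps prefix-freeness and the covering property $A\In\bigcup_{w\in W_j}w2^\IN$ intact through the induction; everything else is routine arithmetic. Alternatively, one may simply invoke the classical Lebesgue Density Theorem: since $\mu(A)>0$, almost every point of $A$ is a density point, so there is $x\in A$ with $\mu(A\cap w2^\IN)/2^{-|w|}\to 1$ along the prefixes $w\prefix x$, and any sufficiently long such $w$ does the job. The contradiction argument above is just a self-contained version of this.
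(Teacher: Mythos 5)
Your argument is correct. Note, however, that the paper itself offers no proof of this lemma: it states the classical Lebesgue Density Theorem as background and then simply cites a direct proof of the Cantor-space special case in the literature (Downey--Hirschfeldt, Theorem~1.2.3). So there is no ``paper proof'' to compare against in detail; what you have supplied is a self-contained, elementary argument that the paper delegates to a reference. Your contradiction scheme is sound: from the assumed uniform failure $\mu(A\cap w2^\IN)<(1-2^{-k})2^{-|w|}$, the clopen over-approximations $C_w$ (available by outer regularity/continuity from above, since $A\cap w2^\IN$ is a decreasing intersection of clopen sets) must be proper subsets of $w2^\IN$, hence finite unions of cylinders indexed by proper extensions; the inductively built prefix-free covers $W_j$ stay covers of $A$ with $\sum_{w\in W_j}2^{-|w|}\leq(1-2^{-k})^{j}$, forcing $\mu(A)=0$. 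One micro-remark: you do not actually need $(1-2^{-k})^{j}\to 0$ stated with the precise rate --- only that it tends to $0$ --- and you should note that $S_w=\emptyset$ is allowed (when $A\cap w2^\IN=\emptyset$), which does no harm to the induction. Your closing remark, that one can alternatively invoke the full Lebesgue Density Theorem by picking a density point $x\in A$ and a long enough prefix $w\prefix x$, is exactly the route the paper takes by citation; your contradiction argument buys self-containedness and avoids the measure-differentiation machinery, at the cost of a short bookkeeping induction.
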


We consider the following multi-valued function as a representative of the Lebesgue Density Lemma 
in the Weihrauch lattice:
\[\LDL:\In\AA_{-}(2^\IN)\times\IN\mto\{0,1\}^*,(A,k)\mapsto\left\{w\in\{0,1\}^*:\frac{\mu(A\cap w2^\IN)}{2^{-|w|}}\geq1-\frac{1}{2^k}\right\}\]
with $\dom(\LDL)=\{(A,k):\mu(A)>0\}$.
It is easy to see that the Lebesgue Density Lemma is equivalent to $\C_\IN$.

\begin{theorem}[Lebesgue Density Lemma] 
\label{thm:LDL-CN}
$\LDL\equivSW\C_\IN$.
\end{theorem}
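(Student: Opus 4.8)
The plan is to establish the two strong reductions $\LDL\leqSW\C_\IN$ and $\C_\IN\leqSW\LDL$ separately; since both turn out to be quite direct, the only real construction is the encoding used for the second one.

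For $\LDL\leqSW\C_\IN$ I would, given an input $(A,k)$ with $\mu(A)>0$, form the set
\[B:=\left\{w\in\{0,1\}^*:\frac{\mu(A\cap w2^\IN)}{2^{-|w|}}\geq1-\frac{1}{2^k}\right\},\]
viewed as a subset of $\IN$ via a fixed computable bijection $\{0,1\}^*\cong\IN$. The key point is that $B$ is co-c.e.\ relative to the input: the closed set $A\cap w2^\IN$ is uniformly computable (as an element of $\AA_-(2^\IN)$) from $A$ and $w$, so by Lemma~\ref{lem:semi-computable} the value $\mu(A\cap w2^\IN)$ is upper semi-computable, and hence the strict inequality $\mu(A\cap w2^\IN)<(1-2^{-k})2^{-|w|}$ is semi-decidable; that is, $\IN\sm B$ is c.e.\ relative to the input. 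By the Lebesgue Density Lemma~\ref{lem:LDL} the set $B$ is non-empty, so $B\in\dom(\C_\IN)$, and any value returned by $\C_\IN$ on $B$ is the code of a valid output word of $\LDL(A,k)$. The final decoding $\IN\to\{0,1\}^*$ is computable and uses nothing but the oracle answer, so this is a strong reduction.

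For $\C_\IN\leqSW\LDL$ I would encode a non-empty co-c.e.\ set $C\In\IN$ — given, as usual, by an enumeration of $\IN\sm C$ — into a closed set $A\In2^\IN$ together with the index $k:=1$. Reserve for each $n$ the clopen cylinder $I_n:=0^n12^\IN$ of measure $2^{-n-1}$, and keep inside it the thin piece $A_n:=0^n1002^\IN$ of measure $2^{-n-3}$, with the proviso that as soon as $n$ is enumerated out of $C$ the whole cylinder $I_n$ is deleted from $A$; in the limit $A\cap I_n=A_n$ if $n\in C$ and $A\cap I_n=\emptyset$ otherwise. The resulting set $A=2^\IN\sm\bigcup_n(I_n\sm A_n)$ is co-c.e.\ closed and uniformly computable from the enumeration, and $\mu(A)=\sum_{n\in C}2^{-n-3}>0$, so $(A,1)\in\dom(\LDL)$. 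The two measure estimates that make the construction work are: for an all-zero word $w=0^m$ one has $\mu(A\cap w2^\IN)=\sum_{n\geq m,\,n\in C}2^{-n-3}\leq\frac14\,2^{-m}<\frac12\,2^{-m}$, so $w$ violates the density condition for $k=1$; and if the first $1$ of $w$ occurs at a position $n\notin C$, then $w2^\IN\In I_n$ and $A\cap w2^\IN=\emptyset$, so again $w$ fails. Consequently every word $w$ returned by $\LDL(A,1)$ has its first $1$ at some position $n\in C$, and the output map can simply read off that $n$; since this again uses only the oracle answer, the reduction is strong. Combining the two reductions gives $\LDL\equivSW\C_\IN$.

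The one step that is a construction rather than a routine verification is the encoding in the second reduction, and the only delicate choice there is the size of the thin pieces: $A_n$ must occupy a fixed proportion of $I_n$ that is strictly below $\frac12$ (the block $00$ yielding the factor $\frac14$), so that the geometric tail $\sum_{n\geq m}\mu(A_n)$ stays below the density threshold and no all-zero prefix can be a legal answer. Everything else — the computability of $A$ and of the two reduction witnesses, and the elementary measure computations above — is routine, and I do not expect any real obstacle. I would also note in passing that the point $0^\omega$ unavoidably lies in the co-c.e.\ closed set $A$, but since $\mu(\{0^\omega\})=0$ it plays no role.
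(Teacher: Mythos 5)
Your proof is correct, and the forward direction $\LDL\leqSW\C_\IN$ matches the paper's argument essentially verbatim (co-c.e.-ness of the candidate set via upper semi-computability of $\mu$ on closed sets, non-emptiness from Lemma~\ref{lem:LDL}). The second reduction $\C_\IN\leqSW\LDL$ uses the same basic encoding idea as the paper — the position of the first $1$ of the output word carries the natural number — but takes a slightly different route: the paper first invokes $\UC_\IN\equivSW\C_\IN$ (from \cite[Proposition~3.8]{BGM12}) and then only has to code a singleton $\{n\}$, via the set $0^n1\{0,1\}^\IN\cup\{0^\omega\}$ with density parameter $k=2$; you instead reduce $\C_\IN$ directly, coding an arbitrary non-empty co-c.e.\ $C$ by placing a thin piece $0^n100\,2^\IN$ (a $\tfrac14$-fraction of the cylinder $0^n1\,2^\IN$) for each surviving $n\in C$ and using $k=1$. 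This is a modest trade: your construction is a bit more elaborate but is self-contained, avoiding the appeal to the $\UC_\IN$ equivalence. The measure estimates you give are right — the geometric tail $\sum_{n\geq m}2^{-n-3}=\tfrac14\cdot2^{-m}$ keeps every all-zero prefix below the $\tfrac12$ threshold, and cylinders for $n\notin C$ are emptied out — so any output word really must read off some $n\in C$, and the decoding is computable from the oracle answer alone, giving a strong reduction. One small slip in the write-up: the displayed identity $A=2^\IN\setminus\bigcup_n(I_n\setminus A_n)$ does not incorporate your proviso that the whole cylinder $I_n$ is removed when $n$ leaves $C$; as written it would give $\mu(A)=\tfrac14$ independent of $C$, contradicting your subsequent computation $\mu(A)=\sum_{n\in C}2^{-n-3}$. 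The set you actually construct, and which your verifications use, is $A=\{0^\omega\}\cup\bigcup_{n\in C}A_n$; the prose makes this clear, so it is only a typo, but the formula should be fixed.
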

\begin{proof}
We first prove $\LDL\leqSW\C_\IN$. By $(w_i)$ we denote some effective standard enumeration of $\{0,1\}^*$.
Given a closed set $A\In2^\IN$ with $\mu(A)>0$ and $k\in\IN$ the Lebesgue Density Lemma~\ref{lem:LDL} guarantees that 
\[B:=\left\{i\in\IN:\frac{\mu(A\cap w_i2^\IN)}{2^{-|w_i|}}\geq1-\frac{1}{2^k}\right\}\]
is non-empty and since the measure $\mu:\AA_-(2^\IN)\to\IR$ is upper semi-computable by Lemma~\ref{lem:semi-computable} it follows
that $B$ is co-c.e.\ closed in $A$. Hence, $\C_\IN$ can determine some point $i\in B$ which yields
the desired result $w_i$.

Now we prove $\C_\IN\leqSW\LDL$. We recall that by $\UC_\IN$ we denote choice for singletons $\{n\}$.
In fact, since $\UC_\IN\equivSW\C_\IN$ by \cite[Proposition~3.8]{BGM12}
it suffices to prove $\UC_\IN\leqSW\LDL$. Hence, given a singleton $\{n\}\In\IN$ by an enumeration of its complement,
we need to find the number $n$. Given $\{n\}$ by an enumeration of its complement, we can compute (negative information on) the closed set 
\[A_n:=0^n1\{0,1\}^\IN\cup\{0^\omega\}\In\{0,1\}^\IN\]
and we obtain $\mu(A_n)>0$. Then $\LDL$ will produce upon input of $(A_n,2)$ a word $w\in\{0,1\}^*$ such that
$\mu(A_n\cap w2^\IN)> 2^{-|w|-1}$. In order to ensure this condition, the word $w$ has to have prefix $0^n1$, 
which yields the number $n$.
\end{proof}

By Corollary~\ref{cor:WWKL-Delta-WWKL-CN} we have $\WWKL^\Delta\equivW\WWKL*\C_\IN$.
We can now factorize $\WWKL^\Delta$ also using $\varepsilon\dash\WWKL$ for arbitrarily large $\varepsilon<1$.
As a preparation we prove the following lemma.

\begin{lemma}
\label{lem:epsilon-WWKL-Delta}
$\WWKL\leqW\varepsilon\dash\WWKL*\C_\IN$ for all $\varepsilon\in[0,1)$.
\end{lemma}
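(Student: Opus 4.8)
The plan is to use the Lebesgue Density Lemma to \emph{zoom in} on the given tree. If $T$ is an infinite binary tree with $\mu([T])>0$, then $\mu([T])$ cannot be bounded below by any fixed constant; however, by Lemma~\ref{lem:LDL} there is a finite word $w$ along which $[T]$ has relative density as close to $1$ as desired, in particular strictly above $\varepsilon$. Finding such a $w$ is precisely the task accomplished by $\LDL$, which is strongly Weihrauch equivalent to $\C_\IN$ by Theorem~\ref{thm:LDL-CN}; so the search for $w$ is a single $\C_\IN$-computation. Once $w$ is available, rescaling $[T]\cap w2^\IN$ back to the whole cube yields a closed set (equivalently, a tree) of measure $>\varepsilon$, which $\varepsilon\dash\WWKL$ handles, and prepending $w$ to the returned path produces a path through $T$.

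In detail I would proceed as follows. Since $\LDL\leqW\C_\IN$ it is enough to exhibit a computation that makes one oracle call to $\LDL$ and then one oracle call to $\varepsilon\dash\WWKL$, interspersed and followed by computable steps. Fix $k\in\IN$ with $2^{-k}<1-\varepsilon$. Given $T$ with $\mu([T])>0$, put $A:=[T]$, which is computable from $T$. As $\mu(A)>0$ we have $(A,k)\in\dom(\LDL)$, and the $\LDL$-call returns a word $w$ with $\mu(A\cap w2^\IN)\,/\,2^{-|w|}\ge 1-2^{-k}>\varepsilon$. From $A$ and $w$ one computes a negative name of the closed set $A_w:=\{p\in2^\IN:wp\in A\}$, equivalently an infinite binary tree $T_w$ with $[T_w]=A_w$ (using the standard computable multivalued inverse of $T\mapsto[T]$); here $\mu([T_w])=\mu(A\cap w2^\IN)\,/\,2^{-|w|}>\varepsilon$, and $A_w\ne\emptyset$ follows since $\mu(A_w)>\varepsilon\ge0$, so $T_w\in\dom(\varepsilon\dash\WWKL)$. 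The $\varepsilon\dash\WWKL$-call on $T_w$ returns some $p\in[T_w]=A_w$, whence $wp\in A=[T]$ is the required path.

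To package this as a reduction $\WWKL\leqW\varepsilon\dash\WWKL*\C_\IN$, I would have the post-processing attached to the first ($\LDL$-, equivalently $\C_\IN$-) oracle call, which in an ordinary Weihrauch reduction has access to the original input $T$, output the pair $(w,T_w)$, carry $w$ unchanged through the second phase via $\id\times\varepsilon\dash\WWKL\leqW\varepsilon\dash\WWKL$, and let the final post-processing prepend $w$ to the returned path. The only point that needs genuine attention is this bookkeeping across the compositional product — ensuring the word $w$ is still at hand when $wp$ is formed — together with the choice of $k$ forcing the \emph{strict} inequality $\mu([T_w])>\varepsilon$ so that $T_w$ really lies in $\dom(\varepsilon\dash\WWKL)$. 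Everything else ($T\mapsto[T]$ and its inverse, $(A,w)\mapsto A_w$, and the fact that $B:=\{i:\mu([T]\cap w_i2^\IN)/2^{-|w_i|}\ge 1-2^{-k}\}$ is non-empty and co-c.e.\ in $T$, which is what makes the first step a legitimate $\C_\IN$-call) is routine, so I do not expect a real obstacle.
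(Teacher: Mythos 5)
Your proof is correct and follows essentially the same route as the paper: fix $k$ with $1-2^{-k}>\varepsilon$, invoke $\LDL\equivSW\C_\IN$ (Theorem~\ref{thm:LDL-CN}) to find a word $w$ of relative density $\geq 1-2^{-k}>\varepsilon$, pass the rescaled subtree $T_w$ to $\varepsilon\dash\WWKL$, and prepend $w$. The only difference is that you are more explicit about the bookkeeping across the compositional product (carrying $w$ through the second oracle call), which the paper leaves implicit; this is routine given that ordinary Weihrauch reducibility keeps the original input available to the outer post-processing.
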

\begin{proof}
Let $\varepsilon\in[0,1)$ and let $k\in\IN$ be such that $1-\frac{1}{2^k}>\varepsilon$. 
By Theorem~\ref{thm:LDL-CN} it suffices to prove $\WWKL\leqW\varepsilon\dash\WWKL*\LDL$.
Given a binary tree $T$ with a set $A$ of infinite paths of positive measure, we apply $\LDL$ to $(A,k)$
in order to obtain a $w$ such that $\frac{\mu(A\cap w2^\IN)}{2^{-|w|}}>1-\frac{1}{2^k}>\varepsilon$.
Hence, the subtree $T_w$ of $T$ that starts in node $w$ (i.e., $u\in T_w\iff wu\in T$) has a set $A_w$ of infinite paths of measure
$\mu(A_w)>\varepsilon$ and hence $\varepsilon\dash\WWKL$ yields an infinite path $p_w$ in $A_w$.
Given this path $p_w$ and $w$ we can compute $p=wp_w\in A$.
\end{proof}

Now we can prove the following main result of this section.

\begin{theorem}
\label{thm:WWKL-Delta-WWKL-CN}
$\WWKL^\Delta\equivW\varepsilon\dash\WWKL*\C_\IN$ for all $\varepsilon\in[0,1)$.
\end{theorem}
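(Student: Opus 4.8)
The plan is to obtain the equivalence purely from two facts already established, namely the factorization $\WWKL^\Delta\equivW\WWKL*\C_\IN$ of Corollary~\ref{cor:WWKL-Delta-WWKL-CN} and the reduction $\WWKL\leqW\varepsilon\dash\WWKL*\C_\IN$ of Lemma~\ref{lem:epsilon-WWKL-Delta}, together with the standard algebraic properties of the compositional product: its monotonicity in both arguments, its associativity, and the fact that $\C_\IN*\C_\IN\equivW\C_\IN$. The last of these follows from Corollary~\ref{cor:products-pairing}, applied to the interval $[0,\infty]$ and the space $\IN$ with its counting measure (the required measure-preserving pairing being provided by Lemma~\ref{lem:pairing}); equivalently it is just the closure of the functions computable with finitely many mind changes under composition.

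For the reduction $\varepsilon\dash\WWKL*\C_\IN\leqW\WWKL^\Delta$ I would note that $\varepsilon\dash\WWKL$ is a restriction of $\WWKL$ and hence $\varepsilon\dash\WWKL\leqSW\WWKL$; monotonicity of $*$ then yields $\varepsilon\dash\WWKL*\C_\IN\leqW\WWKL*\C_\IN$, and by Corollary~\ref{cor:WWKL-Delta-WWKL-CN} the right-hand side equals $\WWKL^\Delta$.

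For the converse reduction I would start again from Corollary~\ref{cor:WWKL-Delta-WWKL-CN}, substitute Lemma~\ref{lem:epsilon-WWKL-Delta} into the left factor using monotonicity of $*$, and then simplify using associativity together with $\C_\IN*\C_\IN\equivW\C_\IN$:
\[\WWKL^\Delta\equivW\WWKL*\C_\IN\leqW(\varepsilon\dash\WWKL*\C_\IN)*\C_\IN\equivW\varepsilon\dash\WWKL*(\C_\IN*\C_\IN)\equivW\varepsilon\dash\WWKL*\C_\IN.\]
This gives $\WWKL^\Delta\leqW\varepsilon\dash\WWKL*\C_\IN$, and combined with the first reduction it establishes the claimed equivalence.

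The only point requiring care is the last displayed chain: one must check that the two occurrences of $\C_\IN$ appear in the correct order and adjacently, so that associativity may be used to merge them. Since in the proof of Lemma~\ref{lem:epsilon-WWKL-Delta} the resource $\C_\IN$ (there in the guise of $\LDL$) is invoked \emph{before} $\varepsilon\dash\WWKL$, substituting that reduction for $\WWKL$ in $\WWKL*\C_\IN$ indeed produces $\varepsilon\dash\WWKL$ preceded by two consecutive applications of $\C_\IN$, which is exactly the configuration that collapses. Beyond this bookkeeping I expect no genuine obstacle.
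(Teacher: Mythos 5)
Your proposal is correct and follows essentially the same approach as the paper: one direction from $\varepsilon\dash\WWKL\leqSW\WWKL$ and monotonicity together with Corollary~\ref{cor:WWKL-Delta-WWKL-CN}, the other by substituting Lemma~\ref{lem:epsilon-WWKL-Delta} into that corollary and collapsing $\C_\IN*\C_\IN\equivW\C_\IN$. The paper justifies the last collapse via closure under composition of functions computable with finitely many mind changes rather than via Corollary~\ref{cor:products-pairing}, but you note that alternative yourself and both are valid.
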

\begin{proof}
Let $\varepsilon\in[0,1)$.
Since $\varepsilon\dash\WWKL\leqSW\WWKL$, we obtain by Corollary~\ref{cor:WWKL-Delta-WWKL-CN}
\[\varepsilon\dash\WWKL*\C_\IN\leqW\WWKL*\C_\IN\leqW\WWKL^\Delta.\]
On the other hand, Corollary~\ref{cor:WWKL-Delta-WWKL-CN} and Lemma~\ref{lem:epsilon-WWKL-Delta} yield
\[\WWKL^\Delta\leqW\WWKL*\C_\IN\leqW\varepsilon\dash\WWKL*\C_\IN*\C_\IN\leqW\varepsilon\dash\WWKL*\C_\IN.\]
The last reduction follows since $\C_\IN$ is closed under composition, which in turn follows from the obvious closure under composition of the class of functions computable with finitely many mind changes~\cite[Corollary~7.6]{BBP12}.
\end{proof}

This result can also be interpreted such that on real numbers probabilistic choice does not depend on the value
of the probability.

\begin{theorem}
\label{thm:WWKL-Delta-PCR}
$\PC_\IR\equivW\P_{>\varepsilon}\C_\IR$ for all $\varepsilon\geq0$.
\end{theorem}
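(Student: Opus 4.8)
The plan is to prove the two reducibilities separately, transporting the statement from $\IR$ to the probability space $\IN\times2^\IN$ (with $\IN$ carrying the counting measure) by means of Proposition~\ref{prop:reals}, which gives $\PC_\IR\equivSW\PC_{\IN\times2^\IN}$ as well as $\P_{>\varepsilon}\C_\IR\equivSW\P_{>\varepsilon}\C_{\IN\times2^\IN}$. The reduction $\P_{>\varepsilon}\C_\IR\leqSW\PC_\IR$ is then immediate from Proposition~\ref{prop:interval-monotonicity}, since $(\varepsilon,\infty]\In(0,\infty]$. So the real work is to prove $\PC_{\IN\times2^\IN}\leqSW\P_{>\varepsilon}\C_{\IN\times2^\IN}$.

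For this I would exploit that $\IN\times2^\IN$ carries an \emph{infinite} measure, so a closed set of merely positive measure can be ``diluted'' into one of infinite measure. Fix a computable bijection $\langle\cdot,\cdot\rangle:\IN\times\IN\to\IN$. Given a closed set $A\In\IN\times2^\IN$ with $\mu(A)>0$, write $A_n:=\{p\in2^\IN:(n,p)\in A\}$ for its sections, and map $A$ to the closed set $B$ whose section at index $\langle n,m\rangle$ equals $A_n$ for all $n,m$, that is, $B:=\{(\langle n,m\rangle,p):m\in\IN\text{ and }(n,p)\in A\}$. A $\psi_-$--name of $B$ is computable from one of $A$: whenever the name of $A$ lists a basic open set $\{n\}\times w2^\IN$ in the complement of $A$, one lists $\{\langle n,m\rangle\}\times w2^\IN$ for every $m\in\IN$ in the complement of $B$, and these lists exhaust exactly the complement of $B$. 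Since $\mu(A)>0$ there is some $n_0$ with $\mu(A_{n_0})>0$, and $B$ contains infinitely many pairwise disjoint copies of $A_{n_0}$, so $\mu(B)=\infty>\varepsilon$ and hence $B\in\dom(\P_{>\varepsilon}\C_{\IN\times2^\IN})$; finally, from any solution $(\langle n,m\rangle,p)\in B$ one decodes $n$ and returns $(n,p)\in A$. As the output map uses only the oracle answer, this is in fact a strong reduction, so one obtains $\PC_\IR\equivSW\P_{>\varepsilon}\C_\IR$.

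I do not anticipate a genuine obstacle here; the points that need a little care are routine. One must check that $B$ is indeed closed and that its negative description is computable from that of $A$, which is unproblematic precisely because $\IN$ is discrete so that the sections are handled independently, and one must be explicit that the counting measure on $\IN$ is used — this is the convention under which Proposition~\ref{prop:reals} is stated, and it is what makes $\mu(B)=\infty$. One could alternatively deduce the theorem from Theorem~\ref{thm:WWKL-Delta-WWKL-CN} together with Theorem~\ref{thm:discrete-jump} and Proposition~\ref{prop:reals}, but the dilution argument above has the advantage of being uniform in $\varepsilon\geq0$ and of directly yielding the strong equivalence.
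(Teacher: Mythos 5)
Your proof is correct, but it takes a genuinely different route from the paper. The paper deduces the theorem from Theorem~\ref{thm:WWKL-Delta-WWKL-CN} (a factorization $\WWKL^\Delta\equivW\delta\dash\WWKL*\C_\IN$ that depends on the Lebesgue Density Lemma) by choosing $\delta$ and $n$ with $\delta n>\varepsilon$ and pushing the product measure bound through the Independent Choice Theorem; this establishes the case $\varepsilon>1$ and then retrieves $\varepsilon\in[0,1]$ by monotonicity. Your argument instead works entirely at the level of closed sets in $\IN\times2^\IN$: the dilution map $A\mapsto B$ that duplicates each section infinitely often is a direct, elementary way to inflate a set of positive measure to one of infinite measure, and the reduction follows immediately without any density considerations. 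What your approach buys is not only simplicity: because the backward map $(\langle n,m\rangle,p)\mapsto(n,p)$ uses only the oracle answer, you obtain the \emph{strong} equivalence $\PC_\IR\equivSW\P_{>\varepsilon}\C_\IR$, whereas the paper's proof — routed through compositional products, which live in the ordinary Weihrauch lattice — delivers only $\equivW$ as stated. One small point worth keeping explicit, which you do address, is that the Cantor pairing $\langle\cdot,\cdot\rangle$ must be a bijection (so that the sections of $B$ enumerate exactly the complement and no section is ``missed''), and that the counting measure on $\IN$ is in force so that the infinitely many disjoint copies of a positive-measure section really give $\mu(B)=\infty$.
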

\begin{proof}
Let us suppose that the claim is true for $\varepsilon=2$. Then we obtain for all $\varepsilon\in[0,1]$ by Proposition~\ref{prop:interval-monotonicity}
\[\PC_\IR\equivW\P_{>2}\C_\IR\leqW\P_{>\varepsilon}\C_\IR\leqW\PC_\IR.\]
This means that it follows that the claim also holds for $\varepsilon\in[0,1]$. What remains is to prove the claim for $\varepsilon>1$ (which includes the case $\varepsilon=2$). 
Hence, let $\varepsilon>1$.
Then there exists $\delta\in(0,1)$ and $n\in\IN$ with $n\geq1$ such that $\delta\cdot n>\varepsilon$.
We note that $\C_\IN\equivSW\P_{=n}\C_\IN$ (which can be proved analogously to $\C_\IN\equivSW\UC_\IN=\P_{=1}\C_\IN$).
With Theorems~\ref{thm:discrete-jump}, \ref{thm:WWKL-Delta-WWKL-CN}, \ref{thm:products}, Propositions~\ref{prop:interval-monotonicity} and \ref{prop:reals} 
and Corollary~\ref{cor:epsilon-WWKL-Cantor} we obtain
\[\PC_\IR\equivW\WWKL^\Delta\equivW\delta\dash\WWKL*\C_\IN\equivW\P_{>\delta}\C_{2^\IN}*\P_{=n}\C_\IN\leqW\P_{>\delta\cdot n}\C_{2^\IN\times\IN}\leqW\P_{>\varepsilon}\C_\IR.\]
The inverse reduction is clear by Proposition~\ref{prop:interval-monotonicity}.
\end{proof}

This result is in sharp contrast to Theorem~\ref{thm:probability-dependency}. While probabilistic choice on Cantor space sensitively depends
on lower bounds on the probability, probabilistic choice on the Euclidean space does not. Intuitively, this is because Cantor space is compact,
whereas Euclidean space offers ``enough space'' to enlarge the measure of sets.

\section{Probability Amplification}
\label{sec:probability-amplification}

Theorem~\ref{thm:probability-dependency} shows that the technique of probability amplification, 
which is well-known from the theory of randomized algorithms \cite{MR95} 
fails for Las Vegas computability over infinite objects. The reason is that we are dealing with infinite
computations and if we run two instances of a probabilistic algorithm with different guesses in parallel, then
we need to decide at some finite time which output we are going to choose. This is simply not possible in general.
The positive content of probability amplification can however be captured in an algebraic way.
In order to express it precisely, we introduce the parallel sum of two degrees.

\begin{definition}[Parallel Sums]
Let $f:\In X\mto Y$ and $g:\In W\mto Z$ be multi-valued functions. Then 
we define the {\em parallel sum} $f+g:X\times W\mto Y\times Z$ by 
\[(f+g)(x,w):=(f(x)\times\range(g))\cup(\range(f)\times g(w))\]
for all $(x,w)\in\dom(f+g):=\dom(f)\times\dom(g)$. 
\end{definition}

The parallel sum $f+g$ captures an operation that takes inputs $x,w$ for both $f$ and $g$
and it produces a pair $(y,z)$ such that $y\in f(x)$ or $z\in g(w)$, i.e., only one of the
two results is guaranteed to be correct.\footnote{We warn the reader that the parallel sum is not monotone for (strong) Weihrauch reducibility and hence it
cannot be considered as an operation on the Weihrauch lattice.} 
We note that for $f,g$ with computable points in the range one obtains $f+g\leqW f\sqcap g$.
The concept of a parallel sum is closely related to the concept of a fraction as introduced in \cite{BLP12}.
We can now express probability amplification with sums as follows.

\begin{proposition}[Probability amplification]
\label{prop:probability-amplification}
Let $X$ and $Y$ be represented spaces with $\sigma$--finite Borel probability measures $\mu_X$ and $\mu_Y$, respectively and
let $a,b\in[0,1]$ and $c:=1-(1-a)(1-b)$. Then
\[\P_{>a}\C_X+\P_{>b}\C_Y\leqSW\P_{>c}\C_{X\times Y}.\]
\end{proposition}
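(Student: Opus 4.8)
The plan is to give a one-shot reduction in which the output of the oracle needs no post-processing at all, so that we even obtain a \emph{strong} reduction. First I would dispose of the degenerate case: if $a=1$ or $b=1$, then since $\mu_X,\mu_Y$ are probability measures one of $\P_{>a}\C_X$, $\P_{>b}\C_Y$ has empty domain, hence so does $\P_{>a}\C_X+\P_{>b}\C_Y$, and the reduction is trivial; so from now on assume $a,b\in[0,1)$. Given an input $(A,B)$ of $\P_{>a}\C_X+\P_{>b}\C_Y$, i.e.\ closed sets with $\mu_X(A)>a$ and $\mu_Y(B)>b$, I would set
\[C:=(A\times Y)\cup(X\times B)\In X\times Y.\]
Since $a<1$ the full space is a legal input of measure $1>a$, so $\range(\P_{>a}\C_X)=X$, and likewise $\range(\P_{>b}\C_Y)=Y$; unravelling the definition of the parallel sum then gives
\[(\P_{>a}\C_X+\P_{>b}\C_Y)(A,B)=(A\times Y)\cup(X\times B)=C.\]
Hence the set of admissible outputs on input $(A,B)$ is \emph{exactly} $C$, so it will suffice to show that $(A,B)\mapsto C$ is computable as a map $\AA_-(X)\times\AA_-(Y)\to\AA_-(X\times Y)$ and that $\mu_{X\times Y}(C)>c$; then $K:={}$this map and $H:=\id$ witness $\P_{>a}\C_X+\P_{>b}\C_Y\leqSW\P_{>c}\C_{X\times Y}$, because any point $\P_{>c}\C_{X\times Y}$ returns on input $C$ already lies in $C=(\P_{>a}\C_X+\P_{>b}\C_Y)(A,B)$.

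For the computability of $(A,B)\mapsto C$ I would argue as follows: a $\psi_-$--name of $A$ (resp.\ $B$) is a name of the Sierpi\'nski--valued characteristic function $\chi_{X\setminus A}$ (resp.\ $\chi_{Y\setminus B}$), and the complement of $C$ is $(X\times Y)\setminus C=(X\setminus A)\times(Y\setminus B)$, so $\chi_{(X\times Y)\setminus C}(x,y)=\chi_{X\setminus A}(x)\wedge\chi_{Y\setminus B}(y)$. Since conjunction $\wedge\colon\IS\times\IS\to\IS$ is computable, this composes through the canonical function-space representations to a computable map into $\AA_-(X\times Y)$. (For computable metric spaces one can alternatively enumerate the rational boxes $U\times V$ with $U,V$ taken from the given enumerations of the complements of $A$ and $B$; their union is exactly $(X\setminus A)\times(Y\setminus B)$.) For the measure bound I would use the defining property of the product measure together with the fact that $A,B$ are Borel and $\mu_X,\mu_Y$ are probability measures:
\[\mu_{X\times Y}\bigl((X\times Y)\setminus C\bigr)=\mu_X(X\setminus A)\cdot\mu_Y(Y\setminus B)=(1-\mu_X(A))(1-\mu_Y(B)),\]
whence
\[\mu_{X\times Y}(C)=1-(1-\mu_X(A))(1-\mu_Y(B))>1-(1-a)(1-b)=c,\]
so $C\in\dom(\P_{>c}\C_{X\times Y})$.

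I expect the only genuinely fiddly point to be the \emph{strictness} of $(1-\mu_X(A))(1-\mu_Y(B))<(1-a)(1-b)$: having $1-\mu_X(A)<1-a$ and $1-\mu_Y(B)<1-b$ with nonnegative factors, one has to treat separately the subcase in which one of the factors vanishes (then the product is $0$, and $(1-a)(1-b)>0$ because $a,b<1$) and the subcase in which both are strictly positive (then multiply the two strict inequalities by the positive quantities $1-\mu_Y(B)$ and $1-a$ successively). Everything else — checking that $C$ is closed, that $\dom(\P_{>a}\C_X+\P_{>b}\C_Y)=\dom(\P_{>a}\C_X)\times\dom(\P_{>b}\C_Y)$ so that $C$ is always defined on the domain, and that $H=\id$ really does the job — is routine, provided one has isolated the case $a,b<1$ at the outset.
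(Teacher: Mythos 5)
Your proof is correct and uses exactly the paper's construction, namely mapping $(A,B)$ to $C:=(A\times Y)\cup(X\times B)$ and observing $\mu_{X\times Y}(C)>c$; the paper's proof is just a one-liner that leaves the verification of strictness, computability of the input translation, the identification $(\P_{>a}\C_X+\P_{>b}\C_Y)(A,B)=C$ (which is what makes $H=\id$ work and hence gives the strong reduction), and the degenerate case $a=1$ or $b=1$ to the reader, all of which you have filled in correctly.
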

\begin{proof}
Given closed sets $A\In X$ and $B\In Y$ with $\mu_X(A)>a$ and $\mu_Y(B)>b$ 
we can compute $C:=(A\times Y)\cup(X\times B)$ and we obtain $(\mu_X\otimes\mu_Y)(C)>c$.
This yields the reduction.
\end{proof}

In case that $X$ is a space that has a measure preserving pairing mechanism, we can replace $X, Y$ and $X\times Y$ by $X$ in this result
using Corollary~\ref{cor:products-pairing}. In particular, we obtain the following.

\begin{corollary}
Let $a,b\in[0,1]$ and $c:=1-(1-a)(1-b)$. Then
\[a\dash\WWKL+b\dash\WWKL\leqSW c\dash\WWKL.\]
\end{corollary}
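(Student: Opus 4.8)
The plan is to obtain this as the special case $X=Y=2^\IN$ of Proposition~\ref{prop:probability-amplification}, translated across the standard dictionary between $\varepsilon\dash\WWKL$ and $\P_{>\varepsilon}\C_{2^\IN}$ and collapsing the product space $2^\IN\times2^\IN$ back to $2^\IN$ via a measure-preserving pairing. First I would clear the degenerate cases: if $a=1$ or $b=1$, then $\mu([T])>1$ is impossible, so one of $\dom(a\dash\WWKL)$, $\dom(b\dash\WWKL)$ is empty, hence $\dom(a\dash\WWKL+b\dash\WWKL)$ is empty and the reduction is vacuous. So I may assume $a,b\in[0,1)$, and then $c=1-(1-a)(1-b)\in[0,1)$ too.

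Next I would assemble the two translation facts I need. The first is $\varepsilon\dash\WWKL\equivSW\P_{>\varepsilon}\C_{2^\IN}$, which is Corollary~\ref{cor:epsilon-WWKL-Cantor}; what matters for me is that these strong equivalences are witnessed on the input side by the computable map $T\mapsto[T]$ and a computable multi-valued inverse -- both respecting the constraint $\mu([T])>\varepsilon$ -- and by the identity on the output side, since an infinite path of $T$ is literally a point of $[T]$. The second is $\P_{>c}\C_{2^\IN\times2^\IN}\equivSW\P_{>c}\C_{2^\IN}$: by Lemma~\ref{lem:pairing}(1) the pairing $\phi:2^\IN\times2^\IN\to2^\IN$, $(p,q)\mapsto\langle p,q\rangle$, is a computable isomorphism that is measure preserving, so the induced maps $A\mapsto\phi^{-1}(A)$ on $\AA_-(2^\IN)$ and $B\mapsto\phi(B)$ on $\AA_-(2^\IN\times2^\IN)$ are computable and carry the constraint ``measure $>c$'' to the same constraint, while composing outputs with $\phi^{\pm1}$ turns solutions of one problem into solutions of the other; this is exactly the instance of the remark preceding Corollary~\ref{cor:products-pairing} that $\P_I\C_{R\times R}\equivSW\P_I\C_R$ for such $R$.

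With these facts I would finish by chaining reductions. Since the parallel sum $f+g$ acts on its two inputs componentwise and does not alter the output format, the componentwise computable input-renamings $T\mapsto[T]$ (with computable inverses) lift to a strong equivalence
\[a\dash\WWKL+b\dash\WWKL\equivSW\P_{>a}\C_{2^\IN}+\P_{>b}\C_{2^\IN}.\]
Proposition~\ref{prop:probability-amplification} with $X=Y=2^\IN$ gives $\P_{>a}\C_{2^\IN}+\P_{>b}\C_{2^\IN}\leqSW\P_{>c}\C_{2^\IN\times2^\IN}$, and the pairing equivalence together with Corollary~\ref{cor:epsilon-WWKL-Cantor} once more gives $\P_{>c}\C_{2^\IN\times2^\IN}\equivSW\P_{>c}\C_{2^\IN}\equivSW c\dash\WWKL$. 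Composing these yields the claim.

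The one step I expect to need care with, and would therefore spell out, is the first equivalence displayed above: the parallel sum is not monotone for strong Weihrauch reducibility (as warned right after its definition), so I must not present this as ``applying $+$ to a reduction''. Instead the point is that $\varepsilon\dash\WWKL$ and $\P_{>\varepsilon}\C_{2^\IN}$ are the same problem up to a computable, measure-condition-preserving renaming of inputs together with the identity on outputs, and such a renaming commutes with the explicit set-theoretic definition $(f+g)(x,w)=(f(x)\times\range(g))\cup(\range(f)\times g(w))$ -- both componentwise on the domain side and on the range side, since $\range(\varepsilon\dash\WWKL)$ and $\range(\P_{>\varepsilon}\C_{2^\IN})$ are correspondingly identified. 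Everything else is routine bookkeeping.
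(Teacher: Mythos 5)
Your proof is correct and follows the paper's own (extremely terse) argument: specialize Proposition~\ref{prop:probability-amplification} to $X=Y=2^\IN$, collapse $2^\IN\times2^\IN$ to $2^\IN$ via the measure-preserving pairing and Corollary~\ref{cor:products-pairing}, and translate between $\varepsilon\dash\WWKL$ and $\P_{>\varepsilon}\C_{2^\IN}$. You are in fact more careful than the paper's one-line derivation, which silently passes the tree/closed-set renaming through $+$ without acknowledging the non-monotonicity issue; your explicit observation that the renaming $T\mapsto[T]$ is an input-side equivalence with identity on outputs, and that such an equivalence commutes with the set-theoretic definition of the parallel sum, is exactly the justification the paper leaves implicit.
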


We also note that in the situation of Proposition~\ref{prop:probability-amplification} with $X=Y$ one single mind change allows
us to identify the successful one among the two parallel computations, i.e.,
$\P_{>a}\C_X\sqcap\P_{>b}\C_X\equivW\P_{>\max(a,b)}\C_X\leqW\C_2*(\P_{>a}\C_X+\P_{>b}\C_X)$.

\section{Majority Vote}
\label{sec:majority}

In this section we will prove that any suitable single-valued function $f$ below any jump $(\frac{1}{2}\dash\WWKL)^{(n)}$ is computable.
The idea is that a simple majority vote after an exhaustive search will yield the result if more than half of the random advices do the job. 
We will consider single-valued functions $f:X\to Y$ to computable metric spaces $Y$. The majority vote technique
works for these spaces since the consistency of approximations can be recognized. We will make this statement more precise.
If $(X,\delta)$ is a represented space, then we denote by $\FF(X)$ the set of finite subsets of $X$ that is represented in the canonical way by
$\delta_{\FF(X)}$, which is defined by
\[\delta_{\FF(X)}\langle n,p_0,...,p_n\rangle:=\{\delta(p_0),...,\delta(p_n)\}.\]
Moreover, we recall that every represented space $(X,\delta)$ induces a dual represented space $(\OO(X),\delta^\circ)$,
where $\OO(X)$ is the topology of $X$ (i.e., the final topology of $\delta$) and $\delta^\circ(p):=X\setminus\psi_-(p)$
is the representation of open subsets via their characteristic functions to Sierpi\'nski space.
In the following lemma we consider $\IN^\IN$ as a represented space via the identity $\id_{\IN^\IN}$ as representation.

\begin{lemma}[Cauchy representation]
\label{lem:Cauchy}
Let $X$ be a computable metric space. The (suitably defined) Cauchy representation $\delta$ of $X$ satisfies
\begin{enumerate}
\item $\Delta:\OO(\IN^\IN)\to\OO(X), U\mapsto \delta(U)$ is computable,
\item $C:=\{W\in\FF(\IN^*):\bigcap_{w\in W}\delta(w\IN^\IN)\not=\emptyset\}$ is c.e.
\end{enumerate}
We say that a set $W\in\FF(\IN^*)$ is {\em consistent}, if $W\in C$.
\end{lemma}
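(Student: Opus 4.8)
The plan is to establish the lemma for a convenient presentation of the (suitably defined) Cauchy representation $\delta$, chosen so that images of basic cylinders are finite intersections of basic open balls. Fix the standard enumeration $(B_n)_n$ of basic open balls of $X$ used throughout the paper, with $B_n$ having centre $c_n$ in the dense subset and rational radius $r_n$, and declare $p\in\IN^\IN$ to be a $\delta$-name of $x\in X$ exactly when $x\in B_{p(k)}$ for all $k\in\IN$ and $r_{p(k)}\to 0$. This is a representation equivalent to the usual Cauchy representation: every $x$ has a name by density, the named point is unique because the radii shrink, and translating to and from fast Cauchy sequences of basis points is routine. Its point is that for every word $w=w(0)\dots w(\ell-1)\in\IN^*$ we have
\[\delta(w\IN^\IN)=\bigcap_{k<\ell}B_{w(k)},\]
since any $x$ in this (open) finite intersection extends to a full name by continuing with basis balls containing $x$ of radius tending to $0$; in particular $\delta(w\IN^\IN)\in\OO(X)$.

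The one computational ingredient I would use is valid in any metric space: $d(c_n,c_m)+r_n<r_m$ implies $B_n\In B_m$; this sufficient condition is semi-decidable, being a strict inequality between a computable real (recall $d$ is computable on the dense subset) and a rational; and conversely, if $x\in\bigcap_{s\le t}B_{m_s}$ then some basis ball $B_n\ni x$ satisfies $d(c_n,c_{m_s})+r_n<r_{m_s}$ for all $s\le t$ simultaneously (take $c_n$ close enough to $x$ and $r_n$ small enough, using the strict slack $r_{m_s}-d(x,c_{m_s})>0$). Hence $\{n:(\forall s\le t)\ d(c_n,c_{m_s})+r_n<r_{m_s}\}$ is c.e.\ and the union of the corresponding $B_n$ is exactly $\bigcap_{s\le t}B_{m_s}$, so every finite intersection of basis balls is c.e.\ open uniformly in the data.

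For (1): from a name of an open $U\In\IN^\IN$ one enumerates words $w_0,w_1,\dots\in\IN^*$ with $U=\bigcup_i w_i\IN^\IN$ (the standard way of reading Sierpi\'nski-space information). Then $\delta(U)=\bigcup_i\delta(w_i\IN^\IN)=\bigcup_i\bigcap_{k<|w_i|}B_{w_i(k)}$, and applying the preceding paragraph to each $i$ enumerates basis balls whose union is $\delta(U)$; this produces, computably from the name of $U$, a name of $\delta(U)\in\OO(X)$, which is the assertion that $\Delta$ is computable. For (2): given $W\in\FF(\IN^*)$, the set $\bigcap_{w\in W}\delta(w\IN^\IN)$ is again a finite intersection of basis balls $\bigcap_{s\le t}B_{m_s}$, hence open, hence non-empty if and only if it contains a point of the dense subset, i.e.\ if and only if there is some $n$ with $d(c_n,c_{m_s})<r_{m_s}$ for all $s\le t$ (for ``$\Leftarrow$'' note $c_n$ then lies in the intersection; ``$\Rightarrow$'' is the slack argument once more). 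This is a $\Sigma^0_1$ condition uniformly in $W$, so $C$ is c.e.

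The only genuine obstacle is that in a general metric space one cannot decide ball inclusion, nor emptiness of an intersection of balls, from centres and radii alone; the remedy used at each step is to rely only on the one-sided semi-decidable criterion $d(c_n,c_m)+r_n<r_m$, to observe that it is met by enough basis balls to exhaust every open finite intersection of balls, and to have fixed the presentation of $\delta$ so that $\delta(w\IN^\IN)$ is literally such an intersection.
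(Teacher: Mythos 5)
Your proposal is correct and takes essentially the same route as the paper: choose a presentation of the Cauchy representation under which $\delta(w\IN^\IN)$ is a finite intersection of basic balls, then exploit the effectivity of ball operations for both claims. The only difference is that the paper fixes a version with radii $2^{-i}$ (so $\delta(w\IN^\IN)=\bigcap_{i<|w|}B(\alpha w(i),2^{-i})$) and then simply cites the general facts that finite intersections and countable unions are computable on $\OO(X)$ and that non-emptiness of finite intersections of opens is c.e., whereas you use arbitrary shrinking balls and work these facts out from first principles via the semi-decidable containment criterion $d(c_n,c_m)+r_n<r_m$ and density; both versions of the representation are computably equivalent and both arguments go through.
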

\begin{proof}
Let $(X,d,\alpha)$ be a computable metric space (see \cite{Wei00} for definitions).
Then we can define a version of the Cauchy representation $\delta$ of $X$ by $\delta(p):=\lim_{n\to\infty}\alpha (p(n))$
with
\[\dom(\delta):=\{p\in\IN^\IN:(\exists x\in X)(\forall i\in\IN)\;d(x,\alpha p(i))<2^{-i}\}.\]
This representation $\delta$ is computably equivalent to other standard versions of the Cauchy representation of $X$ and we obtain
\[\delta(w\IN^\IN)=\bigcap_{i=0}^{|w|-1}B(\alpha w(i),2^{-i})\]
for every $w\in\IN^*$. Secondly, for $U=\bigcup_{w\in W}w\IN^\IN$ with $W\In\IN^*$ we obtain
$\delta(U)=\bigcup_{w\in W}\delta(w\IN^\IN)$.
Since finite intersections and countable unions of open sets are computable, we can conclude that $\Delta$ is computable.
Since
\[\{\UU\in\FF(\OO(X)):\bigcap\UU\not=\emptyset\}\]
is c.e.\ open and $\Delta$ is computable, it follows that $C$ is c.e.
\end{proof}

Now we can prove our main result on majority votes. We point out that the multi-valued function $g$ in the following result
need not be computable. 

\begin{theorem}[Majority vote]
\label{thm:majority}
Let $X$ be a represented space, let $Y$ be a computable metric space and let $f:X\to Y$ be a single-valued function. If 
\[f\leqW\frac{1}{2}\dash\WWKL\circ g\] 
for some $g$, then $f$ is computable.
\end{theorem}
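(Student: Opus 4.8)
The plan is to run a majority vote over the random advices. First I would unfold the reduction: fix computable $K,H$ with $H\langle\id,GK\rangle\vdash f$ for every realizer $G$ of $\frac{1}{2}\dash\WWKL\circ g$. Fix a $\delta_X$--name $p$ of some $x\in X$ and put $y:=f(x)$. Then $K(p)$ is a name of some input $u$ to $\frac{1}{2}\dash\WWKL\circ g$, so the set $M:=(\frac{1}{2}\dash\WWKL\circ g)(u)$ of its correct outputs contains $[T_0]$ for some binary tree $T_0$ with $\mu([T_0])>\frac12$. The key observation is that \emph{for every} $r\in M$ the sequence $H\langle p,r\rangle$ is a $\delta_Y$--name of $y$: given $r\in M$ one can build a (possibly discontinuous) realizer $G$ of $\frac{1}{2}\dash\WWKL\circ g$ with $G(K(p))=r$, and then $H\langle p,r\rangle=H\langle\id,GK\rangle(p)$ is a $\delta_Y$--name of $f(x)=y$ by the choice of $K,H$.

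For $w\in\IN^*$ put $U_w:=\{r\in2^\IN:w\prefix H\langle p,r\rangle\}$; since $H$ is computable this is c.e.\ open uniformly in $p$ and $w$, and hence $\mu(\bigcup_{w\in W}U_w)$ is lower semicomputable for every finite $W\In\IN^*$. The algorithm computing $y$ from $p$ works precision by precision: for each $n$ it searches, by dovetailing, for a finite $W\In\IN^*$ such that (a) $W$ is consistent in the sense of Lemma~\ref{lem:Cauchy} (a c.e.\ condition), (b) $|w|\geq n+2$ for all $w\in W$ (decidable), and (c) $\mu\big(\bigcup_{w\in W}U_w\big)>\frac12$ (semidecidable). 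Such a $W$ exists: for each $r\in[T_0]\In M$ the name $H\langle p,r\rangle$ of $y$ has a prefix $w_r$ with $|w_r|\geq n+2$ and $y\in\delta_Y(w_r\IN^\IN)$, and $r\in U_{w_r}$; since $[T_0]$ is compact, finitely many of the open sets $U_{w_r}$ cover $[T_0]$, and the corresponding finite set of words satisfies (a)--(c). Hence the search terminates.

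For correctness, suppose the search returns $W$. By (c) together with $\mu([T_0])>\frac12$ the sets $\bigcup_{w\in W}U_w$ and $[T_0]$ intersect, so there is $r^{*}\in[T_0]\In M$ with $r^{*}\in U_{w_0}$ for some $w_0\in W$; thus $w_0\prefix H\langle p,r^{*}\rangle$, which — as $r^{*}\in M$ — is a name of $y$, so $y\in\delta_Y(w_0\IN^\IN)$. By consistency of $W$ there is a common point $z\in\bigcap_{w\in W}\delta_Y(w\IN^\IN)$; both $z$ and $y$ lie in $\delta_Y(w_0\IN^\IN)$, which by (b) is contained in a ball of radius $2^{-(n+1)}$, so $d(z,y)\leq2^{-n}$. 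Since every $w\in W$ yields points of $\delta_Y(w\IN^\IN)$ within $2^{-n}$ of $z$, any point extractable from $W$ — e.g.\ the center $\alpha(w(|w|-1))$ for an arbitrary $w\in W$ — lies within a fixed multiple of $2^{-n}$ of $y$. Iterating over $n$ (with a fixed shift of the precision index) and concatenating the resulting approximation centers produces a $\delta_Y$--name of $y$ from $p$, so $f$ is computable.

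The main obstacle is precisely this last step: the search hands us a consistent family $W$ but never reveals which member $w_0\in W$ actually originated from a \emph{correct} advice (the ``wrong'' advices form a minority, but the bookkeeping does not identify them). This is resolved by combining consistency of $W$ with single-valuedness of $f$: one small-diameter piece $\delta_Y(w_0\IN^\IN)$ contains $y$, so the common point of $W$ — and hence every point we can read off $W$ — is forced close to $y$. This is where the threshold $\frac12$ is essential: two subsets of $2^\IN$ of measure exceeding $\frac12$ must meet, which is exactly what lets the ``found'' family $W$ be matched against the ``correct'' set $M$.
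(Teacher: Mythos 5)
Your proof is correct and uses the same majority-vote argument as the paper. The bookkeeping differs slightly --- the paper fixes a cylinder depth $n$ in the advice space $2^\IN$, computes a monotone approximation $h_p$ of $q\mapsto H\langle p,q\rangle$, and searches for a consistent family $\{h_p(w):w\in W\}$ over more than half of the depth-$n$ cylinders, whereas you search over finite sets $W$ of output-name prefixes with $\mu\bigl(\bigcup_{w\in W}U_w\bigr)>\frac12$ using lower semicomputability of the measure of c.e.\ open sets --- but the underlying ideas (compactness of $[T_0]$ for termination of the search, the threshold $\mu>\frac12$ forcing intersection of the found family with the set of correct advices, and consistency in $Y$ together with single-valuedness of $f$ to control the error) are identical.
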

\begin{proof}
Let $(Y,d,\alpha)$ be a computable metric space.
Without loss of generality, we can assume that $\delta$ is the Cauchy representation of $Y$ according to Lemma~\ref{lem:Cauchy}.
Let $f\leqW\frac{1}{2}\dash\WWKL\circ g$ and let $g:\In Z\mto\Tr$. Then there are computable $H,K$, such that
$H\langle\id,GK\rangle$ is a realizer of $f$ whenever $G$ is a realizer of $\frac{1}{2}\dash\WWKL\circ g$.
Given a name $p$ of a point $x\in\dom(f)$, $K(p)$ is a name of a point $z\in\dom(g)$ such that every tree $T\in g(z)$
satisfies $\mu([T])>\frac{1}{2}$ and every infinite path $q\in[T]$ yields a name $H\langle p,q\rangle$ of $f(x)$, i.e., $\delta H\langle p,q\rangle=f(x)$.
We need to prove that there is a computable realizer $F$ of $f$. Upon input of $p$ one can use $H\langle p,q\rangle$
for varying $q$ in order to obtain such a realizer by majority vote. For every $p$ that is a name of a point $x\in\dom(f)$ as above we can
compute a monotone function $h_p:\{0,1\}^*\to\IN^*$ that approximates $q\mapsto H\langle p,q\rangle$.  
Since every $A=[T]$ for binary trees $T$ is a compact set, the function $H$ is uniformly continuous on
$\{p\}\times A$. That means that for every $k\in\IN$ there is an $n\in\IN$ such that for every $q\in A$ it holds that $|h_p(q|_n)|\geq k+2$. 
On the other hand, $\mu(A)>\frac{1}{2}$ and hence there is a finite set $W\in\FF(\{0,1\}^*)$ that consists of more than half of the words in $\{0,1\}^n$
and such that $\{h_p(w):w\in W\}$ is consistent and $|h(w)|\geq k+2$ for all $w\in W$.
Since consistency is a c.e.\ property by Lemma~\ref{lem:Cauchy}, we can find for our given $k$ a suitable $n$ and a corresponding
set $W$ by exhaustive search. As soon as we have found it, we compute an approximation $\alpha(i)\in\bigcap_{w\in W}\delta(h_p(w)\IN^\IN)$ 
of $f(x)$, which is possible since $\delta$ is computably open by Lemma~\ref{lem:Cauchy}.
We claim that $d(\alpha(i),f(x))<2^{-k}$. We note that if $W$ consists of half of the words in $\IN^n$, then
due to the measure condition at least one of the $h_p(w)$ for $w\in W$ has to be a prefix of a correct $\delta$--name $q$ of $f(x)$. 
Hence, for this $w$ we have $f(x)\in\delta(h_p(w)\IN^\IN)$ and hence we obtain for $a:=\alpha(h_p(w)(k+1))$
\[d(\alpha(i),f(x))\leq d(\alpha(i),a)+d(a,f(x))\leq 2^{-k-1}+2^{-k-1}=2^{-k}.\]
If we proceed with the above algorithm for $k=0,1,2,...$ and each fixed given input $p$, then 
we obtain a computable realizer $F$ of $f$ with respect to the representation $\delta$ on the output side.
\end{proof}

In general we obtain $h^{(n)}$ from $h:\In X\mto Y$ by replacing the representation $\delta$ of $X$
by its $n$--fold jump $\delta^{(n)}$ on the input side. Since $h^{(n)}\equivSW h\circ\delta^{(n)}$,
we obtain the following corollary of Theorem~\ref{thm:majority}.

\begin{corollary}[Majority vote]
\label{cor:majority}
Let $X$ be a represented space, let $Y$ be a computable metric space and let $f:X\to Y$ be a single-valued
function. If 
\[f\leqW\frac{1}{2}\dash\WWKL^{(n)}\] 
for some $n\in\IN$, then $f$ is computable.
\end{corollary}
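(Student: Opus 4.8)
The plan is to deduce this immediately from Theorem~\ref{thm:majority} by unwinding the definition of the jump, along the lines sketched in the paragraph preceding the corollary. Recall that for $h : \In (X,\delta_X) \mto (Y,\delta_Y)$ the $n$-fold jump $h^{(n)}$ has the same underlying assignment as $h$, with the input representation $\delta_X$ replaced by $\delta_X^{(n)} = \delta_X \circ \lim^{(n)}$ (and $\lim^{(0)} := \id$). Applying this to $h = \frac{1}{2}\dash\WWKL : \In \Tr \mto 2^\IN$, the function $\frac{1}{2}\dash\WWKL^{(n)}$ has the same domain $\{T : \mu([T]) > \frac{1}{2}\}$ and the same values $T \mapsto [T]$ as $\frac{1}{2}\dash\WWKL$; only the trees are now presented by $n$-fold iterated-limit names of their characteristic functions.

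First I would make the informal identity $h^{(n)} \equivSW h \circ \delta^{(n)}$ precise in the case at hand. Let $g$ be the identity on the set $\Tr$, viewed as a single-valued function $g : (\Tr, \delta_\Tr^{(n)}) \to (\Tr, \delta_\Tr)$ (the ``lowering'' map, which is not computable for $n \geq 1$), with domain all infinite binary trees. Chasing the definition of composition of multi-valued functions, one checks that $\frac{1}{2}\dash\WWKL \circ g$ has domain $\{T : \mu([T]) > \frac{1}{2}\}$, values $T \mapsto [T]$, input representation $\delta_\Tr^{(n)}$ and output representation $\delta_{2^\IN}$; that is, $\frac{1}{2}\dash\WWKL \circ g = \frac{1}{2}\dash\WWKL^{(n)}$ on the nose, so in particular $\frac{1}{2}\dash\WWKL^{(n)} \leqW \frac{1}{2}\dash\WWKL \circ g$.

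Finally, assuming $f \leqW \frac{1}{2}\dash\WWKL^{(n)}$, transitivity of $\leqW$ gives $f \leqW \frac{1}{2}\dash\WWKL \circ g$. Since $Y$ is a computable metric space and $f : X \to Y$ is single-valued, Theorem~\ref{thm:majority} applies word for word to this $g$ and yields that $f$ is computable. There is essentially no obstacle: the proof of Theorem~\ref{thm:majority} never uses computability of its auxiliary function $g$ --- only that $\frac{1}{2}\dash\WWKL \circ g$ produces binary trees of measure $> \frac{1}{2}$ all of whose infinite paths are names of $f(x)$ --- so the non-computable lowering map is a legitimate witness, and the only point deserving care is the routine bookkeeping in the previous paragraph identifying $\frac{1}{2}\dash\WWKL \circ g$ with $\frac{1}{2}\dash\WWKL^{(n)}$.
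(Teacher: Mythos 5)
Your argument is correct and matches the paper's own proof, which is precisely the one-line observation $h^{(n)}\equivSW h\circ\delta^{(n)}$ followed by an invocation of Theorem~\ref{thm:majority} with the non-computable $g=\delta_{\Tr}^{(n)}$. Your ``lowering map'' $g=\id\colon(\Tr,\delta_\Tr^{(n)})\to(\Tr,\delta_\Tr)$ is just a cosmetic repackaging of the same witness, and you correctly flag that the proof of Theorem~\ref{thm:majority} never uses computability of $g$ (a point the paper itself stresses immediately before that theorem).
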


Theorem~\ref{thm:majority} and Corollary~\ref{cor:majority} automatically also hold true for all $\varepsilon>\frac{1}{2}$
instead of $\frac{1}{2}$.
It is easy to see that $\LPO\leqW\varepsilon\dash\WWKL'$ for every $\varepsilon<\frac{1}{2}$ and $\LPO$ is single-valued 
(with the computable metric space $\{0,1\}$ on the output side).
Hence Theorem~\ref{thm:majority} and Corollary~\ref{cor:majority} cannot be generalized to the case of $\varepsilon<\frac{1}{2}$.

Corollary~\ref{cor:majority} analogously holds for $\frac{1}{2}\dash\WWKL^\Delta$ instead of $\frac{1}{2}\dash\WWKL'$.
Since $\LPO\leqW\C_\IN\leqW\WWKL^\Delta$ by Theorem~\ref{thm:WWKL-Delta-WWKL-CN}, we obtain the following.

\begin{corollary}
$\frac{1}{2}\dash\WWKL^\Delta\lW\WWKL^\Delta$.
\end{corollary}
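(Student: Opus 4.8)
The plan is to establish the two halves of the strict reduction separately. The positive direction $\frac{1}{2}\dash\WWKL^\Delta\leqW\WWKL^\Delta$ costs essentially nothing: the two problems use the same input representation $\delta_\Tr^\Delta$ and the same assignment $T\mapsto[T]$, and the domain $\{T:\mu([T])>\tfrac12\}$ of $\frac{1}{2}\dash\WWKL^\Delta$ is contained in the domain $\{T:\mu([T])>0\}$ of $\WWKL^\Delta$; hence $\frac{1}{2}\dash\WWKL^\Delta$ is a restriction of $\WWKL^\Delta$ and the identity functions $K=H=\id$ witness $\frac{1}{2}\dash\WWKL^\Delta\leqSW\WWKL^\Delta$, a fortiori $\frac{1}{2}\dash\WWKL^\Delta\leqW\WWKL^\Delta$. (Alternatively: $\frac{1}{2}\dash\WWKL\leqSW\WWKL$ and the discrete jump is monotone under $\leqSW$, by the same supremum argument that gives monotonicity of ordinary jumps, together with $f^\Delta\equivSW f\stars\lim_\Delta$.)

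For the strictness I would argue by contradiction, using the discrete-jump analogue of the Majority Vote Corollary~\ref{cor:majority} announced in the text just before it: every single-valued $f\colon X\to Y$ into a computable metric space $Y$ with $f\leqW\frac{1}{2}\dash\WWKL^\Delta$ is computable. To obtain this analogue from Theorem~\ref{thm:majority} one notes that $\frac{1}{2}\dash\WWKL^\Delta$ already has the form $\frac{1}{2}\dash\WWKL\circ g$ allowed there, with $g$ the identity on $\{T\in\Tr:\mu([T])>\tfrac12\}$ viewed as a (non-computable) map from $(\Tr,\delta_\Tr^\Delta)$ to $(\Tr,\delta_\Tr)$; since Theorem~\ref{thm:majority} admits an arbitrary $g$, it applies and forces computability of $f$. (One can also sidestep the analogue: $\frac{1}{2}\dash\WWKL^\Delta\leqSW\frac{1}{2}\dash\WWKL'$ since $\lim_\Delta\leqSW\lim$, and then Corollary~\ref{cor:majority} applies with $n=1$.) Now assume $\WWKL^\Delta\leqW\frac{1}{2}\dash\WWKL^\Delta$. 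By Theorem~\ref{thm:WWKL-Delta-WWKL-CN} we have $\C_\IN\leqW\WWKL^\Delta$, and $\LPO\leqW\C_\IN$ because $\LPO$ is computable with finitely many (in fact, one) mind changes, so Proposition~\ref{prop:classes}(2) applies; hence $\LPO\leqW\frac{1}{2}\dash\WWKL^\Delta$. But $\LPO\colon\IN^\IN\to\{0,1\}$ is single-valued with codomain the computable metric space $\{0,1\}$, so $\LPO$ would then be computable by the previous step — contradicting the non-computability of $\LPO$ (the characteristic function of $\{\widehat{0}\}$). Combining the two halves yields $\frac{1}{2}\dash\WWKL^\Delta\lW\WWKL^\Delta$.

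The only step requiring a modicum of care — the ``main obstacle'', modest as it is — is checking that the presentation $\frac{1}{2}\dash\WWKL^\Delta=\frac{1}{2}\dash\WWKL\circ g$ really meets the hypothesis of Theorem~\ref{thm:majority}: in a reduction $H\langle\id,GK\rangle\vdash f$ witnessing $f\leqW\frac{1}{2}\dash\WWKL^\Delta$, the intermediate name $K(p)$ must be legitimately readable as a name of an input $z\in\dom(g)$ for which every $T\in g(z)$ has $\mu([T])>\tfrac12$ and every infinite path through $T$ is sent by $H$ to a correct $\delta_Y$-name of $f(x)$. This is immediate from the definition $\delta_\Tr^\Delta=\delta_\Tr\circ\lim_\Delta$ of the jumped input representation, so the proof of Theorem~\ref{thm:majority} transfers verbatim; everything else is a routine assembly of results already established above.
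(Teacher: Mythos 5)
Your proposal is correct and follows essentially the same route as the paper: it establishes $\frac{1}{2}\dash\WWKL^\Delta\leqSW\WWKL^\Delta$ by inclusion of domains, then derives the analogue of Corollary~\ref{cor:majority} for $\frac{1}{2}\dash\WWKL^\Delta$ (the paper merely asserts this; your two routes, reading $\frac{1}{2}\dash\WWKL^\Delta$ as $\frac{1}{2}\dash\WWKL\circ g$ or factoring through $\frac{1}{2}\dash\WWKL^\Delta\leqSW\frac{1}{2}\dash\WWKL'$, both work and make the assertion precise), and finally separates via $\LPO\leqW\C_\IN\leqW\WWKL^\Delta$ exactly as the paper does.
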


This shows that Theorem~\ref{thm:WWKL-Delta-WWKL-CN} can in general not be improved to the statement that
$\WWKL^\Delta$ is equivalent to $(\varepsilon\dash\WWKL)^\Delta$.
Since $\LPO\leqW\WWKL^\Delta$ is single-valued, we also get the following corollary.

\begin{corollary}
$\WWKL^\Delta\nleqW\frac{1}{2}\dash\WWKL^{(n)}$ for all $n\in\IN$.
\end{corollary}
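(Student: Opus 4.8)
The plan is to derive a contradiction from the assumption $\WWKL^\Delta\leqW\frac{1}{2}\dash\WWKL^{(n)}$ by locating a single‑valued, non‑computable function below $\WWKL^\Delta$ and then invoking the Majority Vote Corollary~\ref{cor:majority}. The natural witness is $\LPO:\IN^\IN\to\{0,1\}$, which is single‑valued with codomain the two‑point discrete space $\{0,1\}$ — trivially a computable metric space — and which is the characteristic function of the constant zero sequence, hence not computable (one can semi‑decide $p\neq\widehat{0}$ but never confirm $p=\widehat{0}$).

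The first ingredient I would use is the reduction chain $\LPO\leqW\C_\IN\leqW\WWKL^\Delta$, which is already recorded in the excerpt via Theorem~\ref{thm:WWKL-Delta-WWKL-CN} (here $\LPO\leqW\C_\IN$ is the standard fact that $\LPO$ is computable with finitely many mind changes, i.e.\ Proposition~\ref{prop:classes}(2), and $\C_\IN\leqW\WWKL^\Delta$ follows from $\WWKL^\Delta\equivW\WWKL*\C_\IN$ in Corollary~\ref{cor:WWKL-Delta-WWKL-CN}). The second ingredient is Corollary~\ref{cor:majority}: any single‑valued $f:X\to Y$ with $Y$ a computable metric space and $f\leqW\frac{1}{2}\dash\WWKL^{(n)}$ is computable.

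Now suppose, toward a contradiction, that $\WWKL^\Delta\leqW\frac{1}{2}\dash\WWKL^{(n)}$ for some $n\in\IN$. Composing with $\LPO\leqW\WWKL^\Delta$ and using transitivity of $\leqW$ gives $\LPO\leqW\frac{1}{2}\dash\WWKL^{(n)}$. Since $\LPO$ is single‑valued with a computable metric space as codomain, Corollary~\ref{cor:majority} then forces $\LPO$ to be computable, contradicting the non‑computability of $\LPO$ noted above. Hence no such $n$ exists and $\WWKL^\Delta\nleqW\frac{1}{2}\dash\WWKL^{(n)}$ for every $n\in\IN$.

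There is essentially no hard step here: the entire difficulty has been absorbed into the Majority Vote theorem proved earlier in this section, and the only things to verify are the two trivial facts that $\LPO$ meets the hypotheses of Corollary~\ref{cor:majority} (single‑valuedness and computable metric codomain) and that $\LPO$ itself is not computable. If anything merits a sentence of care, it is only the bookkeeping that the reduction $\LPO\leqW\WWKL^\Delta$ is indeed available at this point in the paper, which it is via Theorem~\ref{thm:WWKL-Delta-WWKL-CN}.
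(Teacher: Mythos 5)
Your proof is correct and follows exactly the paper's approach: one observes that $\LPO$ is single-valued with codomain a computable metric space, that $\LPO\leqW\C_\IN\leqW\WWKL^\Delta$, and then invokes Corollary~\ref{cor:majority} to conclude that a reduction $\WWKL^\Delta\leqW\frac{1}{2}\dash\WWKL^{(n)}$ would force $\LPO$ to be computable, a contradiction. No differences of substance.
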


\section{Probabilistic Degrees}
\label{sec:probabilistic-degrees}

In this section we would like to capture the non-uniform content of probabilistic computability. 
Intuitively, we want to call a Weihrauch degree {\em probabilistic}, if it can be computed with some random advice,
irrespectively of any failure recognition mechanisms that Las Vegas machines come equipped with.
A suitable notion of random advice has already been introduced and studied by the first author and Arno Pauly in \cite{BP10}.
We repeat the definition for our setting.

\begin{definition}[Probabilistic degrees]
Let $(X,\delta_X)$, $(Y,\delta_Y)$ be represented spaces. A multi-valued function $f:\In X\mto Y$ is called
{\em probabilistic}, if there exists a computable function $F:\In\IN^\IN\times2^\IN\to\IN^\IN$ 
such that $\mu(\{r\in2^\IN:\delta_YF(p,r)\in f\delta_X(p)\})>0$ for all $p\in \dom(f\delta_X)$.
A Weihrauch degree is called {\em probabilistic}, if it has a probabilistic member.
\end{definition}

We emphasize that the condition in this definition implies that the sets $A_p:=\{r\in2^\IN:\delta_YF(p,r)\in f\delta_X(p)\}$ 
have to be measurable, but they are not required to be closed and they do not need to depend on $p$ in any uniform way (in contrast to the
sets $S_p$ in Definition~\ref{def:Las-Vegas}).
It follows from results below that Cantor space $2^\IN$ could be equivalently replaced by Baire space $\IN^\IN$ in the above definition. 
The following characterization of probabilistic degrees follows from \cite[Theorem~11]{BP10}.
We also give a direct proof.

\begin{proposition}[Probabilistic degrees]
\label{prop:probabilistic}
A multi-valued function $f$ on represented spaces is probabilistic, if and only if $f\leqW g$ for some $g:\In\IN^\IN\mto2^\IN$ 
such that $\mu_{2^\IN}(g(p))>0$ for all $p\in\dom(g)$. 
\end{proposition}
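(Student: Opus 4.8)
The plan is to prove the two implications by unfolding the definition of a probabilistic function and matching it with the reduction $f\leqW g$, keeping track of the minimal domains of the reduction witnesses.

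Assume first that $f$ is probabilistic, witnessed by a computable $F:\In\IN^\IN\times2^\IN\to\IN^\IN$, and for $p\in\dom(f\delta_X)$ put $A_p:=\{r\in2^\IN:\delta_YF(p,r)\in f\delta_X(p)\}$. I would then simply take $g:\In\IN^\IN\mto2^\IN$ with $g(p):=A_p$ and $\dom(g):=\dom(f\delta_X)$; by hypothesis every $A_p$ is measurable with $\mu_{2^\IN}(A_p)>0$, hence non-empty, so $g$ has the required properties. The reduction $f\leqW g$ is witnessed by $K:=\id|_{\dom(f\delta_X)}$ --- a $\delta_X$-name of a point of $\dom(f)$ is by definition an element of $\dom(g)$ --- and by the computable $H$ with $H\langle p,q\rangle:=F(p,q)$: for any $G\vdash g$ we have $GK(p)=G(p)\in A_p$, so $\delta_YH\langle p,GK(p)\rangle=\delta_YF(p,G(p))\in f\delta_X(p)$ by the definition of $A_p$, i.e.\ $H\langle\id,GK\rangle\vdash f$.

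Conversely, assume $f\leqW g$ via computable $K,H$ with $\mu_{2^\IN}(g(p))>0$ for all $p\in\dom(g)$. I would define the computable $F:\In\IN^\IN\times2^\IN\to\IN^\IN$ by $F(p,r):=H\langle p,r\rangle$ and check the probabilistic condition at a fixed $p\in\dom(f\delta_X)$. Here $K(p)\in\dom(g)$ and, by the minimal-domain convention, $\langle p,r\rangle\in\dom(H)$ for every $r\in g(K(p))$. The key step is that each such $r$ lies in $A_p:=\{r\in2^\IN:\delta_YF(p,r)\in f\delta_X(p)\}$: given $r\in g(K(p))$, choose a realizer $G_0$ of $g$ with $G_0(K(p))=r$ --- this exists because $r\in g(K(p))$ and every fibre $g(q)$, $q\in\dom(g)$, is non-empty, so $G_0$ may be set arbitrarily on the remaining arguments --- and then $H\langle\id,G_0K\rangle\vdash f$ gives $\delta_YF(p,r)=\delta_YH\langle p,G_0(K(p))\rangle\in f\delta_X(p)$. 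Hence $g(K(p))\subseteq A_p$, and since $g(K(p))$ is measurable of positive measure the set $A_p$ has positive measure, so $F$ witnesses that $f$ is probabilistic.

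The checks that $K$, $H$ and $F$ are computable are routine, since they are built from pairing and the given data, and the only bookkeeping requiring care is the minimal-domain argument placing $\langle p,r\rangle$ in $\dom(H)$. The one conceptually delicate point is in the converse direction, where for each successful advice $r$ one has to exhibit an honest realizer $G_0$ of $g$ routing $K(p)$ to $r$; this is harmless precisely because Weihrauch reducibility quantifies over all realizers, not only computable ones, and it is exactly the place where the non-uniformity permitted by ``probabilistic'' --- as opposed to the uniform failure recognition built into Las Vegas computability --- is used. One should also note that positivity of the measure of the success set is inherited from its measurable subset $g(K(p))$, rather than established directly.
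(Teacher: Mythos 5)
Your proof is correct and follows essentially the same approach as the paper's: in one direction you take $g(p):=A_p$ together with $K:=\id$ and $H\langle p,q\rangle:=F(p,q)$, and in the other you take $F(p,r):=H\langle p,r\rangle$, invoke the minimal-domain convention, and show $gK(p)\subseteq A_p$. The one thing you spell out that the paper leaves implicit is the realizer-selection step (choosing $G_0\vdash g$ with $G_0K(p)=r$) used to justify $gK(p)\subseteq A_p$; you could also note that minimal domain yields the reverse inclusion, so $A_p=gK(p)$ is automatically measurable, which makes the measure condition in the definition of ``probabilistic'' well-posed.
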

\begin{proof}
Let $f:\In X\mto Y$ be a multi-valued function on represented spaces $(X,\delta_X)$ and $(Y,\delta_Y)$
and let $g:\In\IN^\IN\to2^\IN$ be as stated above.
Let $f\leqW g$ be witnessed by computable $H,K$. 
We define $A_p:=gK(p)$ for all $p\in D:=\dom(f\delta_X)$.
Without loss of generality, we can assume that $H$ has minimal domain, i.e., $\dom(H)=\{\langle p,r\rangle:p\in D,r\in A_p\}$.
We define $F:\In\IN^\IN\times2^\IN\to\IN^\IN$ by $F(p,r):=H\langle p,r\rangle$.
Then $F$ is computable and by assumption we have $\mu_{2^\IN}(A_p)>0$ 
and $\delta_Y F(p,r)=\delta_YH\langle p,r\rangle\in f\delta_X(p)$ for all $p\in D$ and $r\in A_p$.
Hence, $f$ is probabilistic. If, on the other hand, $f$ is probabilistic, then there is a computable
$F:\In\IN^\IN\times2^\IN\to\IN^\IN$ such that $A_p:=\{r\in2^\IN:\delta_YF(p,r)\in f\delta_X(p)\}$
satisfies $\mu_{2^\IN}(A_p)>0$ for all $p\in D:=\dom(f\delta_X)$. If we define
$g:\In\IN^\IN\mto2^\IN$ by $g(p):=A_p$ for all $p\in D$, then we obtain $f\leqW g$. 
\end{proof}

It is clear that it follows from this proposition that probabilistic degrees are closed downwards with respect to Weihrauch reduction.

\begin{proposition}
\label{prop:downwards-probabilistic}
If $f\leqW g$ and $g$ is probabilistic, then $f$ is probabilistic.
\end{proposition}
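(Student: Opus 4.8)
The plan is to derive this as an immediate consequence of the characterisation of probabilistic degrees in Proposition~\ref{prop:probabilistic} together with the transitivity of Weihrauch reducibility.

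First I would invoke Proposition~\ref{prop:probabilistic}: since $g$ is probabilistic, there is some $h:\In\IN^\IN\mto2^\IN$ with $\mu_{2^\IN}(h(p))>0$ for all $p\in\dom(h)$ such that $g\leqW h$. Combining this with the hypothesis $f\leqW g$ and using transitivity of $\leqW$, I obtain $f\leqW h$. Now $h$ is again of the form required by Proposition~\ref{prop:probabilistic} (a multi-valued map on Baire space with values closed subsets of $2^\IN$ of positive measure), so the ``if'' direction of that proposition applies and yields that $f$ is probabilistic. This is the entire argument; it is essentially the observation that the class of $g$ witnessing probabilistic-ness in the sense of Proposition~\ref{prop:probabilistic} is closed downwards under $\leqW$, which is trivially true by transitivity.

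One could also try to argue directly, without passing through Proposition~\ref{prop:probabilistic}: given computable $H,K$ witnessing $f\leqW g$ and a computable $G:\In\IN^\IN\times2^\IN\to\IN^\IN$ witnessing that $g$ is probabilistic, one would like to set $F(p,r):=H\langle p,G(K(p),r)\rangle$ and check that the set of successful advices for $f$ on a name $p$ of $x\in\dom(f)$ contains the successful advices for $g$ on the name $K(p)$, hence has positive measure. The only point needing care here is that $G$ is not an honest realizer of $g$ on all inputs but only a ``partial'' one in the probabilistic sense, so one cannot literally substitute it for $G\vdash g$ in the reduction; turning the probabilistic witness $G$ into a genuine multi-valued $h$ with $g\leqW h$ is exactly the content of Proposition~\ref{prop:probabilistic}. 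For this reason I would prefer the short route via Proposition~\ref{prop:probabilistic}, and I do not expect any genuine obstacle: the statement records the downward closure of the probabilistic degrees, and all the real work has already been done in Proposition~\ref{prop:probabilistic}.
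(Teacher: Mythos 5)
Your proof is correct and is exactly the argument the paper has in mind: the paper simply notes that Proposition~\ref{prop:probabilistic} together with transitivity of $\leqW$ makes the claim immediate, which is your first route. One tiny slip in your parenthetical: the values $h(p)$ need only be \emph{measurable} subsets of $2^\IN$ of positive measure, not closed (the sets $A_p$ in the proof of Proposition~\ref{prop:probabilistic} are not claimed to be closed), but this is immaterial since $h$ is supplied directly by that proposition and so is automatically of the required form.
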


The case of Baire space can be reduced to Cantor space in the non-uniform setting due to the following lemma.

\begin{lemma}[Embedding of Baire into Cantor space]
\label{lem:Baire-Cantor}
The following map is a computable embedding with a measure preserving inverse:
\[\iota:\IN^\IN\to2^\IN,p\mapsto 1^{p(0)}01^{p(1)}01^{p(2)}... .\]
\end{lemma}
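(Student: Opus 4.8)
The plan is to verify the three parts of the claim more or less independently: injectivity and computability of $\iota$, computability of the partial inverse $\iota^{-1}$, and measure preservation of $\iota^{-1}$; only the last requires a short measure-theoretic computation.

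First I would record the elementary facts. Given $p\in\IN^\IN$, a machine reading $p(0),p(1),\dots$ can successively append the blocks $1^{p(0)}0,\ 1^{p(1)}0,\ \dots$ to the output tape, producing longer and longer prefixes of $\iota(p)$; hence $\iota$ is computable. Injectivity is witnessed by the inverse procedure: from $q=\iota(p)$ one reads off $p(0)$ as the number of leading ones of $q$, $p(1)$ as the number of ones after the first zero, and so on. This procedure is effective and proceeds without blocking exactly on those $q$ containing infinitely many zeros, i.e.\ on $\range(\iota)$; so $\iota^{-1}\colon\In 2^\IN\to\IN^\IN$ is computable and $\iota$ is a computable embedding in the sense of Section~\ref{sec:probability-space}.

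For the measure statement I would first note that $\range(\iota)=\iota(\IN^\IN)$ is precisely the set of binary sequences with infinitely many zeros, so its complement—the eventually-all-$1$ sequences—is countable and $\mu_{2^\IN}(\range(\iota))=1$. Next, for a word $w=(w(0),\dots,w(n-1))\in\IN^*$ put $v:=1^{w(0)}01^{w(1)}0\cdots 1^{w(n-1)}0\in2^*$, so that $|v|=n+\sum_{i<n}w(i)$. Reading off the definition of $\iota$, a sequence $q\in2^\IN$ lies in $\iota(w\IN^\IN)$ iff $q\in v2^\IN$ \emph{and} $q\in\range(\iota)$; thus $\iota(w\IN^\IN)=v\cdot\range(\iota)$, and therefore
\[
\mu_{2^\IN}\bigl(\iota(w\IN^\IN)\bigr)=2^{-|v|}\cdot\mu_{2^\IN}(\range(\iota))=2^{-n-\sum_{i<n}w(i)}=\prod_{i=0}^{n-1}2^{-w(i)-1}=\mu_{\IN^\IN}(w\IN^\IN).
\]

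Finally I would pass from balls to all closed sets. Since $\iota$ is a homeomorphism onto $\range(\iota)$ (its inverse being continuous) and $\range(\iota)$ is Borel in $2^\IN$, the image $\iota(A)$ is Borel in $2^\IN$ for every Borel $A\In\IN^\IN$; being injective, $\iota$ also respects disjoint countable unions, so $\nu(A):=\mu_{2^\IN}(\iota(A))$ is a finite Borel measure on $\IN^\IN$. By the displayed identity, $\nu$ and $\mu_{\IN^\IN}$ agree on all balls $w\IN^\IN$, so Lemma~\ref{lem:identity} yields $\nu=\mu_{\IN^\IN}$, i.e.\ $\mu_{2^\IN}(\iota(A))=\mu_{\IN^\IN}(A)$ for all closed $A\In\IN^\IN$. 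Since $(\iota^{-1})^{-1}(A)=\iota(A)$ for $A\In\IN^\IN$, this is exactly the assertion that $\iota^{-1}$ is measure preserving. The only point needing care is the one isolated above—that $\iota(w\IN^\IN)$ equals $v2^\IN$ only up to the null set $2^\IN\setminus\range(\iota)$—and this is harmless precisely because that exceptional set has measure zero; the rest is bookkeeping.
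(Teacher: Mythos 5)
Your proof is correct and follows essentially the same route as the paper's: verify computability of $\iota$ and $\iota^{-1}$, compute $\mu_{2^\IN}(\iota(w\IN^\IN))=\prod_{i<|w|}2^{-w(i)-1}=\mu_{\IN^\IN}(w\IN^\IN)$ on balls, and invoke Lemma~\ref{lem:identity} to extend to all Borel sets. The only difference is that you are more careful than the paper about the fact that $\iota(w\IN^\IN)$ is not literally a ball $v2^\IN$ but rather $v2^\IN\cap\range(\iota)$, and you explicitly record that this discrepancy is a null set because $2^\IN\setminus\range(\iota)$ (the eventually-all-$1$ sequences) is countable; the paper simply asserts the measure equality without comment. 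Your extra step of checking that $\nu(A):=\mu_{2^\IN}(\iota(A))$ is a genuine Borel measure on $\IN^\IN$ (using that $\iota$ is a Borel isomorphism onto its range) is also a useful piece of bookkeeping the paper leaves implicit when it applies Lemma~\ref{lem:identity}.
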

\begin{proof}
It is clear that $\iota$ and its partial inverse are computable. For $w\in\IN^*$ we obtain
\[\mu_{2^\IN}(\iota(w\IN^\IN))=2^{-\sum_{i=0}^{|w|-1}(w(i)+1)}=\prod_{i=0}^{|w|-1}2^{-w(i)-1}=\mu_{\IN^\IN}(w\IN^\IN).\]
By Lemma~\ref{lem:identity} this proves that $\mu_{2^\IN}(\iota(A))=\mu_{\IN^\IN}(A)$ holds for all measurable $A\In\IN^\IN$.
\end{proof}

We point out that this embedding does not have a closed range and it does not preserve closedness.
Hence it cannot be used in a uniform setting. 
The following lemma provides a fully uniform reduction in the opposite direction for sets.
We use the {\em signum function} $\sgn:\IN\to\IN$, defined by $\sgn(0):=0$ and $\sgn(n+1)=1$ for all $n\in\IN$ and 
its extension $\sgn:\IN^\IN\to2^\IN$, defined by $\sgn(p)(n):=\sgn(p(n))$ for all $p\in\IN^\IN$ and $n\in\IN$.

\begin{lemma}[Signum]
\label{lem:power-Cantor-Baire}
The map 
\[J:2^{(2^\IN)}\to2^{(\IN^\IN)},A\mapsto\sgn^{-1}(A)\]
has the property that $\mu_{\IN^\IN}(J(A))=\mu_{2^\IN}(A)$ for all 
measurable $A\In 2^\IN$ and its restriction $J:\AA_-(2^\IN)\to\AA_-(\IN^\IN)$ to closed sets is computable.
\end{lemma}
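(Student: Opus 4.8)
The plan is to establish the measure identity directly from Lemma~\ref{lem:Baire-Cantor} (via the signum map) and then verify computability of the restriction to closed sets. First I would observe that $\sgn:\IN^\IN\to2^\IN$ is surjective and that $J(A)=\sgn^{-1}(A)$ is, by definition, the preimage under a measurable map; so the set-theoretic map $J:2^{(2^\IN)}\to2^{(\IN^\IN)}$ is well-defined on all subsets. For the measure claim, I would compute $\mu_{\IN^\IN}(\sgn^{-1}(w2^\IN))$ for cylinders $w\in2^*$: the preimage $\sgn^{-1}(w2^\IN)$ is the set of $p\in\IN^\IN$ with $\sgn(p(i))=w(i)$ for $i<|w|$, i.e.\ $p(i)=0$ if $w(i)=0$ and $p(i)\geq1$ if $w(i)=1$. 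Under the geometric measure $\mu_{\IN^\IN}$, the event $p(i)=0$ has measure $2^{-1}$ and the event $p(i)\geq1$ has measure $\sum_{k\geq1}2^{-k-1}=2^{-1}$ as well; hence $\mu_{\IN^\IN}(\sgn^{-1}(w2^\IN))=2^{-|w|}=\mu_{2^\IN}(w2^\IN)$. Since $A\mapsto\mu_{\IN^\IN}(\sgn^{-1}(A))$ is a Borel measure on $2^\IN$ that agrees with $\mu_{2^\IN}$ on all cylinders, Lemma~\ref{lem:identity} (applied with $R=2^\IN$, which is finite) yields $\mu_{\IN^\IN}(J(A))=\mu_{2^\IN}(A)$ for all measurable $A\In2^\IN$.

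Next I would handle the computability of the restriction $J:\AA_-(2^\IN)\to\AA_-(\IN^\IN)$. The key point is that $\sgn:\IN^\IN\to2^\IN$ is computable and total, and for such a map the inverse image operation on closed sets (with negative information) is computable: a closed set $A\In2^\IN$ is given by a $\psi_-$--name, i.e.\ an enumeration of basic open balls $B_i$ exhausting $2^\IN\setminus A$, and $\sgn^{-1}(A)=\IN^\IN\setminus\sgn^{-1}(2^\IN\setminus A)=\IN^\IN\setminus\bigcup_i\sgn^{-1}(B_i)$. Each $\sgn^{-1}(w2^\IN)$ is an effectively open subset of $\IN^\IN$ (it is $\{p:p(i)=0\text{ for }w(i)=0,\ p(i)\geq1\text{ for }w(i)=1,\ i<|w|\}$, a countable union of basic cylinders of $\IN^\IN$ that can be enumerated uniformly in $w$), so from an enumeration of the $B_i$ we can enumerate basic open balls of $\IN^\IN$ exhausting $\IN^\IN\setminus J(A)$, which is exactly a $\psi_-$--name of $J(A)$. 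More abstractly, this is the standard fact that if $g:X\to Y$ is computable then $g^{-1}:\AA_-(Y)\to\AA_-(X)$ is computable (the pullback of open sets along a continuous map being continuous in the $\delta^\circ$-sense), applied to $g=\sgn$.

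The main obstacle, such as it is, is purely bookkeeping: one must be careful that $\sgn^{-1}$ of a basic ball $w2^\IN$ in Cantor space really is effectively open in Baire space and that this is uniform in $w$, since $\IN^\IN$ is not compact and a single coordinate constraint ``$p(i)\geq1$'' already requires an infinite union of basic cylinders. But this is immediate from the definition of the product topology on $\IN^\IN$, and no genuine difficulty arises. I expect the measure computation via Lemma~\ref{lem:identity} to be the only step requiring an explicit (short) calculation, and the computability part to follow from the general pullback principle already implicit in the discussion of $\psi_-$ and of computable maps between represented spaces in Section~\ref{sec:preliminaries}.
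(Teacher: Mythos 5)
Your proof is correct and follows essentially the same route as the paper's: establish $\mu_{\IN^\IN}(J(w2^\IN))=\mu_{2^\IN}(w2^\IN)$ on cylinders, extend via Lemma~\ref{lem:identity}, and obtain computability from the pullback of co-c.e.\ open complements under the computable map $\sgn$. The only cosmetic difference is that you compute the cylinder measure directly from the product-measure formula (each coordinate event $\{p(i)=0\}$ and $\{p(i)\geq1\}$ has measure $\tfrac12$), whereas the paper proves the same identity by structural induction on $w$; these are two phrasings of the same calculation.
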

\begin{proof}
Since $\sgn:\IN^\IN\to2^\IN$ is computable, it follows that $J$ maps measurable sets 
to measurable sets and closed sets to closed sets and that the restriction of $J$ to closed sets 
is computable. The claim on the measure can be proved by induction. We claim that $\mu_{\IN^\IN}(J(w2^\IN))=\mu_{2^\IN}(w\IN^\IN)$
holds for all $w\in\{0,1\}^*$. This is clear for the empty word $w$. Suppose it holds for a given word $w$. Then we obtain
\[\mu_{\IN^\IN}(J(1w2^\IN))=\sum_{i=1}^\infty2^{-i-1}\mu_{\IN^\IN}(J(w2^\IN))=\frac{1}{2}\mu_{2^\IN}(w2^\IN)=\mu_{2^\IN}(1w2^\IN)\]
and likewise $\mu_{\IN^\IN}(J(0w2^\IN))=\mu_{2^\IN}(0w2^\IN)$. Hence the claim follows by structural induction.
This implies that $\mu_{\IN^\IN}(J(A))=\mu_{2^\IN}(A)$ for all measurable $A\In2^\IN$ by Lemma~\ref{lem:identity}.
\end{proof}

Lemma~\ref{lem:Baire-Cantor} and \ref{lem:power-Cantor-Baire} show that we
could equivalently use functions $g:\In\IN^\IN\mto\IN^\IN$ in Proposition~\ref{prop:probabilistic}.
The following result, which follows from Proposition~\ref{prop:probabilistic} shows that being probabilistic is a necessary criterion for being 
probabilistically computable in any sense that we consider here.

\begin{theorem}[Las Vegas computability and probabilistic degrees]
\label{thm:probabilistic}
If there is a~$g$ with ${f\leqW\PC_{\IN^\IN}\circ g}$, then $f$ is probabilistic.
\end{theorem}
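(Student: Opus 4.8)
The plan is to reduce the problem of being probabilistic to the known characterization in Proposition~\ref{prop:probabilistic}, namely that $f$ is probabilistic if and only if $f\leqW g$ for some $g:\In\IN^\IN\mto2^\IN$ with $\mu_{2^\IN}(g(p))>0$ for all $p\in\dom(g)$. So the whole task amounts to showing that $\PC_{\IN^\IN}\circ g$ can be replaced (up to Weihrauch reducibility) by such a $g$. Since probabilistic degrees are closed downwards under $\leqW$ by Proposition~\ref{prop:downwards-probabilistic}, it suffices to prove that $\PC_{\IN^\IN}$ itself is probabilistic, and then observe that $f\leqW\PC_{\IN^\IN}\circ g\leqW\PC_{\IN^\IN}*g'$ for a suitable $g'\leqW g$; but more directly, I would argue that the \emph{composition} $h:=\PC_{\IN^\IN}\circ g$ is probabilistic by exhibiting it as reducible to an appropriate positive-measure multifunction.

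First I would unfold the definitions. A realizer-level description of $f\leqW\PC_{\IN^\IN}\circ g$ gives computable $H,K$ such that for every name $p$ of an input $x\in\dom(f)$, $K(p)$ is a $\delta$-name of some $z\in\dom(g)$; then for every tree/closed-set value $A\in g(z)$ (so $A\in\AA_-(\IN^\IN)$ with $\mu_{\IN^\IN}(A)>0$) and every $r\in A$, the output $H\langle p,r\rangle$ is a name of some element of $f(x)$. The key point is that for a fixed $p$ there is at least one such $A$, hence at least one positive-measure set $A_p\In\IN^\IN$ of ``good'' advices $r$ — namely $A_p$ can be taken to be any $A\in g(\delta K(p))$. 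What must be checked is (i) that we may work over $2^\IN$ rather than $\IN^\IN$ as the advice space, using the embedding $\iota:\IN^\IN\to2^\IN$ of Lemma~\ref{lem:Baire-Cantor} whose inverse is measure-preserving, so that $\iota(A_p)$ has positive $\mu_{2^\IN}$-measure; and (ii) that the resulting set of good advices in $2^\IN$, after transporting the computation through $\iota^{-1}$, is measurable (it need not be closed, which is fine for the probabilistic notion). Concretely I would define $F:\In\IN^\IN\times2^\IN\to\IN^\IN$ by $F(p,s):=H\langle p,\iota^{-1}(s)\rangle$ where defined, and pad it arbitrarily elsewhere; then the set $\{s\in2^\IN:\delta_YF(p,s)\in f\delta_X(p)\}$ contains $\iota(A_p)$ and hence has positive measure. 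This directly witnesses that $f$ is probabilistic by the definition.

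The one subtlety, and the step I expect to require the most care, is handling the interface between $g$'s output and $\PC_{\IN^\IN}$'s input cleanly: since $g$ is an arbitrary (possibly non-computable) multifunction and $K$ only produces a \emph{name} of a value $z\in\dom(g)$, I should not assume the set $A$ is itself computable from $p$ — only that \emph{some} value $A\in g(\delta K(p))$ exists and that $H$ behaves correctly on $\{p\}\times A$ for every such $A$. It therefore suffices to fix, for each $p$, one such $A_p$ (using choice, non-uniformly — which is permitted since ``probabilistic'' is a non-uniform notion) and verify the positive-measure condition for that single set. I would also remark that the measurability of $\{s:\delta_YF(p,s)\in f\delta_X(p)\}$ follows because $F$ is computable (hence continuous on a $G_\delta$ domain) and $\delta_Y^{-1}(f\delta_X(p))$ is Borel as a preimage under the continuous $\delta_Y$ of... — actually the cleanest route is simply to invoke Proposition~\ref{prop:probabilistic} in the other direction: once I have established $f\leqW g_0$ for the explicit $g_0:\In\IN^\IN\mto2^\IN$, $g_0(p):=\iota(A_p)$ (with the obvious minimal domain), and noted $\mu_{2^\IN}(g_0(p))=\mu_{\IN^\IN}(A_p)>0$, the proposition delivers ``$f$ probabilistic'' with the measurability bookkeeping already discharged inside its proof. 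Thus the whole argument is: unfold $\leqW$, transport advices through $\iota$, fix one good set per input, and appeal to Proposition~\ref{prop:probabilistic}.
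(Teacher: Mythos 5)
Your proposal is correct and follows essentially the same route the paper intends: the paper simply remarks that the theorem "follows from Proposition~\ref{prop:probabilistic}," having noted immediately beforehand that Lemma~\ref{lem:Baire-Cantor} allows one to use $\IN^\IN$-valued functions in that proposition, and you supply exactly those steps — unfold the reduction, fix non-uniformly a positive-measure closed set $A_p$ of good advices per input name, transport through $\iota$ to Cantor space, and invoke Proposition~\ref{prop:probabilistic}. Your instinct to route the measurability bookkeeping through the proposition rather than redo it is also the right call.
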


By Lemma~\ref{lem:power-Cantor-Baire} and due to the fact that the signum function $\sgn:\IN^\IN\to2^\IN$ is computable,
we obtain $\PC_{2^\IN}\leqSW\PC_{\IN^\IN}$.
By a slight variant of this result we also obtain $\PC_{\IN\times2^\IN}\leqSW\PC_{\IN^\IN}$ and hence
\[\PC_\IN\leqSW\PC_{\IN\times2^\IN}\leqSW\PC_{\IN^\IN}.\] Altogether, we get the following corollary.

\begin{corollary}[Probabilistic degrees]
\label{cor:probabilistic}
If $f\leqW\PC_R^{(n)}$ for some $n\in\IN$ and $R$ is among $\IN, 2^\IN, \IN\times2^\IN$ or $\IN^\IN$, then $f$ is probabilistic.
\end{corollary}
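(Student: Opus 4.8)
The plan is to reduce the statement to Theorem~\ref{thm:probabilistic} using the reductions $\PC_R\leqSW\PC_{\IN^\IN}$ that were established just before this corollary (for $R=\IN^\IN$ this is trivial, and for $R$ among $\IN,2^\IN,\IN\times2^\IN$ it follows from Lemma~\ref{lem:power-Cantor-Baire} together with the chain $\PC_\IN\leqSW\PC_{\IN\times2^\IN}\leqSW\PC_{\IN^\IN}$). Since the jump operation $h\mapsto h'$ is monotone with respect to strong Weihrauch reducibility, an $n$--fold iteration yields $\PC_R^{(n)}\leqSW\PC_{\IN^\IN}^{(n)}$ for every $n\in\IN$, so the hypothesis $f\leqW\PC_R^{(n)}$ already gives $f\leqW\PC_{\IN^\IN}^{(n)}$.

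The next step is to bring $\PC_{\IN^\IN}^{(n)}$ into the form $\PC_{\IN^\IN}\circ g$ demanded by Theorem~\ref{thm:probabilistic}. Here I would use the observation, already recorded in Section~\ref{sec:majority}, that for any multi-valued function $h:\In(X,\delta_X)\mto(Y,\delta_Y)$ on represented spaces one has $h^{(n)}\equivSW h\circ\delta_X^{(n)}$, where the $n$--fold jump $\delta_X^{(n)}$ of the representation is viewed as a (single-valued) function from $(\IN^\IN,\id_{\IN^\IN})$ onto $(X,\delta_X)$; this is just an unwinding of the definition of the jump of a represented space. Applying it to $h=\PC_{\IN^\IN}$ shows that $\PC_{\IN^\IN}^{(n)}\equivSW\PC_{\IN^\IN}\circ g$, where $g$ is taken to be the $n$--fold jump of the representation of $\AA_-(\IN^\IN)$.

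Combining these, $f\leqW\PC_{\IN^\IN}^{(n)}\equivSW\PC_{\IN^\IN}\circ g$, hence $f\leqW\PC_{\IN^\IN}\circ g$, and Theorem~\ref{thm:probabilistic} now yields that $f$ is probabilistic. Alternatively, one could route through Proposition~\ref{prop:downwards-probabilistic}: first deduce from the same rewriting and Theorem~\ref{thm:probabilistic} that $\PC_{\IN^\IN}^{(n)}$ itself is probabilistic, and then pull this property down along $f\leqW\PC_R^{(n)}\leqSW\PC_{\IN^\IN}^{(n)}$. I do not anticipate a real obstacle here; the only points needing a little care are that the auxiliary function $g$ in Theorem~\ref{thm:probabilistic} is allowed to be arbitrary (in particular non-computable), so that taking $g$ to be a jumped representation is legitimate, and that the strong reductions have to be weakened to ordinary Weihrauch reductions before invoking the theorem, which is harmless by transitivity of $\leqW$.
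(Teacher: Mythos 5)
Your proof is correct and follows essentially the same route the paper implicitly takes: establish $\PC_R\leqSW\PC_{\IN^\IN}$, apply monotonicity of jumps under $\leqSW$, rewrite $\PC_{\IN^\IN}^{(n)}\equivSW\PC_{\IN^\IN}\circ\delta^{(n)}$ as noted in Section~\ref{sec:majority}, and invoke Theorem~\ref{thm:probabilistic}. Your caveats about the non-computability of $g$ and the passage from $\leqSW$ to $\leqW$ are apt but harmless, exactly as you say.
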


The core observation in this context is that Weak K\H{o}nig's lemma is not probabilistic~\cite[Theorem~20]{BP10}.

\begin{proposition}
\label{prop:WKL-probabilistic}
$\WKL$ is not probabilistic.
\end{proposition}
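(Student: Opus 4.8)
The plan is to contradict the hypothesis that $\WKL$ is probabilistic by confronting the uniformity built into that notion with the Theorem of Sacks (Theorem~\ref{thm:Sacks}). Suppose $\WKL$ is probabilistic. Unwinding the definition (with $\Tr$ and $2^\IN$ carrying their canonical representations), we obtain a \emph{single} computable function $F:\In\IN^\IN\times2^\IN\to\IN^\IN$ such that for every name $p$ of an infinite binary tree $T$ the set $S_p:=\{r\in2^\IN:\delta_{2^\IN}F(p,r)\text{ is a path of }T\}$ has positive measure $\mu_{2^\IN}(S_p)>0$.

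\textbf{Choosing a hard instance.} First I would fix one tree to exploit. By a standard construction in classical computability theory there is a noncomputable $A\in2^\IN$ that forms a $\pO{1}$ singleton, i.e.\ there is a computable infinite binary tree $T\In2^*$ with $[T]=\{A\}$: one builds $T$ by a finite extension argument along a single branch, committing to one further bit at each stage while diagonalising against each $\Phi_e$ at a fresh witness (if $\Phi_e$ converges there, copy the opposite bit; otherwise $\Phi_e$ is partial, hence $\neq A$). Let $p_0$ be a computable name of this $T$; since $T$ is infinite, $p_0$ is an admissible input, so the defining property of $F$ applies to $p_0$. As $[T]=\{A\}$, the success set collapses to $S_{p_0}=\{r\in2^\IN:\delta_{2^\IN}F(p_0,r)=A\}$, and by hypothesis $\mu_{2^\IN}(S_{p_0})>0$.

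\textbf{Deriving the contradiction.} Next I would observe that $S_{p_0}$ must actually be null. Since $F$ is computable and $p_0$ is a computable sequence, the map $r\mapsto\delta_{2^\IN}F(p_0,r)$ is computed by a single Turing functional in the oracle $r$; hence $A=\delta_{2^\IN}F(p_0,r)\leqT r$ for every $r\in S_{p_0}$, so $S_{p_0}\In\{r\in2^\IN:A\leqT r\}$. As $A$ is not computable, the Theorem of Sacks (Theorem~\ref{thm:Sacks}) gives $\mu_{2^\IN}(\{r:A\leqT r\})=0$, contradicting $\mu_{2^\IN}(S_{p_0})>0$. Hence $\WKL$ is not probabilistic; together with Corollary~\ref{cor:probabilistic} this also reproves $\WWKL\lW\WKL$ of Corollary~\ref{cor:WWKL-WKL}.

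\textbf{Main obstacle.} The delicate point is the choice of the instance $T$: everything hinges on making $[T]$ \emph{thin enough} that a positive-measure set of successful advices gets crushed by Sacks. A $\pO{1}$ singleton does this cleanly; by contrast the computable tree of $\{0,1\}$-valued diagonally non-computable functions has continuum many paths, so along that route one would instead have to invoke the classical fact that the reals of PA degree form a null set (this variant also works and yields an alternative proof). The remaining verifications --- existence of a noncomputable $\pO{1}$ singleton, and that a path of $T$ produced by a computable functional from $(p_0,r)$ is genuinely $\leqT r$ --- are routine.
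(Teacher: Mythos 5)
The main proof breaks at the very first step: there is no noncomputable $\pO{1}$ singleton in Cantor space, so the ``hard instance'' you want to feed to $F$ does not exist. If $T\In2^*$ is a computable infinite binary tree with $[T]=\{A\}$, then $A$ is automatically computable. To see this, for each $n$ search for a level $m\geq n$ such that exactly one node $\sigma\in T$ of length $n$ has an extension in $T$ of length $m$; since every length-$n$ node other than $A|_n$ lies on a dead branch, K\H{o}nig's lemma gives a bound beyond which only $A|_n$ survives, so the search halts and necessarily returns $A|_n$ (because $A|_n$ has extensions in $T$ of every length). The finite-extension diagonalisation you sketch does build a noncomputable $A$, but it does not and cannot produce a computable tree with $[T]=\{A\}$: the tree would have to ``know'' each bit of $A$ at the moment it prunes the sibling, which is exactly the computation forbidden by the diagonalisation. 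Granting the false existence claim, the rest of the Sacks argument is fine, but the claim is false, so the proof as stated is not repairable along this route.

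The fallback you mention in passing is, however, exactly what is needed and is essentially what the paper does. Take for $T$ the computable tree of $\{0,1\}$-valued diagonally noncomputable functions (or any nonempty $\pO{1}$ class with no computable member, e.g.\ separating sets for a fixed computably inseparable pair of c.e.\ sets). A successful advice $r$ for the fixed computable name $p_0$ of $T$ yields, via the computable functional $r\mapsto\delta_{2^\IN}F(p_0,r)$, a path through $T$, hence a real of PA degree computable from $r$. The set of oracles of PA degree has measure zero --- this is the Jockusch--Soare theorem the paper invokes, phrased via separating sets, and the equivalence $\WKL\equivW\SEP$ is what turns this pointwise fact into the statement about $\WKL$. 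So your ``alternative'' is not really a second proof so much as the correct proof; the $\pO{1}$-singleton shortcut must be dropped.
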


The proof is essentially based on an earlier result of Jockusch and Soare (see \cite[Theorem~5.3]{JS72}) which shows that the set
of those points in Cantor space from which one can compute a separating set for two given disjoint c.e.\ sets that are computably inseparable has measure $0$.
Since the separation problem is equivalent to Weak K\H{o}nig's Lemma by \cite[Theorem~6.7]{GM09}, one obtains Proposition~\ref{prop:WKL-probabilistic}.

As a consequence it follows with Proposition~\ref{prop:downwards-probabilistic} that everything above $\WKL$ is also not probabilistic.
This applies, in particular, to the limit map $\lim$.
Corollary~\ref{cor:probabilistic} and Proposition~\ref{prop:WKL-probabilistic} yield the following.

\begin{corollary}
\label{cor:WKL-probabilistic}
$\WKL\nleqW\PC_{\IN^\IN}^{(n)}$ and, in particular, $\WKL\nleqW\WWKL^{(n)}$ for every $n\in\IN$.
\end{corollary}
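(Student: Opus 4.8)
The plan is to derive this as an immediate consequence of the two facts just recorded, namely Corollary~\ref{cor:probabilistic} and Proposition~\ref{prop:WKL-probabilistic}. First I would argue by contradiction: suppose $\WKL\leqW\PC_{\IN^\IN}^{(n)}$ for some $n\in\IN$. Then Corollary~\ref{cor:probabilistic} (applied with $R=\IN^\IN$) tells us that $\WKL$ is probabilistic. But this contradicts Proposition~\ref{prop:WKL-probabilistic}, which states that $\WKL$ is not probabilistic. Hence no such $n$ exists, and $\WKL\nleqW\PC_{\IN^\IN}^{(n)}$ for all $n$.

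For the ``in particular'' clause, the remaining task is to observe that $\WWKL^{(n)}$ sits below $\PC_{\IN^\IN}^{(n)}$, so that a reduction $\WKL\leqW\WWKL^{(n)}$ would yield $\WKL\leqW\PC_{\IN^\IN}^{(n)}$ by transitivity of $\leqW$ and thus be excluded by the first part. Here I would chain together: $\WWKL\equivSW\PC_{2^\IN}$ by Proposition~\ref{prop:WWKL}, and $\PC_{2^\IN}\leqSW\PC_{\IN^\IN}$ as noted in the text (via Lemma~\ref{lem:power-Cantor-Baire} and computability of the signum function). Since the jump operation $f\mapsto f'$ is monotone with respect to strong Weihrauch reducibility $\leqSW$, iterating $n$ times gives $\WWKL^{(n)}\equivSW\PC_{2^\IN}^{(n)}\leqSW\PC_{\IN^\IN}^{(n)}$, which is what is needed.

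There is essentially no hard step here; the work was done in establishing Corollary~\ref{cor:probabilistic} (which rests on the measure-zero arguments of Lemmas~\ref{lem:Baire-Cantor} and~\ref{lem:power-Cantor-Baire} together with Proposition~\ref{prop:probabilistic}) and Proposition~\ref{prop:WKL-probabilistic} (which rests on the Jockusch--Soare result that the cone of separating sets for a computably inseparable pair has measure zero, combined with the equivalence of $\WKL$ with the separation problem). The only mild care needed is to make sure the jump monotonicity is invoked for $\leqSW$ and not $\leqW$, since $f\mapsto f'$ is not monotone for ordinary Weihrauch reducibility; using the strong reductions $\WWKL\equivSW\PC_{2^\IN}$ and $\PC_{2^\IN}\leqSW\PC_{\IN^\IN}$ sidesteps this issue cleanly.
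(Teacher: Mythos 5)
Your argument is correct and is exactly the intended one: the paper derives the corollary by combining Corollary~\ref{cor:probabilistic} (applied with $R=\IN^\IN$) and Proposition~\ref{prop:WKL-probabilistic}, with the ``in particular'' clause following from $\WWKL^{(n)}\equivSW\PC_{2^\IN}^{(n)}\leqSW\PC_{\IN^\IN}^{(n)}$. Your care in invoking jump monotonicity only for $\leqSW$, via the strong equivalences $\WWKL\equivSW\PC_{2^\IN}\leqSW\PC_{\IN^\IN}$, is precisely the right precaution.
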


An important classical result that yields further insights into non-uniform randomized computations is the
Theorem of Sacks (see \cite{Sac63} and for recent presentations see \cite[Theorem~5.1.12]{Nie09}, \cite[Corollary~8.12.2]{DH10}).

\begin{theorem}[Sacks 1963]
\label{thm:Sacks}
Let $A\In2^\IN$ be a set such that $\mu(A)>0$ and let $q\in\IN^\IN$ be such that $q\leqT r$ for every $r\in A$.
Then $q$ is computable.
\end{theorem}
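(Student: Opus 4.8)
The plan is to carry out the classical measure-theoretic argument of Sacks, whose combinatorial core is precisely the ``majority vote'' idea of Theorem~\ref{thm:majority}. First I would fix an effective enumeration $(\Phi_e)_{e\in\IN}$ of the oracle Turing machines and, for each $e$, consider
\[
B_e:=\{r\in2^\IN:\Phi_e^r\text{ is total and }\Phi_e^r=q\}.
\]
Each $B_e$ is a $\PO{2}$ subset of $2^\IN$ (hence Borel and measurable), since $r\in B_e$ iff $(\forall n)(\exists s)\;\Phi_{e,s}^r(n)\!\downarrow=q(n)$, and the predicate inside is decidable from a finite portion of $r$. By hypothesis every $r\in A$ computes $q$, so $A\In\bigcup_{e\in\IN}B_e$; as $\mu(A)>0$, countable subadditivity of measure applied to the measurable sets $B_e$ (using outer measure should $A$ itself fail to be measurable) yields an $e$ with $\mu(B_e)>0$. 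I fix such an $e$ and write $B:=B_e$.

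Next I would localise $B$ to a cylinder in which it has relative measure above $\tfrac12$. By inner regularity there is a closed $C\In B$ with $\mu(C)>0$, and then the Lebesgue Density Lemma~\ref{lem:LDL} (applied with $k=2$) provides a word $\sigma\in\{0,1\}^*$ with $\mu(C\cap\sigma2^\IN)\geq(1-\tfrac14)\cdot2^{-|\sigma|}$, so in particular
\[
\mu(B\cap\sigma2^\IN)>\tfrac12\cdot2^{-|\sigma|}.
\]
The finite data $e$ and $\sigma$ will be hard-wired into the algorithm computing $q$. To compute $q(n)$, for each value $v\in\IN$ put
\[
U_v:=\{r\in\sigma2^\IN:\Phi_e^r(n)\!\downarrow=v\}.
\]
Since a halting oracle computation queries only finitely many oracle bits, each $U_v$ is a c.e.\ open subset of $\sigma2^\IN$ uniformly in $e,\sigma,n,v$, hence $\mu(U_v)$ is the supremum of a computable nondecreasing rational sequence. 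Now if $r\in B\cap\sigma2^\IN$ then $\Phi_e^r(n)=q(n)$, so $B\cap\sigma2^\IN\In U_{q(n)}$ while $U_v$ is disjoint from $B\cap\sigma2^\IN$ for every $v\neq q(n)$; therefore $\mu(U_{q(n)})\geq\mu(B\cap\sigma2^\IN)>\tfrac12\cdot2^{-|\sigma|}$, whereas $\mu(U_v)\leq2^{-|\sigma|}-\mu(B\cap\sigma2^\IN)<\tfrac12\cdot2^{-|\sigma|}$ for all $v\neq q(n)$. The algorithm dovetails the lower rational approximations of all the $\mu(U_v)$ and outputs the first $v$ whose approximation strictly exceeds $\tfrac12\cdot2^{-|\sigma|}$: this terminates because the approximation of $\mu(U_{q(n)})$ eventually crosses the threshold, and it is correct because no $U_v$ with $v\neq q(n)$ ever does. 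Running this for every $n$ shows $q$ is computable.

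The argument is short, and I expect no genuine obstacle beyond bookkeeping. The three points deserving care are: the subadditivity step, cleanly handled by covering $A$ with the Borel sets $B_e$ and passing to outer measure if needed; the passage to a closed subset $C\In B$ of positive measure so that Lemma~\ref{lem:LDL} applies verbatim; and the \emph{strict} inequalities in the threshold comparison, which are exactly what lets the ``majority vote'' pick out the correct value $q(n)$ rather than merely some candidate, and which ensure the search both halts and is unambiguous.
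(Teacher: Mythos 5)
The paper does not prove Theorem~\ref{thm:Sacks}: it is cited as a classical result (Sacks 1963, with textbook treatments in Nies and in Downey--Hirschfeldt), and the surrounding text in Sections~\ref{sec:majority} and~\ref{sec:probabilistic-degrees} only remarks that the classical proof is by majority vote. Your proposal correctly reconstructs exactly that classical argument, and I see no gap. Covering $A$ by the Borel sets $B_e$ and applying outer-measure monotonicity plus countable subadditivity does give a single $e$ with $\mu(B_e)>0$; inner regularity yields a closed $C\In B_e$ of positive measure so that Lemma~\ref{lem:LDL} applies as stated; and the pairwise disjoint c.e.\ open sets $U_v\In\sigma2^\IN$ then satisfy $\mu(U_{q(n)})>\tfrac12\cdot2^{-|\sigma|}$ while $\mu(U_v)\leq 2^{-|\sigma|}-\mu(B_e\cap\sigma2^\IN)<\tfrac12\cdot2^{-|\sigma|}$ for $v\neq q(n)$, so the dovetailed comparison of lower approximations against the rational threshold $\tfrac12\cdot2^{-|\sigma|}$ halts with the unique correct value. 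This is precisely the non-uniform majority-vote argument that the paper's Theorem~\ref{thm:majority} turns into a uniform statement, so your route matches what the paper indicates.
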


In a certain sense Theorem~\ref{thm:majority} captures the uniform content of the Theorem of Sacks. 
An early predecessor of the Theorem of Sacks is the Theorem of de Leeuw, Moore, Shannon and Shapiro \cite{dLMSS56}
that makes a corresponding statement with respect to c.e.\ sets (see also \cite[Theorem~8.12.1]{DH10}).

\begin{theorem}[de Leeuw, Moore, Shannon and Shapiro 1956]
\label{thm:dLMSS}
Let $A\In2^\IN$ be a set such that ${\mu(A)>0}$ and let $B\In\IN$ be such that $B$ is c.e.\ in $r$ for every $r\in A$.
Then $B$ is c.e.
\end{theorem}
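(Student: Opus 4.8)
The plan is to run the classical ``majority vote'' over the random oracle $r$, exactly as in the standard proof of the Theorem of Sacks~\ref{thm:Sacks}. First I would unwind the hypothesis: for each $r\in A$ there is an index $e$ (depending on $r$) such that $W_e^r=B$, where $W_e^r$ denotes the $e$--th set that is c.e.\ relative to $r$. Since there are only countably many indices and $A=\bigcup_{e\in\IN}A_e$ with $A_e:=\{r\in A:W_e^r=B\}$, countable additivity yields some $e$ with $\mu(A_e)>0$. (The set $A_e$ is measurable, being the intersection of the measurable set $A$ with the Borel set $\{r:W_e^r=B\}$; a reader uneasy about this may pass to a closed subset of positive measure by inner regularity and then apply Lemma~\ref{lem:LDL} below in place of the classical density theorem.) Fix such an $e$ from now on.

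Next I would localise using density. Since $\mu(A_e)>0$, the classical Lebesgue Density Theorem provides a finite binary word $\sigma$ with $\mu(A_e\cap[\sigma])>\tfrac12\,2^{-|\sigma|}$, where $[\sigma]:=\sigma 2^\IN$ is the cylinder determined by $\sigma$; that is, $A_e$ occupies strictly more than half of $[\sigma]$. Set the threshold $t:=2^{-|\sigma|-1}$. For each $n\in\IN$ consider the set $U_n:=\{r\in[\sigma]:n\in W_e^r\}$. Because a positive instance ``$n\in W_e^r$'' is witnessed by a halting computation that queries only finitely many bits of $r$, the set $U_n$ is c.e.\ open uniformly in $n$ (with $e$ and $\sigma$ fixed), and hence $\mu(U_n)$ is the limit of a nondecreasing computable sequence of rationals, uniformly in $n$.

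Then I would check that $n\mapsto[\mu(U_n)>t]$ decides $B$. If $n\in B$, then every $r\in A_e$ satisfies $n\in W_e^r$, so $A_e\cap[\sigma]\In U_n$ and $\mu(U_n)\ge\mu(A_e\cap[\sigma])>t$. If $n\notin B$, then no $r\in A_e$ satisfies $n\in W_e^r$, so $U_n\In[\sigma]\sm A_e$ and $\mu(U_n)\le 2^{-|\sigma|}-\mu(A_e\cap[\sigma])<2^{-|\sigma|}-t=t$. Both inequalities are strict, so the following is a valid enumeration procedure: for each $n$, run the lower approximation to $\mu(U_n)$, and put $n$ into a set $B^\ast$ as soon as this approximation exceeds $t$. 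By the two displayed inequalities this happens exactly when $n\in B$ (eventually, since the approximation converges to something strictly above $t$) and never when $n\notin B$ (since it stays bounded by something strictly below $t$). Hence $B^\ast=B$, so $B$ is c.e.

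The main obstacle is conceptual rather than computational: the index $e$ witnessing ``$B$ is c.e.\ in $r$'' cannot be chosen uniformly in $r$, and a naive majority vote over all of $2^\IN$ would only pin the threshold at $\tfrac12$ in the limit, which is not strict enough to drive an effective enumeration. Both points are handled above --- the first by countable additivity over the index, the second by restricting to a cylinder in which $A_e$ has relative density strictly above $\tfrac12$ --- after which the remaining verification is just the uniform lower semicomputability of $\mu(U_n)$ for a c.e.\ open $U_n$.
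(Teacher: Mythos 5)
The paper does not give a proof of this theorem; it is cited as a classical result with a pointer to Downey--Hirschfeldt, Theorem~8.12.1. Your reconstruction is correct and follows the standard classical argument: reduce to a single reduction index $e$ by countable (sub)additivity, boost the relative density of $A_e$ above $\tfrac12$ inside a cylinder via the Lebesgue Density Theorem, and then exploit the uniform lower semicomputability of the measures of the c.e.\ open sets $U_n$ to enumerate $B$ by a threshold test. This is the same ``majority-vote'' mechanism the paper isolates uniformly in its Theorem~\ref{thm:majority}, and your use of Lemma~\ref{lem:LDL} as an alternative to the classical density theorem is consistent with the paper's treatment. Two small remarks, neither a gap: what you invoke to locate $e$ is countable subadditivity rather than additivity, since the $A_e$ need not be pairwise disjoint; and your appeal to Lemma~\ref{lem:LDL} should use $k\ge 2$ (or the full density theorem) to guarantee the \emph{strict} inequality $\mu(A_e\cap[\sigma])>\tfrac12\,2^{-|\sigma|}$ that the threshold argument needs, which you in fact state.
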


The Theorem of de Leeuw, Moore, Shannon and Shapiro also holds true in a relativized version that states that
if $\mu(A)>0$ and $B$ is c.e.\ in $p\oplus r$ for every $r\in A$, then $B$ is c.e.\ in $p$.
Here we use this relativized version in order to prove the following result that shows that certain single-valued probabilistic functions 
map computable inputs to computable outputs.\footnote{We thank Mathieu Hoyrup for pointing out that the Theorem of de Leeuw, Moore, Shannon and Shapiro~\ref{thm:dLMSS}
can be used to generalize our initial version of Theorem~\ref{thm:single-valued-probabilistic}, which was originally only formulated for certain metric spaces $Y$. At the same time its use simplified the proof.}

In fact, using the notion of representation reducibility introduced by Joseph Miller \cite{Mil04},
one can even express a stronger result. We recall that for represented spaces $(X,\delta_X)$ and $(Y,\delta_Y)$ and $x\in X$ and $y\in Y$ we say
that $y$ is {\em representation reducible} to $x$, if there exists a partial computable function $g:\In X\to Y$ such that $g(x)=y$.
In symbols this is denoted by $y\leqr x$.\footnote{This can also be rephrased such that $\delta_Y^{-1}(y)$ is Medvedev reducible to $\delta_X^{-1}(x)$.
Miller proved that for computable metric spaces $X$ and $Y$ this is equivalent to $\delta_Y^{-1}(y)$ being Muchnik reducible to $\delta_X^{-1}(x)$,
see \cite[Corollary~4.3]{Mil04}.}
We recall that a $T_0$--space $X$ with countable basis $(U_i)_i$ has a standard representation $\delta_X$ given
by $\delta_X(p)=x:\iff\{n\in\IN:n+1\in\range(p)\}=\{n\in\IN:x\in U_n\}$ (see \cite{Wei00}). In other words,
$p$ is a name of $x$ if it encodes a list of all basic properties $U_n$ of $x$.

\begin{theorem}[Single-valued probabilistic degrees]
\label{thm:single-valued-probabilistic}
Let $X$ be a represented space and let $Y$ be a $T_0$\nobreakdash-space with countable base and standard representation.
If a single-valued function $f:X\to Y$ is probabilistic, then $f$ maps computable inputs to computable outputs.
In fact, even $f(x)\leqr x$ holds for all $x\in X$ in this case.
\end{theorem}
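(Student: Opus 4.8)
The plan is to strip the word ``probabilistic'' down to a single computable machine and then feed its success set into the relativized Theorem of de~Leeuw, Moore, Shannon and Shapiro~\ref{thm:dLMSS}. First I would fix a computable $F:\In\IN^\IN\times2^\IN\to\IN^\IN$ witnessing that $f$ is probabilistic, and fix an arbitrary name $p$ of an arbitrary $x\in X$. Since $f$ is single-valued, $f\delta_X(p)=\{f(x)\}$, so the set $A_p:=\{r\in2^\IN:\delta_Y F(p,r)=f(x)\}$ has $\mu(A_p)>0$, and for every $r\in A_p$ the sequence $F(p,r)$ is a $\delta_Y$-name of $f(x)$. Writing $(U_n)_n$ for the fixed countable base of $Y$ and $N_{f(x)}:=\{n\in\IN:f(x)\in U_n\}$ for the subset of $\IN$ whose enumerations are, up to the harmless index shift in the standard representation, exactly the $\delta_Y$-names of $f(x)$, this says precisely that $F(p,r)$ enumerates $N_{f(x)}$ for every $r\in A_p$. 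Hence $N_{f(x)}$ is c.e.\ in $p\oplus r$ for every $r\in A_p$, and in fact via one fixed index, namely that of the procedure ``run the fixed algorithm for $F$ on $(p,r)$ and output each $n$ with $n+1\in\range(F(p,r))$''.

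Next I would apply the relativized form of Theorem~\ref{thm:dLMSS} with oracle $p$: since $N_{f(x)}$ is c.e.\ in $p\oplus r$ for all $r$ in the positive-measure set $A_p$, it is c.e.\ in $p$. Thus from any name $p$ of $x$ one can enumerate $N_{f(x)}$, i.e.\ compute a $\delta_Y$-name of $f(x)$. Taking $p$ to be a computable name gives at once that $f$ sends computable inputs to computable outputs; and since $p$ ranged over all names of $x$, the set $\delta_Y^{-1}(f(x))$ is Muchnik reducible to $\delta_X^{-1}(x)$.

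To upgrade this to the asserted $f(x)\leqr x$ I would then invoke the fact that, for name sets arising from the standard representation of a $T_0$-space with countable base, Muchnik reducibility coincides with $\leqr$ (a Selman-type uniformization; see \cite[Corollary~4.3]{Mil04}). I expect the real difficulty to be concentrated exactly at this last step, namely the uniformity over names: the classical proof of Theorem~\ref{thm:dLMSS} picks a Lebesgue density point of $A_p$, a string $\sigma$ with $\mu(A_p\cap\sigma2^\IN)>\frac{1}{2}\,2^{-|\sigma|}$, and then reads $N_{f(x)}$ off as the set of those $n$ for which the set $\{r\in\sigma2^\IN:n+1\in\range(F(p,r))\}$, which is c.e.-open in $p$, has relative measure above $\frac{1}{2}$; fixing the reduction index to the single machine $F$ is harmless, but the string $\sigma$ still depends on the name $p$, so the bare argument only yields the Muchnik statement, and it is the cited uniformization that removes this dependence. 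It remains to confirm that the hypotheses of \cite[Corollary~4.3]{Mil04} cover the represented space $X$ and the $T_0$-space $Y$ with countable base and standard representation considered here; the weaker ``computable inputs to computable outputs'' half of the statement, by contrast, requires no such appeal.
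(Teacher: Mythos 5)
Your proof is correct and follows essentially the same route as the paper's: reduce the probabilistic hypothesis to the statement that $N_{f(x)}=\{n:f(x)\in U_n\}$ is c.e.\ in $p\oplus r$ for all $r$ in a positive-measure set, invoke the relativized de~Leeuw--Moore--Shannon--Shapiro theorem to get that $N_{f(x)}$ is c.e.\ in $p$ alone, and then pass from the resulting Muchnik reduction to $f(x)\leqr x$ via \cite[Corollary~4.3]{Mil04}. Your closing remark correctly isolates where uniformity over names enters and why the ``computable inputs to computable outputs'' half needs no appeal to Miller's result; the paper's proof glosses over this point but relies on the same citation.
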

\begin{proof}
Let $\delta_X$ be the representation of $X$ and let $\delta_Y$ be a standard representation of $Y$ with respect
to a countable base $(U_i)_i$.
Let $f:X\to Y$ be single-valued and probabilistic. 
Then there is a computable function $F:\In\IN^\IN\times2^\IN\to\IN^\IN$ such that the sets
$A_p:=\{r\in2^\IN:\delta_YF(p,r)= f\delta_X(p)\}$ satisfy
$\mu(A_p)>0$ for all $p\in D:=\dom(f\delta_X)$.
We fix some $\delta_X$--name $p$ of some $x\in X$.
Then $F(p,r)$ is a $\delta_Y$--name of $f(x)$, 
i.e., $F(p,r)$ is an enumeration of $B:=\{n\in\IN:f(x)\in U_n\}$.
Hence, $B$~is~c.e.\ in $p\oplus r$ for every $r\in A_p$. 
By the relativized version of the Theorem of de Leeuw, Moore, Shannon and Shapiro~\ref{thm:dLMSS}
we obtain that $B$~is~c.e.\ in $p$. This means that $\delta_Y^{-1}(f(x))$ is Muchnik
reducible to $\delta_X^{-1}(x)$ and hence $f(x)\leqr x$ by \cite[Corollary~4.3]{Mil04}.
In particular, $f$ maps computable inputs to computable outputs.
\end{proof}

\section{Zeros of Continuous Functions with Sign Changes}
\label{sec:IVT}

In this section we would like to prove that there is no Las Vegas algorithm for computing zeros of continuous functions with changing signs 
(not even one with additional finitely many mind changes).
There is, however, a probabilistic algorithm of second order for finding such zeros.

By $\IVT$ we denote the Intermediate Value Theorem, i.e., the problem: given a continuous function $f:[0,1]\to\IR$ that changes its sign (i.e., $f(0)\cdot f(1)<0$),
find an $x\in [0,1]$ with $f(x)=0$. In \cite[Theorem~6.2]{BG11a} it was proved that $\IVT\equivSW\ConC_{[0,1]}$,
where $\ConC_{[0,1]}$ denotes choice for closed sets $A\In[0,1]$ restricted to connected sets (i.e., closed intervals).
We now prove that $\ConC_{[0,1]}$ cannot be reduced to probabilistic choice $\PC_{[0,1]}$.
For that purpose we use a finite extension construction to obtain a name of a connected closed set that is mapped
to a set of measure zero (given a potential reduction).

\begin{proposition}
\label{prop:CC-PC}
$\ConC_{[0,1]}\nleqW\PC_{[0,1]}$.
\end{proposition}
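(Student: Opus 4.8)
The plan is to run a finite-extension (forcing) argument. First I would suppose, for a contradiction, that $\ConC_{[0,1]}\leqW\PC_{[0,1]}$ and fix computable functions $H,K$ witnessing this, say with minimal domains. I would then construct a $\psi_-$--name $p$ of a \emph{non-degenerate} closed interval $I\In[0,1]$ together with a signed-digit name $\beta$ of a point $\beta^\ast$ of the closed set $B:=\psi_-(K(p))$, arranging that $H\langle p,\beta\rangle$ is a name of a point \emph{outside} $I$. Since $I$ is a non-empty connected closed set, correctness of the reduction would force on the one hand $\lambda(B)>0$ (so that the realizer $F$ of $\PC_{[0,1]}$ with $FK(p)=\beta$ is legitimate, as $\beta^\ast\in B$) and on the other hand $H\langle p,\beta\rangle\in\ConC_{[0,1]}(I)=I$, the desired contradiction. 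I note in passing that $\ConC_{[0,1]}\nleqSW\PC_{[0,1]}$ is immediate from Propositions~\ref{prop:cardinality} and \ref{prop:cardPICR}, since $[0,1]$ contains uncountably many pairwise disjoint singletons while $\#\PC_{[0,1]}\leq|\IN|$; the content of the present statement is that for ordinary Weihrauch reducibility $H$ has direct access to the input (and indeed $\ConC_{[0,1]}$ restricted to singletons is computable), so the interval used to fool the reduction genuinely has to be non-degenerate.

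The construction I have in mind proceeds in stages, producing nested finite prefixes $p_0\prefix p_1\prefix\ldots$ of $p$ and $\beta_0\prefix\beta_1\prefix\ldots$ of $\beta$, a nested sequence of non-degenerate closed rational intervals $J_0\supseteq J_1\supseteq\ldots$ with $\lambda(J_s)\geq\ell>0$ for all $s$ --- the ``still admissible locations'' for $I$, where $p_s$ lists only rational balls disjoint from $J_s$, so that every extension of $p_s$ still admits all closed subintervals of $J_s$ as possible final inputs --- together with the finite set $D_s\In D_{s+1}$ of balls that $K$ has committed to deleting after reading $p_s$, a rational ball $O_s$ disjoint from $\bigcup D_s$ that still contains candidate points of $B$, and an ``output window'' $W_s$ with $H\langle p_s\IN^\IN,\beta_s\IN^\IN\rangle\In W_s$, with $W_{s+1}\In W_s$, $\diam(W_s)\to 0$, and crucially $\overline{W_s}\cap J_s=\emptyset$. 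If at some stage $K$'s committed deletions already cover $[0,1]$ up to a null set, the construction stops: then $B$ is empty or null, so $K(p)$ is not a valid $\PC_{[0,1]}$--oracle input, already contradicting the reduction. Otherwise positive-measure room always remains, and at a stage I would first extend $p_s$ by listing balls that confine the still-possible inputs to a sub-region of $J_s$, thereby forcing $K$ to react and forcing $H$ to move its output into that sub-region; then extend $\beta_s$ into a smaller live ball $O_{s+1}\In O_s\setminus\bigcup D_{s+1}$ and extend $p_s$ a little further, using uniform continuity of $H$ on the relevant compact set of names, until the output window shrinks to a $W_{s+1}$ of diameter at most $2^{-s}$; finally I would pick $J_{s+1}$ to be a non-degenerate subinterval of $J_s$ lying in the part of $J_s$ disjoint from $W_{s+1}$, shrinking the length only by a factor $1-\delta_s$ with $\prod_s(1-\delta_s)>0$. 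In the limit, $p$ names $I:=\bigcap_s J_s$ (non-degenerate, $\lambda(I)\geq\ell$), $\beta$ names $\beta^\ast:=\bigcap_s O_s\in B$, and $H\langle p,\beta\rangle$ names the point $\bigcap_s\overline{W_s}$, which lies outside every $J_s$, hence outside $I$.

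The hard part will be keeping all the invariants simultaneously consistent at each stage: in particular, being able to force $H$'s output near one \emph{end} of $J_s$ rather than into its interior while shrinking $J_s$ only sub-exponentially, and guaranteeing that after each commitment on the input side the oracle set $B$ still has a live point extending $O_s$. This is exactly where the branching structure of $\ConC_{[0,1]}$ --- equivalently the connectedness of $[0,1]$, via $\ConC_{[0,1]}\equivSW\IVT$ --- enters: a single application of $\PC_{[0,1]}$ supplies only one positive-measure ``budget'' of successful advices, and tracking an adversarially revealed, unbounded nested sequence of left/right choices along the interval consumes that budget completely. Making the choices of the $J_s$, $W_s$, $O_s$ and $\delta_s$ precise and verifying that the invariants persist is the bulk of the argument.
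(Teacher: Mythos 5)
Your stage step is internally inconsistent, and the invariant $\overline{W_s}\cap J_s=\emptyset$ is unsatisfiable against a correct reduction. To shrink $W_s$ you propose to extend $p_s$ so that the admissible inputs are confined to a sub-region $J'\In J_s$ and then to pick $J_{s+1}$ disjoint from $W_{s+1}\approx J'$; but the balls you just listed in $p_{s+1}$ cover $J_s\setminus J'\supseteq J_{s+1}$, so your own invariant ``$p_s$ lists only balls disjoint from $J_s$'' is destroyed and the limit $p$ can no longer name a subinterval of $\bigcap_s J_s$. If instead you keep that invariant and leave $I$ free to be any subinterval of $J_s$ with $\lambda(J_s)\geq\ell>0$, then $H$ will simply not commit its output to a small window disjoint from $J_s$: for any extension $(p,\beta)$ of $(p_s,\beta_s)$ with $I_p\In J_s$ and $\beta^*\in B_p$ --- and your construction requires at least the limit pair to be such --- one has $H\langle p,\beta\rangle\in I_p\In J_s$, so $W_s\cap J_s\neq\emptyset$. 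Your target configuration (non-degenerate $I$, live $\beta^*\in B$, output outside $I$) is not delicate but unreachable; you are trying to trick a correct reduction into giving a wrong answer, which it can always refuse to do by withholding output until the input is pinned down.

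The paper derives the contradiction by making the oracle \emph{input} illegal rather than the oracle \emph{answer} wrong, and it must give up exactly the invariant $\lambda(J_s)\geq\ell$. At stage $n$ it fixes a common prefix $w_n$ of the input name, long enough (by uniform continuity of $H$ on the compact oracle set $A_n$) that the output precision on $w_n\IN^\IN$ is finer than one third of the current interval length, and long enough that $K$ on $w_n\IN^\IN$ adds only a controlled amount of mass outside $A_n$; then two extensions $r_0,r_1$ of $w_n$ name the left and right thirds of the current interval. No oracle answer $q\in A_n$ can serve both branches --- the two outputs would be less than one third apart yet separated by the middle gap --- so $B_0\cap B_1\cap A_n=\emptyset$, one of $B_0,B_1$ meets $A_n$ in a set of measure $\leq\frac12\mu(A_n)$, and choosing that branch gives $\mu(A_{n+1})\leq 2^{-(n+1)}$. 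Iterating drives the interval to a singleton (still a legal input to $\ConC_{[0,1]}$) while $\mu(A_n)\to 0$, so $K(p)$ names a measure-zero set, an illegal input to $\PC_{[0,1]}$. Your budget intuition is exactly right, but the quantity consumed is $\mu(A_n)$, not $\lambda(J_s)$, and the disjointness one forces is between the oracle sets of the two input branches, not between the output window and the admissible input region. (Your aside that $\ConC_{[0,1]}\nleqSW\PC_{[0,1]}$ follows from $\#\ConC_{[0,1]}=|\IR|>|\IN|\geq\#\PC_{[0,1]}$, and that $H$'s access to the input blocks this for $\leqW$, is correct.)
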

\begin{proof}
By Proposition~\ref{prop:WWKL} it suffices to show $\ConC_{[0,1]}\nleqW\WWKL$.
Let us assume for a contradiction that $\ConC_{[0,1]}\leqW\WWKL$. 
Then there are computable $H,K$ such that $H\langle \id,FK\rangle$ is a realizer of $\ConC_{[0,1]}$
for every realizer $F$ of $\WWKL$. 
Without loss of generality, we can assume as usual that $H,K$ have minimal domains. 
Let now $p_0$ be a name of the unit interval $I_0:=[0,1]$.
Then $K(p_0)$ is a name of a tree $T_0$ such that the set of infinite paths $A_0=[T_0]$ satisfies $\mu(A_0)>0$.
Since $H$ is uniformly continuous on the compact set $\{p_0\}\times A_0$, it follows that there is a finite prefix $w_0\prefix p_0$
such that $H\langle w_0\IN^\IN,q\rangle$ produces its output in $[0,1]$ with precision smaller than $\frac{1}{3}$
(i.e., any two outputs $x,x'$ named in that set satisfy $|x-x'|<\frac{1}{3}$),
uniformly for all names $q$ of points in $A_0$. 
Since $K$ is continuous, we can also assume that $w_0$ is long enough such that $K(w_0\IN^\IN)$ only contains
names of trees with closed sets $B\In2^\IN$ of infinite paths with $\mu(B\setminus A_0)\leq\frac{1}{2}(1-\mu(A_0))$.
Now there are names $r_{0}$ and $r_{1}$ of the intervals
$J_0:=[0,\frac{1}{3}]$ and $J_1:=[\frac{2}{3},1]$, respectively such that $w_0\prefix r_0$ and $w_0\prefix r_1$.
Hence $K(r_0)$ and $K(r_1)$ are names of infinite trees $S_0$ and $S_1$ with sets of infinite paths $B_0=[S_0]$
and $B_1=[S_1]$, respectively. We claim that $A_0\cap B_0\cap B_1=\emptyset$. For if $q\in A_0\cap B_0\cap B_1$,
then $H\langle r_0,q\rangle$ and $H\langle r_1,q\rangle$ have to be names of points $x_0,x_1\in[0,1]$, respectively
with $|x_0-x_1|<\frac{1}{3}$ according to the choice of $w_0$. On the other hand, there is a realizer $F$ of $\WWKL$
with $F(r_0)=F(r_1)=q$ and thus $x_0\in J_0=[0,\frac{1}{3}]$
and $x_1\in J_1:=[\frac{2}{3},1]$, which is impossible. 
Hence $A_0\cap B_0\cap B_1=\emptyset$ and there is an $i\in\{0,1\}$ with $\mu(B_{i}\cap A_0)\leq\frac{1}{2}\mu(A_0)$
and we obtain
\[\mu(B_{i})=\mu(B_{i}\cap A_0)+\mu(B_{i}\setminus A_0)\leq\frac{1}{2}\mu(A_0)+\frac{1}{2}(1-\mu(A_0))=\frac{1}{2}.\]
We now choose $I_1:=J_{i}$, $p_1:=r_{i}$ and $A_1:=B_{i}$ and we continue the construction inductively. 
If at stage $n>0$ the interval $I_n$ with $\diam(I_n)=\frac{1}{3^n}$, the sequence $p_n$ and a set $A_n$ with $\mu(A_n)\leq\frac{1}{2^{n}}$ have been determined, 
then we continue as follows.
There exists a prefix $w_n\prefix p_n$ such that $H$ on $w_n$ guarantees precision $\frac{1}{3^{n+1}}$ 
and $K$ guarantees measure $\mu(B\setminus A_n)\leq\frac{1}{2}(\frac{1}{2^{n}}-\mu(A_{n}))$ in an analogous way as above.
We select the left and right third $J_0$ and $J_1$ of $I_n$ together with corresponding names $r_0$ and $r_1$, respectively,
such that $w_n\prefix r_0$ and $w_n\prefix r_1$. By $B_0, B_1$ we denote the sets named by $K(r_0)$, $K(r_1)$, respectively.
As above, $A_n\cap B_0\cap B_1=\emptyset$ and there exists $i\in\{0,1\}$ such that $\mu(B_i\cap A_n)\leq\frac{1}{2}\mu(A_n)$.
Analogously to above, we obtain $\mu(B_i)\leq\frac{1}{2}\mu(A_n)+\frac{1}{2}(\frac{1}{2^{n}}-\mu(A_n))=\frac{1}{2^{n+1}}$.
For the next stage we choose $I_{n+1}:=J_i$, $p_{n+1}:=r_i$ and $A_{n+1}:=B_i$. 

Altogether, this construction yields a strictly increasing sequence $w_n$ of prefixes of names $p_n$ of closed intervals $I_n$ with $\diam(I_n)=\frac{1}{3^{n}}$
that are mapped to names $K(p_n)$ of closed sets $A_n$ with $\mu(A_n)\leq\frac{1}{2^{n}}$.
These names $p_n$ converge to a name $p$ of a singleton interval $\{x\}\In[0,1]$
and by continuity of $K$ this name $p$ is mapped to a name $K(p)$ of a tree $T$ with a set $A=[T]$ of infinite paths such that $\mu(A)=0$.
This is a contradiction to the assumption. 
\end{proof}

It is easy to see that the inverse reduction is also not possible.
This is because $\ConC_{[0,1]}$ has some computable outputs for any
computable input and $\PC_{[0,1]}$ does not 
(for instance a universal Martin-L\"of test yields examples of co-c.e.\ closed subsets $A\In[0,1]$ of positive measure without computable points).

\begin{proposition}
\label{prop:PC-CC}
$\PC_{[0,1]}\nleqW\ConC_{[0,1]}$.
\end{proposition}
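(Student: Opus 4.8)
The plan is to use the non-uniform obstruction sketched right after the statement: a Weihrauch reduction $f\leqW g$ carries the property ``every computable input is mapped to some computable output'' from $g$ to $f$, and $\ConC_{[0,1]}$ has this property while $\PC_{[0,1]}$ does not. Concretely, suppose towards a contradiction that $\PC_{[0,1]}\leqW\ConC_{[0,1]}$ is witnessed by computable $H,K$ with minimal domains. I would first fix, as the ``hard'' input, a co-c.e.\ closed set $A\In[0,1]$ with $\lambda(A)>0$ having no computable point: with $(U_n)_n$ a universal Martin-L\"of test, put $A:=[0,1]\sm U_1$. Since $(U_n)$ is uniformly c.e., $A$ is co-c.e.\ closed with a computable $\psi_-$-name $p$, and $\lambda(A)\geq\frac12>0$; moreover any computable real, not being Martin-L\"of random, lies in $\bigcap_n U_n\In U_1$ and hence is not in $A$.

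Next I would record the companion fact that every nonempty co-c.e.\ closed connected subset $B=[a,b]\In[0,1]$ contains a computable point: if $a<b$ any rational of $(a,b)$ works, and if $a=b$ then $\{a\}$ being co-c.e.\ closed forces $a$ to be computable (given an enumeration of rational balls exhausting $[0,1]\sm\{a\}$, the statement $q<a$ is semidecidable by waiting for the compact set $[0,q]$ to be covered, and symmetrically $q>a$ via $[q,1]$). Now run the assumed reduction on $p$: $K(p)$ is a computable name of a valid input of $\ConC_{[0,1]}$, i.e.\ of some nonempty closed interval $B\In[0,1]$, so by the previous fact $B$ has a computable point $y$ with a computable name $q$. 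Since $y\in B=\ConC_{[0,1]}(B)$, there is a realizer $G$ of $\ConC_{[0,1]}$ with $G(K(p))=q$ (prescribe this value on the single admissible argument $K(p)$ and choose arbitrary admissible values elsewhere). Then $H\langle p,q\rangle$ must be a name of a point of $\PC_{[0,1]}(A)=A$. But $\langle p,q\rangle$ and $H$ are computable, so $H\langle p,q\rangle$ is a computable name, whence $A$ contains a computable point --- contradicting the choice of $A$.

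The only genuinely substantive step is the construction of $A$: one needs a concrete co-c.e.\ closed set of positive measure with no computable members, and the complement of the first level of a universal Martin-L\"of test supplies this cheaply, the only things to verify being that this complement has a computable name and positive measure. Everything else --- the computable-point lemma for co-c.e.\ closed intervals and the transfer of ``computable in, computable out'' along the reduction --- is routine bookkeeping with the definitions, using only that pairing on $\IN^\IN$ and the maps $K,H$ are computable and that realizers of $\ConC_{[0,1]}$ may be prescribed arbitrarily at a single admissible argument.
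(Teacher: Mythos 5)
Your proof is correct and is precisely the argument the paper sketches right after the statement: $\ConC_{[0,1]}$ maps computable inputs to (some) computable outputs, $\PC_{[0,1]}$ does not (witnessed by the complement of a level of a universal Martin-L\"of test), and this non-uniform property is preserved downward under Weihrauch reducibility. You have simply filled in the routine details that the paper leaves implicit.
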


This can also be proved in a topological way that does not refer to computable inputs and outputs and such a proof will
follow from Proposition~\ref{prop:C2xAC-PCC}.
Altogether we obtain that connected and probabilistic choice on $[0,1]$ are incomparable.

\begin{corollary}[Connected and probabilistic choice]
\label{cor:CC-PC}
$\ConC_{[0,1]}\nW\PC_{[0,1]}$.
\end{corollary}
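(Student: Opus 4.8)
The plan is to read Corollary~\ref{cor:CC-PC} as the conjunction of the two failures of reducibility that have just been isolated, one in each direction, and simply to assemble them. Concretely, $\ConC_{[0,1]}\nleqW\PC_{[0,1]}$ is exactly Proposition~\ref{prop:CC-PC}, established above by the finite extension construction that builds a name of a nested sequence of intervals whose diameters tend to $0$ and whose images under a hypothetical reduction have measures tending to $0$; and $\PC_{[0,1]}\nleqW\ConC_{[0,1]}$ is Proposition~\ref{prop:PC-CC}. Together these say precisely that $\ConC_{[0,1]}$ and $\PC_{[0,1]}$ are $\leqW$-incomparable, i.e.\ $\ConC_{[0,1]}\nW\PC_{[0,1]}$. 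So the proof of the corollary itself is a one-line appeal to Propositions~\ref{prop:CC-PC} and~\ref{prop:PC-CC}, and the only ``step'' is bookkeeping.

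Since Proposition~\ref{prop:PC-CC} was only sketched in the text, I would spell out the half that is not yet fully written, as it is the part where something could conceivably go wrong. The point is a non-uniform invariant: $\ConC_{[0,1]}$ maps computable inputs to computable outputs, whereas $\PC_{[0,1]}$ does not. For the first claim one uses the computable Intermediate Value Theorem: a computable closed interval $J\In[0,1]$ presented by negative information always contains a computable point, since the trisection search underlying $\IVT\equivSW\ConC_{[0,1]}$ can be carried out effectively on any computable instance. For the second claim one fixes a $\pO{1}$--class $A\In2^\IN$ with $\mu(A)>0$ all of whose members are Martin-L\"of random (e.g.\ the complement of the first level of a universal Martin-L\"of test), so that no member of $A$ is computable, and one transfers $A$ to a co-c.e.\ closed $A'\In[0,1]$ of positive measure without computable points (directly, or along the computable measure-preserving map $\rho_2$). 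If now $\PC_{[0,1]}\leqW\ConC_{[0,1]}$ were witnessed by computable $H,K$, then on a computable name $p$ of $A'$ the interval $J$ named by $K(p)$ is computable, hence contains a computable point $x$ with a computable name $q$; choosing a realizer $F$ of $\ConC_{[0,1]}$ with $FK(p)=q$, the point $H\langle p,q\rangle$ is a computable member of $A'$, contradicting the choice of $A'$.

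The genuinely substantial work sits entirely inside Proposition~\ref{prop:CC-PC}, whose finite extension argument has to control two quantities simultaneously along the construction: the diameter of the chosen subinterval (forced down by uniform continuity of $H$) and the ``leakage'' of measure into the part of the selected closed set lying outside the previously secured set (forced down by continuity of $K$). For the $\PC_{[0,1]}\nleqW\ConC_{[0,1]}$ direction there is essentially no obstacle beyond quoting computable $\IVT$ and the standard existence of a positive-measure $\pO{1}$--class without computable members; a purely topological proof avoiding computable inputs and outputs is also available and can be read off from Proposition~\ref{prop:C2xAC-PCC}. Hence the main obstacle for the corollary is nil: it is the mere combination of Propositions~\ref{prop:CC-PC} and~\ref{prop:PC-CC}, with all the real content already contained in those two results.
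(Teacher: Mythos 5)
Your proposal is correct and matches the paper exactly: the corollary is nothing more than the conjunction of Propositions~\ref{prop:CC-PC} and \ref{prop:PC-CC}, and your elaboration of the non-uniform argument for $\PC_{[0,1]}\nleqW\ConC_{[0,1]}$ (computable $\IVT$ on one side, a positive-measure co-c.e.\ closed set without computable points on the other) is precisely the justification the paper sketches before Proposition~\ref{prop:PC-CC}. No gap.
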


This means, in particular,  that there is no Las Vegas algorithm that computes zeros of continuous functions with changing sign.
Next we want to show that it does not help to have finitely many mind changes additionally to a Las Vegas algorithm or, 
in other words, there is also no Las Vegas algorithm over the probability space $\IN\times2^\IN$.
This follows from Corollary~\ref{cor:CC-PC} using the following choice elimination principle
that was proved in \cite{LP13} by Le Roux and Pauly and is based on the Baire category technique introduced in \cite{BG11a}.

\begin{theorem}[Discrete choice elimination]
\label{thm:CN-elimination}
$f\leqW g*\C_\IN$ implies $f\leqW g$ for total fractals $f$.
\end{theorem}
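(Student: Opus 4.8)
The plan is to unwind the reduction $f\leqW g*\CN$ into concrete data, then use a Baire category argument to find a ball of Baire space on which the leading $\CN$‑call has a predetermined answer, and finally invoke the fractal property of $f$ to lift the resulting local reduction back to all of $f$. First I would normalise the hypothesis: replacing $f$ by a Weihrauch‑equivalent function, we may assume that $f\colon\IN^\IN\mto Y$ is total, i.e.\ $\dom(f)=\IN^\IN$, and that $f|_{w\IN^\IN}\equivW f$ for every $w\in\IN^*$; it then suffices to prove $f\leqW g$. Since the compositional product $g*\CN$ is a maximum, $f\leqW g_0\circ h_0$ for some $g_0\leqW g$ and some $h_0\leqW\CN$ of composable types. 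Unwinding the three reductions $f\leqW g_0\circ h_0$, $h_0\leqW\CN$, and $g_0\leqW g$ (taking all reduction witnesses on their minimal domains) yields, for every $p\in\IN^\IN$ (which is its own name of a point of $\dom(f)$): a nonempty co‑c.e.\ subset $B_p\In\IN$ computed from $p$ by a fixed procedure, uniformly in $p$; for each $n\in B_p$ a name $y_{p,n}$ of a point of $\dom(g)$, again computed uniformly from $p$ and $n$; and a computable map producing, from $p$, a number $n\in B_p$, and any name of a value $z\in g(y_{p,n})$, a name of a point of $f(p)$. Here the totality of $f$ is exactly what guarantees $B_p\neq\emptyset$ for \emph{every} $p\in\IN^\IN$, because every $p$ is a legal input.

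Next comes the Baire category step. For $n\in\IN$ set $U_n:=\{p\in\IN^\IN:n\in B_p\}$. Since the family $(B_p)_{p}$ is uniformly co‑c.e., the set $\{(p,n):n\notin B_p\}$ is c.e.\ open in $\IN^\IN\times\IN$, so each $U_n$ is closed. By the previous paragraph $\bigcup_{n\in\IN}U_n=\IN^\IN$, and $\IN^\IN$ is Polish, hence not meager in itself; therefore some $U_n$ fails to be nowhere dense and, being closed, contains a basic clopen ball $w\IN^\IN$. Fix such $w$ and $n$. On $w\IN^\IN$ the fixed number $n$ is always a legal $\CN$‑answer on input $B_p$, so the maps ``$p\mapsto y_{p,n}$'' (restricted to $w\IN^\IN$) and ``$(p,z)\mapsto$ point of $f(p)$'' extracted above are honest computable maps witnessing $f|_{w\IN^\IN}\leqW g$. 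Since $f$ is a total fractal, $f\equivW f|_{w\IN^\IN}$, and transitivity of $\leqW$ gives $f\leqW g$, as required.

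The main obstacle I expect is not any single step in isolation but the bookkeeping of the first paragraph: one must extract from the definition of the compositional product a genuinely \emph{uniform} co‑c.e.\ family $(B_p)_{p\in\IN^\IN}$ together with the accompanying computable maps, and then verify that the localised data over $w\IN^\IN$ still constitutes a Weihrauch reduction — in particular the output map must be fed the full name $p$ (together with the hard‑wired $n$), not merely the prefix $w$. Once this is set up, the role of totality (ensuring $\bigcup_n U_n=\IN^\IN$, so that Baire category applies) and the role of fractality (climbing back from $f|_{w\IN^\IN}$ to $f$) are both immediate.
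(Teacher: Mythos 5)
Your proof is correct and matches the Baire-category argument that the paper attributes to Le Roux and Pauly (\cite{LP13}) rather than reproving itself. All the delicate points you flag are handled properly: the extraction of a genuinely uniform co-c.e.\ family $(B_p)_p$ with the accompanying realizer-level maps from the three unwound reductions, the closedness of each $U_n$ so that Baire category applies, the role of totality in forcing $\bigcup_n U_n=\IN^\IN$, the fact that the hard-wired $n$ together with the \emph{full} name $p$ gives an honest reduction $f|_{w\IN^\IN}\leqW g$, and the use of fractality to conclude $f\leqW g$.
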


We recall that a {\em fractal} $f$ is a multi-valued function on represented spaces such that there exists
a $g:\In\IN^\IN\mto\IN^\IN$ with non-empty domain and such that $g|_A\equivW f$ for every clopen $A\In\IN^\IN$ 
for which $A\cap\dom(g)$ is non-empty. Moreover, $f$ is called a {\em total fractal} if $g$
can be chosen to be total. It is implicit in the proof of \cite[Proposition~4.9]{BG11a} that $\ConC_{[0,1]}$
is a total fractal and we include the proof here for completeness. 

\begin{lemma}
\label{lem:CC-fractal}
$\ConC_{[0,1]}$ is a total fractal.
\end{lemma}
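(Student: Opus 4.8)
The plan is to construct a single total multi-valued function $g:\Baire\mto\Baire$ with $\dom(g)=\Baire$ and to verify that $g$ witnesses the fractal property for $\ConC_{[0,1]}$, i.e.\ that $g|_A\equivW\ConC_{[0,1]}$ for every nonempty clopen $A\In\Baire$. The organising observation is that it suffices to establish the two facts
\[ \text{(i) } g\equivW\ConC_{[0,1]}, \qquad \text{(ii) } g\leqW g|_{w\Baire}\text{ for every }w\in\IN^*. \]
Indeed, every nonempty (cl)open $A\In\Baire$ contains some cylinder $w\Baire$, and then $g|_{w\Baire}\leqW g|_A\leqW g\leqW g|_{w\Baire}$ by restriction together with (ii), so $g|_A\equivW g\equivW\ConC_{[0,1]}$; totality of $g$ yields the ``total fractal'' upgrade.

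For the construction of $g$ I would follow the iterated--subdivision coding that is implicit in the proof of \cite[Proposition~4.9]{BG11a}. Fix affine bijections $\phi_0,\phi_1,\phi_2:[0,1]\to[0,1]$ onto three overlapping subintervals chosen so that every subinterval of $[0,1]$ which is sufficiently short relative to $[0,1]$ is contained in one of them; for $w\in\{0,1,2\}^*$ write $\phi_w$ for the composition and $I_w:=\phi_w([0,1])$ for the corresponding triadic frame, with $\diam(I_w)\to0$ along any infinite branch. Using these I would define a \emph{total} representation $\nu$ of the space of connected closed subsets of $[0,1]$ (singletons included): reading a $\nu$--name, an initial segment $w$ selects a frame $I_w$, and the remainder continues with an affinely rescaled negative--information description of the input interval \emph{inside} $I_w$, with a descent into a finer frame $I_{wj}$ performed exactly when the negative information confirms that the interval is contained in it. One checks that every connected closed $A\In[0,1]$ has a $\nu$--name, that every $p\in\Baire$ is a $\nu$--name of some such $A$, and that $\nu$ is computably equivalent to the standard representation $\psi_-$ on connected closed subsets of $[0,1]$. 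Finally one sets $g(p):=\nu(p)\In[0,1]$, viewed as a total multi-valued map into $\Baire$ by outputting a signed-digit name of an arbitrary point of $\nu(p)$.

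It then remains to verify (i) and (ii). Claim (i) is immediate from the computable inter-translatability of $\nu$ and $\psi_-$ on connected closed sets: the direction $g\leqW\ConC_{[0,1]}$ uses $\nu\to\psi_-$, and the direction $\ConC_{[0,1]}\leqW g$ uses the $\psi_-\to\nu$ translation, which is exactly the finite--extension/descent argument appearing in the proof of Proposition~\ref{prop:CC-PC}; here it is crucial, as is typical for Weihrauch reducibility, that the output modificator still has the original $\psi_-$--name available. For (ii), a $\nu$--name lying in $w\Baire$ has its first $|w|$ symbols frozen, which after decoding amounts to committing to a fixed frame $I_v$ together with finitely many rescaled pieces of negative information inside it; by the overlapping--frame design this committed information never exhausts $I_v$, so $p\mapsto\nu(wp)$ realises, up to the computable affine isomorphism $\phi_v$ (whose inverse is computable), a copy of $\nu$ itself. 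Hence $g\leqW g|_{w\Baire}$: translate a $\nu$--name of $A\In[0,1]$ into a $\nu$--name of $\phi_v(A)\In I_v$ starting with $w$, query $g|_{w\Baire}$ to obtain a point $y\in\phi_v(A)$, and return $\phi_v^{-1}(y)\in A$. Combined with the first paragraph this gives $g|_A\equivW\ConC_{[0,1]}$ for every nonempty clopen $A$, so $\ConC_{[0,1]}$ is a total fractal.

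The main obstacle is getting the representation $\nu$ right: it must be simultaneously total on $\Baire$, computably equivalent to $\psi_-$ on connected closed subsets of $[0,1]$, and self-similar in the strong form (ii) — in particular no finite prefix of a $\nu$--name may reduce $g$ to a computable, or otherwise strictly weaker, problem. This last requirement is precisely why one cannot simply take $g:=\ConC_{[0,1]}$ with the plain negative--information representation (a finite prefix of such a name can already cover all of $[0,1]$, trivialising the corresponding restriction), and it is what forces the iterated--frame coding. Checking that the overlapping thirds indeed prevent any frame from being exhausted at a finite stage, while still allowing genuine descents along branches, is the delicate point of the argument.
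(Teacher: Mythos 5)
Your overall strategy matches the paper's: exhibit a total $g:\In\Baire\mto\Baire$ with $g|_A\equivW\ConC_{[0,1]}$ for every nonempty clopen $A$, and your reduction of this to the two facts (i) $g\equivW\ConC_{[0,1]}$ and (ii) $g\leqW g|_{w\Baire}$ for all $w\in\IN^*$ is a clean way of organizing the verification. The difference, and the gap, lies in how $g$ is actually constructed.

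You propose a self-similar ``overlapping-frames'' representation $\nu$ of connected closed subsets of $[0,1]$, to be made simultaneously total, computably equivalent to $\psi_-$, and non-exhausting at every finite stage. You correctly flag the last property as the delicate point, but you do not actually discharge it, and it is not a formality: it is exactly the obstruction you yourself identify for the plain $\psi_-$-representation, and overlapping frames alone do not remove it, because nothing in your description prevents the ``rescaled negative information inside $I_v$'' from carving out an arbitrarily large proportion of $I_v$ within finitely many symbols. To fix this one would have to throttle the rate and size of negated balls per input symbol and then re-verify both totality and the two-way computable translation with $\psi_-$, a non-trivial undertaking that you leave open. The appeal to the proof of Proposition~\ref{prop:CC-PC} for the $\psi_-\to\nu$ direction is also misplaced: that argument is a finite-extension separation, not a representation translation, and it does not supply the needed reduction.

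The paper avoids all of this by replacing $\ConC_{[0,1]}$ with the equivalent boundedness principle $\B_I:(a,b)\mapsto[a,b]$, with the left endpoint $a$ represented by a strictly increasing rational sequence $\rho_<$ and the right endpoint $b$ by a strictly decreasing one $\rho_>$. Then the partial map $G\langle p,q\rangle$ producing points of $[\rho_<(p),\rho_>(q)]$ has a co-c.e.\ closed domain, so totalizing it only requires sanitizing inconsistent input symbols, and self-similarity is automatic: a finite prefix commits only to rationals $c<a\leq b<d$ with $c<d$, which always leaves room to affinely re-embed an arbitrary interval into $(c,d)$. The non-exhaustion property that is delicate in your coding is thus built into the choice of representation. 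I would recommend adopting the $\B_I$ route with $\rho_<,\rho_>$ rather than trying to complete the frame-based construction.
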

\begin{proof}
In \cite[Proposition~3.6]{BG11a} it was proved that $\ConC_{[0,1]}\equivW\B_I$, where 
\[\B_I:\In\IR_<\times\IR_>\mto\IR,(a,b)\mapsto [a,b]\]
is the boundedness principle that maps a left real number $a$ and a right real number $b$ 
with $a\leq b$ to the closed interval $[a,b]$, i.e., $\dom(\B_I)=\{(a,b)\in\IR_<\times\IR_>:a\leq b\}$.
It is easy to see that $\dom(\B_I)$ is co-c.e.\ closed. 
We assume that $\IR_<$ and $\IR_>$ are represented by $\rho_<$ and $\rho_>$, which are the
representations of reals by strictly increasing and decreasing sequences of rational numbers, respectively. 
We represent $\IR$ by the usual Cauchy representation $\rho$.
Let $G:\In\IN^\IN\mto\IN^\IN$ be the function that maps every pair $\langle p,q\rangle\in\IN^\IN$
to all names of $r$ with $\rho_<(p)\leq\rho(r)\leq\rho_>(q)$. By definition this $G$ is 
partial, but since the domain is co-c.e.\ closed, one can easily extend it to an equivalent total
$F:\In\IN^\IN\mto\IN^\IN$: as soon as a prefix $\langle p|_{n+1},q|_{n+1}\rangle$ of the input is inconsistent, since
the represented sequences of rational numbers are either not increasing in case of $p$ or not decreasing in case of $q$
or since the rational number given by $p(n)$ is not smaller than the one given by $q(n)$, $F$ just maps $\langle p,q\rangle$
to the value $G\langle p',q'\rangle$ where $p'$ and $q'$ are some canonical consistent extensions of $p|_{n}$ and $q|_{n}$, respectively. 
This guarantees that $F\equivW G\equivW \B_I\equivW\ConC_{[0,1]}$. 
Moreover, it is easy to see that $F|_A\equivW F$ for every non-empty clopen $A\In\IN^\IN$. 
It suffices to consider $A$ of the form $A=w\IN^\IN$ with $w\in\IN^*$.
Given a prefix $w$ of a $[\rho_<,\rho_>]$--name of an interval $[a,b]$, the interval is
only described by $w$ up to some rational numbers $c,d$ with $c<a\leq b<d$.
Now any other given interval $[a',b']$ can be mapped by a computable affine transformation
$T:\IR\to\IR$ to an interval $[T(a'),T(b')]$ with $c<T(a')\leq T(b')<d$ and given a
point $x\in[T(a'),T(b')]$ one can easily recover a point $T^{-1}(x)\in[a',b']$. 
This proves $F_{w\IN^\IN}\equivW F$.   
Altogether, this shows that $\ConC_{[0,1]}$ is a total fractal.
\end{proof}

Using Corollary~\ref{cor:CC-PC} we obtain the following result.

\begin{theorem}
\label{thm:CC-PCR}
$\ConC_{[0,1]}\nW\PC_\IR$.
\end{theorem}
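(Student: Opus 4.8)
The plan is to reduce Theorem~\ref{thm:CC-PCR} entirely to the already-established incomparability $\ConC_{[0,1]}\nW\PC_{[0,1]}$ of Corollary~\ref{cor:CC-PC}, by using the discrete choice elimination technique to absorb the extra strength that $\PC_\IR$ carries over $\PC_{[0,1]}$. I would prove the two non-reductions separately.

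For the direction $\ConC_{[0,1]}\nleqW\PC_\IR$ I would argue by contradiction. The conceptual heart is to rewrite $\PC_\IR$ in a form from which a $\C_\IN$-factor can be peeled off: combining Theorem~\ref{thm:discrete-jump} ($\PC_\IR\equivSW\WWKL^\Delta$), Corollary~\ref{cor:WWKL-Delta-WWKL-CN} ($\WWKL^\Delta\equivW\WWKL*\C_\IN$) and Proposition~\ref{prop:WWKL} ($\WWKL\equivSW\PC_{2^\IN}$) gives $\PC_\IR\equivW\PC_{2^\IN}*\C_\IN$. Hence a hypothetical reduction $\ConC_{[0,1]}\leqW\PC_\IR$ would yield $\ConC_{[0,1]}\leqW\PC_{2^\IN}*\C_\IN$. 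Since $\ConC_{[0,1]}$ is a total fractal by Lemma~\ref{lem:CC-fractal}, the Discrete Choice Elimination Theorem~\ref{thm:CN-elimination} then removes the $\C_\IN$-factor, leaving $\ConC_{[0,1]}\leqW\PC_{2^\IN}\equivW\PC_{[0,1]}$ (Proposition~\ref{prop:WWKL} or Corollary~\ref{cor:interval-cantor}), which contradicts Corollary~\ref{cor:CC-PC}.

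For the direction $\PC_\IR\nleqW\ConC_{[0,1]}$ the argument is softer and needs no elimination machinery: one only needs $\PC_{[0,1]}\leqW\PC_\IR$. This is immediate from $\PC_{[0,1]}\equivSW\WWKL\leqSW\WWKL^\Delta\equivSW\PC_\IR$ using Proposition~\ref{prop:WWKL} and Theorem~\ref{thm:discrete-jump}. Therefore a reduction $\PC_\IR\leqW\ConC_{[0,1]}$ would compose to $\PC_{[0,1]}\leqW\ConC_{[0,1]}$, contradicting Corollary~\ref{cor:CC-PC} (concretely Proposition~\ref{prop:PC-CC}).

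I do not expect a genuine obstacle here: both directions are essentially bookkeeping over cited results. The only point that requires care is verifying the hypotheses of Theorem~\ref{thm:CN-elimination} — namely that $\ConC_{[0,1]}$ is a \emph{total} fractal and that $\PC_\IR$ genuinely factors as $g*\C_\IN$ with $g=\PC_{2^\IN}$ — and both of these are supplied, respectively, by Lemma~\ref{lem:CC-fractal} and by the chain of equivalences above. So the write-up will amount to assembling these citations into the two-paragraph contradiction argument sketched here.
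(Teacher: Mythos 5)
Your proposal is correct and follows essentially the same route as the paper's proof: both reduce to $\ConC_{[0,1]}\nW\PC_{[0,1]}$ by factoring $\PC_\IR\equivW\PC_{[0,1]}*\C_\IN$ and then invoking Theorem~\ref{thm:CN-elimination} together with Lemma~\ref{lem:CC-fractal} to eliminate the $\C_\IN$-factor. The only difference is cosmetic: you establish the factorization via $\PC_\IR\equivSW\WWKL^\Delta\equivW\WWKL*\C_\IN$, whereas the paper cites Corollary~\ref{cor:PCN2N}, Corollary~\ref{cor:interval-cantor} and Proposition~\ref{prop:reals} directly.
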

\begin{proof}
Since $\PC_{[0,1]}\leqW\PC_\IR$ we obtain $\PC_\IR\nleqW\ConC_{[0,1]}$ by Proposition~\ref{prop:PC-CC}.
Now we prove $\ConC_{[0,1]}\nleqW\PC_\IR$. 
For one we have $\PC_\IR\equivW\PC_{2^\IN}*\C_\IN\equivW\PC_{[0,1]}*\C_\IN$ due to Corollaries~\ref{cor:PCN2N}, \ref{cor:interval-cantor}
and Proposition~\ref{prop:reals}.
If we assume for a contradiction $\ConC_{[0,1]}\leqW\PC_\IR$, then $\ConC_{[0,1]}\leqW\PC_{[0,1]}*\C_\IN$
follows and hence $\ConC_{[0,1]}\leqW\PC_{[0,1]}$ follows with Theorem~\ref{thm:CN-elimination} 
on discrete choice elimination since $\ConC_{[0,1]}$ is a total fractal by Lemma~\ref{lem:CC-fractal}. 
This contradicts Proposition~\ref{prop:CC-PC}.
\end{proof}

On the other hand, it is easy to see that there is a probabilistic algorithm of second order that
can compute zeros of functions with sign change. We first give an intuitive description of this
algorithm before we formulate the result using choice:

\begin{enumerate}
\item A continuous function $f:[0,1]\to\IR$ with $f(0)\cdot f(1)<0$ is given as input.
\item Guess a binary sequence or, equivalently, a bit $b\in\{0,1\}$ and a point $x\in[0,1]$.
\item Interpret the guess $b=1$ such that the zero set $f^{-1}\{0\}$ contains no open intervals
        and use the trisection method to compute a zero $z\in[0,1]$ with $f(z)=0$ in this case (disregarding $x$).
\item Interpret the guess $b=0$ such that the zero set $f^{-1}\{0\}$ does contain an open interval
        and check whether $f(x)=0$ in this case. Stop after finite time if this test fails and output $x$ otherwise.
\end{enumerate}

This is not a Las Vegas algorithm, since the failure of the algorithm in case of $b=0$ cannot be recognized computably.
However, the algorithm succeeds with a positive probability in any case since $x$ is disregarded in case $b=0$ and
there is a set of successful guesses of positive measure in case $b=1$.
Additionally, even in case $b=0$ it is not too difficult to recognize failure, even though it is not computable.
We prove that this algorithm is probabilistic of second order and for simplicity we express this using choice again.

\begin{proposition}
\label{prop:ConC-PC}
$\ConC_{[0,1]}\leqW\PC_{[0,1]}'$.
\end{proposition}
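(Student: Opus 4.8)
The plan is to build a Weihrauch reduction $\ConC_{[0,1]}\leqW\PC_{[0,1]}'$ directly. It is convenient to regard $\ConC_{[0,1]}$ as the problem: given a non-empty connected closed set $I=[a,b]\In[0,1]$ by negative information, output a point of $I$. If $C_n$ denotes the closed set obtained by removing from $[0,1]$ the first $n$ enumerated complement balls, then $c_n:=\min C_n$ is a non-decreasing rational sequence with $c_n\uparrow a=\inf I$ and $d_n:=\max C_n$ is non-increasing with $d_n\downarrow b=\sup I$; all of this is computable from the given name. In particular $d_n-c_n\to b-a$, so whenever $a=b$ one can actually \emph{compute} the unique point of $I$ (search for $n$ with $d_n-c_n<2^{-k}$ and output the midpoint), and this computation terminates precisely when $a=b$.

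The reason for using the jump is that a reduction to $\PC_{[0,1]}'$ is allowed to feed the choice operation a closed set whose negative description is only \emph{limit}--computable from the input, rather than co-c.e.\ closed. I would exploit this as follows. Fix the affine bijection $\varphi\colon[0,1]\to[\tfrac23,1]$, $\varphi(t):=\tfrac23+\tfrac13 t$, put $F:=[0,\tfrac13]$, and let
\[A:=\varphi([a,b])\cup F\quad\text{if }a=b,\qquad A:=\varphi([a,b])\quad\text{if }a<b.\]
Then $\mu(A)=\tfrac13(b-a)>0$ if $a<b$ and $\mu(A)=\tfrac13>0$ if $a=b$, so $A$ is always in the domain of $\PC_{[0,1]}$, and $A\In[0,\tfrac13]\cup[\tfrac23,1]$ in both cases. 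The balls covering $(\tfrac13,\varphi(a))$ (use the $c_n$) and $(\varphi(b),1]$ (use the $d_n$) can be listed computably from the given name of $I$; the one additional ball $(-1,\tfrac23)$, whose presence is exactly what deletes $F$ from $A$, is to be listed if and only if $a<b$, i.e.\ if and only if $\inf_n(d_n-c_n)>0$, which is a $\SO{2}$ condition in the input and hence can be guessed in the limit. Consequently there is a computable $K$ producing from a name of $I$ a convergent sequence of $\psi_-$--names (a $\psi_-'$--name) whose limit describes $A$, i.e.\ an admissible input to $\PC_{[0,1]}'$.

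On the output side, $\PC_{[0,1]}'$ returns a point $z\in A$; since $z\in[0,\tfrac13]\cup[\tfrac23,1]$ we have $z\neq\tfrac12$, so the test ``$z<\tfrac12$'' is decidable from a $\rho$--name of $z$. The post--processing $H$ proceeds as follows. If $z\geq\tfrac23$, then $z\in\varphi([a,b])$ (either $a<b$, so $z\in\varphi([a,b])$ directly, or $a=b$, so $z=\varphi(a)$), and $H$ outputs $\varphi^{-1}(z)=3z-2\in[a,b]=I$. If $z\leq\tfrac13$, then $F\In A$, which by construction forces $a=b$; in this case $H$ ignores $z$, computes the unique point of $I$ from the name of $I$ (a terminating computation, since $a=b$), and outputs it. Since $H$ is computable and always returns a point of $I$ (and uses the original name of $I$), this witnesses $\ConC_{[0,1]}\leqW\PC_{[0,1]}'$.

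The step I expect to be the main obstacle is making the second paragraph precise: one has to exhibit a single convergent sequence of $\psi_-$--names whose limit lists exactly the complement of $A$. This amounts to running the standard $\SO{2}$--approximation of ``$a<b$'' and, at a fixed position of the enumeration, placing the ball $(-1,\tfrac23)$ when the current guess says $a<b$ and otherwise a harmless ball inside the gap $(\tfrac13,\tfrac23)$, while all remaining balls stay at fixed positions so that convergence in Baire space is preserved. Everything else — the measure computation, the decidability of ``$z<\tfrac12$'', and the termination of the singleton computation — is routine. It is also worth noting that the argument genuinely needs the jump and cannot be pushed down to $\PC_{[0,1]}$, in accordance with Proposition~\ref{prop:CC-PC}: if $F$ were kept in $A$ unconditionally then $\PC_{[0,1]}$ could return a point of $F$ even when $a<b$, and from such a point no point of $I$ is computable.
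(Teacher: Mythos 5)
Your construction of $A$ and the post-processing $H$ are correct and essentially mirror the paper's own proof, which works with trees in Cantor space via $\PC_{[0,1]}\equivSW\WWKL$; there the ballast piece is the subtree $1\{0,1\}^*$, playing the role of your $F$. The genuine gap is at the step you flag yourself: producing a convergent sequence of $\psi_-$-names. Placing $(-1,\tfrac23)$ ``at a fixed position'' and letting a stage-by-stage $\Sigma^0_2$-approximation of ``$a<b$'' decide whether that slot holds $(-1,\tfrac23)$ or a harmless ball cannot work. For the sequence of names to converge in Baire space, that single slot must eventually stabilize, which would yield a computable $\{0,1\}$-valued approximation converging to the truth value of ``$a<b$''. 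But ``$a<b$'' is $\Sigma^0_2$-complete in the $\psi_-$-name of $[a,b]$ (one readily encodes $\mathrm{Fin}$ by shrinking an interval each time a new element of $W_e$ appears), hence not $\Delta^0_2$, and no such convergent approximation exists.

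The repair is the monotone device the paper's proof is built around: spread the conditional content over infinitely many positions, one per scale $m\in\IN$. At the $m$-th conditional slot write $(-1,\tfrac23)$ at stage $n$ as long as $d_{n'}-c_{n'}\geq 2^{-m}$ for all $n'\leq n$, and switch permanently to a harmless ball inside $(\tfrac13,\tfrac12)$ as soon as this $\Pi^0_1$ condition is refuted. Each slot changes value at most once, so the names converge, and in the limit the union of the conditional balls equals $(-1,\tfrac23)$ if and only if $(\exists m)(\forall n)\,d_n-c_n\geq 2^{-m}$, i.e.\ iff $a<b$. In the paper's Cantor-space formulation this is exactly the role of the integer parameter $k$ in $T_n:=T_I\cup1\{0,1\}^k$: since $k$ is driven by the non-increasing upper approximation $m_n$ of $\mu(I)$ it is non-decreasing, so every node of the ballast subtree eventually stabilizes, and the full subtree $1\{0,1\}^*$ appears in the limit tree precisely when $\mu(I)=0$.
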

\begin{proof}
By Proposition~\ref{prop:WWKL} it suffices to prove $\ConC_{[0,1]}\leqW\WWKL'$.
Given a name $p$ of a closed interval $I\In[0,1]$ we compute a sequence $(T_n)_n$
of trees that converges to a tree $T$ with a set $A:=[T]\In\{0,1\}^\IN$ of infinite paths with $\mu(A)>0$. 
Firstly, we can compute a tree $T_I\In0\{0,1\}^*$ such that the binary representation $\rho_2$ maps $[T_I]$ to $\frac{1}{2}I$.
By $m_n$ we denote the measure of the approximation of $I$ that is determined by $p|_n$, the prefix
of $p$ of length $n$. Without loss of generality, we can assume that 
we can compute $m_n$ as a positive rational number.
Now we can compute a sequence $(T_n)_n$ of trees
\[T_n:=T_I\cup1\{0,1\}^k\]
where $k$ is maximal with $m_n<2^{-k}$. Then the sequence $(T_n)_n$ clearly 
converges to a tree $T$ with
\[ [T]=\left\{\begin{array}{ll}
  [T_I] & \mbox{if $I$ is not a singleton}\\
  {[T_I]} \cup 1\{0,1\}^\IN & \mbox{otherwise}
\end{array}\right.\]
Let us denote by $K$ the computable map that maps $p$ to a sequence of names of the trees $(T_n)_n$.
Given an infinite path $q\in[T]$ and the original name $p$ we can reconstruct a point $x\in I$ as follows:
if the path $q$ starts with $q(0)=0$, then we compute $x=2\cdot\rho_2(q)$ and if the path $q$
starts with $q(0)=1$, then we know that $I$ must be a singleton and we use $p$ to compute $x$ with $I=\{x\}$.
This describes a computable function $H$ such that $H\langle\id,FK\rangle$ is a realizer of $\ConC_{[0,1]}$
whenever $F$ is a realizer of $\WWKL'$.
\end{proof}

One can also ask whether Proposition~\ref{prop:ConC-PC} can be strengthened to the statement $\ConC_{[0,1]}\leqSW\PC_{[0,1]}'$.
However, for mere cardinality reasons this is not possible.
Since there cannot be an uncountable number of pairwise disjoint sets $A\In[0,1]$ of positive measure
it follows that $\#\PC_{[0,1]}^{(n)}=|\IN|$ (see Proposition~\ref{prop:cardPICR}), while obviously $\#\ConC_{[0,1]}=|\IR|$.
We obtain the following consequence of Proposition~\ref{prop:cardinality}.

\begin{corollary}
\label{cor:CC-PC2}
$\ConC_{[0,1]}\nleqSW\PC_{[0,1]}^{(n)}$ for all $n\in\IN$.
\end{corollary}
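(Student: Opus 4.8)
The plan is to obtain the non-reduction purely from a cardinality obstruction, exactly in the spirit of the remark preceding the corollary. The three ingredients are Proposition~\ref{prop:cardinality} (strong reductions do not increase the cardinality $\#$), Proposition~\ref{prop:cardPICR} (probabilistic choice over a $\sigma$--finite measure space with $0\notin I$ has cardinality at most $|\IN|$), and one small auxiliary observation, namely that $\#f^{(n)}=\#f$ for every multi-valued function $f$ on represented spaces.

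First I would verify that auxiliary observation. The jump $f':\In(X,\delta_X')\mto(Y,\delta_Y)$ of $f:\In(X,\delta_X)\mto(Y,\delta_Y)$ is obtained by replacing $\delta_X$ by $\delta_X'=\delta_X\circ\lim$, leaving the underlying partial multi-valued map $X\mto Y$ — its domain as a subset of $X$ and its value sets — completely unchanged. Since $\#f$ depends only on this set-theoretic data (it is the supremum of $|M|$ over $M\In\dom(f)$ whose image consists of pairwise disjoint sets), we get $\#f'=\#f$, and iterating, $\#f^{(n)}=\#f$ for all $n\in\IN$. Applying this to $f=\PC_{[0,1]}$ together with Proposition~\ref{prop:cardPICR} — applicable because the Lebesgue measure on $[0,1]$ is finite, hence $\sigma$--finite, and $0\notin(0,\infty]$ — yields $\#\PC_{[0,1]}^{(n)}=\#\PC_{[0,1]}\leq|\IN|$.

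Next I would record that $\#\ConC_{[0,1]}=|\IR|$: the singletons $\{x\}$ for $x\in[0,1]$ are pairwise disjoint closed intervals, all lying in $\dom(\ConC_{[0,1]})$, and $\ConC_{[0,1]}(\{x\})=\{x\}$, so their images are pairwise disjoint; this produces an uncountable admissible family. Finally I would combine these with Proposition~\ref{prop:cardinality}: a reduction $\ConC_{[0,1]}\leqSW\PC_{[0,1]}^{(n)}$ would force $|\IR|=\#\ConC_{[0,1]}\leq\#\PC_{[0,1]}^{(n)}\leq|\IN|$, which is impossible. I do not anticipate any genuine difficulty here; the only point requiring a moment's care is the bookkeeping that jumps leave $\#$ invariant, but this is immediate once the definitions of the jump and of $\#$ are unwound, and it is what lets the single-jump statement $\#\PC_{[0,1]}\leq|\IN|$ propagate to all finite iterates.
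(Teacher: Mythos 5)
Your proposal is correct and follows essentially the same cardinality argument the paper uses: combine Proposition~\ref{prop:cardPICR} (bounding $\#\PC_{[0,1]}^{(n)}$ by $|\IN|$), the observation $\#\ConC_{[0,1]}=|\IR|$, and Proposition~\ref{prop:cardinality}. The one thing you make explicit that the paper leaves tacit is the lemma that jumps preserve $\#$ (since $f^{(n)}$ differs from $f$ only in the input representation, not in the underlying set-theoretic data on which $\#$ depends); this is a worthwhile bit of bookkeeping, but the route is the same.
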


Another interpretation of our results is that Weak Weak K\H{o}nig's Lemma does not compute the Intermediate
Value Theorem (nor the other way around), but the jump of Weak Weak K\H{o}nig's Lemma does. 

\begin{corollary}[Intermediate Value Theorem and Weak Weak K\H{o}nig's Lemma]
\label{cor:IVT-WWKL}
$\IVT\nW\WWKL$ and $\IVT\leqW\WWKL'$.
\end{corollary}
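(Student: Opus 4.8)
The plan is to observe that this corollary is an immediate translation of results already established for connected choice and probabilistic choice on the unit interval, so almost all of the work has been done. Recall that $\IVT\equivSW\ConC_{[0,1]}$ (stated at the beginning of Section~\ref{sec:IVT}, following \cite[Theorem~6.2]{BG11a}) and that $\WWKL\equivSW\PC_{[0,1]}$ by Proposition~\ref{prop:WWKL}. Since strong Weihrauch equivalence implies ordinary Weihrauch equivalence, and since $\leqW$ is invariant under replacing either side by a $\equivW$-equivalent problem, both assertions of the corollary become statements purely about $\ConC_{[0,1]}$ and $\PC_{[0,1]}$.

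For the first assertion, $\IVT\nW\WWKL$ is then equivalent to $\ConC_{[0,1]}\nW\PC_{[0,1]}$, which is exactly Corollary~\ref{cor:CC-PC}, itself obtained by combining the two non-reductions of Propositions~\ref{prop:CC-PC} and~\ref{prop:PC-CC}. For the second assertion, I would first use that the jump operation is monotone with respect to strong Weihrauch reducibility (as recalled in the subsection on jumps in Section~\ref{sec:preliminaries}): from $\WWKL\equivSW\PC_{[0,1]}$ this gives $\WWKL'\equivSW\PC_{[0,1]}'$ and hence $\WWKL'\equivW\PC_{[0,1]}'$. Therefore $\IVT\leqW\WWKL'$ is equivalent to $\ConC_{[0,1]}\leqW\PC_{[0,1]}'$, which is precisely Proposition~\ref{prop:ConC-PC}.

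Consequently there is no genuine obstacle here: the substantive content lies in Propositions~\ref{prop:CC-PC}, \ref{prop:PC-CC} and~\ref{prop:ConC-PC} together with the equivalence $\IVT\equivSW\ConC_{[0,1]}$. The only subtlety is keeping track of the direction of the equivalences and the fact that the jump operator is well behaved under $\leqSW$ but not under $\leqW$, so for the second assertion the transfer should be carried out at the level of $\equivSW$ before passing to $\equivW$. (One could instead rewrite the finite-extension argument of Proposition~\ref{prop:CC-PC} and the trisection-with-guessing algorithm of Proposition~\ref{prop:ConC-PC} directly in the language of continuous functions with sign changes, but that would merely duplicate those proofs.)
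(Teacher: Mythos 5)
Your argument is correct and coincides with the paper's: the corollary is stated as an immediate reinterpretation of Corollary~\ref{cor:CC-PC} and Proposition~\ref{prop:ConC-PC} via the equivalences $\IVT\equivSW\ConC_{[0,1]}$ and $\WWKL\equivSW\PC_{[0,1]}$. You also correctly flag that the jump is transferred at the level of $\equivSW$ before passing to $\leqW$, which is exactly the care required.
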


We note that the results of this section also yield another proof of the fact that $\WKL\nleqW\WWKL$ (see Corollary~\ref{cor:WKL-probabilistic}), 
since $\IVT\leqW\WKL$.

\section{Robust Division}
\label{sec:RDIV}

In this section we would like to prove that there is a Las Vegas algorithm for robust division
\[\RDIV:[0,1]\times[0,1]\mto[0,1],(x,y)\mapsto\left\{\begin{array}{ll}
  \{\frac{x}{\max(x,y)}\} & \mbox{if $y\not=0$}\\
   {[0,1]} & \mbox{otherwise}
\end{array}\right.\]
Robust division can be used for solving linear equations and inequalities in compact domains
and it has been defined and studied by Arno Pauly \cite{Pau10,Pau11}.
For instance it is easy to see that robust division can be used to find solutions of 
linear equations $ax=b$ for $a,b\in\IR$ in a compact domain.
Robust division is related to all-or-unique choice that we define next.

\begin{definition}[All-or-unique choice]
Let $X$ be a represented space. Then by $\AUC_X$ we denote the {\em all-or-unique choice} operation
of $X$, which is $\C_X$ restricted to
\[\dom(\AUC_X):=\{X\}\cup\{\{x\}:x\in X\},\]
i.e., the entire set or singletons.
\end{definition}

One readily verifies the following result (see \cite[Proposition~5.2.1.3]{Pau11}).

\begin{proposition}[Robust division]
\label{prop:RDIV-AUC}
$\RDIV\equivSW\AUC_{[0,1]}$.
\end{proposition}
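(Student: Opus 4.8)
The plan is to exhibit explicit computable reductions in both directions, using the fact that both problems have the same characteristic structure: either the answer set is the whole interval $[0,1]$, or it is a singleton, and we can tell which case we are in only in the limit (the ``whole interval'' case is the one that is co-c.e.\ observable, the singleton case being recognisable only through convergence of approximations). The key observation that makes the strong reduction work is that in both cases, from a name of the output set one can directly read off an admissible value without consulting the original input.

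For the direction $\RDIV\leqSW\AUC_{[0,1]}$, I would proceed as follows. Given $(x,y)\in[0,1]\times[0,1]$ by names, the reduction map $K$ must produce a negative-information name of a closed set $A_{x,y}\in\dom(\AUC_{[0,1]})$; the post-processing $H$ then takes any point of $A_{x,y}$ and must already be a correct output of $\RDIV(x,y)$, with no access to $(x,y)$. The natural choice is $A_{x,y}:=\{x/\max(x,y)\}$ when $y\neq 0$ and $A_{x,y}:=[0,1]$ when $y=0$. The point is that this is computable as a map into $\AA_-([0,1])$: one enumerates rational open intervals into the complement of $A_{x,y}$ by monitoring lower bounds on $\max(x,y)$ and on $x$; as long as $\max(x,y)$ stays bounded away from $0$ one can pin down $x/\max(x,y)$ to arbitrary precision and exclude everything else, and if $\max(x,y)$ turns out to be $0$ one simply never excludes anything, correctly yielding $[0,1]$. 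Since $y\neq 0$ forces $x/\max(x,y)$ to be the unique element of $A_{x,y}$, the set is always either a singleton or all of $[0,1]$, so it lies in $\dom(\AUC_{[0,1]})$, and $H$ can be taken to be the identity on the output. This is essentially the content of \cite[Proposition~5.2.1.3]{Pau11}, and the only thing to check carefully is the computability of $K$ as a map $\AA_-$, i.e.\ that negative information about $A_{x,y}$ is effectively obtainable from names of $x,y$; this is the main technical point of this direction, but it is a routine ``wait and enumerate'' argument exploiting that $\max$ is computable and that $\dist(\,\cdot\,,\{c\})$ is lower semi-computable once a lower bound on $\max(x,y)$ is in hand.

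For the converse $\AUC_{[0,1]}\leqSW\RDIV$, I would encode an instance $A\in\dom(\AUC_{[0,1]})$ of all-or-unique choice into a pair $(x,y)$. Given $A$ by negative information, one computes in parallel: a lower-semicomputable approximation to the ``location'' of $A$ (which converges to the unique point if $A$ is a singleton), and an approximation to $\mu(A)$ or, more simply, a monitor for whether any open interval ever gets excluded. Concretely, set $y$ to be a real number built so that $y=0$ exactly when nothing is ever excluded from $A$ (i.e.\ $A=[0,1]$) and $y>0$ otherwise — for instance, let the name of $y$ emit $0$ as long as the enumeration of the complement of $A$ is empty and start emitting a positive rational as soon as the first exclusion appears — while $x$ is built to track the endpoint data of $A$ so that, when $y\neq 0$, the ratio $x/\max(x,y)$ equals the unique point of $A$. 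When $A=[0,1]$, we get $y=0$ and $\RDIV$ returns all of $[0,1]$, from which $H$ returns any point, which is correct. When $A=\{c\}$, eventually $y>0$, and by construction $x/\max(x,y)=c$, so $\RDIV(x,y)=\{c\}$ and again $H$ just passes the value through. The subtle point here is arranging the arithmetic so that the single formula $x/\max(x,y)$ produces exactly $c$ in the singleton case regardless of the timing of when $y$ becomes positive; this requires choosing the scales of $x$ and $y$ so that $\max(x,y)=x$ eventually (e.g.\ keep $x$ bounded below by a fixed positive constant and $y$ small), which is straightforward once one fixes, say, $x\in[\tfrac12,1]$ coding $\tfrac12(1+c)$ and $y$ a small ``flag'' real.

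The main obstacle, in both directions, is bookkeeping rather than conceptual: one has to verify that the encodings are genuinely computable as maps on the represented spaces $\AA_-([0,1])$ and $[0,1]$ (negative information in, Cauchy names out, and vice versa) and that the resulting sets always stay inside the prescribed restricted domains $\dom(\AUC_{[0,1]})$ and $\dom(\RDIV)$. Because $H$ is the identity on outputs in both reductions, the reductions are strong, giving $\RDIV\equivSW\AUC_{[0,1]}$; and since this equivalence, together with $\AUC_{[0,1]}\leqW\PC_{[0,1]}\equivW\WWKL$ (which one checks directly: the singleton-or-everything structure makes the answer set always have measure in $\{1\}\cup\{0\}\subseteq(0,\infty]$... more carefully, one uses that $\mu(A)>0$ in the ``everything'' case and passes to a Las Vegas algorithm that guesses the all-case), feeds into the later conclusion $\NASH\leqW\WWKL$, it suffices here to establish the strong Weihrauch equivalence as sketched.
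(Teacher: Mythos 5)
The paper cites Pauly \cite[Proposition~5.2.1.3]{Pau11} rather than supplying its own proof, so there is no competing argument in the text; your overall plan is the standard one, and your first direction is essentially correct. Monitoring for a lower bound on $\max(x,y)$ and pinning down the ratio once such a bound is observed produces a set that is always either a singleton or all of $[0,1]$, and whose elements are always valid outputs of $\RDIV(x,y)$ (note that when $y=0$ but $x>0$ the algorithm actually yields $\{1\}$ rather than $[0,1]$ as you stated, but $\{1\}\subseteq\RDIV(x,0)$, so the reduction with $H=\id$ is unaffected).

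Your second direction, however, contains an arithmetic error that would make the reduction fail. You propose to arrange the scales so that $\max(x,y)=x$ eventually while still having $x/\max(x,y)=c$; but $\max(x,y)=x$ forces $x/\max(x,y)=1$, which is independent of the singleton $\{c\}$. Since in a strong reduction $H$ sees only the output of $\RDIV$ and not the original input $A$, an output of $1$ cannot be post-processed into $c$. The correct scaling is the opposite one: you want $\max(x,y)=y$ so that $x/y=c$. Concretely, output $x_n=y_n=0$ while the negative information about $A$ remains empty; at the first stage $N$ at which an exclusion appears, commit to $y=2^{-N}$ (consistent with the zeros emitted so far) and from then on produce a Cauchy name of $x=c\cdot 2^{-N}$, refining the approximation of $c$ as you learn more about $A$. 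Since $c\leq 1$, this gives $x\leq y$, hence $\max(x,y)=y$ and $x/\max(x,y)=c$; and if no exclusion ever appears, then $(x,y)=(0,0)$ and $\RDIV(0,0)=[0,1]=A$. With this correction your argument does give $\AUC_{[0,1]}\leqSW\RDIV$ with $H=\id$.
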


Now we will further study the relation of all-or-unique choice $\AUC_{[0,1]}$ to probabilistic choice.

\begin{theorem}[All-or-unique and probabilistic choice]
\label{thm:AC-PCC}
$\AUC_{[0,1]}\lW\PCC_{[0,1]}$.
\end{theorem}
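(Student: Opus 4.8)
The plan is to establish the two halves of $\AUC_{[0,1]}\lW\PCC_{[0,1]}$ separately: the reduction $\AUC_{[0,1]}\leqW\PCC_{[0,1]}$, and the failure of the converse $\PCC_{[0,1]}\nleqW\AUC_{[0,1]}$.

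For the reduction, the key observation is that an element of $\dom(\AUC_{[0,1]})$ is either $[0,1]$ or a singleton $\{x\}$, and that from a $\psi_-$-name of such a set one can, while reading it, maintain the decreasing sequence of closed sets $C_s\In[0,1]$ not yet excluded after $s$ steps, with $C_0=[0,1]$ and $\bigcap_sC_s=A$. Let $N$ be the first step at which $C_N\not=[0,1]$, so that $N=\infty$ precisely when $A=[0,1]$. The preprocessor $K$ then outputs a $\psi_-$-name of a connected set $B$ as follows: as long as $s<N$, the name says ``complement empty'', so $B$ still looks like $[0,1]$; as soon as $s\geq N$ it knows $A=\{x\}$ for some $x\in C_N$, picks a radius $\eta:=2^{-m(N)}$ for a fixed computable $m$ (synchronised with $H$ below), and enumerates into the complement of $B$ every rational ball lying more than $\eta$ to the left of $\min C_s$ or more than $\eta$ to the right of $\max C_s$. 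Since $x\in C_s$ and the $C_s$ decrease to $\{x\}$, this enumeration is monotone and converges exactly to the complement of $B=[\max(0,x-\eta),\min(1,x+\eta)]$, which is an interval of measure at least $\eta$ containing $x$; and $B=[0,1]$ when $A=[0,1]$. So $B$ is always a legitimate input to $\PCC_{[0,1]}$.

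The postprocessor $H$, given the name of $A$ and any point $t\in B$, also tracks $C_s$ and the step $N$. It commits the precision-$2^{-k}$ entry of a name of its output only at a prescribed step $n_k$, and $m(N)$ is chosen so that by step $N$ only the entries up to precision $2^{-m(N)}$ have been committed, each copied slightly more precisely than needed from $t$. If $N$ is never reached, then $A=[0,1]$ and $H$ just finishes copying $t\in[0,1]=A$; if $N$ is reached, $H$ continues the (computable) unique-choice procedure on the name of $A$ to produce $x$ to all finer precisions, and the estimate $|t-x|\leq\eta$ guarantees that the already-committed prefix is consistent with a name of $x$. This witnesses $\AUC_{[0,1]}\leqW\PCC_{[0,1]}$, hence (with Proposition~\ref{prop:RDIV-AUC}) that $\RDIV$ is Las Vegas computable. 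For strictness I would use that $\C_2^*\leqW\PCC_{[0,1]}$ by \cite[Proposition~7.2]{BLP12}, so it suffices to prove $\C_2^*\nleqW\AUC_{[0,1]}$, and since $\C_3\leqW\C_2^*$ it even suffices to prove $\C_3\nleqW\AUC_{[0,1]}$. The obstruction is that every input to $\AUC_{[0,1]}$ is either the whole space $[0,1]$, on which a realizer may return an arbitrary point carrying no decisive information, or a singleton, which commits the returned point, and with it any decoded answer, once and for all. Given a $\C_3$-instance $C\In\{0,1,2\}$ whose negative information so far only excludes $2$ (so $C\in\{\{0\},\{1\},\{0,1\}\}$), the preprocessor must either keep the $\AUC_{[0,1]}$-instance equal to $[0,1]$, in which case the computable postprocessor extracts no signal telling it which of $0,1$ is forced and cannot safely commit while $C$ is unresolved — yet it must terminate, since $C$ may stay $\{0,1\}$ forever; or it must at some finite stage narrow the $\AUC_{[0,1]}$-instance to a singleton, thereby fixing the returned point and the answer it decodes to, which becomes incorrect once a later step reveals that $C$ excludes exactly that answer. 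Converting this dilemma into a contradiction is the technical heart, and I would carry it out by a finite extension argument along the enumeration of the complement of $C$, using the continuity of the witnesses and adapting the technique of Proposition~\ref{prop:CC-PC} and Proposition~\ref{prop:example-unit-cantor} to build $C$ and a realizer of $\AUC_{[0,1]}$ against which the postprocessor outputs a value outside $C$. Together with $\C_3\leqW\C_2^*\leqW\PCC_{[0,1]}$ this gives $\PCC_{[0,1]}\nleqW\AUC_{[0,1]}$.

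I expect the main obstacle to be on the negative side: turning the ``preprocessor cannot commit, postprocessor cannot wait'' dilemma into a rigorous finite extension argument valid for every realizer of $\AUC_{[0,1]}$. On the positive side the only delicate point is bookkeeping — keeping the $\psi_-$-name of $B$ monotone while $B$ contracts to a small interval around the still-unknown $x$ (handled above by anchoring the cut-offs to the monotone quantities $\min C_s$ and $\max C_s$) and matching $K$'s padding radius $\eta$ with $H$'s commitment schedule.
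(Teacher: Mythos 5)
Your construction for the reduction $\AUC_{[0,1]}\leqW\PCC_{[0,1]}$ is essentially the paper's. Both preprocessors turn the $\psi_-$--name of $A$ into a name of a proper closed interval $B$ that equals $[0,1]$ while no negative information has been seen, and then shrinks to a small interval around the still-unknown singleton $x$; both postprocessors relay the oracle point $t\in B$ at a precision paced against the input scan, so that when negative information does appear the committed prefix is still consistent with the directly computable $x$. The bookkeeping differs only cosmetically: you anchor $B$ to a window of prescribed width $2\eta$ around $x$ determined by the step $N$ at which negative information first appears, while the paper anchors it to a rational enclosure $[a,b]$ of width $\leq2^{-N}$; the synchronization issue you flag (choosing $\eta$ so the output committed by step $N$ is at most $\eta$--precise) is the same one the paper resolves by making both $K$ and $H$ react to the same position $n$ in $p$.

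For the strictness, though, you have not actually given a proof. You correctly reduce the task to $\C_3\nleqW\AUC_{[0,1]}$ (via $\C_3\leqW\C_2^*\leqW\PCC_{[0,1]}$) and describe the right informal obstruction, but you then defer ``the technical heart'' to an unwritten finite extension argument, so the separation is asserted rather than established. There is also a much shorter route which is what the paper uses: $\AUC_{[0,1]}\leqW\LPO$ (ask whether the input set is $[0,1]$; output $\tfrac12$ if yes, compute the singleton if no), hence anything Weihrauch below $\AUC_{[0,1]}$ is computable with at most one mind change; but $\C_2^*\leqW\PCC_{[0,1]}$ cannot be computed with any bounded number of mind changes, so $\PCC_{[0,1]}\nleqW\AUC_{[0,1]}$. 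If you want to keep your $\C_3$ route, the same mind-change count disposes of it in one line ($\C_3$ needs two mind changes, $\AUC_{[0,1]}$ needs one) and no finite extension machinery is required.
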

\begin{proof}
Given a name $p$ of a set $A\In[0,1]$, which is either the whole interval or a singleton,
we compute the name $q$ of a proper closed interval $I\In[0,1]$ as follows:
as long as $p$ does not contain any negative information (i.e., $p$ is still compatible with a name
of the full interval), we just copy $p$ to $q$. If the first negative information in $p$ appears 
at position $n$, then we continue to read $p$ until we know the singleton $A=\{x\}$ given by it
up to precision $2^{-n}$. At that point we have rational numbers $a,b$ with $x\in[a,b]$ and
$b-a\leq2^{-n}$ and we just extend the output $q$ to a name of the interval $I=[a,b]$.
This describes a computable function $K$ that maps $p$ to $q$.
The output produced by $K$ is a name for an interval of the form $I=[a,b]$,
where $I=[0,1]$ if and only if $p$ is a name of $[0,1]$.
Given a name of a point $y\in I$ and the original input $p$, we can recover a point $x\in A$ as follows:
we read $p$ and produce an approximation of $y$ up to precision $2^{-n}$ as long as
$p|_n$ does not contain any negative information. In the moment where we find some negative
information in $p$, we stop using $y$ and we just compute an output $x$ with $A=\{x\}$ 
by inspection of $p$. 
Note that this is always possible, since the approximation of $y$ that we have produced so far can 
always be extended to $x$.
This describes a computable function $H$ such that
$H\langle\id,FK\rangle$ is a realizer of $\AUC_{[0,1]}$ whenever $F$ is a realizer of $\PCC_{[0,1]}$. 

It is clear that $\PCC_{[0,1]}\nleqW\AUC_{[0,1]}$: while $\AUC_{[0,1]}\leqW\LPO$ can be computed
with one mind change,  $\C_2^*\leqW\PCC_{[0,1]}$ (which holds by \cite[Proposition~7.2]{BLP12})
implies that $\PCC_{[0,1]}$ cannot be computed
with any finite number of mind changes.
\end{proof}

In other words, we have proved that robust division can be reduced to Weak Weak K\H{o}nig's Lemma.

\begin{corollary}
\label{cor:RDIV-WWKL}
$\RDIV\lW\WWKL$.
\end{corollary}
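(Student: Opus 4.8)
The plan is to derive the reduction $\RDIV\leqW\WWKL$ by composing results already established, and then to establish strictness by a computability obstruction.

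For the reduction, I would use that $\RDIV\equivSW\AUC_{[0,1]}$ by Proposition~\ref{prop:RDIV-AUC}, that $\AUC_{[0,1]}\leqW\PCC_{[0,1]}$ by Theorem~\ref{thm:AC-PCC}, that $\PCC_{[0,1]}\leqSW\PC_{[0,1]}$ by identity reductions (since $\PCC_{[0,1]}$ is by definition the restriction of $\PC_{[0,1]}=\P_{>0}\C_{[0,1]}$ to the connected members of its domain), and that $\PC_{[0,1]}\equivSW\WWKL$ by Proposition~\ref{prop:WWKL}. Composing this chain yields $\RDIV\leqW\WWKL$.

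For strictness I would prove $\WWKL\nleqW\RDIV$ by exploiting that $\RDIV$ maps computable inputs to computable outputs whereas $\WWKL$ does not. That $\RDIV\equivSW\AUC_{[0,1]}$ has the former property is elementary: on input $[0,1]$ one may output the computable point $0$, and if the input is a singleton $\{x\}$ given by negative information, then $x$ is computable, because by compactness the part of $[0,1]$ still uncovered after finitely many rational balls from the name have been removed eventually shrinks into arbitrarily small neighbourhoods of $x$. This property is inherited downwards along $\leqW$: if $f\leqW g$ via computable $H,K$ and $g$ maps computable inputs to computable outputs, then for any computable name $p$ of an admissible input of $f$ the point $H\langle p,GK(p)\rangle$ is computable, where $G$ is the canonical computable realizer of $g$, and it names an admissible output of $f$. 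On the other hand, as recalled after Proposition~\ref{prop:PC-CC}, a universal Martin-L\"of test provides a co-c.e.\ closed set $A\In2^\IN$ with $\mu(A)>0$ containing no computable member; transporting $A$ along the computable map $T\mapsto[T]$ (which has a computable multivalued inverse) gives a computable tree $T\in\Tr$ that is an admissible input to $\WWKL$ all of whose infinite paths are non-computable. Hence $\WWKL$ does not map computable inputs to computable outputs, so $\WWKL\nleqW\RDIV$, and combined with the first part this gives $\RDIV\lW\WWKL$.

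I do not anticipate a real obstacle. The heavy work --- the finite-extension separation of $\AUC_{[0,1]}$ from $\PCC_{[0,1]}$ in Theorem~\ref{thm:AC-PCC} and the equivalence $\PC_{[0,1]}\equivSW\WWKL$ --- is already available, so only routine bookkeeping remains. The one place deserving care is the strictness step: verifying that ``maps computable inputs to computable outputs'' genuinely transfers through the computable reduction functions, and that the Martin-L\"of witness really is an element of $\Tr$ whose path set has positive measure. As an alternative to this argument one can reason as in the proof of Theorem~\ref{thm:AC-PCC}: $\RDIV\equivSW\AUC_{[0,1]}$ is computable with a single mind change, whereas $\C_2^*\leqW\PCC_{[0,1]}\leqW\WWKL$ shows that $\WWKL$ is not computable with any bounded number of mind changes, and being computable with at most one mind change is closed downwards under $\leqW$.
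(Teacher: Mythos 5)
Your positive direction $\RDIV\leqW\WWKL$ follows exactly the chain the paper uses: $\RDIV\equivSW\AUC_{[0,1]}\leqW\PCC_{[0,1]}\leqW\PC_{[0,1]}\equivSW\WWKL$, invoking Propositions~\ref{prop:RDIV-AUC} and \ref{prop:WWKL} and Theorem~\ref{thm:AC-PCC}. For strictness you offer two arguments, and both are sound, but they differ in flavor from each other and from the paper's intended route.

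Your primary argument is non-uniform: $\AUC_{[0,1]}\equivSW\RDIV$ maps computable inputs to computable outputs (using that a co-c.e.\ closed singleton in a computably compact space has a computable member), this property is preserved downward under $\leqW$, and a universal Martin-L\"of test yields a computable tree $T\in\Tr$ with $\mu([T])>0$ but no computable path. The paper uses exactly this Martin-L\"of observation in the remark after Proposition~\ref{prop:PC-CC} to separate $\PC_{[0,1]}$ from $\ConC_{[0,1]}$, so it is a natural tool here too. One small imprecision in your write-up: you write ``$H\langle p,GK(p)\rangle$ is computable, where $G$ is the canonical computable realizer of $g$,'' but $\WWKL$ has no computable realizer. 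The correct statement of the downward transfer is that, given a computable name $p$ of an admissible input, $K(p)$ is a computable name of a point in $\dom(g)$, which by hypothesis has a computable output $z\in g(\delta K(p))$ with some computable name $q$; one then checks that $H\langle p,q\rangle$ names an element of $f(\delta_X p)$ by splicing $q$ into an arbitrary realizer of $g$. The conclusion is the same.

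Your alternative argument is essentially the paper's own: $\AUC_{[0,1]}$ is computable with at most one mind change, while $\C_2^*\leqW\PCC_{[0,1]}\leqW\WWKL$ rules out computability with any bounded number of mind changes for $\WWKL$; combined with downward closure of bounded-mind-change computability under $\leqW$, this gives $\WWKL\nleqW\RDIV$. This is precisely the mind-change argument that appears inside the proof of Theorem~\ref{thm:AC-PCC} to show $\PCC_{[0,1]}\nleqW\AUC_{[0,1]}$, from which the strictness of the corollary then follows by composing with $\PCC_{[0,1]}\leqW\WWKL$. The Martin-L\"of argument is slightly more elementary as a standalone computability obstruction; the mind-change argument is the one that the paper's structure makes immediately available and is what it intends the reader to extract. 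Either serves.
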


This means that there is a Las Vegas algorithm for robust division.
Now one can ask whether there is a Las Vegas algorithm for robust division with a fixed positive success probability.
We will show that this is not the case and we start with an observation that follows from Lemma~\ref{lem:join-irreducible}.

\begin{corollary}
\label{cor:AC-irreducible}
$\AUC_{[0,1]}$ is join-irreducible.
\end{corollary}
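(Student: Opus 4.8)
The plan is to recognise that $\AUC_{[0,1]}$ is nothing but a particular instance of the restricted choice operators treated in Lemma~\ref{lem:join-irreducible}, so that the statement reduces to a one-line application of that lemma.

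First I would unfold the definition of all-or-unique choice. By definition, $\AUC_{[0,1]}$ is the restriction of $\C_{[0,1]}$ to the family
\[
\CC:=\{[0,1]\}\cup\{\{x\}:x\in[0,1]\}
\]
of closed subsets $A\In[0,1]$; that is, $\AUC_{[0,1]}=\C_{[0,1]}|_\CC$. In particular the full space $[0,1]$ belongs to $\CC$.

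Then I would simply invoke Lemma~\ref{lem:join-irreducible}: since $[0,1]\in\CC$, the restriction $\C_{[0,1]}|_\CC=\AUC_{[0,1]}$ is join-irreducible, which is exactly the assertion. This is entirely analogous to the derivation of Corollary~\ref{cor:PCC-irreducible} for $\PCC_{[0,1]}$, where the relevant family consists of $[0,1]$ together with all proper intervals.

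As the argument is a direct instantiation, there is no genuine obstacle; the only point that needs checking is that the domain of $\AUC_{[0,1]}$, as prescribed in its definition, contains the whole space $[0,1]$, which it does by construction. Hence no new combinatorial or measure-theoretic input is required beyond Lemma~\ref{lem:join-irreducible}.
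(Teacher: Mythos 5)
Your proof is correct and is exactly the paper's approach: the corollary is stated as an immediate consequence of Lemma~\ref{lem:join-irreducible}, applied to $\AUC_{[0,1]}=\C_{[0,1]}|_\CC$ with $\CC=\{[0,1]\}\cup\{\{x\}:x\in[0,1]\}$, noting $[0,1]\in\CC$.
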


Now we can conclude that no fixed positive success probability is sufficient for robust division.

\begin{theorem}
\label{thm:AC-*-WWKL}
$\AUC_{[0,1]}\nleqW*\dash\WWKL$.
\end{theorem}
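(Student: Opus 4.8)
The plan is to argue by contradiction, exactly mirroring the opening of the proof of Proposition~\ref{prop:PCC-WWKL}. Assume $\AUC_{[0,1]}\leqW *\dash\WWKL$. Since $*\dash\WWKL$ is (equivalent to) the coproduct $\bigsqcup_{n\in\IN}2^{-n}\dash\WWKL$ and $\AUC_{[0,1]}$ is join\dash irreducible by Corollary~\ref{cor:AC-irreducible}, there is a single $n\in\IN$ with $\AUC_{[0,1]}\leqW 2^{-n}\dash\WWKL\equivSW\P_{>2^{-n}}\C_{2^\IN}$ (Corollary~\ref{cor:epsilon-WWKL-Cantor}). The real content is then to refute $\AUC_{[0,1]}\leqW\P_{>2^{-n}}\C_{2^\IN}$ for this fixed $n$. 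The point that makes this possible even though $\AUC_{[0,1]}\leqW\PC_{[0,1]}\equivW\WWKL$ is that a \emph{fixed} threshold $2^{-n}$ can be beaten by splitting into enough pairwise disjoint pieces; the argument is in the spirit of the negative parts of Proposition~\ref{prop:example-unit-cantor} and Theorem~\ref{thm:probability-dependency}, but — since $\C_2^*\leqW\AUC_{[0,1]}$ is \emph{not} available, unlike in the $\PCC_{[0,1]}$ case — the refutation must be produced in a single round, using $m\approx 2^n$ separated singleton inputs rather than an iterated bisection.

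Concretely, suppose computable $H,K$ with minimal domains witness $\AUC_{[0,1]}\leqW\P_{>2^{-n}}\C_{2^\IN}$. Put $m:=2^n+2$, so that $\frac1m<2^{-n}$, and $\delta:=\frac12\bigl(2^{-n}-\frac1m\bigr)>0$. Let $p_0$ be a name of the input $[0,1]\in\dom(\AUC_{[0,1]})$; then $K(p_0)$ is a $\psi_-$\dash name of a closed $A\In2^\IN$ with $\mu(A)>2^{-n}$. Using that the set of names of points of the compact set $A$ is compact and that $H$ is uniformly continuous on it (in the strengthened sense made precise in the proof of Proposition~\ref{prop:example-unit-cantor}), and that from a long prefix of a $\psi_-$\dash name of $A$ one already sees almost all of the complement (Lemma~\ref{lem:semi-computable}), I would choose a prefix $v_0\prefix p_0$ long enough that (i) for every name $q$ of a point of $A$, the real named by $H\langle p',q\rangle$ varies by less than $\frac1m$ as $p'$ ranges over names of $\AUC$\dash inputs in $v_0\IN^\IN$ with $\langle p',q\rangle$ in the domain, and (ii) every set named by an element of $K(v_0\IN^\IN)$ has measure at most $\delta$ outside $A$. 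Since $v_0$ carries no negative information meeting $[0,1]$, it extends to a name $r_j$ of the singleton $\{x_j\}$ for each of the $m$ points $x_j:=\frac{j}{m-1}$, $j=0,\dots,m-1$. Writing $B_j$ for the closed set named by $K(r_j)$, validity of the reduction forces $\mu(B_j)>2^{-n}$, and for every name $q$ of a point of $B_j$ we get $H\langle r_j,q\rangle$ a name of $x_j$ (apply the reduction to a realizer of $\P_{>2^{-n}}\C_{2^\IN}$ that answers $q$ on input $K(r_j)$).

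The contradiction then falls out. If $q$ named a point of $A\cap B_j\cap B_k$ with $j\ne k$, then $H\langle r_j,q\rangle$ names $x_j$ and $H\langle r_k,q\rangle$ names $x_k$, while by (i) and $r_j,r_k\in v_0\IN^\IN$ these two reals are within $\frac1m\le\frac1{m-1}\le|x_j-x_k|$, which is absurd; hence $A\cap B_0,\dots,A\cap B_{m-1}$ are pairwise disjoint subsets of $A$. As $\mu(A)\le 1$, some $j$ satisfies $\mu(A\cap B_j)\le\frac1m$, and then by (ii)
\[\mu(B_j)=\mu(A\cap B_j)+\mu(B_j\setminus A)\le\tfrac1m+\delta=\tfrac1{2m}+\tfrac12\cdot 2^{-n}<2^{-n},\]
contradicting $\mu(B_j)>2^{-n}$. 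Hence $\AUC_{[0,1]}\nleqW\P_{>2^{-n}}\C_{2^\IN}$ for all $n$, so $\AUC_{[0,1]}\nleqW *\dash\WWKL$. I expect the only delicate steps to be the two simultaneous requirements baked into the single prefix $v_0$ — the uniform\dash continuity control of $H$ over the compact set of oracle answers, and the control (via the structure of $\psi_-$\dash names together with continuity of $K$) on how much measure $K$ can gain outside $A$ — but both are exactly the ingredients already deployed in Propositions~\ref{prop:example-unit-cantor} and~\ref{prop:CC-PC}, so the rest is bookkeeping.
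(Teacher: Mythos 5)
Your proof is correct and follows essentially the same route as the paper's: reduce to a single threshold $2^{-n}\dash\WWKL$ via join-irreducibility of $\AUC_{[0,1]}$, then run a finite-extension argument with $\approx 2^n$ singleton inputs extending a common prefix of a name of $[0,1]$, using uniform continuity of $H$ on the compact set of oracle answers together with continuity of $K$ and a measure bookkeeping step to derive a contradiction. The only cosmetic differences are that you work with $\psi_-$-names of closed sets rather than trees, choose $2^n+2$ equi-distant points rather than $2^n+1$ arbitrary ones, and use a fixed $\delta$ instead of the paper's $\mu(A)$-dependent bound; also, what you need for (ii) is continuity of $\mu$ from below along the enumerated complement, not really Lemma~\ref{lem:semi-computable}, but this does not affect correctness.
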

\begin{proof}
Let us assume for a contradiction that $\AUC_{[0,1]}\leqW*\dash\WWKL$.
Since $\AUC_{[0,1]}$ is join-irreducible by Corollary~\ref{cor:AC-irreducible}, we obtain
that there exists an $n\in\IN$ with $\AUC_{[0,1]}\leqW2^{-n}\dash\WWKL$.
Let $H,K$ be computable functions such that $H\langle\id,FK\rangle$ is a realizer
of $\AUC_{[0,1]}$ for every realizer $F$ of $2^{-n}\dash\WWKL$.
Let $p$ be a name of $[0,1]$, which is mapped to a name $K(p)$ of a tree $T$
with a set $A=[T]$ of infinite paths such that $\mu(A)>2^{-n}$.
Now we consider $2^{n}+1$ distinct points $x_0,x_1,....,x_{2^{n}}\in[0,1]$
and let $m\in\IN$ be such that $2^{-m}<\min\{|x_i-x_j|:i,j\in\{0,...,2^{n}\},i\not=j\}$.
Since $H$ is uniformly continuous on the compact set $\{p\}\times A$, 
there is some prefix $w\prefix p$ such that all names in $H\langle w\IN^\IN,q\rangle$ determine their results
with precision better than $2^{-m-1}$ for all $q\in A$.
Since $K$ is continuous, we can assume that $w$ is long enough such that
$K(w\IN^\IN)$ contains only names of trees $S$ with sets $B=[S]$ of infinite paths
such that $\mu(B\setminus A)\leq2^{-n}(1-\mu(A))$. 
Now we consider extensions $p_0,p_1,...,p_{2^{n}}$ of $w$ which are names
of the singletons $\{x_0\}$, $\{x_1\}$, ..., $\{x_{2^{n}}\}$, respectively. 
Then $K(p_0)$,...,$K(p_{2^{n}})$ are names of trees $T_0,...,T_{2^n}$ with sets
of infinite paths $A_0,...,A_{2^n}$, respectively. Since $\mu(A_i)>2^{-n}$ for all $i=0,...,2^n$,
it is clear that there are distinct $k,j\in\{0,...,2^n\}$ such that $A_j\cap A_k\cap A\not=\emptyset$,
since otherwise we obtain for some $i=0,...,2^n$
\[\mu(A_i)=\mu(A_i\cap A)+\mu(A_i\setminus A)\leq\frac{1}{2^n+1}\mu(A)+2^{-n}(1-\mu(A))<2^{-n}\]
in contradiction to the assumption. 
Let now $q\in A_j\cap A_k\cap A$. Then $H\langle p_j,q\rangle$ is a name of $x_j$ and
$H\langle p_k,q\rangle$ is a name of $x_k$ and hence $|x_j-x_k|<2^{-m}$ according to the choice
of $w$. This is in contradiction to the definition of $m$. 
\end{proof}

Together with Theorem~\ref{thm:AC-PCC} this gives us an alternative proof of Proposition~\ref{prop:PCC-WWKL}.
Theorem~\ref{thm:AC-*-WWKL} also implies $\AUC_{[0,1]}\nleqW\C_2^*$, which was proved
in a different way by Arno Pauly in \cite[Theorem~5.2.1.4]{Pau11}.

Similarly as before Corollary~\ref{cor:CC-PC2} we can ask whether Theorem~\ref{thm:AC-PCC} can be strengthened to the statement $\AUC_{[0,1]}\leqSW\PCC_{[0,1]}$.
However, again for mere cardinality reasons this is not possible. As above we note that $\#\PC_{[0,1]}^{(n)}=|\IN|$, 
while obviously $\#\AUC_{[0,1]}=|\IR|$.
We obtain the following consequence of Proposition~\ref{prop:cardinality}.

\begin{corollary}
$\AUC_{[0,1]}\nleqSW\PC_{[0,1]}^{(n)}$ for all $n\in\IN$.
\end{corollary}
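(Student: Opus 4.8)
The plan is to obtain this statement, just as Corollary~\ref{cor:CC-PC2} was obtained, by a pure cardinality argument using Proposition~\ref{prop:cardinality}. Concretely, it suffices to show that $\#\AUC_{[0,1]}$ is strictly larger than $\#\PC_{[0,1]}^{(n)}$, for then $\AUC_{[0,1]}\leqSW\PC_{[0,1]}^{(n)}$ would contradict $f\leqSW g\TO\#f\leq\#g$.

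First I would compute $\#\AUC_{[0,1]}$. By definition $\dom(\AUC_{[0,1]})$ contains every singleton $\{x\}$ with $x\in[0,1]$, and $\AUC_{[0,1]}(\{x\})=\{x\}$. Taking $M:=\{\{x\}:x\in[0,1]\}\In\dom(\AUC_{[0,1]})$, the family $\{\AUC_{[0,1]}(A):A\in M\}=\{\{x\}:x\in[0,1]\}$ consists of pairwise disjoint sets, so $\#\AUC_{[0,1]}\geq|[0,1]|=|\IR|$; since $\#f$ is always bounded above by $|\dom(f)|$ and $|\dom(\AUC_{[0,1]})|=|\IR|$, this gives $\#\AUC_{[0,1]}=|\IR|$.

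Next I would bound $\#\PC_{[0,1]}^{(n)}$. The unit interval with Lebesgue measure is $\sigma$--finite and $0\notin(0,\infty]$, so Proposition~\ref{prop:cardPICR} yields $\#\PC_{[0,1]}\leq|\IN|$. It then remains to observe — and this is the only point requiring a word of care — that jumps do not change the cardinality of a problem: the jump $f^{(n)}$ is obtained from $f$ merely by replacing the input representation by its $n$--fold jump, so as a multi-valued map it has the same underlying domain $\dom(f)\In X$ and the same assignment $x\mapsto f(x)$ of value sets, and $\#f$ is defined purely in terms of families of pairwise disjoint value sets over subsets of $\dom(f)$. Hence $\#\PC_{[0,1]}^{(n)}=\#\PC_{[0,1]}\leq|\IN|$.

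Finally, assuming for a contradiction that $\AUC_{[0,1]}\leqSW\PC_{[0,1]}^{(n)}$, Proposition~\ref{prop:cardinality} would give $|\IR|=\#\AUC_{[0,1]}\leq\#\PC_{[0,1]}^{(n)}\leq|\IN|$, which is absurd. I do not anticipate any real obstacle in this proof; the only step one should state explicitly is the invariance of $\#$ under taking jumps.
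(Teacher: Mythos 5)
Your proof is correct and follows exactly the paper's intended route: the paper invokes the same cardinality argument (\(\#\AUC_{[0,1]}=|\IR|\) versus \(\#\PC_{[0,1]}^{(n)}\le|\IN|\) via Propositions~\ref{prop:cardinality} and \ref{prop:cardPICR}), just as it did for Corollary~\ref{cor:CC-PC2}. The observation that \(\#\) is invariant under jumps, which you rightly flag as the one step worth stating, is indeed the implicit ingredient the paper leaves unstated.
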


\section{Nash Equilibria}
\label{sec:NASH}

In this section we would like to prove (based on results of Arno Pauly) that there is a Las Vegas algorithm to compute Nash equilibria.
We recall from \cite{Pau10,NRTV07} that a pair $A,B\in\IR^{m\times n}$ of $m\times n$--matrices
is called a {\em bi-matrix game}. Any vector $s=(s_1,...,s_m)\in\IR^m$ with $s_i\geq 0$ for all $i=1,...,m$ and
$\sum_{j=1}^ms_j=1$ is called a {\em mixed strategy}. By $S^m$ we denote the set of these mixed strategies of dimension $m$.
Then a {\em Nash equilibrium} is a pair $(x,y)\in S^n\times S^m$ of strategies such that
\begin{enumerate}
\item $x^{\rm T}Ay\geq w^{\rm T}Ay$ for all $w\in S^n$ and
\item $x^{\rm T}By\geq x^{\rm T}Bz$ for all $z\in S^m$.
\end{enumerate}
John F.\ Nash \cite{Nas51} proved that for any bi-matrix game there exists a Nash equilibrium. 
By $\NASH_{n,m}:\IR^{m\times n}\times\IR^{m\times n}\mto\IR^n\times\IR^m$ we denote the corresponding problem 
\[\NASH_{n,m}(A,B):=\{(x,y)\in\IR^n\times\IR^m:(x,y)\mbox{ is a Nash equilibrium for $(A,B)$}\}\]
of finding a Nash-equilibrium for an $m\times n$ bi-matrix game and
by $\NASH:=\bigsqcup_{n,m\in\IN}\NASH_{n,m}$ we denote the coproduct of all such games for finite $m,n\in\IN$.
By \cite[Theorem~28]{Pau10} it follows that $\NASH$ is strongly idempotent, i.e., $\NASH\times\NASH\leqSW\NASH$.
Like in \cite[Theorem~24]{Pau10} we will use a variant of the well-known {\em matching pennies game} (see \cite{NRTV07}), 
which has a unique Nash equilibrium, in order to prove the following result.

\begin{lemma}
\label{lem:NASH-cylinder}
$\NASH$ is a cylinder.
\end{lemma}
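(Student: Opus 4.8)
The plan is to show $\NASH\equivSW\id_{\Baire}\times\NASH$, where $\id_{\Baire}$ is the identity on Baire space. The direction $\NASH\leqSW\id_{\Baire}\times\NASH$ is immediate since $\NASH$ is pointed (any bi-matrix game has a computable point, e.g.\ a Nash equilibrium exists and the zero game has computable equilibria), so I only need to produce a strong reduction $\id_{\Baire}\times\NASH\leqSW\NASH$. The idea, following \cite[Theorem~24]{Pau10}, is to encode an arbitrary $p\in\Baire$ inside the \emph{unique} Nash equilibrium of a suitable perturbation of a matching-pennies-type game, so that recovering the equilibrium of the combined game lets us recover both $p$ and a solution of the original game — without any access to the input afterwards, which is exactly what strong reducibility requires.

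First I would recall the matching pennies game: the bi-matrix game with $A=\begin{pmatrix}1&-1\\-1&1\end{pmatrix}$, $B=-A$ has the unique Nash equilibrium $x=y=(\tfrac12,\tfrac12)$. More generally, for a target value $t\in[0,1]$ one can design a $2\times2$ bi-matrix game $G(t)$ whose unique Nash equilibrium has first strategy coordinate equal to $t$; the entries of $G(t)$ depend computably (in fact, affinely) on $t$, and conversely $t$ can be read off computably from the equilibrium. Since the input $p\in\Baire$ can be computably (and injectively, with computable partial inverse) encoded as a single real number $t_p\in[0,1]$ — for instance via the embedding $\iota$ of Lemma~\ref{lem:Baire-Cantor} composed with the binary representation $\rho_2$, whose partial inverse on the relevant range is computable — this gives a computable map $p\mapsto G(t_p)$ such that the equilibrium of $G(t_p)$ determines $p$.

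Then the reduction $\id_{\Baire}\times\NASH\leqSW\NASH$ goes as follows. Given input $(p,(A,B))$ with $(A,B)$ an $m\times n$ bi-matrix game, the preprocessor $K$ computes the game $G(t_p)\sqcup(A,B)$, i.e.\ a single bi-matrix game formed by placing $G(t_p)$ and $(A,B)$ as independent blocks (the standard way to take a ``sum'' of bi-matrix games so that a Nash equilibrium of the product is exactly a pair of Nash equilibria of the factors; this is precisely the construction behind $\NASH\times\NASH\leqSW\NASH$ from \cite[Theorem~28]{Pau10}, which I may invoke). An equilibrium of this combined game is a pair $((x_0,y_0),(x,y))$ where $(x_0,y_0)$ is \emph{the} equilibrium of $G(t_p)$ and $(x,y)$ is some equilibrium of $(A,B)$. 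The postprocessor $H$ reads $t_p$ off from $(x_0,y_0)$, inverts the encoding to recover $p$, and outputs $(p,(x,y))$. Crucially $H$ uses only the oracle's answer, so this is a \emph{strong} reduction.

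The main obstacle I anticipate is technical bookkeeping rather than conceptual difficulty: I need a clean construction of the parametrized game $G(t)$ with a genuinely \emph{unique} equilibrium depending computably and invertibly on $t\in[0,1]$ (uniqueness is what forces the oracle to reveal $t_p$ rather than some other equilibrium), together with care that the block-sum construction indeed has product-form equilibria and is itself a bi-matrix game of the right shape. A secondary point is matching up the coproduct structure of $\NASH=\bigsqcup_{n,m}\NASH_{n,m}$ with the index of the combined game, which is routine since the index is computable from the input dimensions. Once $G(t)$ is pinned down, verifying conditions (1) and (2) in the definition of a Nash equilibrium for the block sum is a direct calculation.
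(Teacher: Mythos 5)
Your overall outline coincides with the paper's: obtain $\id_{\Baire}\leqSW\NASH$ by encoding the input into the unique equilibrium of a suitably parametrized game, then conclude $\id_{\Baire}\times\NASH\leqSW\NASH\times\NASH\leqSW\NASH$ from strong idempotency. The paper carries this out by first invoking $\id_{\Baire}\equivSW\id_{[0,1]}$ and then showing $\id_{[0,1]}\leqSW\NASH$ with a concrete $2\times2$ game whose entries depend affinely on $a\in[0,1]$ and whose unique equilibrium satisfies $y_1=\frac{1+a}{3+a}$, so that $a=\frac{2y_1}{1-y_1}-1$ is recovered; this is exactly your $G(t)$ made concrete, and your block-sum step is the strong idempotency $\NASH\times\NASH\leqSW\NASH$, as you say.

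The genuine gap is the proposed encoding $p\mapsto t_p:=\rho_2(\iota(p))\in[0,1]$. You claim that the partial inverse of $\rho_2\circ\iota$ is computable on its range, but it is not. The equilibrium coordinates returned by $\NASH$ carry the Cauchy representation of $\IR$, so your postprocessor $H$ must recover $p$ from a Cauchy name of $t_p$, and this map is already discontinuous: $t=\frac12=\rho_2(\iota(1,0,0,\dots))$ lies in the range with $p(0)=1$, yet for $p'=(0,k,0,0,\dots)$ one has $\iota(p')=01^k0^\omega$ and $\rho_2(\iota(p'))\nearrow\frac12$ as $k\to\infty$ while $p'(0)=0$, so $t\mapsto p(0)$ has no continuous (let alone computable) realizer. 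This is the familiar obstruction to extracting binary digits from Cauchy names, and the paper's remark after Lemma~\ref{lem:Baire-Cantor} that $\iota$ ``cannot be used in a uniform setting'' is a warning sign here, though it is stated there for a different reason (failure to preserve closedness). The fix is to replace $\rho_2\circ\iota$ by an encoding of $\Baire$ into $[0,1]$ that \emph{is} computably invertible from Cauchy names, e.g.\ the continued-fraction map $p\mapsto[0;p(0)+1,p(1)+1,\dots]$ onto the irrationals of $(0,1)$, whose inverse is computable because the floor function is computable on irrationals; this is precisely what underlies the equivalence $\id_{\Baire}\equivSW\id_{[0,1]}$ that the paper uses. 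With that replacement, or simply by factoring through $\id_{[0,1]}$ as the paper does, the rest of your plan is correct.
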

\begin{proof}
Since $\NASH$ is strongly idempotent and $\id_{\IN^\IN}\equivSW\id_{[0,1]}$, it suffices to prove $\id_{[0,1]}\leqSW\NASH$. 
Given some input $a\in[0,1]$, we can compute the bi-matrix game $(A,B)$ given by
\[A:=\left(\begin{matrix} 1 & -1\\ -1 & a \end{matrix}\right),\;B:=\left(\begin{matrix} -1 & 1\\ 1 & -1 \end{matrix}\right).\]
We claim that the unique Nash equilibrium $(x,y)$ of the game $(A,B)$ is given by $x=(x_1,x_2):=(\frac{1}{2},\frac{1}{2})$ and 
$y:=(y_1,y_2)$ with $y_1:=\frac{1+a}{3+a}$ and $y_2:=1-y_1$.
This yields the desired reduction, since $a$ can be recovered from the unique output $(x,y)=\NASH_{2,2}(A,B)$ by
$a=\frac{2y_1}{1-y_1}-1$. It remains to prove the claim, which amounts to check that the above pair $(x,y)\in S^2\times S^2$
is the unique pair that satisfies
\begin{enumerate}
\item $(y_1-y_2)x_1+(ay_2-y_1)x_2\geq (y_1-y_2)w_1+(ay_2-y_1)w_2 $ and
\item $(x_2-x_1)y_1+(x_1-x_2)y_2\geq(x_2-x_1)z_1+(x_1-x_2)z_2$ 
\end{enumerate}
for all $w=(w_1,w_2),z=(z_1,z_2)\in S^2$. If we consider the case $y_1,y_2\not\in\{0,1\}$, then the only
way to satisfy the second constraint (2) is by balancing both addends, i.e., $x_2-x_1=x_1-x_2$, which yields $x_1=x_2=\frac{1}{2}$.
This is because an unbalanced pair $x_2-x_1\not=x_1-x_2$ would always allow to increase the weight $y_1$ or $y_2$ of the
larger component of the pair, which is possible if both weights $y_1$ and $y_2$ are smaller than $1$.
The corresponding modified pair of weights $z_1,z_2$ would then violate (2).
Likewise, if we consider the case $x_1,x_2\not\in\{0,1\}$ balancing the addends in the first constraint (1) yields $y_1-y_2=ay_2-y_1$, which implies $y_1=\frac{1+a}{3+a}$ after the
substitution $y_2=1-y_1$. Now we still need to consider the case where we allow $x_1,x_2,y_1,y_2\in\{0,1\}$.
For instance, if $y_2=1$, then $y_1=0$ and the second constraint (2) can only be satisfied if $x_1-x_2\geq x_2-x_1$, which means if $x_1\geq x_2$. 
In this case the first constraint can only be satisfied if $-1=y_1-y_2\geq ay_2-y_1=a$, which is impossible for $a\in[0,1]$.  
Likewise, the other cases with values in $\{0,1\}$ can be ruled out. What remains is the above unique Nash equilibrium $(x,y)$.
\end{proof}

Arno Pauly proved that the problem $\NASH$ is Weihrauch equivalent to the idempotent closure $\AUC_{[0,1]}^*$ of all-or-unique choice on the unit interval (see \cite{Pau10}).

\begin{theorem}[Nash equilibria, Arno Pauly 2010]
$\NASH\equivSW\AUC_{[0,1]}^*$.
\end{theorem}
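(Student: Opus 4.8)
The plan is to establish the two strong reductions $\AUC_{[0,1]}^*\leqSW\NASH$ and $\NASH\leqSW\AUC_{[0,1]}^*$ separately, using Proposition~\ref{prop:RDIV-AUC} (which lets us freely replace $\AUC_{[0,1]}$ by robust division $\RDIV$), the strong idempotency of $\NASH$ recorded above, and the ``pass-through'' flexibility of $\AUC_{[0,1]}$ on singleton inputs. For the first reduction I would start with a small gadget showing $\RDIV\leqSW\NASH$: given $(x,y)\in[0,1]^2$ one constructs, exactly in the style of the proof of Lemma~\ref{lem:NASH-cylinder} and following \cite{Pau10}, a $2\times 2$ bi-matrix game $(A,B)$ depending computably on $(x,y)$ whose set of Nash equilibria projects onto $\{x/\max(x,y)\}$ when $y\neq 0$ and onto all of $[0,1]$ when $y=0$, i.e.\ onto $\RDIV(x,y)$; since $(x,y)$ is not needed to read off the answer, this is strong. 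By monotonicity of the finite-parallelization operator this yields $\RDIV^*\leqSW\NASH^*$, and, again by Proposition~\ref{prop:RDIV-AUC} together with that monotonicity, $\RDIV^*\equivSW\AUC_{[0,1]}^*$. It then remains to argue $\NASH^*\equivSW\NASH$: from $\NASH\times\NASH\leqSW\NASH$ one gets $\NASH^i\leqSW\NASH$ for every $i$ by induction, and the coproduct $\NASH^*=\bigsqcup_{i}\NASH^i$ collapses because each $\NASH^i$, being a product of finitely many $\NASH_{n_j,m_j}$, is strongly equivalent to a single $\NASH_{N,M}$ via a ``disjoint-union game'', whose dimensions $(N,M)$ determine $i$ — so the coproduct index is recoverable from the output of $\NASH=\bigsqcup_{n,m}\NASH_{n,m}$. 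Altogether $\AUC_{[0,1]}^*\equivSW\RDIV^*\leqSW\NASH^*\equivSW\NASH$.

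For the converse $\NASH\leqSW\AUC_{[0,1]}^*$ I would first fix the dimensions $n,m$ and prove $\NASH_{n,m}\leqSW\AUC_{[0,1]}^*$, which is the technical core and where I would follow Pauly \cite{Pau10,Pau11}. The idea is support enumeration: a Nash equilibrium of $(A,B)$ has some support pair $(I,J)$ with $I\In\{1,\dots,n\}$, $J\In\{1,\dots,m\}$, and there are only finitely many (at most $2^{n+m}$) candidates; for a fixed candidate the equilibrium conditions restricted to that support amount to a system of linear equations together with finitely many linear inequalities, and such a system can be solved by a bounded number $k(n,m)$ of robust divisions performed in Gaussian-elimination style — each division by a pivot returns the unique value when the pivot is nonzero and the whole interval when it vanishes, which is exactly $\AUC_{[0,1]}$ and which produces a valid branch in the degenerate case. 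Running all candidates in parallel through $\RDIV^{k(n,m)}\equivSW\AUC_{[0,1]}^*$ and selecting a branch satisfying all Nash inequalities gives the reduction; to keep it strong I would append one further $\AUC_{[0,1]}$-component equal to the singleton encoding the game $(n,m,A,B)$ (a singleton is returned verbatim by $\AUC_{[0,1]}$), so that the output modifier can recognise a genuine equilibrium without re-reading the input. This reduction is uniform in $(n,m)$, and to assemble the coproduct $\NASH=\bigsqcup_{n,m}\NASH_{n,m}\leqSW\AUC_{[0,1]}^*$ one arranges the number of $\AUC_{[0,1]}$-copies used to encode $(n,m)$ (padding with harmless singleton components); since $\AUC_{[0,1]}^*=\bigsqcup_{k}\AUC_{[0,1]}^k$ reports $k$ as the index of its output, the pair $(n,m)$ — hence the coproduct index — is recoverable, and the reduction is strong.

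The main obstacle is precisely the bookkeeping in this second direction: checking that the Gaussian-elimination-by-robust-division produces, for a fixed support candidate, a solution set that continuously covers the correct linear variety (with the ``all'' outcome of $\AUC_{[0,1]}$ absorbing rank-deficient subsystems), that the parallel selection of a genuine equilibrium among the finitely many candidate branches is computable and total, and that all bounds are uniform in $(n,m)$. These are the points worked out by Pauly, so I would cite \cite{Pau10,Pau11} for the detailed construction and only spell out the adaptation to strong Weihrauch reducibility and the handling of the coproduct over all dimensions. (Alternatively, one could first prove $\NASH\equivW\AUC_{[0,1]}^*$ and then upgrade to $\equivSW$ by observing that $\NASH$ is a cylinder by Lemma~\ref{lem:NASH-cylinder} and that $\AUC_{[0,1]}^*$ is a cylinder as well, since $\id\times\AUC_{[0,1]}^*\leqSW\AUC_{[0,1]}^*$ by the same singleton pass-through trick.)
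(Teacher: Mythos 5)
Your closing parenthetical alternative --- prove $\NASH\equivW\AUC_{[0,1]}^*$ first and then upgrade to $\equivSW$ using that $\NASH$ is a cylinder (Lemma~\ref{lem:NASH-cylinder}) and that $\AUC_{[0,1]}^*$ is a cylinder --- is exactly the paper's proof and is the cleanest correct route: once $\NASH\equivW\RDIV^*$ is imported from \cite[Corollary~40]{Pau10} and $\RDIV^*\equivSW\AUC_{[0,1]}^*$ is read off Proposition~\ref{prop:RDIV-AUC}, the two cylinder facts promote both ordinary reductions to strong ones (recall that $g\leqW f$ is equivalent to $g\leqSW f$ whenever $f$ is a cylinder). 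You should have led with this; the rest of your write-up is unnecessary overhead.

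The main route you sketch in order to establish the strong reductions by hand has a genuine gap. In arguing $\NASH^*\equivSW\NASH$ you combine an $i$-tuple of games into a block-diagonal game and assert that ``the dimensions $(N,M)$ determine $i$'', so that the coproduct index can be read off the output of $\NASH$. That is not true: two $2\times 2$ games ($i=2$) and a single $4\times 4$ game ($i=1$) both produce combined dimensions $(4,4)$, so $(N,M)$ determines neither $i$ nor the block decomposition. A \emph{strong} reduction $\NASH^*\leqSW\NASH$ must let the output of $\NASH$ alone recover the block structure, which requires a padding scheme that injectively codes the whole tuple $((n_1,m_1),\dots,(n_i,m_i))$ into the combined dimensions --- a real additional construction you have not supplied. (There is also an unaddressed encoding issue in the plan to ``append one further $\AUC_{[0,1]}$-component equal to the singleton encoding the game $(n,m,A,B)$'': one needs a computable injection of $\IN^2\times\IR^{mn}\times\IR^{mn}$ into $[0,1]$ whose inverse is computable on Cauchy names, and the naive binary-expansion coding fails at dyadics; a continued-fraction style coding works, but it is not free.) The cylinder route bypasses all of this bookkeeping, which is why the paper takes it.
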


The proof of $\NASH\equivW\RDIV^*$ can be found in  \cite[Corollary~40]{Pau10}. Moreover,
the equivalence $\RDIV^*\equivSW\AUC_{[0,1]}^*$ holds by Proposition~\ref{prop:RDIV-AUC}. 
It is easy to see that $\AUC_{[0,1]}^*$ is a cylinder and hence we obtain by Lemma~\ref{lem:NASH-cylinder} that even strong Weihrauch equivalence
holds as stated in the previous result.
Since $\WWKL$ is idempotent, we obtain the following immediate conclusion of Theorem~\ref{thm:AC-PCC}.
 
\begin{corollary}
\label{cor:NASH-WWKL}
$\NASH\leqW\WWKL$.
\end{corollary}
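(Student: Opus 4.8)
The plan is to chain together the equivalence $\NASH\equivSW\AUC_{[0,1]}^*$ with the reduction $\AUC_{[0,1]}\lW\PCC_{[0,1]}$ from Theorem~\ref{thm:AC-PCC}, the obvious inclusion $\PCC_{[0,1]}\leqW\PC_{[0,1]}$, the equivalence $\PC_{[0,1]}\equivW\WWKL$ from Proposition~\ref{prop:WWKL}, and the idempotency of $\WWKL$ (Corollary~\ref{cor:WWKL-product}, or equivalently Corollary~\ref{cor:products-pairing}, which gives $\WWKL^*\leqW\WWKL$ since $\WWKL$ is pointed). First I would record that by the cited theorem of Pauly together with Proposition~\ref{prop:RDIV-AUC} we have $\NASH\equivSW\AUC_{[0,1]}^*$.

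Next I would observe that finite parallelization is monotone with respect to $\leqW$, so from $\AUC_{[0,1]}\leqW\PCC_{[0,1]}\leqW\PC_{[0,1]}\equivW\WWKL$ we get
\[
\NASH\equivW\AUC_{[0,1]}^*\leqW\WWKL^*.
\]
Finally, since $\WWKL$ is pointed and idempotent (it has a computable point in its domain, e.g.\ the full tree, and $\WWKL*\WWKL\equivW\WWKL$ by Corollary~\ref{cor:WWKL-product}), we have $\WWKL^*\equivW\WWKL$, so $\WWKL^*\leqW\WWKL$. Combining these yields $\NASH\leqW\WWKL$, as claimed.

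The main obstacle, such as it is, is purely bookkeeping: one must make sure that the monotonicity of $(\cdot)^*$ under ordinary Weihrauch reducibility is available (it is, since $f\mapsto f^*$ is a closure operator on the Weihrauch lattice, as noted after Definition~\ref{def:algebraic-operations}), and that the idempotency of $\WWKL$ is genuinely available in the form $\WWKL^*\leqW\WWKL$ rather than merely $\WWKL\times\WWKL\leqW\WWKL$. The latter transfer uses pointedness of $\WWKL$ together with the remark in the preliminaries that for pointed $f$ one has $f\equivW f^*$ iff $f$ is idempotent; alternatively Corollary~\ref{cor:products-pairing} applied to $R=2^\IN$ and $I=(0,\infty]$ gives strong idempotency of $\PC_{2^\IN}\equivSW\WWKL$ directly, from which $\WWKL^*\leqW\WWKL$ follows. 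No delicate argument is needed beyond assembling these cited facts in the right order.
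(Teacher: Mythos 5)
Your proof is correct and follows essentially the same route as the paper: reduce $\NASH\equivW\AUC_{[0,1]}^*$ (equivalently $\RDIV^*$) to $\WWKL^*$ via Theorem~\ref{thm:AC-PCC} and $\PCC_{[0,1]}\leqW\PC_{[0,1]}\equivW\WWKL$, then absorb the star using idempotency and pointedness of $\WWKL$. The paper presents this more tersely as an "immediate conclusion" of Theorem~\ref{thm:AC-PCC} and the idempotency of $\WWKL$, but the bookkeeping you spell out is exactly what is implicit there.
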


This means that there is a Las Vegas algorithm for computing Nash equilibria.
In terms of $\RDIV$ this algorithm is quite involved and can be found in \cite{Pau10}, while we have shown how
$\RDIV$ can be computed in Las Vegas style (see the proof of Theorem~\ref{thm:AC-PCC} and Corollary~\ref{cor:RDIV-WWKL}). 
An obvious question is whether Corollary~\ref{cor:NASH-WWKL} can be improved 
to the reduction $\NASH\leqW\PCC_{[0,1]}$, which would mean that there is a Las Vegas algorithm,
whose random guesses can always be organized in a connected interval. 
We will see that this is not the case and we will even obtain $\NASH\nleqW\ConC_{[0,1]}$.
We start with the following result, which improves \cite[Proposition~7.1]{BLP12}.

\begin{proposition}
\label{prop:C2xAC-PCC}
$\C_2\times\AUC_{[0,1]}\nleqW\ConC_{[0,1]}$.
\end{proposition}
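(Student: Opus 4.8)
The plan is to prove $\C_2\times\AUC_{[0,1]}\nleqW\ConC_{[0,1]}$ by a finite extension argument, combining the incomparability idea from Proposition~\ref{prop:CC-PC} with the fact that $\ConC_{[0,1]}$ chooses from a \emph{single} interval, so it cannot "simultaneously" serve a two-way branching ($\C_2$) together with a singleton-or-interval choice ($\AUC_{[0,1]}$). Assume for a contradiction that there are computable $H,K$ with minimal domains such that $H\langle\id,FK\rangle$ realizes $\C_2\times\AUC_{[0,1]}$ whenever $F$ realizes $\ConC_{[0,1]}$. The input to $\C_2\times\AUC_{[0,1]}$ is a pair: a name of a set $C\In\{0,1\}$ with $|C|\ge 1$, and a name of a set $A\In[0,1]$ that is either all of $[0,1]$ or a singleton. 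We will keep the $\AUC$-input fixed at the full interval $[0,1]$ for as long as possible and use the $\C_2$-component to force $H$ to commit, then later switch one of the components.

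First I would set up the diagonalization. Start with $p_0$ a name of $(\{0,1\},[0,1])$ for the pair. Then $K(p_0)$ names a proper closed interval $I_0\In[0,1]$ with $\lambda(I_0)>0$, say $I_0=[0,1]$ may be assumed without loss of generality (since the input is "most open"). By uniform continuity of $H$ on $\{p_0\}\times N$, where $N$ is the (compact, signed-digit) set of names of points of $I_0$, there is a finite prefix $w_0\prefix p_0$ such that $H\langle w_0\IN^\IN,q\rangle$ determines the $\C_2$-output bit $b\in\{0,1\}$ for every name $q$ of a point in $I_0$. Now the key move: the $\C_2$-output is a \emph{single} bit, so there is a fixed $b^*\in\{0,1\}$ such that $H\langle w_0\IN^\IN,q\rangle$ outputs $b^*$ on a positive-measure (in fact all) of $I_0$; but by feeding the $\C_2$-component the name of the singleton $\{1-b^*\}$ extending $w_0$, we get a name $p_1$ with $w_0\prefix p_1$, and $K(p_1)$ names an interval $I_1$ with $\lambda(I_1)>0$; by continuity of $K$ we may take $w_0$ long enough that $\lambda(I_1\setminus I_0)$ is small, so $I_1\cap I_0$ still has positive measure, hence is a nondegenerate subinterval. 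Picking $q\in I_1\cap I_0$ and a realizer $F$ of $\ConC_{[0,1]}$ with $FK(p_1)=q$ forces $H\langle p_1,q\rangle$ to output bit $b^*$ (by the prefix $w_0$) but also output a bit in $\{1-b^*\}$ (correctness on $p_1$), a contradiction.

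Actually the cleanest route, mirroring the proof of Proposition~\ref{prop:CC-PC}, is a full finite-extension construction that shrinks the $\AUC$-interval to a singleton while keeping the $\ConC$-image interval of positive length, and then derives the contradiction there; combining this with the $\C_2$ trap above gives extra room. Concretely: inductively build prefixes $w_0\prefix w_1\prefix\dots$ of names $p_n$ of $\AUC$-inputs $A_n$ (with $A_0=[0,1]$, and $\diam(A_n)\to 0$), so that $K(p_n)$ names an interval $I_n$ with $\lambda(I_n)\ge c>0$ for all $n$ (the $\lambda(I_{n+1}\setminus I_n)$ increments are summable and chosen small). At each stage use uniform continuity of $H$ to fix the $\C_2$-bit on $I_n$, then split $[0,1]$ into a left and a right third (as in Proposition~\ref{prop:CC-PC}) for the $\AUC$-singleton target; among the two corresponding $\ConC$-images, one intersects $I_n$ in at most half its measure, so we can recurse with that one as $I_{n+1}$ while still retaining positive measure via the small-increment bound — wait, that shrinks $I_n$, so instead I would keep $I_n$ fixed in measure and use the $\C_2$-trap to produce the contradiction in finitely many steps: once $H$ has committed to a bit $b^*$ on a positive-measure piece of the current $\ConC$-image interval and the $\AUC$-input is still compatible with a singleton $\{x\}$ inside that piece, feed the $\C_2$-component the name of $\{1-b^*\}$ extending the current prefix, extract a common point, and contradict correctness.

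The main obstacle, and where care is needed, is ensuring that after switching the $\C_2$-input (and possibly the $\AUC$-input) to a more-committed name extending the frozen prefix $w$, the new $\ConC$-image interval $I'$ still \emph{overlaps} the region of $I$ on which $H$ has been forced to output the wrong bit. This is exactly the role of the measure-accounting step $\lambda(I'\setminus I)\le\frac{1}{2}(1-\lambda(I))$ (or a similar fraction), guaranteed by continuity of $K$ and a long enough prefix $w$; one must check that the set $A_i=\{q\in I: H\langle p,q\rangle=i\}$ for the unavoidable bit $i=b^*$ has positive measure (it does, being a positive-measure subset of $I$, since the two $A_i$ partition a positive-measure interval and one of them must be "large"), and that $I'\cap A_{b^*}$ is then nonempty. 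A realizer $F$ of $\ConC_{[0,1]}$ picking a point there yields the contradiction $b^*=H\langle p',q\rangle\in\{1-b^*\}$. This establishes $\C_2\times\AUC_{[0,1]}\nleqW\ConC_{[0,1]}$.
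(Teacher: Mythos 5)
Your proposal has a fundamental gap and would not work as written. The core of your first attempt tries to derive a contradiction using only the $\C_2$-component: you freeze a prefix $w_0$ of a name of $(\{0,1\},[0,1])$, claim $H$ commits to a single bit $b^*$ on (all of, or a positive-measure part of) $I_0$, then feed $\{1-b^*\}$ and reach a contradiction. This cannot succeed, because the reduction $\C_2\leqW\ConC_{[0,1]}$ is actually \emph{true} (via $\C_2\leqW\C_2^*\leqW\PCC_{[0,1]}\leqW\ConC_{[0,1]}$, the first part being \cite[Proposition~7.2]{BLP12} as used elsewhere in the paper). Concretely, $H$ does \emph{not} output a single bit on all of $I_0$: after the prefix is frozen, the bit still depends on which point $q\in I_0$ the oracle returns, so $I_0$ is split into a $b^*$-region $A_{b^*}$ and a $(1-b^*)$-region $A_{1-b^*}$. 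When you switch the $\C_2$-input to $\{1-b^*\}$, $K$ is free to map the new name to an interval $I_1$ that lies (up to a small error) inside $A_{1-b^*}$, so $I_1\cap A_{b^*}$ can be empty, and no contradiction arises. The parenthetical ``(in fact all)'' is precisely where the argument breaks, and your third paragraph, which identifies ``one must check that $I'\cap A_{b^*}$ is then nonempty,'' never supplies a reason --- and there is none.

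A second, independent problem is that you repeatedly assume the $\ConC_{[0,1]}$-output intervals have positive Lebesgue measure (``$K(p_0)$ names \dots\ $I_0$ with $\lambda(I_0)>0$,'' ``$K(p_1)$ names an interval $I_1$ with $\lambda(I_1)>0$,'' ``the two $A_i$ partition a positive-measure interval''). Unlike $\WWKL$ or $\PCC_{[0,1]}$, the problem $\ConC_{[0,1]}$ allows singleton intervals, so none of these claims is automatic. This means the measure-accounting step $\lambda(I'\setminus I)\leq\tfrac{1}{2}(1-\lambda(I))$, imported from the proof of Proposition~\ref{prop:CC-PC}, does not carry over: in that proof one derives a contradiction by driving the measure of a positive-measure \emph{$\WWKL$-output} to zero, but there is no positive-measure constraint on $\ConC_{[0,1]}$-outputs to violate. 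What is actually needed, and what the paper's proof provides, is a packing argument that exploits \emph{both} components at once: fix a single long prefix $p|_k$; use the $\AUC$-component to manufacture roughly $2^{n+1}$ singleton inputs $\{x_i\}$ whose associated $\ConC$-image intervals $I_{x_i}$ must be pairwise disjoint (since a common oracle answer would force $H$ to name two far-apart reals at once); and use the $\C_2$-component to force $\lambda(I_{x_i})>2^{-n}$ for each of them (by separating the names of $\{0\}\times\{x_i\}$ and $\{1\}\times\{x_i\}$, which share a long prefix, and noting that $H$ must be able to distinguish the two bits from the oracle answer alone). Summing, $\sum_i\lambda(I_{x_i})>2^{n+1}\cdot2^{-n}=2>\lambda([0,1])$, a contradiction. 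Your proposal contains neither the disjointness argument nor the measure lower bound, so it does not close the proof.
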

\begin{proof}
Let us assume for a contradiction that $\C_2\times\AUC_{[0,1]}\leqW\ConC_{[0,1]}$.
Then there are computable $H,K$ such that $H\langle\id,FK\rangle$ is a realizer of $\C_2\times\AUC_{[0,1]}$
whenever $F$ is a realizer of $\ConC_{[0,1]}$.
Without loss of generality, we can assume that we represent $[0,1]$ with the following signed-digit representation:
\[\rho:\In\{-1,0,1\}^\IN\to[0,1],p\mapsto\sum_{n=0}^\infty p(n)2^{-n}.\]
Let now $p$ be a name of $\{0,1\}\times[0,1]$. Then $K(p)$ is the name of an interval $I$
and $H$ is uniformly continuous on the compact set $\{p\}\times N$, where $N$ is the set of all names of points in $I$
(this set is compact, since we are using a signed-digit representation).
\begin{enumerate}
\item[(a)]
Hence there is a number $n\in\IN$ such that all points in $H\langle p|_n\IN^\IN,q|_n\IN^\IN\rangle$ with $q\in N$
have a fixed first discrete component in $\{0,1\}$. 
Since we use the signed-digit representation $\rho$ for $[0,1]$, each $q|_n=q(0)...q(n-1)$ determines a point $x\in I$ 
up to precision $2^{-n+1}$, more precisely, if $x,y\in I$ are points with $|x-y|\leq2^{-n+1}$, then $x,y$ have names $r,s$, respectively, with $r|_n=s|_n$.
\item[(b)]
There is also a number $k\geq n$ such that all second components of the points $H\langle p|_k\IN^\IN,q|_k\IN^\IN\rangle$ with $q\in N$
determine values in $[0,1]$, which are identical up to precision $2^{-n-2}$; more precisely, if $r,s\in\pi_2 H\langle p|_k\IN^\IN,q|_k\IN^\IN\rangle$ for some $q\in N$,
then $r,s$ are names of points $x,y\in[0,1]$ with $|x-y|<2^{-n-2}$.
\end{enumerate}
We now choose $2^{n+2}$ equi-distant points $x_1,...,x_{2^{n+2}}\in[0,1]$ including the endpoints $0,1$.
In particular, $|x_i-x_j|>2^{-n-2}$ for all $i,j$ with $i\not=j$.
Let now $q_a,q_b$ be names of the two endpoints of $I$
and let $a,b\in [0,1]$ be the value of the second components of $H\langle p,q_a\rangle$
and $H\langle p,q_b\rangle$, respectively. We fix some $x\in[0,1]$. Let $A_x=\{0,1\}\times\{x\}$. 
Then due to continuity of $K$ there is a name $p_i$ of $A_x$ for every $i\in\IN$
such that $p|_{k+i}$ is a prefix of $p_i$ and $K(p_i)$ is a name of an interval $I_i$ such that $\sup_{y\in I_i}\dist_I(y)<2^{-i}$ (where $\dist_I(y):=\inf\{|z-y|:z\in I\}$).
We note that $p_i\to p$ for $i\to\infty$.
Let us assume that $I_i\not\In I^\circ$ for all $i$ (where $I^\circ$ denotes the interior of $I$). 
Then there is a sequence of points $(y_i)$ with $y_i\in I_i$ and a corresponding sequence
of names $(q_i)$ such that $q_i\to q_a$ or $q_i\to q_b$ for $i\to\infty$. Without loss of generality, we assume $q_i\to q_a$.
Due to continuity of $H$ we obtain
\[H\langle p,q_a\rangle=H\left\langle\lim_{i\to\infty}p_i,\lim_{i\to\infty}q_i\right\rangle=\lim_{i\to\infty}H\langle p_i,q_i\rangle.\]
Since the second component of $H\langle p,q_a\rangle$ is a name for $a$ and the second components of all the $H\langle p_i,q_i\rangle$
are names for $x$, we obtain $x=a$. Hence, in the general case we have $x\in\{a,b\}$.
In other words, if $x\not\in\{a,b\}$, then $A_x$ has a name $p_x$ with prefix $p|_k$ such that $K(p_x)$ is a name of an interval $I_x$ with $I_x\In I^\circ$.
Among the $2^{n+2}$ points $x_i$ there are at least $2^{n+1}$, which are different from $a,b$.
Let us assume, without loss of generality, that the points $x_1,...,x_{2^{n+1}}$ are all different from $a,b$.
Now we claim that
\begin{enumerate}
\item $I_{x_i}\cap I_{x_j}=\emptyset$ for different $i,j\in\{1,...,2^{n+1}\}$,
\item $\lambda(I_{x_i})>2^{-n}$ for all $i\in\{1,...,2^{n+1}\}$.
\end{enumerate}
Together this is clearly a contradiction since 
\[\sum_{i=1}^{2^{n+1}}\lambda(I_{x_i})>2^{n+1}2^{-n}>1=\lambda([0,1]).\]
We first prove (1). Let us assume that $z\in I_{x_i}\cap I_{x_j}$ for $i\not=j$ and $q$ is a name of $z$.
It follows that the second components of $H\langle p_{x_i},q\rangle$ and $H\langle p_{x_j},q\rangle$ are names of $x_i$ and $x_j$, respectively,
and since $p_{x_i}$ and $p_{x_j}$ have the prefix $p|_k$ in common, we obtain $|x_i-x_j|<2^{-n-2}$ in contradiction
to the choice of these points $x_i,x_j$.
We now prove (2). Let $i\in\{1,...,2^{n+1}\}$.
Due to continuity of $K$ we can choose some $m\geq k$ such that all intervals $J$ named in $K(p_{x_i}|_m\IN^\IN)$
satisfy $\sup_{y\in J}\dist_{I_{x_i}}(y)<2^{-n-1}$ and $J\In I$ (the latter is possible since $I_{x_i}\In I^\circ$). Now the two sets $\{0\}\times\{x_i\}$ and $\{1\}\times\{x_i\}$
have names $p_0,p_1$ that share the common prefix $p_{x_i}|_m$.
Let $J_0\In I$ and $J_1\In I$ be the intervals named by $K(p_0)$ and $K(p_1)$, respectively and let $x\in J_0$ and $y\in J_1$.
If $|x-y|\leq2^{-n+1}$, then there are names $q,r\in N$ of $x,y$, respectively, such that $q|_n=r|_n$ and hence 
$H\langle p_0,q\rangle$ and $H\langle p_1,r\rangle$ must name identical first components, which is a contradiction to the choice of $p_0,p_1$.
Hence we obtain $\inf\{|x-y|:x\in J_0,y\in J_1\}\geq2^{-n+1}$. But this implies 
\[\lambda(I_{x_i})=\sup\{|x-y|:x,y\in I_{x_i}\}>2^{-n+1}-2\cdot2^{-n-1}=2^{-n}.\]
This completes the proof.
\end{proof}

Since $\C_2\times\AUC_{[0,1]}\leqW\AUC_{[0,1]}^*$ we obtain the following corollary.

\begin{corollary}
\label{cor:AC-CC}
$\AUC_{[0,1]}^*\nleqW\ConC_{[0,1]}$.
\end{corollary}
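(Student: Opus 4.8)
The plan is to deduce Corollary~\ref{cor:AC-CC} directly from Proposition~\ref{prop:C2xAC-PCC} by a short monotonicity argument, so there is essentially no new content beyond organizing the pieces already in place.

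First I would record the reduction $\C_2\times\AUC_{[0,1]}\leqW\AUC_{[0,1]}^*$. This is immediate: the finite parallelization $\AUC_{[0,1]}^*$ contains, among its components, the two-fold product $\AUC_{[0,1]}^2=\AUC_{[0,1]}\times\AUC_{[0,1]}$, and since $\C_2=\P_{\geq1}\C_2$ embeds strongly into $\AUC_{[0,1]}$ (map $\{0,1\}$ to $[0,1]$ via the computable embedding picking out two points with separated preimages, and a singleton to a singleton — this is the standard fact that $\C_2\leqSW\AUC_{[0,1]}$, cf.\ the proof of Proposition~\ref{prop:C2xAC-PCC} where the signed-digit apparatus is already used), we get $\C_2\times\AUC_{[0,1]}\leqSW\AUC_{[0,1]}\times\AUC_{[0,1]}\leqSW\AUC_{[0,1]}^*$. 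In fact one can be even cruder and simply observe that $\AUC_{[0,1]}$ is pointed, so $\C_2\times\AUC_{[0,1]}\leqW\C_2*\AUC_{[0,1]}\leqW\AUC_{[0,1]}^**\AUC_{[0,1]}^*\equivW\AUC_{[0,1]}^*$ using that $\AUC_{[0,1]}^*$ is idempotent; but the direct product embedding is cleaner.

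Then I would close the argument by contradiction: suppose $\AUC_{[0,1]}^*\leqW\ConC_{[0,1]}$. Composing with the reduction from the previous paragraph and using transitivity of $\leqW$ yields $\C_2\times\AUC_{[0,1]}\leqW\ConC_{[0,1]}$, contradicting Proposition~\ref{prop:C2xAC-PCC}. Hence $\AUC_{[0,1]}^*\nleqW\ConC_{[0,1]}$.

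There is no real obstacle here; the only thing to be slightly careful about is the direction of the reducibilities in the composition step (we need $\C_2\times\AUC_{[0,1]}\leqW\AUC_{[0,1]}^*$, not the reverse), and making sure the claimed embedding $\C_2\leqSW\AUC_{[0,1]}$ is phrased correctly — it is exactly the kind of embedding used implicitly throughout Section~\ref{sec:RDIV}, so it may simply be cited. All the genuine work has already been done in Proposition~\ref{prop:C2xAC-PCC}, whose finite-extension/measure-counting construction is the heart of the matter; the corollary is a one-line consequence. I would write it as:

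\begin{proof}
Since $\AUC_{[0,1]}$ is pointed and $\C_2\leqSW\AUC_{[0,1]}$, we obtain $\C_2\times\AUC_{[0,1]}\leqSW\AUC_{[0,1]}\times\AUC_{[0,1]}\leqSW\AUC_{[0,1]}^*$. If $\AUC_{[0,1]}^*\leqW\ConC_{[0,1]}$ held, then by transitivity $\C_2\times\AUC_{[0,1]}\leqW\ConC_{[0,1]}$, contradicting Proposition~\ref{prop:C2xAC-PCC}.
\end{proof}
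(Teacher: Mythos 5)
Your proof is correct and takes exactly the route the paper does: the paper's justification for Corollary~\ref{cor:AC-CC} is precisely the one-line observation that $\C_2\times\AUC_{[0,1]}\leqW\AUC_{[0,1]}^*$ combined with Proposition~\ref{prop:C2xAC-PCC}. The only cosmetic difference is that you explicitly supply (and slightly strengthen, to $\leqSW$) the reduction $\C_2\leqSW\AUC_{[0,1]}$, whereas the paper takes the ordinary reduction $\C_2\leqW\AUC_{[0,1]}$ as known background.
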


This means that a method to compute zeros of continuous functions with changing signs
cannot help to compute Nash equilibria.  

\begin{corollary}
\label{cor:NASH-IVT}
$\NASH\nleqW\IVT$.
\end{corollary}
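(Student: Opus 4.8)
The plan is to derive $\NASH\nleqW\IVT$ directly from the results just established. Recall that $\IVT\equivSW\ConC_{[0,1]}$ by \cite[Theorem~6.2]{BG11a}, and that $\NASH\equivSW\AUC_{[0,1]}^*$ by Arno Pauly's theorem cited above. So it suffices to show $\AUC_{[0,1]}^*\nleqW\ConC_{[0,1]}$, which is precisely Corollary~\ref{cor:AC-CC}. Thus the whole statement is essentially an unwinding of equivalences already in hand, and no genuinely new argument is needed.

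Concretely, first I would invoke Corollary~\ref{cor:AC-CC}, which gives $\AUC_{[0,1]}^*\nleqW\ConC_{[0,1]}$. Then I would note the chain of (strong) Weihrauch equivalences $\NASH\equivSW\AUC_{[0,1]}^*$ and $\IVT\equivSW\ConC_{[0,1]}$. Since $\leqW$ only depends on Weihrauch degrees, $\NASH\leqW\IVT$ would imply $\AUC_{[0,1]}^*\leqW\ConC_{[0,1]}$ by substituting equivalent problems on both sides (using transitivity of $\leqW$ together with $\AUC_{[0,1]}^*\equivSW\NASH\leqW\IVT\equivSW\ConC_{[0,1]}$, and $\equivSW$ implies $\equivW$). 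This contradicts Corollary~\ref{cor:AC-CC}, so $\NASH\nleqW\IVT$.

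The only mild subtlety is making sure the reduction of the $*$-closure behaves well: we use that $\C_2\times\AUC_{[0,1]}\leqW\AUC_{[0,1]}^*$ (a pointed idempotent closure absorbs finite products), which is the step recorded just before Corollary~\ref{cor:AC-CC}, so that Proposition~\ref{prop:C2xAC-PCC} indeed yields Corollary~\ref{cor:AC-CC}. Everything else is bookkeeping with $\leqW$, $\leqSW$ and the stated equivalences.

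Honestly, there is no real obstacle here: the content lies entirely in Proposition~\ref{prop:C2xAC-PCC}, whose proof (a finite-extension/Baire-category argument on the signed-digit representation, carving out $2^{n+2}$ well-separated target points and deriving a measure contradiction $\sum_i\lambda(I_{x_i})>1$) does the heavy lifting. The corollary $\NASH\nleqW\IVT$ is then a one-line consequence, and I would present it as such, citing Corollary~\ref{cor:AC-CC}, the equivalence $\IVT\equivSW\ConC_{[0,1]}$ from \cite{BG11a}, and Pauly's theorem $\NASH\equivSW\AUC_{[0,1]}^*$.
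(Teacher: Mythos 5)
Your proof is correct and is essentially identical to the paper's: the statement is indeed an immediate consequence of Corollary~\ref{cor:AC-CC} combined with the equivalences $\NASH\equivSW\AUC_{[0,1]}^*$ and $\IVT\equivSW\ConC_{[0,1]}$, exactly as you describe. The paper presents this in the same one-line fashion, and your remark about $\C_2\times\AUC_{[0,1]}\leqW\AUC_{[0,1]}^*$ being the bridge from Proposition~\ref{prop:C2xAC-PCC} to Corollary~\ref{cor:AC-CC} matches the paper's derivation as well.
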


We mention that Proposition~\ref{prop:C2xAC-PCC} together with
$\C_2\times\AUC_{[0,1]}\leqW\PC_{[0,1]}$ also yields another 
proof of Proposition~\ref{prop:PC-CC}.
Since $\C_2\leqW\AUC_{[0,1]}\leqW\PCC_{[0,1]}\leqW\ConC_{[0,1]}$ we also obtain the following corollary of Proposition~\ref{prop:C2xAC-PCC}.

\begin{corollary}
$\AUC_{[0,1]}$, $\PCC_{[0,1]}$ and $\ConC_{[0,1]}$ are not idempotent.
\end{corollary}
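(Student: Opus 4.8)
The plan is to deduce all three statements simultaneously from Proposition~\ref{prop:C2xAC-PCC}, using the chain of reductions $\C_2\leqW\AUC_{[0,1]}\leqW\PCC_{[0,1]}\leqW\ConC_{[0,1]}$ (the middle reduction by Theorem~\ref{thm:AC-PCC}, and the last one because $\PCC_{[0,1]}$ is simply the restriction of $\ConC_{[0,1]}$ to connected sets of positive measure) together with the monotonicity of the product operation $\times$ with respect to $\leqW$.

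So first I would argue by contradiction: suppose that one of $\AUC_{[0,1]}$, $\PCC_{[0,1]}$, $\ConC_{[0,1]}$ is idempotent, and call it $f$. In each of the three cases both $\C_2\leqW f$ and $\AUC_{[0,1]}\leqW f$ hold, by the reduction chain above. By monotonicity of $\times$ for $\leqW$ these two reductions combine to $\C_2\times\AUC_{[0,1]}\leqW f\times f$, and the assumed idempotency gives $f\times f\equivW f$, hence $\C_2\times\AUC_{[0,1]}\leqW f$. Finally $f\leqW\ConC_{[0,1]}$ in all three cases, so $\C_2\times\AUC_{[0,1]}\leqW\ConC_{[0,1]}$, contradicting Proposition~\ref{prop:C2xAC-PCC}. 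Since idempotency is invariant under Weihrauch equivalence (as observed after Definition~\ref{def:algebraic-operations}), this genuinely excludes idempotency of each of the three degrees, not merely of fixed representatives.

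There is no real obstacle here; the argument is just an assembly of reductions already established. The only points deserving a sentence of care are to record explicitly why $\PCC_{[0,1]}\leqW\ConC_{[0,1]}$ (it is a domain restriction, so even $\leqSW$ holds via the identity) and why the single problem $\C_2\times\AUC_{[0,1]}$ — the exact object that Proposition~\ref{prop:C2xAC-PCC} places outside the cone below $\ConC_{[0,1]}$ — ends up on the left-hand side of the reduction in all three instances, which is precisely what produces the contradiction uniformly.
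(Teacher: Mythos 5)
Your proof is correct and matches the paper's approach exactly: the paper introduces this corollary with the one-line remark that it follows from Proposition~\ref{prop:C2xAC-PCC} via the chain $\C_2\leqW\AUC_{[0,1]}\leqW\PCC_{[0,1]}\leqW\ConC_{[0,1]}$, which is precisely the mechanism you spell out, with the added (correct and worthwhile) explicitness about monotonicity of $\times$ and invariance of idempotency under $\equivW$.
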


The first fact can also easily be deduced from the number of mind changes required and the 
latter fact was already proved in \cite[Theorem~7.3]{BLP12} in a slightly different way.
We mention that the situation also yields another instance of a difference between suprema and products, since we obtain
\[\C_2\sqcup\AUC_{[0,1]}\lW\C_2\times\AUC_{[0,1]}.\]

In particular, Corollary~\ref{cor:AC-CC} implies that $\AUC_{[0,1]}^*\nleqW\PCC_{[0,1]}$.
As a final result in this section we would like to clarify the inverse relation between $\PCC_{[0,1]}$ and $\AUC_{[0,1]}^*$.
The separation can be achieved using the concept of a level (as introduced by Hertling \cite{Her96b,Her96}), which is preserved downwards by Weihrauch reducibility 
(i.e., if $f\leqW g$, then the level of $f$ is less or equal to the level of $g$).
Since $\AUC_{[0,1]}^*\leqW\LPO^*$, it follows that $\AUC_{[0,1]}^*$ has at most the level of $\LPO^*$, which is $\omega$ (the first transfinite ordinal),
while $\PCC_{[0,1]}$ has no level, since its entire domain consists of points of discontinuity. 
This implies $\PCC_{[0,1]}\nleqW\LPO^*$ and altogether we obtain the following result.

\begin{corollary}
$\PCC_{[0,1]}\nW\AUC_{[0,1]}^*$.
\end{corollary}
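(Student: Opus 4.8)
The plan is to split the claimed incomparability into its two constituent non\nobreakdash-reductions and to treat them by completely different means. The direction $\AUC_{[0,1]}^*\nleqW\PCC_{[0,1]}$ will be the easy one: since $\PCC_{[0,1]}$ is, by definition, just the restriction of $\ConC_{[0,1]}$ to those connected closed subsets of $[0,1]$ that additionally have positive measure, we get $\PCC_{[0,1]}\leqSW\ConC_{[0,1]}$ via identity maps on inputs and outputs. Hence a hypothetical reduction $\AUC_{[0,1]}^*\leqW\PCC_{[0,1]}$ would compose with this to yield $\AUC_{[0,1]}^*\leqW\ConC_{[0,1]}$, contradicting Corollary~\ref{cor:AC-CC}. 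That settles the first half.

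For the other direction, $\PCC_{[0,1]}\nleqW\AUC_{[0,1]}^*$, I would first reduce everything to a statement about $\LPO^*$. From the proof of Theorem~\ref{thm:AC-PCC} we already know $\AUC_{[0,1]}\leqW\LPO$ (all\nobreakdash-or\nobreakdash-unique choice can be resolved with one mind change), and since $f\mapsto f^*$ is a closure operator on the Weihrauch lattice, hence monotone, this upgrades to $\AUC_{[0,1]}^*\leqW\LPO^*$. It therefore suffices to establish $\PCC_{[0,1]}\nleqW\LPO^*$. Here I would appeal to Hertling's notion of level \cite{Her96b,Her96}, which is preserved downwards along Weihrauch reductions: on one side, $\LPO^*$ has level $\omega$; on the other side, $\PCC_{[0,1]}$ has no level at all. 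For the latter one observes that every element of $\dom(\PCC_{[0,1]})$ is a proper interval $[a,b]$ with $a<b$, and that after any finite prefix of a negative\nobreakdash-information name of $[a,b]$ the name can still be continued so that the described interval is forced arbitrarily close to $a$, or alternatively arbitrarily close to $b$; since these two continuations pin the output down in regions separated by the fixed distance $b-a$, no realizer of $\PCC_{[0,1]}$ can be continuous at any name of any element of its domain, so its entire domain consists of points of discontinuity and no level exists. Consequently $\PCC_{[0,1]}\leqW\LPO^*$ would bound the level of $\PCC_{[0,1]}$ by $\omega$, a contradiction. Combining the two non\nobreakdash-reductions gives $\PCC_{[0,1]}\nW\AUC_{[0,1]}^*$.

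The step I expect to require the most care is the assertion that $\PCC_{[0,1]}$ genuinely ``has no level'': one must set up Hertling's definition so that a function all of whose inputs are points of discontinuity indeed fails to possess any (even transfinite) level, and one needs the companion fact that $\LPO^*$ has level exactly $\omega$, which in turn rests on the levels of the finite powers $\LPO^n$ being unbounded and on finite parallelization acting as a supremum of levels. Everything else is routine bookkeeping: the first non\nobreakdash-reduction is pure transitivity on top of Corollary~\ref{cor:AC-CC}, and the auxiliary reduction $\AUC_{[0,1]}^*\leqW\LPO^*$ uses only the already\nobreakdash-recorded facts $\AUC_{[0,1]}\leqW\LPO$ and the monotonicity of the idempotent closure.
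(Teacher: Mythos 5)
Your proposal is correct and follows essentially the same route as the paper: the direction $\AUC_{[0,1]}^*\nleqW\PCC_{[0,1]}$ is obtained from Corollary~\ref{cor:AC-CC} via $\PCC_{[0,1]}\leqW\ConC_{[0,1]}$, and the direction $\PCC_{[0,1]}\nleqW\AUC_{[0,1]}^*$ is obtained from $\AUC_{[0,1]}^*\leqW\LPO^*$ together with Hertling's level, $\LPO^*$ having level $\omega$ while $\PCC_{[0,1]}$ has none because its entire domain consists of points of discontinuity. Your added detail (the two incompatible continuations of a name of $[a,b]$ pushing the output toward $a$ or toward $b$) is a correct and useful elaboration of why $\PCC_{[0,1]}$ has no level, a point the paper merely asserts.
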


In particular, we obtain the following corollary.

\begin{corollary}
\label{cor:IVT-NASH}
$\IVT\nW\NASH$.
\end{corollary}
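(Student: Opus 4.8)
The plan is to reduce the claim to two incomparability facts already established, using the characterizations $\IVT\equivSW\ConC_{[0,1]}$ from \cite[Theorem~6.2]{BG11a} and $\NASH\equivSW\AUC_{[0,1]}^*$ from Pauly's theorem. Thus it suffices to show that $\ConC_{[0,1]}$ and $\AUC_{[0,1]}^*$ are Weihrauch incomparable, i.e., $\ConC_{[0,1]}\nleqW\AUC_{[0,1]}^*$ and $\AUC_{[0,1]}^*\nleqW\ConC_{[0,1]}$.

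For the direction $\NASH\nleqW\IVT$ I would simply invoke Corollary~\ref{cor:NASH-IVT}, which already records $\NASH\nleqW\IVT$; this rests on Corollary~\ref{cor:AC-CC} (that is, $\AUC_{[0,1]}^*\nleqW\ConC_{[0,1]}$), obtained in turn from Proposition~\ref{prop:C2xAC-PCC}. For the reverse direction $\IVT\nleqW\NASH$, the key observation is the chain $\PCC_{[0,1]}\leqW\ConC_{[0,1]}\equivW\IVT$ (the first reduction holds because $\PCC_{[0,1]}$ is a restriction of $\ConC_{[0,1]}$) together with the corollary immediately preceding the present one, namely $\PCC_{[0,1]}\nleqW\AUC_{[0,1]}^*\equivW\NASH$. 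Hence, were $\IVT\leqW\NASH$ to hold, transitivity of $\leqW$ would yield $\PCC_{[0,1]}\leqW\ConC_{[0,1]}\equivW\IVT\leqW\NASH\equivW\AUC_{[0,1]}^*$, contradicting that preceding corollary. Therefore $\IVT\nleqW\NASH$, and combining the two directions gives $\IVT\nW\NASH$.

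The only substantive ingredient behind this deduction is the preceding corollary $\PCC_{[0,1]}\nW\AUC_{[0,1]}^*$, whose proof uses Hertling's notion of level: $\AUC_{[0,1]}^*\leqW\LPO^*$ has level $\omega$, while $\PCC_{[0,1]}$ has no level since its entire domain consists of points of discontinuity. So I expect no real obstacle here — the argument is a short bookkeeping step assembling results already in hand, and I would present it in exactly this order, recalling the two equivalences, citing Corollary~\ref{cor:NASH-IVT} for one direction, and arguing the other direction by transitivity through $\PCC_{[0,1]}$.
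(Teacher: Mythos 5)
Your proposal is correct and follows the paper's (implicit) argument exactly: $\NASH\nleqW\IVT$ is Corollary~\ref{cor:NASH-IVT}, and $\IVT\nleqW\NASH$ follows by transitivity from $\PCC_{[0,1]}\leqW\ConC_{[0,1]}\equivW\IVT$ together with the immediately preceding corollary $\PCC_{[0,1]}\nleqW\AUC_{[0,1]}^*\equivW\NASH$.
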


\begin{figure}[htb]
\begin{center}
\begin{tikzpicture}[scale=.5,auto=left,every node/.style={fill=black!15},y=-2cm]
\input{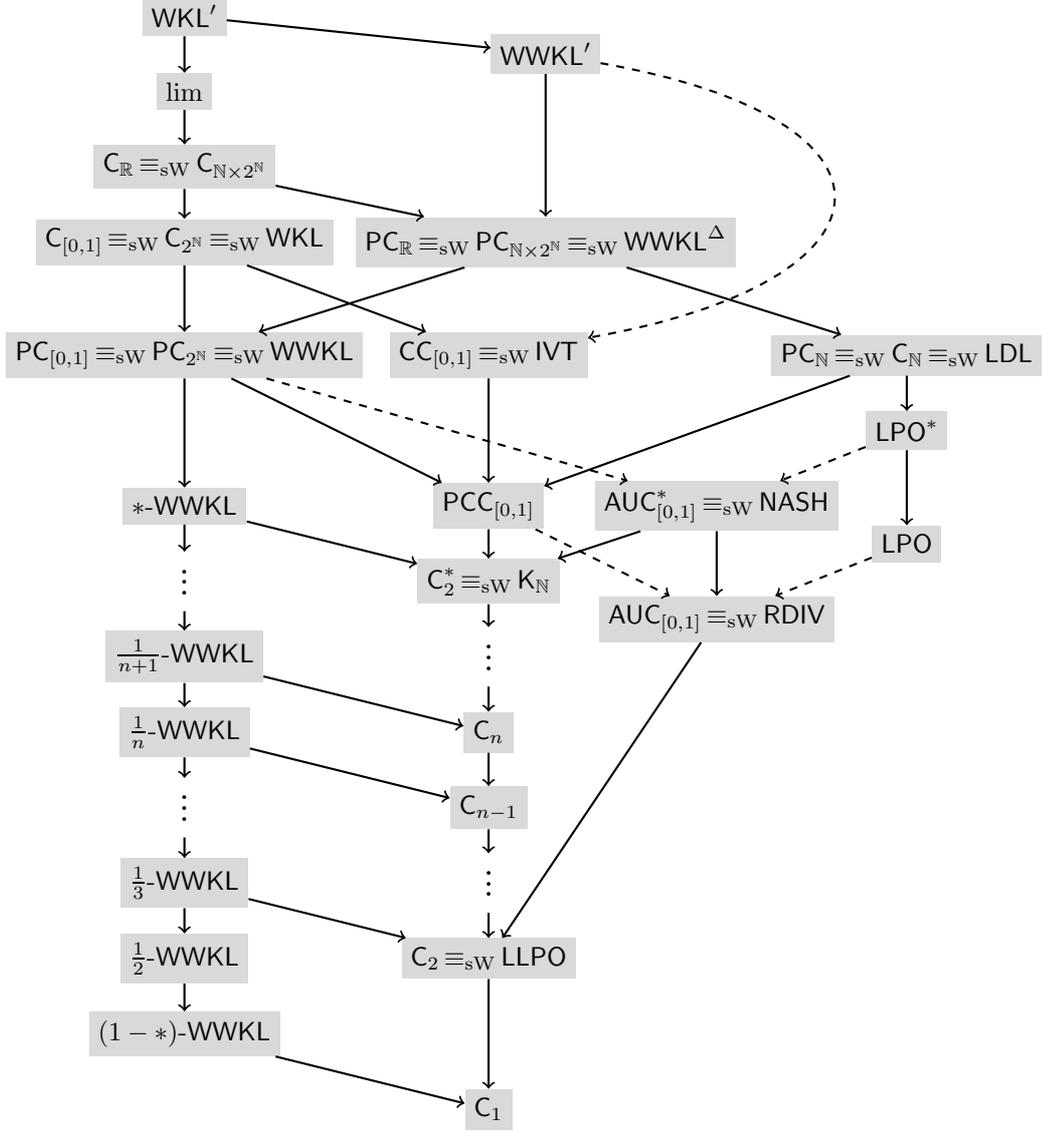}
\end{tikzpicture}
\caption{(Probabilistic choice in the Weihrauch lattice). All solid lines indicate strong Weihrauch reductions $\leqSW$ against the direction of the arrow, i.e., if
$f\leqSW g$, then the arrow points from $g$ to $f$ (which corresponds to the direction of logical implication).
All dashed lines indicate that we only have ordinary Weihrauch reductions $f\leqW g$ in those cases. The diagram is
 complete (with regards to ordinary Weihrauch reducibility) up to transitivity.}
\label{fig:diagram}
\end{center}
\end{figure}

\section{Conclusions}
\label{sec:conclusions}

In Figure~\ref{fig:diagram} we illustrate the fragment of the Weihrauch lattice that we have studied in this paper.

We emphasize that many separations presented in this paper hold for purely topological reasons. That is, we 
get analogous results if we replace (strong) Weihrauch reducibility by its topological counterpart that is defined
analogously, but with continuous $H,K$ instead of computable $H,K$. 
In particular, Propositions~\ref{prop:example-unit-cantor}, \ref{prop:CC-PC}, and \ref{prop:C2xAC-PCC}, as well as Theorems~\ref{thm:probability-dependency} and \ref{thm:AC-*-WWKL}, 
hold analogously for the topological variant of Weihrauch reducibility.

There are numerous structural questions that we have not addressed or answered in our study.
We mention some examples:

\begin{enumerate}
\item Is $\WWKL'$ closed under composition?
\item Or is $\WWKL'*\WWKL'\equivW\WWKL''$?
\item Is $\WWKL'\leqW\PC_{\IN^\IN}$?
\end{enumerate}

The techniques used to prove $\WKL'*\WKL'\equivW\WKL''$ in \cite{BGM12} cannot be directly transferred to the case of $\WWKL$,
since the proofs crucially exploit that $\WKL$ is a cylinder, which $\WWKL$ is not according to Corollary~\ref{cor:cylinder}.

Of course, it would be very interesting to find out whether further concrete problems (besides determining zeros or computing
Nash equilibria) admit Las Vegas algorithm or other types of randomized algorithms. 
In one forthcoming paper we will study the Vitali Covering Theorem and other results from measure theory from this perspective
and in a second paper we will investigate the relation between Martin-L\"of randomness and Weak Weak K\H{o}nig's Lemma.

d to prove $\WKL'*\WKL'\equivW\WKL''$ in \cite{BGM12} cannot be directly transferred to the case of $\WWKL$,
since the proofs crucially exploit that $\WKL$ is a cylinder, which $\WWKL$ is not according to Corollary~\ref{cor:cylinder}.

Of course, it would be very interesting to find out whether further concrete problems (besides determining zeros or computing
Nash equilibria) admit Las Vegas algorithm or other types of randomized algorithms. 
In one forthcoming paper we will study the Vitali Covering Theorem and other results from measure theory from this perspective
and in a second paper we will investigate the relation between Martin-L\"of randomness and Weak Weak K\H{o}nig's Lemma.

\section*{Acknowledgments}

We would like to thank the anonymous referees for their detailed and helpful remarks.
We are also grateful to Mathieu Hoyrup and Arno Pauly for their comments on an earlier draft of this article.

\bibliographystyle{plain}
\bibliography{C:/Users/Vasco/Dropbox/Bibliography/lit}

\end{document}